  \newcommand\imCMsym[4][\mathord]{%
  \DeclareFontFamily{U} {#2}{}
  \DeclareFontShape{U}{#2}{m}{n}{
    <-6> #25
    <6-7> #26
    <7-8> #27
    <8-9> #28
    <9-10> #29
    <10-12> #210
    <12-> #212}{}
  \DeclareSymbolFont{CM#2} {U} {#2}{m}{n}
  \DeclareMathSymbol{#4}{#1}{CM#2}{#3}
}
\newcommand\alsoimCMsym[4][\mathord]{\DeclareMathSymbol{#4}{#1}{CM#2}{#3}}
\theoremstyle{plain}
\newtheorem{theorem}{Theorem}[section]
\newtheorem{proposition}[theorem]{Proposition}
\newtheorem{corollary}[theorem]{Corollary}
\newtheorem{lemma}[theorem]{Lemma}
\theoremstyle{definition}
\newtheorem{definition}[theorem]{Definition}
\newtheorem{setup}[theorem]{Setup}
\theoremstyle{remark}
\newtheorem{remark}[theorem]{Remark}
\newtheorem{example}[theorem]{Example}
\newcommand{\N}{{\mathbb N}}
\newcommand{\Z}{{\mathbb Z}}
\newcommand{\Q}{{\mathbb Q}}
\newcommand{\R}{{\mathbb R}}
\newcommand{\F}{{\mathbb F}}
\renewcommand{\P}{{\mathbb P}}
\newcommand{\A}{{\mathbb A}}
\newcommand{\isomto}{\overset{\sim}{\rightarrow}}
\newcommand{\pow}[1]{\llbracket #1 \rrbracket}
\newcommand{\weak}[1]{\langle #1\rangle^\dagger}
\newcommand{\tate}[1]{\langle #1 \rangle}
\newcommand{\spec}[1]{\mathrm{Spec}\left(#1\right)}
\newcommand{\spf}[1]{\mathrm{Spf}\left(#1\right)}
\newcommand{\cur}[1]{\mathcal{#1}}
\newcommand{\fr}[1]{\mathfrak{#1}}
\newcommand{\Norm}[1]{\left\Vert #1\right\Vert}
\newcommand{\norm}[1]{\left\vert#1\right\vert}
\newcommand{\pn}{(\varphi,\nabla)}
\newcommand{\rig}{\mathrm{rig}}
\DeclareSymbolFontAlphabet{\mathbb}{AMSb} %to ensure that the meaning of \mathbb does not change
\DeclareSymbolFontAlphabet{\mathbbl}{bbold}
\title{Local acyclicity in $\bm{p}$-adic cohomology}
\author{Christopher Lazda}
       \address{ Mathematics Institute \\ Zeeman Building \\ University of Warwick \\ Coventry \\ CV4 7AL \\ United Kingdom }
       \email{chris.lazda@warwick.ac.uk}
\begin{document}

\begin{abstract} We prove an analogue for $p$-adic coefficients of the Deligne--Laumon theorem on local acyclicity for curves. That is, for an overconvergent $F$-isocrystal $E$ on a relative curve $f:U\rightarrow S$ admitting a good compactification, we show that the cohomology sheaves of $\mathbf{R}f_!E$ are overconvergent isocrystals if and only if $E$ has constant Swan conductor at infinity.
\end{abstract}

\maketitle 

\tableofcontents

\section*{Introduction}

Given a morphism $f:X\rightarrow S$ of algebraic varieties over a field $k$, it is natural to ask when the higher direct images $\mathbf{R}^if_!E$ of some smooth coefficient object (such as a vector bundle with integrable connection, a lisse $\ell$-adic sheaf, or an overconvergent $F$-isocrystal) are smooth coefficient objects on $S$. Of course, this will always happen `generically', i.e. on a dense open subset of $S$, but one may hope to be able to say something about when this happens on the whole of $S$.

For example, the smooth and proper base change theorem in \'etale cohomology says that whenever $f$ is smooth and proper, and $E$ is a lisse $\ell$-adic sheaf (with $\ell\neq \mathrm{char}(k)$), then the relative cohomology sheaves $\mathbf{R}^if_!E=\mathbf{R}^if_*E$ are also lisse. Similarly, Berthelot's conjecture (versions of which have been proved by Shiho \cite{Shi08a} and Caro \cite{Car15a}) states that when $k$ is perfect of characteristic $p>0$, and $E$ is an overconvergent $F$-isocrystal on $X$, then each $\mathbf{R}^if_*E$ is an overconvergent $F$-isocrystal on $S$.

In characteristic $0$, with $\ell$-adic \'etale coefficients, these smoothness results often persist for families of open varieties, provided that the morphism $f:X\rightarrow S$ admits a `good' compactification, that is a compactification $\overline{X}$ smooth over $S$, such that the complement $\overline{X}\setminus X$ is a relative normal crossings divisor. However, the same is not true with `de\thinspace Rham' style coefficients in characteristic $0$, nor with $\ell$-adic coefficients in positive characteristic.  For example, one can use Artin--Schreier extensions to produce examples lisse $\F_\ell$-sheaves $E$ on $\A^2_k$ (with $\ell\neq p$) such that the rank of the cohomology groups jumps in fibres of the projection $\A^2_k\rightarrow \A^1_k$.

This is explained by the fact that the Swan conductor of $E$ at infinity, which is a numerical measure of the wild ramification of $E$, itself jumps along these fibres. It turns out, however, that for curves at least, the Swan conductor exactly controls the failure of the higher direct images to be lisse. Indeed, the main result of \cite{Lau81} shows that for a relative smooth curve $f:U\rightarrow S$, and a lisse $\F_\ell$-sheaf $E$ on $U$, ``if the wild ramification of $E$ at infinity is locally constant, then the higher direct images $\mathbf{R}^if_!E$ are lisse''. Concretely, the wild ramification of $E$ being (locally) constant means that the Swan conductor of $E$ at infinity is (locally) constant. One can use this to deduce a similar result with $\Z_\ell$ or $\Q_\ell$ coefficients.

The purpose of this article is to prove an analogue of this result for $p$-adic coefficients, that is for overconvergent $F$-isocrystals; in this case the correct analogue of the Swan conductor is the irregularity of a $p$-adic differential equation studied in \cite{CM00}. We have two main results in this direction. The first of these, Theorem \ref{theo: main dagger} is phrased in the language of relative Monsky--Washnitzer cohomology (in the spirit of \cite{Ked06a}), it assumes that the base variety $S$ is smooth, affine and connected, and also imposes relatively strong conditions on the curve $f:U\rightarrow S$ (see Setup \ref{setup: main}). The second, Theorem \ref{theo: main general}, is phrased using the theory of arithmetic $\mathcal{D}^\dagger$-modules, as developed by Berthelot and Caro, and while it allows more general bases $S$, as well as for any relative curve admitting a good compactification, it assumes that $k$ is perfect. In both cases, the result says that if $E$ is an overconvergent $F$-isocrystal on a suitable relative smooth curve $f:U\rightarrow S$, and $E$ has constant irregularity at infinity, then appropriately defined higher direct images are overconvergent $F$-isocrystals. (In fact, we work everywhere with `$F$-able isocrystals', that is extensions of subquotients of objects admitting some $F^n$-structure.) Results along these lines were previously obtained by Kedlaya \cite[Proposition 3.4.3]{Ked06b}, the the proof of which provided part of the inspiration for the methods used in \S\ref{sec: universal}.

The majority of this article is concerned with the Monsky--Washnitzer case, that is Theorem \ref{theo: main dagger}; it is not too difficult to then use the general $\mathcal{D}^\dagger$-module machinery (nicely summarised in \cite[\S1]{AC18a}) to deduce Theorem \ref{theo: main general} when $k$ is perfect. The basic idea of the proof is rather simple, and the bulk of the work consists of facing down the technical difficulties involved in actually getting this idea to work. To explain the approach, suppose that we have some relative curve $f:U \rightarrow S$ over a smooth, affine, base, and an overconvergent $F$-isocrystal $E$ on $U$. We know by results of Kedlaya (see Theorem \ref{theo: generic pushforwards} below) that for some open subset $V\subset S$ the higher direct images $\mathbf{R}^if_*(E|_{U_V})$ are overconvergent $F$-isocrystals on $U$, and by Noetherian induction we can assume that the complement $Z\subset S$ is also smooth over $k$, and that the higher direct images $\mathbf{R}^if_*(E|_{U_Z})$ are themselves overconvergent $F$-isocrystals on $Z$ of the same rank. The key Lemma \ref{lemma: gluing} then tells us that we can use the overconvergence of these objects to `glue' them together along a suitable punctured tube of $Z$ inside some formal lift of $S$, to get an overconvergent $F$-isocrystal on the whole of $S$.

Interestingly enough, the deduction of the result for arithmetic $\mathcal{D}^\dagger$-modules only uses the fact that the higher direct images are convergent $F$-isocrystals on $S$. However, the above strategy would not work if we tried to work everywhere in the convergent category, since we would not be able to `glue' along the stratification $V \hookrightarrow S \hookleftarrow  Z$. Thus it is important to be working with overconvergent objects from the beginning.

Theorem \ref{theo: main general} is weaker than the Deligne--Laumon theorem in one crucial aspect. Namely, the curve $f:U\rightarrow S$ is assumed to have a good compactification, i.e. a smooth compactification $\bar f:C\rightarrow S$ such that the complement $C\setminus U$ is \'etale over $S$; in \cite{Lau81} it is only assumed to be finite and flat. Our proof is completely different to that given in \cite{Lau81}, which uses the formalism of nearby and vanishing cycles in \'etale cohomology. While this article was being written, an analogue of the nearby and vanishing cycles formalism for $p$-adic coefficients appeared in \cite{Abe19}, which in fact was what allowed us to extend our main results to singular bases (at least when $k$ is perfect). It would be interesting to see whether or not Abe's theory can be used to give another proof of local acyclicity, more similar in spirit to the $\ell$-adic case, and that would be able to handle the more general case where the divisor at infinity is only finite flat over the base.

Let us now give a summary of the various parts of the article. In \S\ref{sec: background} we introduce some basic notations and definitions concerning rigid cohomology and the theory of arithmetic $\mathcal{D}^\dagger$-modules, in particular the approach to the  6 operations formalism taken in \cite{AC18a}. In \S\ref{sec: irreg} we recall the definition of the irregularity of a $p$-adic differential equation, as well as some results of Kedlaya and Kedlaya--Xiao concerning extending break decompositions in families. In \S\ref{sec: src and gp} we introduce the basic geometric setup that we will work with. We recall results of Kedlaya on generic higher direct images and state our first main result on the relative Monsky--Washnitzer cohomology of curves. In \S\ref{sec: unip bc} we investigate the cohomology of $\nabla$-modules over relative Robba rings, and prove a base change result under the assumption of constant irregularity, which in \S\ref{sec: bc for R10} is used to prove Theorem \ref{theo: main dagger} for the `lower' direct image $\mathbf{R}^0f_*$. Section \ref{sec: strong} is devoted to a rather grisly study of relative cohomology on tubes and punctured tubes, which forms the key input for the proof in \S\ref{sec: universal} of finiteness and base change for $\mathbf{R}^1f_*$ via the gluing argument outlined above. In \S\ref{sec: conv} we then use this, together with a little functional analysis, to deduce finiteness and base change for certain partially overconvergent cohomology groups, which in \S\ref{sec: Dmod2} then allows us to obtain our second main result, Theorem \ref{theo: main general}, by reduction to the smooth and affine case.

\subsection*{Acknowledgements} The author was supported by the Netherlands Organisation for Scientific Research (NWO). He would like to thank T. Abe and A. Shiho for helpful conversations regarding various parts of this article. He would also like to greatly thank the anonymous referee for a careful reading of the manuscript, resulting both in improved exposition and in several earlier mistakes being corrected.

\section{Background on rigid cohomology and arithmetic \texorpdfstring{$\mathcal{D}^\dagger$}{D}-modules}\label{sec: background}

Throughout this article, we will denote by $K$ (the ground field) a complete, normed field of characteristic $0$, by $\cur{V}$ its ring of integers and by $k$ (the residue field) its residue field, which will be assumed to be of characteristic $p>0$. From \S\ref{sec: src and gp} onwards we will assume that $K$ is discretely valued, and denote by $\varpi$ a uniformiser for $\cur{V}$. To begin with, however, we will need to make certain constructions more generally, in which case $\varpi$ will be a non-zero element of the maximal ideal $\mathfrak{m}$ of $\mathcal{V}$ (a pseudo-uniformiser). In \S\ref{sec: Dmod2} we will want to assume that $k$ is perfect, however, for the most part we will allow arbitrary $k$.  We will assume that $K$ admits a lifting $\sigma$ of the absolute $q=p^a$-power Frobenius on $k$, and fix such a $\sigma$.

In this first section we will recall some definitions and constructions in rigid cohomology and the theory of arithmetic $\mathcal{D}^\dagger$-modules, and review the 6 opertaions formalism for varieties and couples introduced in \cite{AC18a}. We will generally assume the reader has some basic familiarity with the theory of rigid cohomology, as developed in \cite{Ber96b,LS07}, and will mostly use this section for fixing definitions and notations. 

\begin{definition} \begin{enumerate}
\item A \emph{variety} over $k$ is a separated and finite type $k$-scheme. 
\item A \emph{formal scheme} over $\cur{V}$ is a $\varpi$-adic formal $\cur{V}$-scheme which is separated and topologically of finite type. 
\item A \emph{rigid variety} over $K$ is a rigid analytic space in the sense of Tate, which in addition is separated  over $\mathrm{Sp}(K)$.
\item A \emph{couple} $(X,Y)$ over $k$ is an open immersion $j:X\hookrightarrow Y$ of $k$-varieties.
\item A \emph{frame} over $\cur{V}$ is a triple $(X,Y,\mathfrak{P})$ consisting of a couple $(X,Y)$ and a closed immersion $Y\hookrightarrow \mathfrak{P}$ of formal $\mathcal{V}$-schemes.
\item An \emph{l.p. frame} over $\cur{V}$  is a quadruple $(X,Y,\mathfrak{P},\mathfrak{Q})$ such that $(X,Y,\mathfrak{P})$ is a frame, and $\mathfrak{P}\hookrightarrow \mathfrak{Q}$ is an open immersion of formal schemes, such that $\mathfrak{Q}$ is proper over $\mathcal{V}$.
\end{enumerate}
\end{definition}

A morphism of couples
\[ (X',Y')\rightarrow (X,Y)\]
is said to be flat (resp. smooth, \'etale) if $X'\rightarrow X$ is, and proper (resp. finite) if $Y'\rightarrow Y$ is. It is said to be Cartesian if the diagram
\[  \xymatrix{ X' \ar[r] \ar[d] & Y' \ar[d] \\ X \ar[r] & Y   } \]
is Cartesian. A couple $(X,Y)$ is said to be smooth or proper if the natural morphism
\[ (X,Y) \rightarrow (\spec{k},\spec{k})\]
is. Similarly, a morphism of frames
\[ (X',Y',\mathfrak{P}')\rightarrow (X,Y,\mathfrak{P}) \]
is said to be flat (resp. smooth, \'etale) if $\mathfrak{P}'\rightarrow \mathfrak{P}$ is flat (resp. smooth, \'etale) in a neighbourhood of $X'$, and proper (resp. finite) if $Y'\rightarrow Y$ is. It is said to be Cartesian if both squares in the diagram
\[  \xymatrix{ X' \ar[r] \ar[d] & Y' \ar[d] \ar[r] & \mathfrak{P}' \ar[d]  \\ X \ar[r] & Y  \ar[r] & \mathfrak{P}  } \]
are Cartesian. A frame $(X,Y,\mathfrak{P})$ is flat (resp. smooth, \'etale, proper, finite) if the natural morphism
\[ (X,Y,\mathfrak{P}) \rightarrow (\spec{k},\spec{k},\spf{\mathcal{V}})\]
is.  We will not need to define particular properties of morphisms of l.p. frames. 

If $(X,Y)$ is a pair, we will denote by $\mathrm{Isoc}((X,Y)/K)$ the category of isocrystals on $X$, overconvergent along $Y$. A \emph{Frobenius structure} on an isocrystal $E$ is an isomorphism
\[ \varphi: F^{n*}E\isomto E \]
for some $n\geq 1$, and we will denote by
\[ \mathrm{Isoc}_F((X,Y)/K) \subset \mathrm{Isoc}((X,Y)/K) \]
the full subcategory consisting of iterated extensions of subquotients of objects admitting Frobenius structures. 

\begin{lemma} A partially overconvergent isocrystal $E\in  \mathrm{Isoc}((X,Y)/K)$ lies in $E\in  \mathrm{Isoc}_F((X,Y)/K) $ if and only if the irreducible constituents of $E$ admit Frobenius structures.
\end{lemma} 

\begin{proof}
One direction is clear, so suppose that $E\in  \mathrm{Isoc}_F((X,Y)/K)$; we must show that its irreducible constituents admit Frobenius structures. We are free to replace $E$ by any subquotient, in particular we may therefore assume that $E$ itself is a subquotient of some $E'$ admitting a Frobenius structure. 

Choose $n$ such that $F^{n*}E'\cong E'$, thus $F^{n*}$ permutes the (finite) set $\mathcal{I}$ of isomorphism classes of irreducible constituents of $E'$. By possibly increasing $n$, then, we can assume that in fact $F^{n*}$ acts trivially on $\mathcal{I}$. In particular, if $E_0\subset E\subset E'$ is irreducible, then $F^{n*}E_0\cong E_0$ and we are done. 
\end{proof}

Thus $\mathrm{Isoc}_F((X,Y)/K)$ is the thick abelian subcategory of $\mathrm{Isoc}((X,Y)/K)$ generated by objects admitting Frobenius structures. When $X=Y$ we will write $\mathrm{Isoc}_F(X/K)$ and $\mathrm{Isoc}(X/K)$ respectively. When $Y$ is proper over $k$ these do not depend on $Y$ and we will write $\mathrm{Isoc}^\dagger_F(X/K)$ and $\mathrm{Isoc}^\dagger(X/K)$ instead. We will refer to objects in $\mathrm{Isoc}_F((X,Y)/K)$ as `$F$-able isocrystals'.

\subsection{Monsky--Washnitzer cohomology}

For most of this article, we will use the Monsky--Washnitzer approach to rigid cohomology, the basics of which we very briefly review here. For the reader wishing for more details, a better introduction is given in \cite[\S\S2-3]{Ked06a}. 

\begin{definition}[\cite{GK00}] A $K$-dagger algebra is a $K$-algebra isomorphic to a quotient $K\weak{x_1,\ldots,x_n}/I$ of an overconvergent power series algebra over $K$.
\end{definition}

We denote by $\Norm{\cdot}_\mathrm{sup}$ the supremum seminorm on a $K$-dagger algebra, if $A$ is reduced then this is a norm \cite[Theorem 1.7]{GK00}. For any real number $\lambda>1$, we consider the subalgebra 
\[ K\tate{\lambda^{-1} x_1,\ldots,\lambda^{-1}x_n }:= \left\{ \left.\sum_{i_1,\ldots,i_n\geq 0} a_{i_1,\ldots,i_n}x_1^{i_d}\ldots x_n^{i_n} \right\vert   \norm{a_{i_1,\ldots,i_n}}\lambda^{i_1+\ldots +i_n}\rightarrow 0\right\} \subset K\weak{x_1,\ldots,x_n} \]
of series converging for $\norm{x_i}\leq \lambda$;
the image of any such subalgebra under any surjection $K\weak{x_1,\ldots,x_n}\twoheadrightarrow A$ will be called a fringe algebra of $A$. This admits a norm $\Norm{\,\cdot\,}_\lambda$ coming from the $\lambda$-Gauss norm on $K\tate{\lambda^{-1} x_1,\ldots,\lambda^{-1}x_n }$. We will let $\otimes^\dagger$ denote the weakly completed tensor product of $K$-dagger algebras. 

For $A$ reduced, we denote by $A^\mathrm{int}\subset A$ the subring of integral elements, consisting of those elements of supremum norm $\leq 1$, and by $\overline{A}$ the reduction of $A$, that is the quotient of $A^\mathrm{int}$ by the ideal of topologically nilpotent elements. We let $\widehat{A}$ denote the completion of $A$ with respect to this ideal.

\begin{definition}[Definition 3.2.1, \cite{Ked06a}] A $K$-dagger algebra $A$ is said to be of MW-type if it is integral, and its reduction $\overline{A}$ is smooth over $k$.
\end{definition}

If $A$ is a $K$-dagger algebra, we will let $\Omega^1_{A/K}$ denote the module of $p$-adically continuous differentials, a $\nabla$-module over $A$ is then by definition a finitely generated $A$-module $M$ together with an integrable connection
\[ \nabla: M \rightarrow M\otimes_A \Omega^1_{A/K}. \]
If $A$ is of MW-type then the $A$-module underlying $M$ is automatically projective \cite[Lemma 3.3.4]{Ked06b}. If $A$ is a MW-type dagger algebra, with reduction $\overline{A}$, then there is a fully faithful functor
\[ \mathrm{Isoc}^\dagger(\spec{\overline{A}}/K) \rightarrow \underline{\mathrm{Mod}}^\nabla_A   \]
from overconvergent isocrystals on $\overline{A}$ to $\nabla$-modules on $A$, which we call `realisation on $A$' \cite[Proposition 8.1.13]{LS07}. A $\nabla$-module is called overconvergent if it is in the essential image of this functor. This construction is compatible with pullback, and hence whenever we fix a lift $\sigma$ of Frobenius on $A$, we obtain a functor
\[ F\text{-}\mathrm{Isoc}^\dagger(\spec{\overline{A}}/K) \rightarrow \underline{\mathrm{Mod}}^{\pn}_A  \]
from overconvergent isocrystals on $\spec{\overline{A}}$ to $\pn$-modules over $A$, that is $\nabla$-modules equipped with a horizontal isomorphism $\varphi:\sigma^*M\isomto M$. This functor is an equivalence of categories \cite[Theorem 8.3.10]{LS07}. We will say that $M \in \underline{\mathrm{Mod}}^\nabla_A$ admits a Frobenius structure if there exists an isomorphism $\varphi:\sigma^{n*}M\isomto M$ for some $n\geq 1$, any such $M$ is automatically overconvergent. We say that $M$ is $F$-able if its irreducible constituents admit Frobenius structures, this is equivalent to being in the essential image of 
\[ \mathrm{Isoc}_F^\dagger(\spec{\overline{A}}/K) \rightarrow \underline{\mathrm{Mod}}^\nabla_A .\]
If $\bar{g}\in \overline{A}$ and $g\in A^\mathrm{int}\subset A$ is a lift of $\bar{g}$, then we may form the MW-type dagger algebra 
\[ A\weak{g^{-1}} := \frac{A\weak{x}}{(gx-1)}\]
whose reduction can be identified with the localisation $\overline{A}[\bar{g}^{-1}]$. We call any such $A\weak{g^{-1}}$ a `dagger localisation' of $A$, and we can use the specialisation map
\[ \mathrm{sp}:\mathrm{maxSpec}(	A) \rightarrow \spec{\overline{A}}  \]
to see that (as an $A$-algebra) this only depends on $\bar{g}$ up to canonical isomorphism. A collection of dagger localisations $\{ A\rightarrow A_i\}_{i\in I}$ will be called a dagger open cover if $\mathrm{maxSpec}(A)=\bigcup_{i\in I} \mathrm{maxSpec}(A_i)$. Again using the specialisation map we see that this will happen if and only if the reductions $\spec{\overline{A}_i}$ form a Zariski open cover of $\spec{\overline{A}}$.

\begin{definition}[Definition 2.5.3, \cite{Ked06a}] Let $u$ be a variable. The Robba ring $\mathcal{R}_A^u$ over $A$ consists of those series
\[ \sum_{i\in \Z} a_iu^i \in A\pow{u,u^{-1}} \]
satisfying the following convergence condition:
\begin{itemize}
\item[$(\star)$] there exists $\eta<1$ such that for all $\eta<\rho<1$ there  there exists some fringe algebra $A_\lambda\subset A$ such that $a_i \in A_\lambda$ for all $i$ and
\begin{align*} \Norm{a_i}_\lambda \rho^{i} &\rightarrow 0\;\;\text{ as }\;\;i \rightarrow \pm \infty.
\end{align*}
\end{itemize} 
The plus part $\mathcal{R}_A^{u+}$ of the Robba ring over $A$ consists of those series for which $a_i=0$ for all $i<0$.
\end{definition}

The relative Robba ring $\mathcal{R}^u_A$ over a dagger algebra can be expressed as a colimit of a limit of a colimit of Banach $K$-algebras, it therefore comes equipped with a natural locally convex topology simply by taking these (co)limits in the category of locally convex topological vector spaces over $K$. With this topology, the module of continuous ($K$-linear) derivations of $\mathcal{R}_A^u$ is isomorphic to
\[ \Omega^1_{A/K}\otimes_A \mathcal{R}_A^u \oplus \mathcal{R}_A^u\cdot du. \]
A $\nabla$-module over $\mathcal{R}_A^u$ is defined to be a finitely presented module together with an integrable connection relative to $K$. The notion of a $\nabla$-module over $\mathcal{R}_A^{u+}$ is defined similarly.

\begin{remark} For any of the rings $A,\mathcal{R}_A^u$, $\mathcal{R}_A^{u+}$ (and some others that we will introduce later) a $\nabla$-module will always (unless explicitly stated otherwise) mean a $\nabla$-module relative to $K$, whose underlying module is finitely presented. Extra adjectives, such as projective, stably free, free \&c. are understood to apply to the underling module. 
\end{remark}

\begin{definition} \label{defn: mod} A frame $(X,\overline{X},\mathfrak{X})$ is called a Monsky--Washnitzer frame  (or an MW frame for short) if:
\begin{enumerate}
\item $X$ is smooth, affine and connected;
\item $X\hookrightarrow \overline{X}$ has dense image;
\item $\mathfrak{X}$ is projective over $\cur{V}$ (and in particular algebraisable);
\item the map $\overline{X}\rightarrow \mathfrak{X}_k$ is an isomorphism.
\end{enumerate}
\end{definition}

If $(X,\overline{X},\mathfrak{X})$ is a MW frame, and $X=\spec{\overline{A}}$, then $A= \Gamma(\mathfrak{X}_K,j_X^\dagger \mathcal{O}_{\mathfrak{X}_K})$ is a $K$-dagger algebra of MW-type, whose reduction is exactly $\overline{A}$. We will call such an $A$ an MW lift of $\overline{A}$. 
%\begin{definition} A modification of MW frames is a flat morphism of frames
%\[ (X,\overline{X},\mathfrak{X}) \rightarrow (Y,\overline{Y},\mathfrak{Y})\]
%such that $X\rightarrow Y$ is an isomorphism.
%\end{definition}
%
%Note that a modification is proper, smooth and Cartesian, and induces an isomorphism
%\[ \Gamma(\mathfrak{Y}_K,j_Y^\dagger\mathcal{O}_{\mathfrak{Y}_K}) \isomto \Gamma(\mathfrak{X}_K,j_X^\dagger\mathcal{O}_{\mathfrak{X}_K}) \]
%on the level of $K$-dagger algebras, by the strong fibration theorem \cite[Th\'eor\`eme 1.3.5]{Ber96b}.
We will need the following useful result about lifting \'etale morphisms from characteristic $p$ to characteristic $0$.

\begin{lemma} \label{lemma: finite etale lift} Assume that $K$ is discretely valued, let $A,B$ be dagger algebras of MW-type, and $\bar\alpha:\overline{A}\rightarrow \overline{B}$ a finite \'etale morphism between their reductions. Then there exists a finite \'etale morphism $\alpha:A\rightarrow B$ lifting $\bar\alpha$.
\end{lemma}

\begin{proof}
Note that by \cite[Theorem 2.4.4]{vdP86} any two MW lifts of either $\overline{A}$ or $\overline{B}$ are isomorphic as $K$-dagger algebras, so in fact it suffices to construct \emph{some} MW lifts $A'$ and $B'$ of $\overline{A}$ and $\overline{B}$ respectively, together with a finite \'etale morphism $\alpha':A '\rightarrow B'$ lifting $\bar\alpha$. That this is possible follows from \cite[Th\'eor\`eme 3]{Ete02a}.
\end{proof}

While it will be important for us to only consider MW lifts arising from MW frames (in order to make certain `geometric' constructions in \S\ref{sec: strong}), we will need to know that we can change the frame with impunity.

\begin{lemma} \label{lemma: change frame} Let $A,A'$ be MW lifts of a smooth $k$-algebra $\overline{A}$, and $B,B'$ MW lifts of a smooth $k$-algebra $\overline{B}$. Suppose we are given a smooth morphism $\overline{A}\rightarrow \overline{B}$ together with lifts $A\rightarrow B$ and $A'\rightarrow B'$. Then there exists a commutative diagram
\[ \xymatrix{ B \ar[r]^-{\cong} & B' \\ A \ar[u]\ar[r]^-{\cong}  & A' \ar[u] } \]
of $K$-algebras, with both horizontal arrows isomorphisms and inducing the identity on $\overline{A}$ and $\overline{B}$ respectively.
\end{lemma}

\begin{proof}
Fixing an isomorphism $A\rightarrow A'$ inducing the identity on $\overline{A}$, and base changing $B$ along this morphism we can assume that $A=A'$. Since the morphism $\overline{A}\rightarrow \overline{B}$ is smooth, we can certainly find an isomorphism $\widehat{B}\rightarrow \widehat{B}'$ of affinoid completions, fixing the affinoid completion $\widehat{A}$ and inducing the identity on $\overline{B}$. Hence we may apply \cite[Corollaey 2.4.3]{vdP86} to produce a morphism $B\rightarrow B'$ of $A$-algebras inducing the identity on $\overline{B}$, which has to be an isomorphism by \cite[Theorem 3.2]{MW68}. 
\end{proof}

\subsection{Arithmetic \texorpdfstring{$\mathcal{D}^\dagger$}{D}-modules on varieties and couples}

The purpose of this section is to recall how the 6 operations formalism works for arithmetic $\mathcal{D}^\dagger$-modules on $k$-varieties and couples, as described in \cite{AC18a}. We will assume that the ground field $K$ is discretely valued, and that residue field $k$ is perfect (these assumptions will be dropped again at the beginning of \S\ref{sec: irreg}).

\begin{definition} \begin{enumerate} \item A variety $X/k$ is realisable if there exists a frame $(X,Y,\mathfrak{P})$  such that $\mathfrak{P}$ is smooth and proper over $\cur{V}$.
\item A couple $(X,Y)/k$ is realisable if there exists an l.p. frame $(X,Y,\mathfrak{P},\mathfrak{Q})$ such that $\mathfrak{Q}$ is smooth over $\mathcal{V}$. 
\end{enumerate}
\end{definition}

Note that both these conditions are marginally stronger than might be expected from the definition of rigid cohomology. If $\mathfrak{P}$ is a smooth formal $\mathcal{V}$-scheme, we let
\[ \mathrm{Hol}(\mathcal{D}^\dagger_{\mathfrak{P},\Q})\;\;\;\;\text{ and }\;\;\;\;D^b_\mathrm{hol}(\mathcal{D}^\dagger_{\mathfrak{P},\Q}) \]
denote the categories of overholonomic (complexes of) $\mathcal{D}^\dagger_{\mathfrak{P},\Q}$-modules respectively, in the sense of \cite{Car09b}. We denote by
\[ \mathrm{Hol}_F(\mathcal{D}^\dagger_{\mathfrak{P},\Q}) \subset \mathrm{Hol}(\mathcal{D}^\dagger_{\mathfrak{P},\Q}) \]
the thick abelian subcategory generated by objects which admit an $F^n$-Frobenius structure for some $n\geq 1$, and 
\[ D^b_{\mathrm{hol},F}(\mathcal{D}^\dagger_{\mathfrak{P},\Q}) \subset D^b_\mathrm{hol}(\mathcal{D}^\dagger_{\mathfrak{P},\Q})\]
the full subcategory of objects whose cohomology sheaves lie in $\mathrm{Hol}_F(\mathcal{D}^\dagger_{\mathfrak{P},\Q})$. If $\mathbb{X}=(X,Y)$ is a realisable couple, and $(X,Y,\mathfrak{P},\mathfrak{Q})$ is an l.p. frame with $\mathfrak{Q}$ smooth over $\mathcal{V}$, then Abe and Caro define the category
\[ D^b_{\mathrm{hol},F}(\mathbb{X}/K) \subset  D^b_{\mathrm{hol},F}(\mathcal{D}^\dagger_{\mathfrak{P},\Q}) \]
of overholonomic complexes of $\mathcal{D}^\dagger$-modules on $\mathbb{X}$ to be the full subcategory of overholonomic complexes of $\mathcal{D}^\dagger_{\mathfrak{P},\Q}$-modules $\mathcal{M}$ which satisfy 
\[ \mathcal{M}\isomto \mathbf{R}\underline{\Gamma}_Y^\dagger \mathcal{M} \;\;\;\;\text{ and }\;\;\;\; \mathcal{M}\isomto (^\dagger Y\setminus X)\mathcal{M}.\]
Here $\mathbf{R}\underline{\Gamma}_Y^\dagger$ and $(^\dagger Y\setminus X)$ are the functors of support and overconvergent sections defined in \cite[\S2.2]{Car04}. This does not depend on the choice of l.p. frame $(X,Y,\mathfrak{P},\mathfrak{Q})$ extending $\mathbb{X}$ \cite[\S1.1, Definition on p. 885]{AC18a}. There is a dual functor
\[ \mathbf{D}_\mathbb{X}: D^b_{\mathrm{hol},F}(\mathbb{X}/K)^{\mathrm{op}} \rightarrow D^b_{\mathrm{hol},F}(\mathbb{X}/K) \]
and a tensor product functor
\[ -\widetilde{\otimes}_\mathbb{X} - : D^b_{\mathrm{hol},F}(\mathbb{X}/K) \times D^b_{\mathrm{hol},F}(\mathbb{X}/K) \rightarrow D^b_{\mathrm{hol},F}(\mathbb{X}/K)\]
which are defined as follows. Let $(X,Y,\mathfrak{P},\mathfrak{Q})$ be an l.p. frame extending $\mathbb{X}$ with $\mathfrak{Q}$ smooth over $\mathcal{V}$. Then 
\[ \mathbf{D}_\mathbb{X}:= \mathbf{R}\underline{\Gamma}_Y^\dagger \circ (^\dagger Y\setminus X) \circ \mathbf{D}_\mathfrak{P}. \]
where $\mathbf{D}_\mathfrak{P}$ is the dual functor as defined in \cite[\S4]{Ber02}. Similarly, 
\[ \mathcal{M}\widetilde{\otimes}_\mathbb{X} \mathcal{N} := \mathcal{M} \otimes^{\mathbf{L},\dagger}_{\mathcal{O}_{\mathfrak{P},\Q}} \mathcal{N}[-\dim\mathfrak{P}], \]
where $\otimes^{\mathbf{L},\dagger}_{\mathcal{O}_{\mathfrak{P},\Q}}$ is the tensor product defined in \cite{Ber02}.\footnote{The reason for using the notation $\widetilde{\otimes}$ is that the tensor product defined here is strictly speaking analogous to the `twisted' tensor product $(\mathcal{F},\mathcal{G})\mapsto \mathbf{D}(\mathbf{D}(\mathcal{F})\otimes^\mathbf{L}_{\Q_\ell} \mathbf{D}(\mathcal{G}))$ for constructible $\Q_\ell$ sheaves, where $\mathbf{D}$ is the Verdier dual functor.} The resulting functors only depend on $\mathbb{X}$ up to canonical isomorphism \cite[\S1.1.6]{AC18a}. If $u:\mathbb{X}'\rightarrow \mathbb{X}$ is a morphism of couples then there are functors
\[ u^!,u^+: D^b_{\mathrm{hol},F}(\mathbb{X}/K)\rightarrow D^b_{\mathrm{hol},F}(\mathbb{X}'/K), \]
and if $u$ is proper there are functors
\[ u_!,u_+: D^b_{\mathrm{hol},F}(\mathbb{X}'/K) \rightarrow D^b_{\mathrm{hol},F}(\mathbb{X}/K). \]
These are defined as follows: choose a morphism
\[ \tilde{u}: (X',Y',\mathfrak{P}',\mathfrak{Q}') \rightarrow (X,Y,\mathfrak{P},\mathfrak{Q}) \]
of l.p. frames extending $u$, and set
\begin{align*} u^! : = \mathbf{R}\underline{\Gamma}_{Y'}^\dagger \circ (^\dagger Y'\setminus X') \circ \tilde{u}^!,\;\;&\;\; u^+ = \mathbf{D}_{\mathbb{X}'} \circ  u^! \circ  \mathbf{D}_{\mathbb{X}}\\
u_+=\tilde{u}_+ \;\;&\;\; u_! = \mathbf{D}_{\mathbb{X}} \circ  u_+ \circ  \mathbf{D}_{\mathbb{X}'},
\end{align*}
where $\bar{u}_+$ and $\bar{u}^!$ are the pushforward and pullback functors defined in \cite[\S4]{Ber02}. All these functors commute with Frobenius pullback \cite[\S4]{Ber02}. Both $(u^+,u_+)$ and $(u_!,u^!)$ are adjoint pairs \cite[Lemma 1.1.10]{AC18a}, and if
\[\xymatrix{ \mathbb{Y}' \ar[r]^{a'}\ar[d]_{u'} & \mathbb{X}'\ar[d]^u \\ \mathbb{Y} \ar[r]^a & \mathbb{X} } \]
is a Cartesian morphism of couples, with $u$ proper, then by \cite[Lemma 1.3.10]{AC18a} there is a natural isomorphism
\[ a^!u_+ \cong u'_+a'^! \]
of functors
\[ D^b_{\mathrm{hol},F}(\mathbb{Y}'/K) \rightarrow  D^b_{\mathrm{hol},F}(\mathbb{Y}/K) .\]
The triangulated category $D^b_{\mathrm{hol},F}(\mathbb{X}/K)$ admits a `holonomic' $t$-structure \cite[\S1.2]{AC18a}, whose heart we will denote by $\mathrm{Hol}_F(\mathbb{X}/K)$. The duality functor $\mathbf{D}_\mathbb{X}$ is exact with respect to this $t$-structure \cite[Proposition 1.3.1]{AC18a}, and hence induces an anti-equivalence
\[ \mathbf{D}_\mathbb{X}:\mathrm{Hol}_F(\mathbb{X}/K)^\mathrm{op} \isomto \mathrm{Hol}_F(\mathbb{X}/K). \]
When $\mathbb{X}$ is \emph{smooth}, there exists a fully faithful functor
\[ \mathrm{sp}_+:\mathrm{Isoc}_F(\mathbb{X}/K) \rightarrow \mathrm{Hol}_F(\mathbb{X}/K) \]
constructed in \cite{Car12} which is compatible with duality in the sense that there are isomorphisms
\[ \mathrm{sp}_+E^\vee \cong  \mathbf{D}_\mathbb{X} \mathrm{sp}_+E, \]
natural in $E$.

If $\mathbb{X}=(X,\overline{X})$ is a couple proper over $k$, then $D^b_{\mathrm{hol},F}(\mathbb{X}/K)$ and $\mathrm{Hol}_F(\mathbb{X}/K)$ only depend on $X$ up to canonical equivalence \cite[\S1.3.14]{AC18a}. In this case we write $D^{b,\dagger}_{\mathrm{hol},F}(X/K)$ and $\mathrm{Hol}^\dagger_F(X/K)$ respectively. For $\mathbb{X}=(X,X)$ then we write $D^{b}_{\mathrm{hol},F}(\mathbb{X}/K)= D^{b}_{\mathrm{hol},F}(X/K)$ and $\mathrm{Hol}_F(\mathbb{X}/K) = \mathrm{Hol}_F(X/K)$. These categories correspond to overconvergent and convergent holonomic modules on $X/K$ respectively, and there are obvious forgetful functors
\begin{align*}  D^{b,\dagger}_{\mathrm{hol},F}(X/K) \rightarrow D^{b}_{\mathrm{hol},F}(X/K) \;\;\;\;&\text{ and }\;\;\;\;\mathrm{Hol}^\dagger_F(X/K) \rightarrow \mathrm{Hol}_F(X/K) \\
\mathcal{M} &\mapsto \hat{\mathcal{M}}.
\end{align*}
The above defined duality and tensor product functors induce functors $\mathbf{D}_X$ and $\-\widetilde{\otimes }_X$ on $D^b_{\mathrm{hol},F}(X/K)$ and $D^{b,\dagger}_{\mathrm{hol},F}(X/K)$, such that $\mathbf{D}_X$ is exact for the holonomic $t$-structures. 
If $u:X'\rightarrow X$ is a morphism of varieties, we therefore have functors
\[ u^!,u^+: D^{b,(\dagger)}_{\mathrm{hol},F}(X/K)\rightarrow D^{b,(\dagger)}_{\mathrm{hol},F}(X'/K) \]
and
\[ u_!,u_+: D^{b,\dagger}_{\mathrm{hol},F}(X'/K)\rightarrow D^{b,\dagger}_{\mathrm{hol},F}(X/K) ,\]
as well as 
\[ u_!,u_+: D^{b}_{\mathrm{hol},F}(X'/K)\rightarrow D^{b}_{\mathrm{hol},F}(X/K)  \]
whenever $u$ is proper. These have exactly the same properties as in the case of couples, and there is an analogous base change result. If $X$ is smooth, then we have full subcategories
\[ D^{b,(\dagger)}_{\mathrm{isoc},F}(X/K) \subset D^{b,(\dagger)}_{\mathrm{hol},F}(X/K)
\]
consisting of objects whose cohomology sheaves are in the essential image of 
\[ \mathrm{sp}_+: \mathrm{Isoc}_F^{(\dagger)}(X/K) \rightarrow \mathrm{Hol}_F^{(\dagger)}(X/K).\]
At least in the overconvergent case, it is explained how to extend all these definitions to the not-necessarily-realisable case in \cite{Abe18a}. That is, Abe shows that the category $\mathrm{Hol}^\dagger_F(X/K)$ is of a Zariski-local nature for realisable varieties, and thus for a general variety we may define $\mathrm{Hol}^\dagger_F(X/K)$ by taking an open affine cover and gluing (affine varieties being realisable). We can then define
\[ D^{b,\dagger}_{\mathrm{hol},F}(X/K) := D^b(\mathrm{Hol}^\dagger_F(X/K)),\]
which is justified by the main result of \cite{AC18}, showing that this recovers the previous definition in the realisable case. Abe explains in \cite[\S2.3]{Abe18a} how to define the 6 functors $u^+,u_+,u^!,u_!,\widetilde{\otimes}$ and $\mathbf{D}$ in the non-realisable case, and shows that all the same properties hold. Again, if $X$ is smooth then we have the fully faithful functor
\[\mathrm{sp}_+: \mathrm{Isoc}_F^{\dagger}(X/K) \rightarrow \mathrm{Hol}_F^{\dagger}(X/K)  \]
and the corresponding full subcategory  $D^{b,\dagger}_{\mathrm{isoc},F}(X/K) \subset D^{b,\dagger}_{\mathrm{hol},F}(X/K)$.

If $X$ is not assumed to be smooth, then we can still consider overconvergent isocrystals as holonomic complexes on $X$ using the approach of \cite{Abe19}. Indeed, in \cite[\S1.3]{Abe18a} Abe defines another $t$-structure on $D^{b,\dagger}_{\mathrm{hol},F}(X/K)$, called the constructible $t$-structure. The heart of this $t$-structure is denoted $\mathrm{Cons}(X/K)$, and the corresponding cohomology objects by ${}^c\hspace{-1mm}\mathcal{H}^i$. The pullback functor $u^+$ is $t$-exact for the constructible $t$-structure \cite[Lemma 1.3.4]{Abe18a}. Abe constructs in \cite[\S3]{Abe19} a fully faithful functor
\[ \rho: \mathrm{Isoc}^\dagger_F(X/K) \rightarrow \mathrm{Cons}(X/K) \] 
such that:
\begin{itemize}
\item$u^+\rho(E) \cong \rho(u^*E)$ for any morphism $u:X'\rightarrow X$;
\item $\rho(E) \cong \mathrm{sp}_+E[-\dim X]$ whenever $X$ is smooth.
\end{itemize}
We can therefore define
\[ D^{b,\dagger}_{\mathrm{isoc},F}(X/K) \subset D^{b,\dagger}_{\mathrm{hol},F}(X/K) \]
to be the full subcategory whose \emph{constructible} cohomology objects are in the essential image of $\rho$. This coincides with the previous definition when $X$ is smooth, in which case
\[D^{b,\dagger}_{\mathrm{isoc},F}(X/K) \subset D^{b,\dagger}_{\mathrm{hol},F}(X/K)  \]
is stable under $\mathbf{D}_X$, however, this stability does not hold in general. It will be helpful to isolate the following result, which is simply a restatement of various results of Caro and Caro--Tsuzuki.

\begin{lemma} \label{lemma: isoc conv} Let $X/k$ be a smooth, realisable variety, and $\mathcal{M}\in D^{b,\dagger}_{\mathrm{hol},F}(X/K)$. Then $\mathcal{M}\in D^{b,\dagger}_{\mathrm{isoc},F}(X/K)$ if and only if $\hat{\mathcal{M}}\in D^b_{\mathrm{isoc},F}(X/K)$. 
\end{lemma}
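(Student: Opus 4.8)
The plan is to reduce at once to the case where $\mathcal{M}$ is concentrated in a single degree, and then to invoke the intrinsic characterisation of (overconvergent, resp.\ convergent) $F$-isocrystals among $F$-holonomic modules on a smooth variety. Since the forgetful functor $\mathcal{M}\mapsto\hat{\mathcal{M}}$ is $t$-exact for the holonomic $t$-structures — on each cohomology sheaf it is simply the restriction-of-overconvergence functor $\mathrm{Hol}_F((X,\overline{X})/K)\to\mathrm{Hol}_F((X,X)/K)$, see \cite{AC13a} — we have $\widehat{\mathcal{H}^i(\mathcal{M})}\cong\mathcal{H}^i(\hat{\mathcal{M}})$ for all $i$, so it suffices to treat $\mathcal{M}\in\mathrm{Hol}^\dagger_F(X/K)$. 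The ``only if'' direction is then formal: $\mathrm{sp}_+$ is compatible with the two forgetful functors, so if $\mathcal{M}\cong\mathrm{sp}_+E$ with $E\in\mathrm{Isoc}^\dagger_F(X/K)$ then $\hat{\mathcal{M}}\cong\mathrm{sp}_+\hat{E}$, where $\hat{E}$ is $E$ with its overconvergence forgotten; and $\hat{E}\in\mathrm{Isoc}_F(X/K)$ because the restriction functor $\mathrm{Isoc}^\dagger(X/K)\to\mathrm{Isoc}(X/K)$ is exact and faithful and carries Frobenius structures to Frobenius structures, hence sends iterated extensions of subquotients of Frobenius objects to the same.

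For the ``if'' direction, fix a frame $(X,Y,\mathfrak{P})$ realising $X$ with $\mathfrak{P}$ smooth and proper over $\mathcal{V}$, and set $Z=Y\setminus X$. By the theorems of Caro and Caro--Tsuzuki on the overholonomicity of overconvergent $F$-isocrystals on smooth varieties, $\mathrm{sp}_+$ identifies $\mathrm{Isoc}^\dagger_F(X/K)$ with the full subcategory of $\mathrm{Hol}^\dagger_F(X/K)$ consisting of the \emph{$\mathcal{O}$-coherent} objects, i.e.\ those whose underlying $\mathcal{D}^\dagger_{\mathfrak{P}}(^\dagger Z)$-module is coherent over $\mathcal{O}_{\mathfrak{P}}(^\dagger Z)$; likewise $\mathrm{sp}_+$ identifies $\mathrm{Isoc}_F(X/K)$ with the $\mathcal{O}$-coherent objects of $\mathrm{Hol}_F(X/K)$. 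The crucial point is that, for $\mathcal{M}\in\mathrm{Hol}^\dagger_F(X/K)$, this coherence property is detected by $\hat{\mathcal{M}}$. One implication is clear: $\mathcal{M}=(^\dagger Z)\mathcal{M}$ already, so $\hat{\mathcal{M}}$ is just the restriction of $\mathcal{M}$ to the tube of $X$, and restriction of an $\mathcal{O}_{\mathfrak{P}}(^\dagger Z)$-coherent module is $\mathcal{O}$-coherent. For the converse, a holonomic $F$-$\mathcal{D}^\dagger$-module $\mathcal{M}$ on the smooth $X$ whose restriction to the tube of $X$ is $\mathcal{O}$-coherent is itself $\mathcal{O}$-coherent — this is exactly the relevant case of Caro's results recognising overconvergent isocrystals among $F$-holonomic modules (the $p$-adic analogue of the fact that a holonomic $\mathcal{D}_X(\ast Z)$-module which is $\mathcal{O}$-coherent on $X\setminus Z$ is a meromorphic connection along $Z$). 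Granting this, if $\hat{\mathcal{M}}\cong\mathrm{sp}_+F$ with $F\in\mathrm{Isoc}_F(X/K)$ then $\hat{\mathcal{M}}$ is $\mathcal{O}$-coherent, hence so is $\mathcal{M}$, hence $\mathcal{M}\cong\mathrm{sp}_+E$ for some overconvergent isocrystal $E$ on $X$. Finally $E\in\mathrm{Isoc}^\dagger_F(X/K)$: since $\mathrm{sp}_+$ is fully faithful and exact it matches the irreducible constituents of $E$ with those of $\mathcal{M}$ and respects Frobenius structures, so the hypothesis $\mathcal{M}\in\mathrm{Hol}^\dagger_F(X/K)$ forces each constituent of $E$ to admit a Frobenius structure.

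I expect the main obstacle to be the precise assembly from the literature of the two ingredients above — the $\mathcal{O}$-coherence characterisation of $\mathrm{sp}_+$ inside $\mathrm{Hol}^\dagger_F$ and $\mathrm{Hol}_F$, and the statement that $\mathcal{O}$-coherence of $\mathcal{M}$ is equivalent to that of $\hat{\mathcal{M}}$ — since it is here, and only here, that holonomicity and the Frobenius structure are genuinely used: they are what prevent a spurious piece of $\mathcal{M}$ with extra support along $Z$, or a failure of overconvergence at the boundary, from going undetected by $\hat{\mathcal{M}}$. Everything else (the $t$-exactness of $\hat{(-)}$, its compatibility with $\mathrm{sp}_+$, and the transport of the thick-subcategory ``$F$-able'' structure through the fully faithful exact functor $\mathrm{sp}_+$) is formal bookkeeping in the six-operations formalism recalled above.
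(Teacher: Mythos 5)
Your proposal follows essentially the same route as the paper: reduce to a single cohomology sheaf, characterise the essential image of $\mathrm{sp}_+$ inside the holonomic category, and observe that only the condition on $\hat{\mathcal{M}}$ is non-automatic. At the step you flag as ``the main obstacle,'' the paper's argument makes the assembly precise: it does not invoke a ready-made $\mathcal{O}$-coherence criterion, but unpacks Caro's notion of \emph{overcoherent isocrystal} from \cite[Th\'eor\`eme 6.1.11]{Car11} and \cite[D\'efinition 6.2.1]{Car06a} into three conditions (overcoherence of $\mathcal{M}$, overcoherence of its dual, and the restriction to the tube being $\mathrm{sp}_+$ of a convergent isocrystal) and then uses \cite[Theorem 2.3.17]{CT12} --- which is exactly where the $F$-able and overholonomicity hypotheses are spent --- to show the first two conditions hold automatically, so that the criterion collapses to the statement about $\hat{\mathcal{M}}$.
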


\begin{proof} The question is local on $X$, which we may assume to be affine. By further localising if necessary, we may assume that there exists an immersion $X\hookrightarrow \mathfrak{P}$, with $\mathfrak{P}$ smooth and proper over $\mathcal{V}$, such that there exists a divisor $T$ of $P:=\mathfrak{P}_k$ with $X$ is closed in $P\setminus T$. Let $\mathfrak{U} \subset \mathfrak{P}$ be an open formal subscheme such that $X$ is closed inside $\mathfrak{U}$. In this case, the holonomic $t$-structure on
\[  D^{b,\dagger}_{\mathrm{isoc},F}(X/K) \subset D^{b}_{\mathrm{coh}}(\mathcal{D}^\dagger_{\mathfrak{P},\Q})\]
is the restriction of the obvious $t$-structure on $D^{b}_{\mathrm{coh}}(\mathcal{D}^\dagger_{\mathfrak{P},\Q})$ by \cite[Proposition 1.3.13]{AC18a}. In particular, the $\mathscr{D}^\dagger_{\mathfrak{P},\Q}$-module $\mathcal{H}^i(\mathcal{M})$ is overholonomic for all $i$; we may therefore replace $\mathcal{M}$ by $\mathcal{H}^i(\mathcal{M})$, and thus assume that $\mathcal{M}$ is an overholonomic $\mathcal{D}^\dagger_{\mathfrak{P},\Q}$-module. By \cite[Lemme 1.2.13]{Car15a}, $\mathcal{M}$ arises by restriction of scalars from a $\mathcal{D}^\dagger_{\mathfrak{P},\Q}(^\dagger T)$-module, which we will denote by the same letter $\mathscr{M}$. Let $\mathbf{D}_{\mathfrak{P},T}$ denote the dual functor for $\mathcal{D}^\dagger_{\mathfrak{P},\Q}(^\dagger T)$-modules in the sense of \cite{Vir00}.

By \cite[Th\'eor\`eme 6.1.11]{Car11} $\mathcal{M}$ is in the essential image of $\mathrm{sp}_+$ if and only if it is an `overcoherent isocrystal' in the sense of \cite[D\'efinition 6.2.1]{Car06a}, that is, if both $\mathcal{M}$ and $\mathbf{D}_{\mathfrak{P},T}(\mathcal{M})\cong (T^\dagger)\mathbf{D}_{\mathfrak{P}}(\mathcal{M})$ are overcoherent as $\mathcal{D}^\dagger_{\mathfrak{P},\Q}(^\dagger T)$-modules, and if the restriction of $\mathcal{M}$ to $\mathfrak{U}$ is in the essential image of $\mathrm{sp}_+$. To prove the lemma, then, we need to explain why the first two conditions are automatically satisfied.

In other words, we want to show that if $\mathcal{M}$ is a coherent $\mathcal{D}^\dagger_{\mathfrak{P},\Q}(^\dagger T)$-module, overholonomic as a $\mathcal{D}^\dagger_{\mathfrak{P},\Q}$-module, then both $\mathcal{M}$ and $(T^\dagger)\mathbf{D}_{\mathfrak{P}}(\mathcal{M})$ are overcoherent as $\mathcal{D}^\dagger_{\mathfrak{P},\Q}(^\dagger T)$-modules. However, these are both overholonomic as $\mathcal{D}^\dagger_{\mathfrak{P},\Q}$-modules, and since we are dealing with objects admitting Frobenius structures (or rather, the thick abelian subcategory generated by objects admitting a Frobenius structure), we may therefore appeal to \cite[Theorem 2.3.17]{CT12} to conclude.
\end{proof}

In the definition of the functor
\[ u_+: D^b_{\mathrm{hol},F}(\mathbb{X}'/K)\rightarrow D^b_{\mathrm{hol},F}(\mathbb{X}/K) \]
coming from a morphism of pairs $u:\mathbb{X}'\rightarrow \mathbb{X}$, we had to choose a morphism of l.p. frames $(X',Y',\mathfrak{P}',\mathfrak{Q}') \rightarrow (X,Y,\mathfrak{P},\mathfrak{Q})$ extending $u$. Note that neither the formal schemes $\mathfrak{Q}$ and $\mathfrak{Q}'$, nor the morphism between them, play any role in the definition of either the categories or the functors involved, however, one still needs to know that they exist. It will be important for us to shows that in certain situations we can completely ignore this technicality, and work simply with immersions of couples into smooth formal $\mathcal{V}$-schemes.

Our setup will be the following. We will take a base couple $\mathbb{S}=(S,S)$, with $S$ smooth and affine, and admitting a smooth, affine lift $\mathfrak{S}$ to a formal scheme over $\mathcal{V}$. We assume that we are given a smooth and projective morphism $\tilde{u}: \mathfrak{X}\rightarrow \mathfrak{S}$, and an open immersion
\[ U\hookrightarrow X:=\mathfrak{X}_k \]
of $k$-varieties. We let $\mathbb{U}=(U,X)$, and we assume that both $\mathbb{U}$ and $\mathbb{S}$ are realisable as couples. The proper morphism $\tilde u$ induces a functor
\[ \tilde{u}_+:D^b_{\mathrm{hol}}(\mathcal{D}^\dagger_{\mathfrak{X},\Q}) \rightarrow D^b_{\mathrm{hol}}(\mathcal{D}^\dagger_{\mathfrak{S},\Q})\]
between the categories of complexes of overholonomic $\mathcal{D}^\dagger$-modules on $\mathfrak{X}$ and $\mathfrak{S}$ respectively, as in \cite{Car09b}. We define
\[ D^b_{\mathrm{isoc}}(\mathcal{D}^\dagger_{\mathfrak{S},\Q}) \subset D^b_{\mathrm{hol}}(\mathcal{D}^\dagger_{\mathfrak{S},\Q}) \]
to be the subcategory whose cohomology sheaves are coherent as $\mathcal{O}_{\mathfrak{S},\Q}$-modules. Again, the following lemma is just a rephrasing of various results of Caro. 

\begin{lemma} \label{lemma: good lifts} There are fully faithful embeddings
\[ D^b_{\mathrm{hol},F}(\mathbb{U}/K)\hookrightarrow D^b_{\mathrm{hol}}(\mathcal{D}^\dagger_{\mathfrak{X},\Q})\;\;\;\;\text{ and }\;\;\;\;D^b_{\mathrm{hol},F}(\mathbb{S}/K)\hookrightarrow D^b_{\mathrm{hol}}(\mathcal{D}^\dagger_{\mathfrak{S},\Q})\]
such that the diagram
\[ \xymatrix{ D^b_{\mathrm{hol},F}(\mathbb{U}/K) \ar[r]\ar[d]_{u_+} &  D^b_{\mathrm{hol}}(\mathcal{D}^\dagger_{\mathfrak{X},\Q}) \ar[d]^{\tilde{u}_+}\\ 
D^b_{\mathrm{hol},F}(\mathbb{S}/K) \ar[r] &  D^b_{\mathrm{hol}}(\mathcal{D}^\dagger_{\mathfrak{S},\Q}) 
} \]
is $2$-commutative. Moreover, the square
\[ \xymatrix{ D^b_{\mathrm{isoc},F}(\mathbb{S}/K) \ar[r]\ar[d] &  D^b_{\mathrm{isoc}}(\mathcal{D}^\dagger_{\mathfrak{S},\Q}) \ar[d]\\ 
D^b_{\mathrm{hol},F}(\mathbb{S}/K) \ar[r] &  D^b_{\mathrm{hol}}(\mathcal{D}^\dagger_{\mathfrak{S},\Q}) 
} \]
is $2$-Cartesian.
\end{lemma}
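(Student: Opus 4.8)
Lemma \ref{lemma: good lifts} asserts two things: first, that the categories $D^b_{\mathrm{hol},F}(\mathbb{U}/K)$ and $D^b_{\mathrm{hol},F}(\mathbb{S}/K)$ — defined via arbitrary l.p. frames — embed fully faithfully into the categories of overholonomic complexes on the *specific* smooth formal schemes $\mathfrak{X}$ and $\mathfrak{S}$ we are handed, compatibly with $u_+$ versus $\tilde{u}_+$; and second, that being an "isocrystal complex" on $\mathbb{S}$ is detected by coherence over $\mathcal{O}_{\mathfrak{S},\mathbb{Q}}$ together with holonomicity, i.e. the lower square is $2$-Cartesian.

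The plan is to reduce everything to the definitions in \cite{AC13a} and known comparison results of Caro. For the embeddings: $\mathfrak{S}$ is smooth affine, and $\mathbb{S}=(S,S)$ with $S$ smooth affine; choosing a projective (hence proper) compactification $\mathfrak{S}\hookrightarrow\overline{\mathfrak{S}}$ over $\mathcal{V}$, and (after the usual blowup to ensure smoothness of the ambient proper space, using that everything is realisable) an l.p. frame $(S,\overline{S},\mathfrak{S},\overline{\mathfrak{S}})$, one gets by definition $D^b_{\mathrm{hol},F}(\mathbb{S}/K)\subset D^b_{\mathrm{hol},F}(\mathcal{D}^\dagger_{\mathfrak{S},\mathbb{Q}})$ as the subcategory of objects $\mathcal{M}$ with $\mathcal{M}\isomto\mathbf{R}\underline\Gamma_S^\dagger\mathcal{M}$ (here $Y=X=S$ so the condition $(^\dagger Y\setminus X)\mathcal{M}=\mathcal{M}$ is vacuous and $\mathbf{R}\underline\Gamma_S^\dagger$ is the identity since $S$ is closed in $\mathfrak{S}_k$). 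So the embedding is essentially tautological, and independence of the l.p. frame — already granted in the excerpt — is what makes it canonical. The same works for $\mathbb{U}=(U,X)$ with $X=\mathfrak{X}_k$: choose a proper compactification of $\mathfrak{X}$ over $\mathcal{V}$ dominating a proper compactification of $\mathfrak{S}$, giving a morphism of l.p. frames extending $u$, and then $D^b_{\mathrm{hol},F}(\mathbb{U}/K)\subset D^b_{\mathrm{hol},F}(\mathcal{D}^\dagger_{\mathfrak{X},\mathbb{Q}})$ is the subcategory cut out by $\mathcal{M}\isomto\mathbf{R}\underline\Gamma_X^\dagger\mathcal{M}$ (again vacuous, $X$ closed in $\mathfrak{X}_k$, and $Y=X$ so the second condition is trivial). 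The $2$-commutativity of the square is then immediate from the definition $u_+=\tilde u_+$ in the excerpt (where $\tilde u$ is the chosen morphism of l.p. frames, whose underlying morphism of smooth formal schemes we may take to be our given $\tilde u:\mathfrak{X}\to\mathfrak{S}$ — the compactifications $\overline{\mathfrak{S}},\overline{\mathfrak{X}}$ and the morphism between them play no role in the functor, as emphasised in the text).

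For the $2$-Cartesian claim: one direction is clear — an object of $D^b_{\mathrm{isoc},F}(\mathbb{S}/K)=D^{b}_{\mathrm{isoc},F}(S/K)$ has, by Lemma \ref{lemma: isoc conv} and the discussion preceding it, cohomology sheaves in the essential image of $\mathrm{sp}_+$, hence coherent over $\mathcal{O}_{\mathfrak{S},\mathbb{Q}}$, so it lands in $D^b_{\mathrm{isoc}}(\mathcal{D}^\dagger_{\mathfrak{S},\mathbb{Q}})$. The content is the converse: given $\mathcal{M}\in D^b_{\mathrm{hol},F}(\mathbb{S}/K)$ whose image in $D^b_{\mathrm{hol}}(\mathcal{D}^\dagger_{\mathfrak{S},\mathbb{Q}})$ has $\mathcal{O}_{\mathfrak{S},\mathbb{Q}}$-coherent cohomology, one must show $\mathcal{M}\in D^b_{\mathrm{isoc},F}(\mathbb{S}/K)$. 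Reducing to a single holonomic module $\mathcal{M}=\mathcal{H}^i(\mathcal{M})$ (the $t$-structure being compatible), one knows $\mathcal{M}$ is an overholonomic $\mathcal{D}^\dagger_{\mathfrak{S},\mathbb{Q}}$-module, admitting a Frobenius structure up to the thick abelian subcategory, whose underlying $\mathcal{O}_{\mathfrak{S},\mathbb{Q}}$-module is coherent. One then invokes exactly the mechanism of the proof of Lemma \ref{lemma: isoc conv}: by \cite[Th\'eor\`eme 6.1.11]{Car11} such a module is in the essential image of $\mathrm{sp}_+$ iff it is an overcoherent isocrystal, and the two overcoherence conditions (on $\mathcal{M}$ and on its dual, now with $T=\emptyset$ since $S$ is smooth affine and closed in $\mathfrak{S}_k$) follow from \cite[Theorem 2.3.17]{CT12} as in that proof, while the restriction-to-an-open condition is the coherence hypothesis. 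Hence $\mathcal{M}\in\mathrm{Hol}_F(S/K)$ lies in the essential image of $\mathrm{sp}_+$, i.e. $\mathcal{M}\in D^b_{\mathrm{isoc},F}(S/K)$.

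**Main obstacle.** The genuinely delicate point is bookkeeping around the two $t$-structures and the two possible meanings of "isocrystal complex" — the constructible-heart definition via $\rho$ versus the $\mathrm{sp}_+$ definition — and checking they agree here ($S$ is smooth, so they do, by the bullet point $\rho(E)\cong\mathrm{sp}_+E[-\dim X]$), together with verifying that the embedding of $D^b_{\mathrm{hol},F}(\mathbb{S}/K)$ really does identify the holonomic $t$-structure on the source with (the restriction of) the natural $t$-structure on $D^b_{\mathrm{coh}}(\mathcal{D}^\dagger_{\mathfrak{S},\mathbb{Q}})$ used to reduce to a single module — this is where one leans on the affineness of $\mathfrak{S}$ and $S$ closed in $\mathfrak{S}_k$, exactly as in the proof of Lemma \ref{lemma: isoc conv}. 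Beyond that, the argument is a matter of assembling the cited results of Caro and Caro--Tsuzuki; no new ideas are required.
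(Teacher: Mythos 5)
There is a genuine gap at the very start of your argument. You want to take the given smooth affine $\mathfrak{S}$ and the given smooth projective-over-$\mathfrak{S}$ scheme $\mathfrak{X}$ directly as the ``$\mathfrak{P}$'' in l.p.\ frames $(S,S,\mathfrak{S},\overline{\mathfrak{S}})$, $(U,X,\mathfrak{X},\overline{\mathfrak{X}})$ with $\overline{\mathfrak{S}}$, $\overline{\mathfrak{X}}$ smooth \emph{and} proper over $\mathcal{V}$. You invoke ``the usual blowup to ensure smoothness of the ambient proper space'' and ``realisability'' for this, but neither works: resolution of singularities is not available over $\mathcal{V}$ (this is not the equicharacteristic-zero situation), and realisability of $\mathbb{S}$, $\mathbb{U}$ only guarantees that \emph{some} l.p.\ frame with a smooth proper $\mathfrak{Q}$ exists — it in no way promises one whose $\mathfrak{P}$ equals the given $\mathfrak{S}$ or $\mathfrak{X}$. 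Since the categories $D^b_{\mathrm{hol},F}(\mathbb{S}/K)$ etc.\ are a priori defined inside $D^b_{\mathrm{hol},F}(\mathcal{D}^\dagger_{\mathfrak{P},\Q})$ for an l.p.-frame $\mathfrak{P}$ living in a smooth proper ambient $\mathfrak{Q}$, one cannot simply declare them subcategories of $D^b_{\mathrm{hol}}(\mathcal{D}^\dagger_{\mathfrak{S},\Q})$ by fiat. The paper circumvents this by embedding $\mathfrak{S}$ as a \emph{closed} formal subscheme of $\widehat{\A}^N_\mathcal{V}$ (using affineness of $\mathfrak{S}$), so that the l.p.\ frame is $(S,S,\widehat{\A}^N_\mathcal{V},\widehat{\P}^N_\mathcal{V})$ with $\widehat{\P}^N_\mathcal{V}$ trivially smooth and proper; the embedding $D^b_{\mathrm{hol},F}(\mathbb{S}/K)\hookrightarrow D^b_{\mathrm{hol}}(\mathcal{D}^\dagger_{\mathfrak{S},\Q})$ then comes from Caro's Kashiwara-type theorem for the closed immersion $i:\mathfrak{S}\hookrightarrow\widehat{\A}^N_\mathcal{V}$, and the compatibility with $u_+$ vs.\ $\tilde u_+$ is a nontrivial check using his commutation theorem for $i_+$ with pushforward. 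This Kashiwara step, and the attendant compatibility check, is the real content of the first half of the lemma and it is entirely absent from your proposal.

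Your treatment of the $2$-Cartesian square also has a soft spot, though less serious. You route through the overcoherent-isocrystal characterisation of Caro's Th\'eor\`eme 6.1.11 with $T=\emptyset$, and then assert ``the restriction-to-an-open condition is the coherence hypothesis.'' But Caro's restriction condition says the restriction lies in the essential image of the convergent $\mathrm{sp}_+$, whereas the hypothesis is $\mathcal{O}_{\mathfrak{S},\Q}$-coherence; identifying these two is precisely Berthelot's equivalence between convergent isocrystals on $S$ and $\mathcal{O}_{\mathfrak{S},\Q}$-coherent $\mathcal{D}^\dagger_{\mathfrak{S},\Q}$-modules, which is exactly what you are trying to prove in the degenerate $T=\emptyset$ case, so as written the argument is circular. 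The paper instead appeals directly to Berthelot's equivalence as the input and builds $\mathrm{sp}_+$ out of it; this sidesteps the overcoherent-isocrystal machinery entirely and is cleaner.
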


\begin{proof}
Since $\mathfrak{S}$ is affine, there exists an immersion
\[ \mathfrak{S} \overset{i}{\hookrightarrow} \widehat{\A}^N_\mathcal{V}\hookrightarrow \widehat{\P}^N_\mathcal{V} \]
for some $N$. Similarly, since $\mathfrak{X}\rightarrow \mathfrak{S}$ is projective, we can extend this to a commutative diagram
\[ \xymatrix{  \mathfrak{X} \ar@{^{(}->}[r] \ar[d]_{\tilde{u}} & \widehat{\P}^N_\mathcal{V} \times_\mathcal{V} \widehat{\P}^M_\mathcal{V}  \ar[d]^v \\ \mathfrak{S} \ar@{^{(}->}[r] &  \widehat{\P}^N_\mathcal{V} }  \]
where $v$ is the first projection. Then $(S,S,\widehat{\A}^N_\mathcal{V},\widehat{\P}^N_\mathcal{V})$ is an l.p. frame, and setting $\mathfrak{Y} := v^{-1}(\widehat{\A}^N_\mathcal{V})$ gives a closed immersion $i':\mathfrak{X}\hookrightarrow \mathfrak{Y}$, an l.p. frame $(U,X,\mathfrak{Y},\widehat{\P}^N_\mathcal{V} \times_\mathcal{V} \widehat{\P}^M_\mathcal{V})$, and a morphism of l.p. frames
\[ v: (U,X,\mathfrak{Y},\widehat{\P}^N_\mathcal{V} \times_\mathcal{V} \widehat{\P}^M_\mathcal{V}) \rightarrow (S,S,\widehat{\A}^N_\mathcal{V},\widehat{\P}^N_\mathcal{V}) \]
extending $\mathbb{U}\rightarrow \mathbb{S}$. By definition, $D^b_{\mathrm{hol},F}(\mathbb{S}/K)$ is a full subcategory of $D^b_\mathrm{hol}(\mathcal{D}^\dagger_{\widehat{\A}^N_\mathcal{V},\Q})$, consisting of objects supported on $S$. Hence by \cite[Th\'eor\`eme 2.11]{Car09b} it is contained in the essential image of the fully faithful functor
\[ i_+: D^b_\mathrm{hol}(\mathcal{D}^\dagger_{\mathfrak{S},\Q}) \rightarrow D^b_\mathrm{hol}(\mathcal{D}^\dagger_{\widehat{\A}^N_\mathcal{V},\Q}),\]
in other words we can view it as a full subcategory of $D^b_\mathrm{hol}(\mathcal{D}^\dagger_{\mathfrak{S},\Q})$. An entirely similar argument applies for $D^b_{\mathrm{hol},F}(\mathbb{U}/K)$. For the claim concerning the pushforward functor, it suffices to verify that the diagram
\[ \xymatrix{ D^b_\mathrm{hol}(\mathcal{D}^\dagger_{\mathfrak{X},\Q}) \ar[d]_{\tilde{u}_+} \ar[r]^{i'_+} & D^b_\mathrm{hol}(\mathcal{D}^\dagger_{\mathfrak{Y},\Q}) \ar[d]^{\mathbf{R}\underline{\Gamma}_S^\dagger \circ v_+}\\ D^b_\mathrm{hol}(\mathcal{D}^\dagger_{\mathfrak{S},\Q}) \ar[r]^{i_+} & D^b_\mathrm{hol}(\mathcal{D}^\dagger_{\widehat{\A}^N_\mathcal{V},\Q}) }  \]
commutes up to natural isomorphism, which follows for example from \cite[Th\'eor\`eme 3.8]{Car09b}. The final claim simply follows from the construction of
\[\mathrm{sp}_+: \mathrm{Isoc}(S/K) \rightarrow D^b_\mathrm{coh}(\mathcal{D}^\dagger_{\widehat{\A}^N_\mathcal{V},\Q}) \]
as the composite
\[\mathrm{Isoc}(S/K) \overset{\mathrm{sp}_*}{\longrightarrow}  D^b_\mathrm{coh}(\mathcal{D}^\dagger_{\mathfrak{S},\Q}) \overset{i_+}{\longrightarrow}  D^b_\mathrm{coh}(\mathcal{D}^\dagger_{\widehat{\A}^N_\mathcal{V},\Q}) ,\]
where the first functor is Berthelot's equivalence \cite[Proposition 4.1.4]{Ber96a} between convergent isocrysals on $S/K$ and $\mathcal{O}_{\mathfrak{S},\Q}$-coherent $\mathcal{D}^\dagger_{\mathfrak{S},\Q}$-modules.
\end{proof}

\section{Irregularity of \texorpdfstring{$p$}{p}-adic differential equations} \label{sec: irreg}

The $p$-adic analogue of the Swan conductor is the \emph{irregularity} of a $p$-adic differential equation, as defined by Christol and Mebkhout. In this section we will recall the definition and basic properties of this irregularity. We will continue to allow $K$ to be any complete, normed field of mixed characteristic $(0,p)$, unless specifically stated otherwise. Let $\mathcal{R}^u_K$ denote the Robba ring over $K$, with co-ordinate $u$, say, and let $M$ be a projective $\nabla$-module over $\mathcal{R}^u_K$. Then for all $\rho<1$ sufficiently close to $1$, we can base change $M$ to obtain a (necessarily free) $\nabla$-module $M_\rho$ over the completion $K(u)_\rho$ of $K(u)$ for the $\rho$-Gauss norm. Define the radius of convergence
\[ R(M_\rho):=\min\left\{ \rho, \mathrm{lim\,inf}_{k\rightarrow\infty} \norm{G_k}_\rho^{-1/k}   \right\} ,\]
where $G_k$ is the matrix of the operator $\frac{1}{k!}\frac{d^k}{du^k}$ acting on $M_\rho$.

\begin{definition} We say that $M$ is overconvergent if $\lim_{\rho\rightarrow 1} R(M_\rho)=1$. 
\end{definition}

\begin{remark} The more standard terminology for such a $\nabla$-module is `solvable', however, we will also want to work with $\nabla$-modules over relative Robba rings $\mathcal{R}^u_A$ arising from overconvergent isocrystals. Thus we have chosen to use a more uniform terminology.
\end{remark}

The $\nabla$-module $M$ is said to have uniform break $b$ if for all $\rho$ sufficiently closed to $1$, and all sub-quotients $N$ of $M_\rho$, $R(N)=\rho^{b+1}$.

\begin{theorem}\cite[Corollaire 2.4-1]{CM01} \label{theo: break 1} For any projective, overconvergent $\nabla$-module over $\mathcal{R}^u_K$, there exists a unique decomposition
\[ M=\bigoplus_{b\geq0} M_b \]
of $\nabla$-modules, called the break decomposition, such that each $M_b$ has uniform break $b$.
\end{theorem}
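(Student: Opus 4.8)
The plan is to follow the strategy of Christol--Mebkhout: first decompose $M$ after base change to each Gauss-norm completion $K_\rho$, using the spectral decomposition of the connection operator, and then show that these local decompositions descend to a decomposition of $M$ over $\mathcal{R}^u_K$. Uniqueness is the easy part and should be disposed of first.

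\emph{Uniqueness.} Given two such decompositions $M = \bigoplus_b M_b = \bigoplus_b M'_b$, for $b\neq b'$ the image of the composite $M_b \hookrightarrow M \twoheadrightarrow M'_{b'}$ is simultaneously a quotient of $M_b$ and a sub-$\nabla$-module of $M'_{b'}$. After base change to $K_\rho$ (for $\rho<1$ close to $1$) every subquotient $N$ of this image would then have to satisfy both $R(N)=\rho^{b+1}$ and $R(N)=\rho^{b'+1}$, which is impossible unless the image is zero. A short bookkeeping with the resulting projection/inclusion maps then forces the two decompositions to coincide.

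\emph{Existence.} Fix $\rho<1$ close to $1$ and base change to the complete differential field $K_\rho$. By the spectral decomposition theorem for $p$-adic differential operators (Dwork--Robba, Christol, Christol--Mebkhout), once all the radii $R(N)$ of subquotients $N$ of $M_\rho$ lie above the threshold $\rho\,\norm{p}^{1/(p-1)}$, the module $M_\rho$ splits canonically and horizontally as $\bigoplus_i M_\rho^{(i)}$, where $M_\rho^{(i)}$ is the maximal piece all of whose subquotients share a single common value $r_i(\rho)$ of $R(-)$. To get into this range one uses that $M$ is overconvergent, so $R(M_\rho)\to 1$, together with the structure of the function $\rho\mapsto R(M_\rho)$ near $\rho=1$ (log-affineness, built from the Frobenius antecedent theorem): this shows that for $\rho$ sufficiently close to $1$ each $r_i(\rho)$ is of the form $\rho^{b_i+1}$ for a fixed rational $b_i\geq 0$, and in particular lies above the threshold. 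Large breaks should be handled by first passing to Frobenius antecedents/pushforwards, which rescale all the $b_i$ and the radii in a controlled way, decomposing there, and pushing the decomposition back down.

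It then remains to glue the pieces $M_\rho^{(i)}$, for varying $\rho$, into a genuine direct-sum decomposition of $M$ over $\mathcal{R}^u_K$. The idempotents in $\mathrm{End}_{K_\rho}(M_\rho)$ cutting out the $M_\rho^{(i)}$ are horizontal and of bounded norm, hence compatible under the transition maps between the various $K_\rho$; a convergence argument promotes them to horizontal idempotents lying in $\mathrm{End}_{\mathcal{R}^u_K}(M)$, and the corresponding summand $M_b$ has uniform break $b$ by construction. \emph{The main obstacle} is precisely this interplay between the threshold and the descent: one must control how the radii $R(N)$ vary with $\rho$ near $\rho=1$, invoke Frobenius antecedents to bring them into the range where the local spectral decomposition is even available, and then check that the resulting family of local idempotents is coherent and bounded enough to descend to the Robba ring — this is the substance of the Christol--Mebkhout decomposition theory, of which the statement here is a corollary.
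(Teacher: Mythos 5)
The paper does not actually prove this theorem: it cites it directly from \cite[Corollaire 2.4-1]{CM01}, and the remark immediately following the statement flags the one nontrivial point, namely that Christol--Mebkhout work over a spherically complete ground field, with the reduction to general $K$ delegated to \cite[Lemma 2.7.3]{Ked07a}. So there is no in-paper argument to compare yours against; you are attempting to reconstruct the proof that lives in the references.

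As a sketch of the Christol--Mebkhout route your outline is broadly faithful: uniqueness from the incompatibility of distinct uniform breaks on a common subquotient, existence by combining the Dwork--Robba spectral decomposition over $K_\rho$ (valid once radii lie above the threshold $\lvert p\rvert^{1/(p-1)}$), Frobenius antecedents to force the radii into that range, and a gluing of idempotents. Two things should be flagged, though. First, the phrase ``compatible under the transition maps between the various $K_\rho$'' does not quite parse: the fields $K_\rho$ for different $\rho$ are distinct completions of $K(u)$ and do not map to one another. What one actually argues is that the horizontal idempotent over $K_\rho$ is cut out by a convergent series in the connection operator with coefficients in a closed-annulus subring of $\mathcal{R}^u_K$, and that these idempotents over overlapping annuli agree (by uniqueness of the spectral decomposition), so they patch to a single idempotent in $\mathrm{End}_{\mathcal{R}^u_K}(M)$; no ``transition maps between the $K_\rho$'' are involved. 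Second, and more importantly given the paper's own remark, your sketch is silent on the spherical completeness hypothesis: Christol--Mebkhout assume $K$ spherically complete (so that, e.g., finite modules over $\mathcal{R}^u_K$ are free and the spectral theory behaves well), and the extension to arbitrary complete $K$ is not automatic --- it is exactly the content of \cite[Lemma 2.7.3]{Ked07a}, which proceeds by base change to a spherically complete extension and a descent argument. Without addressing that, your proof only recovers the spherically complete case, which is strictly weaker than what the paper uses.
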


\begin{remark} In \cite{CM01} the ground field $K$ is assumed to be spherically complete, in which case $M$ is free. It is explained how to extend this to the general case in \cite[Lemma 2.7.3]{Ked07a}.
\end{remark}

\begin{definition}\cite[D\'efinition 8.3-8]{CM00} The irregularity of $M$ is defined to be $\mathrm{Irr}(M):=\sum_b b\cdot \mathrm{rank}_{\mathcal{R}^u_K} M_b$.
\end{definition}

\begin{remark} \begin{enumerate} \item We will often want to consider cases when $K$ itself is equipped with a natural derivation $\partial_t$, for example when $K$ is the completion of a rational function field $K_0(t)$ for the Gauss norm induced by a norm on $K_0$. In this case Kedlaya \cite{Ked07a} has developed a more refined notion of irregularity, that takes this horizontal derivation $\partial_t$ into account. We will only consider the `na\"ive' irregularity coming from the vertical derivation $\partial_u$.  
\item If $K\rightarrow K'$ is an isometric extension of complete fields, then a projective $\nabla$-module $M$ over $\mathcal{R}^u_K$ is overconvergent if and only if $M\otimes \mathcal{R}_{K'}^u$ is, in which case they have the same irregularity \cite[Proposition 1.2-4]{Meb02}.
\end{enumerate}
\end{remark}

\begin{lemma} \label{lemma: irr push} Let $L/K$ be a finite extension, and $M$ an overconvergent $\nabla$-module over $\mathcal{R}_L^u$. Let $\mathrm{Res}^L_KM$ denote $M$ considered as an overconvergent $\nabla$-module over $\mathcal{R}_K^u$ via the map $\mathcal{R}_K^u\rightarrow \mathcal{R}_L^u$. Then
\[ \mathrm{Irr}(\mathrm{Res}_{L/K}M) = [L:K]\mathrm{Irr}(M).\]
\end{lemma}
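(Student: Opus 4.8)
The statement is that for a finite extension $L/K$ and an overconvergent $\nabla$-module $M$ over $\mathcal{R}_L^u$, the restriction $\mathrm{Res}_{L/K}M$ satisfies $\mathrm{Irr}(\mathrm{Res}_{L/K}M) = [L:K]\mathrm{Irr}(M)$. The plan is to reduce to a statement about radii of convergence after base change to the completed Gauss-norm fields, and then to track how those radii transform under the finite extension.

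First I would use Remark (2) after the definition of irregularity, which lets us pass to an isometric extension of complete fields without changing overconvergence or irregularity. Replacing $K$ by the completion of its algebraic closure (or a suitable finite extension that splits $L/K$), we may assume $L/K$ is either trivial, in which case there is nothing to prove, or — in the inseparable/wildly ramified residue case — we reduce to the two basic cases: $L/K$ unramified (so $\mathcal{R}_L^u = \mathcal{R}_K^u \otimes_K L$ with $L/K$ an extension of residue fields and the same value group) and $L/K$ totally ramified of degree $e$. Actually, a cleaner route: since irregularity is additive in short exact sequences and compatible with the break decomposition (Theorem \ref{theo: break 1}), and since $\mathrm{Res}_{L/K}$ is exact, it suffices to prove the identity when $M$ has a single uniform break $b$; then $\mathrm{Irr}(M) = b\cdot\mathrm{rank}_{\mathcal{R}_L^u}M$ and $\mathrm{rank}_{\mathcal{R}_K^u}(\mathrm{Res}_{L/K}M) = [L:K]\cdot\mathrm{rank}_{\mathcal{R}_L^u}M$, so the whole claim collapses to showing that $\mathrm{Res}_{L/K}M$ again has a uniform break, equal to $b$.

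The key computation is therefore: if $M_\rho$ over $(L)_\rho$ has all subquotients with radius of convergence $\rho^{b+1}$, then $(\mathrm{Res}_{L/K}M)_\rho$ over $K_\rho$ has the same property. The point is that $(L)_\rho$ is a finite extension of $K_\rho$ (of degree $[L:K]$, since the $\rho$-Gauss norm on $L(u)$ restricts to the $\rho$-Gauss norm on $K(u)$ and the extension is unramified for the $u$-adic structure), and the radius of convergence $R(N)$ of a $\nabla$-module $N$ over $K_\rho$ relative to $\partial_u$ is unchanged when we view $N$ over the finite extension $(L)_\rho$ — this is because $R$ is computed from the operator norms of $\tfrac1{k!}\tfrac{d^k}{du^k}$, and $\partial_u$ extends to $(L)_\rho$ with the same effect on the $u$-variable, so the matrices $G_k$ are literally the same. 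Hence every subquotient of $(\mathrm{Res}_{L/K}M)_\rho$ over $K_\rho$, after base change to $(L)_\rho$, is a subquotient of $M_\rho \otimes_{(L)_\rho} (\text{something})$ built out of the $M_\rho$-pieces, all of which have radius $\rho^{b+1}$; conversely subquotients over $K_\rho$ have radius bounded by that of their base change. This gives uniform break $b$ for $\mathrm{Res}_{L/K}M$ and closes the argument.

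The main obstacle is the last step: controlling subquotients of $(\mathrm{Res}_{L/K}M)_\rho$ over the smaller field $K_\rho$ in terms of subquotients over $(L)_\rho$. Over a larger field there can be more subobjects, so one must argue that the radius of convergence can only decrease (not increase) upon restriction of scalars to the smaller field, and that it is bounded below by $\rho^{b+1}$ because any $K_\rho$-subquotient becomes, after base change to $(L)_\rho$, a subquotient of $M_\rho^{\oplus[L:K]}$ (using that $(L)_\rho \otimes_{K_\rho}(\mathrm{Res}\,M_\rho)$ is an iterated extension of copies of $M_\rho$ up to Frobenius twists that do not affect the break). I expect this to be handled by a standard "radius is insensitive to finite base field extension" lemma, for which one can cite \cite{Ked07a} or \cite{CM00} directly rather than reproving it; with that in hand the rest is the bookkeeping of ranks and the additivity of $\mathrm{Irr}$ already recorded above.
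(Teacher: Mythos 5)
Your approach is genuinely different from the paper's, and it is morally sound, but as written it has a gap at exactly the point where the paper takes care: the non-Galois case. The paper works entirely at the level of the Robba ring $\mathcal{R}^u$: for $L/K$ Galois it uses $\mathrm{Res}^L_K M\otimes_K L\cong\bigoplus_{\sigma\in\mathrm{Gal}(L/K)}M$ over $\mathcal{R}_L^u$, additivity of $\mathrm{Irr}$, and invariance of irregularity under isometric extension (\cite[Proposition 1.2-4]{Meb02}); for general $L/K$ it passes to a Galois closure $F$ and exploits $\mathrm{Res}^F_L(M\otimes_LF)\cong M^{[F:L]}$ to solve for $\mathrm{Irr}(\mathrm{Res}^L_KM)$. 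This never touches the completed fields $K_\rho$, $L_\rho$, and never needs to analyze subquotients over a smaller base.

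Your route instead reduces, via the break decomposition and exactness of $\mathrm{Res}$, to showing that if $M$ has uniform break $b$ then so does $\mathrm{Res}_{L/K}M$, and then argues about radii of subquotients of $(\mathrm{Res}_{L/K}M)_\rho$ over $K_\rho$. The central claim you make there — that a $K_\rho$-subquotient $N$ base changes to a subquotient of $M_\rho^{\oplus[L:K]}$ over $L_\rho$ — uses $L_\rho\otimes_{K_\rho}L_\rho\cong L_\rho^{\oplus[L:K]}$, which is the \emph{Galois} case. When $L/K$ is not Galois this tensor product is a nontrivial product of fields, and you would need to pass to a Galois closure before the subquotient bookkeeping goes through; you gesture at this but do not carry it out, and the parenthetical ``(using that $L_\rho\otimes_{K_\rho}(\mathrm{Res}\,M_\rho)$ is an iterated extension of copies of $M_\rho$ up to Frobenius twists)'' is both imprecise (``Galois twists,'' not ``Frobenius twists'') and false as stated outside the Galois case. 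A second, smaller issue: you assert $[L_\rho:K_\rho]=[L:K]$ and that $L\otimes_KK_\rho$ is the field $L_\rho$; this is true but requires an argument (linear disjointness of $L$ and $K_\rho$ over $K$), which the paper sidesteps entirely by never descending to $K_\rho$. The opening of your proposal, where you consider an isometric base change splitting $L/K$ and additivity of $\mathrm{Irr}$, is in fact closer to the paper's argument and, if pursued with the Galois-closure device, would be both simpler and complete; the ``cleaner route'' you then switch to is actually the more delicate one.
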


\begin{proof}
First assume that $L/K$ is Galois. In this case, we have
\[ \left(\mathrm{Res}^L_K M\right) \otimes_K L \cong \bigoplus_{\sigma\in \mathrm{Gal}(L/K)} M \]
as $\nabla$-modules over $\mathcal{R}_L^u$, and so we can apply \cite[Proposition 1.2-4]{Meb02}. In the general case we take a Galois closure $F/L/K$ and deduce that
\[ \mathrm{Irr}(\mathrm{Res}^L_K \mathrm{Res}^F_L(M\otimes_L F)) = \mathrm{Irr}(\mathrm{Res}^F_K(M\otimes_L F))=[F:K]\mathrm{Irr}(M) \]
again using \cite[Proposition 1.2-4]{Meb02}. Finally, we use the fact that $\mathrm{Res}^F_L(M\otimes_L F) \cong M^{[F:L]}$ to conclude.
\end{proof}

\subsection{Irregularity in families}

We will be interested in studying how the irregularity varies in families, and so we will want to replace the field $K$ in the above discussion by a $K$-dagger algebra $A$ of MW-type. (It seems entirely likely that the results here will extend to more general $K$-dagger algebras, but we will only need this restricted case.) For such a $K$-dagger algebra $A$, suppose that we have a projective $\nabla$-module $M$ over the relative Robba ring $\mathcal{R}_A^u$. Let $L$ be the completion of the fraction field of $A$ for the supremum norm.

\begin{definition} We say that $M$ is overconvergent if the generic fibre $M_L:=M\otimes \mathcal{R}_L^u$ of $M$ is overconvergent.
\end{definition}

The generic fibre $M_L$ therefore admits a break decomposition
\[ M_L = \bigoplus_{b\geq 0} M_{L,b} \]
by Theorem \ref{theo: break 1}.

\begin{theorem}[\cite{Ked11a}, Theorem 1.3.2] There exists a dagger localisation $A\rightarrow B$ and a unique decomposition of $M_B:= M\otimes \mathcal{R}^u_B$ which restricts to the break decomposition over $\mathcal{R}^u_L$.
\end{theorem}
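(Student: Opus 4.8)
The plan is to follow Kedlaya's analysis of the global variation of break decompositions in \cite{Ked11a}, whose crux is the constructible (semicontinuous) behaviour of the subsidiary radii of convergence together with a spreading-out of the associated spectral projectors.

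First I would reduce to a single threshold. The generic break decomposition $M\otimes_{\mathcal{R}^u_A}\mathcal{R}^u_L=\bigoplus_{b\geq0}M_{L,b}$ has only finitely many non-zero summands, with breaks $b_1<\dots<b_r$; fix rationals $c_i\in(b_i,b_{i+1})$. It suffices to show that for each $i$ there is a dense open $V_i\subset\mathrm{Spec}\,\overline{A}$ over which $M$ splits $\nabla$-equivariantly as a direct sum of a submodule whose generic fibre has all breaks $\leq b_i$ and one whose generic fibre has all breaks $\geq b_{i+1}$: intersecting the finitely many dense opens $V_i$ and passing to a basic affine open inside $\bigcap_i V_i$ produces a dagger localisation $A\to B$ (again of MW-type, an open subscheme of a smooth $k$-scheme being smooth) over which all $r-1$ decompositions hold, and their common refinement is the full break decomposition, restricting to the generic one over $\mathcal{R}^u_L$. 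So fix a threshold $c:=c_i$.

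Next I would bring in the subsidiary radii. For $s$ a point of $\mathrm{Spec}\,\overline{A}$ and $\rho<1$ close to $1$, let $R_1(M,s,\rho)\leq\dots\leq R_n(M,s,\rho)$ be the sorted radii of convergence of the subquotients of the fibre of $M$ at $(s,\rho)$; by Christol--Mebkhout and Kedlaya these are well-defined, vary continuously, and the break of a subquotient $N$ is read off from $R(N)=\rho^{b+1}$, the tuple of breaks being a constructible function of $s$ that takes its generic value on a dense open. Over the generic point $\eta$, the break decomposition at $c$ says precisely that for $\rho$ near $1$ the radii fall into a group with $R_j\geq\rho^{c+1}$ and a group with $R_j<\rho^{c+1}$, separated by a genuine gap; the locus $V_i\ni\eta$ where this gap persists is then open and dense, with estimates uniform over $V_i$. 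Over $\mathcal{R}^u_{B_i}$, with $\mathrm{Spec}\,\overline{B_i}=V_i$, the presence of a uniform gap in the subsidiary radii of a solvable $\nabla$-module triggers the Dwork--Robba / Christol--Dwork decomposition (in Kedlaya's relative form): a canonical $\nabla$-stable splitting into the ``large radii'' and ``small radii'' parts, the projector being built from the factorisation of the Newton polygon of the matrix of $\nabla$ over the region where the gap holds. Unwinding the construction, this splitting restricts over $\mathcal{R}^u_L$ to $\bigoplus_{j\leq i}M_{L,b_j}\oplus\bigoplus_{j>i}M_{L,b_j}$, which is what the first step required.

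The main obstacle is the uniformity in this argument: one must show not merely that the gap in subsidiary radii holds at the generic point $\eta$, but that the Dwork--Robba estimates hold with bounds uniform over an entire open neighbourhood of $\eta$, so that the spectral projector converges with coefficients in $\mathcal{R}^u_{B_i}$ and not only in the completion $\mathcal{R}^u_L$. This is exactly where Kedlaya's careful bookkeeping of the variation of subsidiary radii over the relative Robba space enters, and where the overconvergence (solvability) of $M_L$ together with the dagger --- rather than merely affinoid --- nature of $A$ is used to keep the localisation $B$ and its Robba ring in the right category. A Frobenius-descent shortcut (reducing to breaks $<1$) is not available, since $M$ is assumed only to be a $\nabla$-module, so the spectral-radius approach cannot be bypassed.
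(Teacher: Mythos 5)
Your sketch reconstructs, in broad outline, the machinery behind Kedlaya's theorem: semicontinuity and constructibility of the subsidiary radii of convergence, a gap persisting over a dense open of $\spec{\overline{A}}$, and a relative Dwork--Robba splitting around that gap, refined over all thresholds. That is the right circle of ideas, and as a sketch of how [Ked11a, Theorem 1.3.2] is proved it is reasonable. But the paper's own proof is of a completely different character: it constructs nothing. The theorem is simply cited from [Ked11a], and the entire argument is the observation that Kedlaya states his Theorem 1.3.2 under the hypothesis that $\partial_u$ is \emph{eventually dominant} relative to the derivations of $L/K$ also acting on $M\otimes\mathcal{R}^u_L$, and that this hypothesis is used in [Ked11a] only to \emph{interpret} the resulting decomposition as a break decomposition for the full set of derivations, not to establish that the decomposition exists. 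Since the present paper works exclusively with the na\"ive irregularity attached to the single vertical derivation $\partial_u$, the hypothesis can be dropped. Your proposal never confronts this hypothesis at all --- you work only with $\partial_u$, which is consistent with the paper's convention, but you give no justification for applying Kedlaya's theorem outside its stated hypotheses, which is precisely and solely what the paper's proof supplies.

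There is also a circularity concern in your central step. The appeal to ``the Dwork--Robba / Christol--Dwork decomposition in Kedlaya's relative form'' over $\mathcal{R}^u_{B_i}$ is, to a first approximation, the content of [Ked11a, Theorem 1.3.2] itself: the whole difficulty is establishing exactly that relative, uniform splitting over a dagger Robba ring rather than over its completion $\mathcal{R}^u_L$. So as written you are partly invoking the result you set out to prove. To turn the sketch into an honest independent argument you would need to descend to the actual ingredients --- the subsidiary-radii estimates of [KX10] and the projector constructions in [Ked11a] --- and verify both that they assemble into the splitting you claim and that they nowhere secretly require eventual dominance, which returns you precisely to the paper's one-line observation.
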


\begin{proof}
The proof of \cite[Theorem 1.3.2]{Ked11a} assumes that the derivation $\partial_u$ is `eventually dominant' relative to the derivations of $L/K$ which also act on $M\otimes \mathcal{R}_L$. However, this assumption is only used to \emph{interpret} the break decomposition obtained as a genuine break decomposition for the full collection of derivations, and is not used in showing that such a decomposition exists.
\end{proof}

It will be important to have conditions for extending this break decomposition over the whole of $A$. Let $\mathcal{M}(A)$ denote the Berkovich spectrum of $A$ (consisting of $p$-adically bounded multiplicative semi-norms on $A$); for any $v\in \mathcal{M}(A)$ we let $\mathscr{H}(v)$ denote the completed residue field at $v$. Thus base changing $M$ via $\mathcal{R}^u_A\rightarrow \mathcal{R}^u_{\mathscr{H}(v)}$ we obtain a projective $\nabla$-module $M_v$ over $\mathcal{R}^u_{\mathscr{H}(v)}$. Since the reduction $\overline{A}$ of $A$ is smooth and integral over $k$, it follows from \cite[Proposition 2.4.4]{Ber90} that the Berkovich space $\mathcal{M}(A)\cong \mathcal{M}(\widehat{A})$ has a unique point $\xi$ in its Shilov boundary. In this case, the completed residue field $\mathscr{H}(\xi)$ is equal to the completed fraction field $L$ of $A$.

\begin{proposition} \label{prop: lower} For any point $v\in \cur{M}(A)$ the $\nabla$-module $M_v$ is overconvergent, and we have $\mathrm{Irr}(M_v)\leq \mathrm{Irr}(M_L)$.
\end{proposition}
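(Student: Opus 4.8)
The plan is to reduce the statement at an arbitrary point $v\in\cur{M}(A)$ to the already-known statement at the Shilov point $\xi$. First I would observe that, since $K_\xi=L$ is the completed fraction field of $A$, the hypothesis that $M$ is overconvergent says precisely that $M_L=M_\xi$ is overconvergent, and so $M_\xi$ admits a break decomposition with $\mathrm{Irr}(M_\xi)=\mathrm{Irr}(M_L)$. The key geometric input is that for \emph{any} $v\in\cur{M}(A)$ there is an isometric embedding of complete fields $L\hookrightarrow K_v$: indeed $v$ restricts to a (semi)norm on $A$ dominated by the supremum norm, and since $A$ is reduced of MW-type the supremum norm on $A$ is the spectral norm, so the completed residue field $K_v$ contains an isometric copy of the completion $L$ of $\mathrm{Frac}(A)$ — more precisely $v$ gives a character $A\to K_v$, which extends to $\mathrm{Frac}(A)\to K_v$ (the image of a nonzero element of $A$ is nonzero because... well, this needs care, see below) and then to $L\to K_v$ by completeness, and the norm is the restriction of $|\cdot|_v$. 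Granting this, I would invoke Remark after Theorem \ref{theo: break 1} (the second item, citing \cite[Proposition 1.2-4]{Meb02}): for an isometric extension $L\hookrightarrow K_v$, a projective $\nabla$-module over $\mathcal{R}^u_L$ is overconvergent if and only if its base change to $\mathcal{R}^u_{K_v}$ is, and in that case they have the same irregularity. Applying this to the base change $M_\xi\otimes_{\mathcal{R}^u_L}\mathcal{R}^u_{K_v}\cong M_v$ shows $M_v$ is overconvergent with $\mathrm{Irr}(M_v)=\mathrm{Irr}(M_L)$.

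This last computation would in fact give equality $\mathrm{Irr}(M_v)=\mathrm{Irr}(M_L)$, which is stronger than the claimed inequality — so the honest situation must be that $L\hookrightarrow K_v$ need \emph{not} be isometric in general: the map $A\to K_v$ can kill prime ideals, so a nonzero $a\in A$ can have $|a|_v=0$, i.e.\ $v$ need not factor through $\mathrm{Frac}(A)$ at all. The correct reduction is therefore: $v$ defines a character $\overline{A}\to \overline{K_v}$ on reductions whose kernel is a prime $\mathfrak{p}$, cutting out an integral closed subscheme $Z\subset X=\spec{\overline A}$, and $v$ factors through the dagger algebra attached to (a chart of) $Z$ with $K_v$ an isometric extension of the completed fraction field $L_Z$ of that algebra. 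Restricting $M$ to the relative Robba ring over $Z$ and applying the generic break decomposition (Kedlaya, \cite[Theorem 1.3.2]{Ked11a}) over $Z$, together with semicontinuity of irregularity when passing from the generic point of $X$ to the generic point of the subvariety $Z$ — and this is exactly where one expects a \emph{drop}: breaks can only decrease under specialisation, so $\mathrm{Irr}(M_{L_Z})\leq\mathrm{Irr}(M_L)$. Then the isometric-extension invariance $\mathrm{Irr}(M_v)=\mathrm{Irr}(M_{L_Z})$ finishes the argument.

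The main obstacle I anticipate is precisely this semicontinuity step: one needs that for $Z\subset X$ an integral closed subvariety with generic point $z$, the irregularity of the restriction of $M$ to the Robba ring over the completed local field at $z$ is $\leq\mathrm{Irr}(M_L)$. One route is to spread out: by \cite[Theorem 1.3.2]{Ked11a} there is a dagger localisation $A\to B$ over which the break decomposition extends; if $Z$ meets $\spec{\overline B}$ we are done by base-changing the extended decomposition along $B\to (\text{dagger algebra of }Z)$ and comparing ranks. If $Z$ lies entirely in the complement of $\spec{\overline B}$ one has to iterate, stratifying $X$ by Noetherian induction and controlling the irregularity on each stratum; the point to nail down is that each time one passes to a smaller stratum the sum $\sum_b b\cdot\mathrm{rank}\,M_b$ cannot increase, which follows from the fact that $R(N_\rho)$ is lower-semicontinuous in the parameters (equivalently, breaks are upper-semicontinuous — no, lower-semicontinuous in the right normalisation), so that a break $b$ at the generic point of $Z$ is dominated by some break at the generic point of $X$. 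Making this comparison of multisets of breaks precise, in the relative setting of $\nabla$-modules over $\mathcal{R}^u_A$, is the technical heart; everything else is either the formal field-extension bookkeeping of the first paragraph or a direct appeal to \cite[Proposition 1.2-4]{Meb02} and \cite[Theorem 1.3.2]{Ked11a}.
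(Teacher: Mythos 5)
Your overall plan is sound and mirrors the paper's strategy in outline: reduce to the case where $v$ lies over the Shilov point of an integral closed subvariety (the paper does this by base changing to $A_{K_v}$, localising, applying \cite[Theorem 1]{Ked05}, and pushing forward along a finite \'etale map to reduce to $A=K\tate{\bm{x}}$ with $v=0$), invoke the invariance of irregularity under isometric extensions \cite[Proposition 1.2-4]{Meb02} to handle the field-extension bookkeeping, and then climb a chain of intermediate Shilov points $v_0=\xi,\ldots,v_d=v$. You are also right that the equality in your first paragraph cannot hold in general, and you correctly diagnose the reason ($v$ need not be injective on $A$).

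However, the argument is incomplete precisely where it matters. You defer the entire content of the proposition — ``$M_{v_i}$ overconvergent $\Rightarrow M_{v_{i+1}}$ overconvergent, with $\mathrm{Irr}(M_{v_i})\geq \mathrm{Irr}(M_{v_{i+1}})$'' — to a ``technical heart'' that you do not carry out, and your hesitation about whether breaks are upper- or lower-semicontinuous signals that the actual mechanism is not in hand. Moreover, the spreading-out-by-Noetherian-induction route you sketch has a circularity: to iterate on the boundary stratum you must already know the restriction of $M$ to the Shilov point of that stratum is overconvergent with controlled irregularity, and that is exactly the inductive step you need to prove, not a consequence of \cite[Theorem 1.3.2]{Ked11a} alone. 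What makes the single inductive step go through in the paper is a reduction (via a commutative diagram of Berkovich spaces) to $A=K\tate{x}$, followed by an appeal to the two-variable radius function $F_M(r,s)$ and \cite[Theorem 2.2.6]{KX10}, which says that $F_M(r_0,\cdot)$ is decreasing and continuous since $K_{e^{-r_0}}$ is of rational type. That monotonicity is the semicontinuity you are looking for: it both forces $\lim_{r\to 0}F_M(r,\infty)=0$ (overconvergence of $M_0$) and gives $\mathrm{Irr}(M_0)\leq \mathrm{Irr}(M_L)$ on comparing slopes. Without that input, or some substitute for it, your proof does not close.
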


Before we can give the proof of this proposition, we need to introduce another function governing the variation of the irregularity along a $2$-dimensional Berkovich space. Let $M$ be a $\nabla$-module over the ring
\[ K\tate{\tau u^{-1},\rho^{-1}u,x}\]
of functions converging for $\rho\leq \norm{u} \leq \tau$ and $\norm{x}\leq 1$. Then for any $(-\log \alpha,-\log \beta)\in [-\log\tau,-\log \rho ] \times [0,\infty ]$ we obtain by base change a $\nabla$-module $M_{\alpha,\beta}$ over the field $K(u,x)_{\alpha,\beta}$ obtained by completing $K(u,x)$ with respect to the norm for which $\norm{u}=\alpha$ and $\norm{x}=\beta$. (When $\beta=0$ this should be interpreted as $K(u)_\alpha$.) For every irreducible constituent $N$ of $M_{\alpha,\beta}$ we can therefore consider the radius of convergence $R(N)$ with respect to the $u$-derivation exactly as before, that is 
\[ R(N):=\min\left\{ \rho, \mathrm{lim\,inf}_{k\rightarrow\infty} \norm{G_k}_\rho^{-1/k}   \right\} ,\]
where $G_k$ is the matrix of the operator $\frac{1}{k!}\frac{\partial^k}{\partial u^k}$ acting on $N$. We then define
\[ F_M(-\log\alpha,-\log\beta) = -\sum_N \mathrm{dim}_{K_{\alpha,\beta}}N \cdot\log R(N),\]
the sum being over all such irreducible constituents $N$. This gives a function
\[ F_M(-,-): [-\log\tau,-\log \rho ] \times [0,\infty ] \rightarrow \R_{\geq 0}. \]

\begin{proof}[Proof of Proposition \ref{prop: lower}] It is harmless to replace $A$ by completion $\widehat{A}$, so we may instead prove the corresponding claim for a smooth affinoid $K$-algebra $A$ with good reduction. We let $A_{\mathscr{H}(v)}$ denote the base change of $A$ to $\mathscr{H}(v)$ and consider the Cartesian diagram
\[ \xymatrix{\mathcal{M}(A_{\mathscr{H}(v)})\ar[r] \ar[d] & \mathcal{M}(A) \ar[d]  \\ \mathcal{M}(\mathscr{H}(v)) \ar[r] & \mathcal{M}(K). } \]
By construction, there exists a rigid point of $\mathcal{M}(A_{\mathscr{H}(v)})$ lying above $v\in \mathcal{M}(A)$, and the unique point in the Shilov boundary of $\mathcal{M}(A_{\mathscr{H}(v)})$ lies above $\xi$. By invariance under isometric extensions, we may replace $K$ by $\mathscr{H}(v)$ and thus assume that $v$ is a rigid point of $\cur{M}(A)$. Taking a completed localisation of $A$ around $v$, applying \cite[Theorem 1]{Ked05} and lifting, we can assume we have a finite \'etale map $K\tate{\bm{x}} \rightarrow A$, for $\bm{x}=(x_1,\ldots,x_d)$. By Lemma \ref{lemma: irr push} it suffices to prove the claim for the pushforward of $M$ along $K\tate{\bm{x}}\rightarrow A$, hence we may assume that $A=K\tate{\bm{x}}$. By translating, and possibly increasing $K$, we may assume that $v=0$. 

We let $v_i$ denote the image in $\mathcal{M}(K\tate{\bm{x}})$ of the unique point in the Shilov boundary of
\[ \mathcal{M}(K\tate{x_1,\ldots,x_d}/(x_1,\ldots,x_i))\] under the canonical closed immersion. Thus $v_0=\xi$ and $v_d=v$, and it therefore suffices to show that $M_{v_i}$ overconvergent $\Rightarrow $ $M_{v_{i+1}}$ overconvergent, and that $\mathrm{Irr}(M_{v_i})\geq \mathrm{Irr}(M_{v_{i+1}})$. But now looking at the commutative (although not in general Cartesian) diagram
\[ \xymatrix{ \mathcal{M}(K_{v_i}\tate{x_{i}}) \ar[r]\ar[d] & \mathcal{M}(K\tate{\bm{x}}) \ar[d] \\ \mathcal{M}(K_{v_i}) \ar[r] & \mathcal{M}(K) } \]
we can see that the zero point of $\mathcal{M}(K_{v_i}\tate{x_{i}})$ lies above $v_{i+1}$, and the unique point in the Shilov boundary of $\mathcal{M}(K_{v_i}\tate{x_{i}})$ lies above $v_{i}$. Again by invariance under isometric extensions we can therefore reduce to the case $d=1$, i.e. $A=K\tate{x}$.

Now let $\rho$ be close enough to $1$ such that $M$ comes from a $\nabla$-module defined over the ring
\[  \cap_{\rho\leq \tau<1}K\tate{\rho u^{-1},\tau^{-1}u,x}\] 
of functions converging for $\rho\leq \norm{u} <1$ and $\norm{x}\leq 1$, and let
\[ F_M(-,-):(0,-\log \rho ] \times [0,\infty ] \rightarrow \R  \]
be the function defined above. After possibly increasing $\rho$, we may assume by Theorem \ref{theo: break 1} quoted above that the function $F_M(r,0)$ is given by
\[ F_M(r,0) = (\mathrm{rank}_{\mathcal{R}_L^u}M_L + \mathrm{Irr}(M_L))r, \]
where $M_L$ is the base change to $\mathcal{R}_L^u$. Now fix some $r_0\in (0,-\log \rho]$ and consider the $\nabla$-module $M\otimes K(u)_{e^{-r_0}}\tate{x}$. The field $K(u)_{e^{-r_0}}$ is of rational type in the sense of \cite[Definition 1.4.1]{KX10}, hence we may apply \cite[Theorem 2.2.6]{KX10} to deduce that the function
\[ F_M(r_0,-) : [0,\infty] \rightarrow \R_{\geq 0} \]
is decreasing and continuous. Thus we find that
\[ \lim_{r\rightarrow 0} F_M(r,\infty) \leq \lim_{r\rightarrow 0} F_M(r,0) = 0 \]
from which we deduce that the base change $M_0$ of $M$ to $\mathcal{R}_K^u$ via $x\mapsto 0$ is also overconvergent. Thus after possibly increasing $\rho$ we may assume again by Theorem \ref{theo: break 1} that $F_M(r,\infty)$ is given by 
\[ F_M(r,\infty) = (\mathrm{rank}_{\mathcal{R}_K^u}M_0 + \mathrm{Irr}(M_0))r. \]
Now again using the fact that $F_M(r_0,-)$ is a decreasing function we can deduce that $\mathrm{Irr}(M_0)\leq \mathrm{Irr}(M_L)$ as required.
\end{proof}

We therefore obtain a function
\[ \mathrm{Irr}_M : \mathcal{M}(A) \rightarrow \Z_{\geq 0} \]
bounded above by $\mathrm{Irr}(M_L)$.

\begin{proposition} \label{prop: spread} Assume that $K$ is discretely valued. Let $M$ be an overconvergent, projective $\nabla$-module over $\mathcal{R}^u_A$, and assume that the function $\mathrm{Irr}_M$ is constant. Then the break decomposition extends uniquely across $A$, that is, there exists a unique decomposition
\[ M = \bigoplus_b M_b \]
of $\nabla$-modules over $\mathcal{R}_A^u$ which restricts to the break decomposition of $M\otimes_A \mathcal{R}_L^u$. Moreover, for any closed point $s:A\rightarrow K'$ the induced decomposition
\[ M_s = \bigoplus_b (M_b)_s \]
of $M_s := M\otimes_s \mathcal{R}_{K'}^u$ coincides with the break decomposition of $M_s$.
\end{proposition}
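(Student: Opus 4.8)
The plan is to first split $M$ over a dagger localisation using Kedlaya's theorem, and then to spread the splitting across the rest of $\mathcal{M}(A)$ using the constancy of $\mathrm{Irr}_M$ together with a bounded-analytic-continuation argument; the compatibility with closed points will then be an exercise in irregularity bookkeeping.

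\textbf{Reduction to extending the projectors.} By \cite[Theorem 1.3.2]{Ked11a} (the theorem quoted above) there is a dagger localisation $A\to B$ and a decomposition $M\otimes_{\mathcal{R}^u_A}\mathcal{R}^u_B=\bigoplus_b N_b$ restricting to the break decomposition over $\mathcal{R}^u_L$; write $r_b:=\mathrm{rank}_{\mathcal{R}^u_B}N_b$, so $\sum_b b\,r_b=\mathrm{Irr}(M_L)$. This decomposition is cut out by a family of orthogonal, $\nabla$-equivariant idempotents $\pi_b\in\mathrm{End}_{\mathcal{R}^u_B}(M\otimes\mathcal{R}^u_B)$. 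Since $A\to B$ is a dagger localisation, $\mathcal{R}^u_A\hookrightarrow\mathcal{R}^u_B$, and as $M$ is projective this identifies $\mathrm{End}_{\mathcal{R}^u_A}(M)$ with $\{\phi\in\mathrm{End}_{\mathcal{R}^u_B}(M\otimes\mathcal{R}^u_B):\phi(M)\subseteq M\}$. So it suffices to show that each $\pi_b$ preserves $M$: the decomposition $M=\bigoplus_b\pi_b(M)=:\bigoplus_b M_b$ is then $\nabla$-stable, each $M_b$ is projective, and $M_b\otimes\mathcal{R}^u_B=N_b$, hence $M_b\otimes\mathcal{R}^u_L=M_{L,b}$, which is the first assertion.

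\textbf{Boundedness of the projectors (the main obstacle).} Shrink the annulus and pass to a fringe algebra $A_\lambda$ over which $M$ becomes free; then $\pi_b$ is an $N\times N$ matrix of series $\sum_i c_iu^i$ with coefficients $c_i$ in the fringe algebra $B_\lambda$ of $B$. The crucial claim, and the only place where the hypothesis on $\mathrm{Irr}_M$ is used, is that these entries are \emph{uniformly bounded}: there is $C>0$ with $\Norm{\pi_b}\leq C$ over the whole annulus $\rho\leq\norm{u}<1$ and over all of $\mathcal{M}(A_\lambda)$. The mechanism is that the size of the projector onto the break-$b$ component is controlled by the gap between $b$ and the neighbouring breaks, which is $\geq 1$ since breaks are integers; a blow-up of $\pi_b$ near a point $v_0$ would force two breaks to collide along a path towards $v_0$, hence $\mathrm{Irr}_M$ to drop at $v_0$. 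Now Proposition \ref{prop: lower} shows $M_v$ is overconvergent with $\mathrm{Irr}(M_v)\leq\mathrm{Irr}(M_L)$, and together with the constancy hypothesis and the majorisation of break multisets under specialisation (a form of semicontinuity of breaks, coming from the concavity and monotonicity of the Frobenius-radius functions of Kedlaya--Xiao \cite{KX10}), one gets that the break multiset of $M_v$ equals that of $M_L$ for \emph{every} $v\in\mathcal{M}(A)$; no collision ever occurs and the gaps stay $\geq1$ uniformly, which yields the bound $C$. Turning this into an actual estimate requires either redoing the relevant step of the Christol--Mebkhout/Kedlaya construction of the break decomposition in a family with explicit constants, or, equivalently, showing that the flat limit over $\mathcal{M}(B_\lambda)$ of the fibrewise break-$(\leq b)$ submodules — whose rank is the constant $\sum_{b'\leq b}r_{b'}$ by the preceding paragraph — is a subbundle of $M$, again using constancy of that rank.

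\textbf{Analytic continuation and conclusion of the first assertion.} Since $A$ is of MW-type, $A_\lambda$ is a smooth, hence normal, affinoid $K$-algebra, and $\mathcal{M}(B_\lambda)\subseteq\mathcal{M}(A_\lambda)$ is the complement of a nowhere-dense closed analytic subset (the tube of the locus inverted in the localisation). By Bartenwerfer's extension theorem, bounded analytic functions on $\mathcal{M}(B_\lambda)$ extend (uniquely) to $\mathcal{M}(A_\lambda)$. Applying this coefficient-by-coefficient to the series defining the entries of $\pi_b$, and using the uniform bound $C$ to check that the resulting series still satisfy the convergence condition defining $\mathcal{R}^u_A$, we obtain $\pi_b\in\mathrm{End}_{\mathcal{R}^u_A}(M)$. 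Hence $M=\bigoplus_b M_b$ over $\mathcal{R}^u_A$ as required.

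\textbf{Compatibility at closed points.} Let $s\colon A\to K'$ be a closed point and base change to get $M_s=\bigoplus_b(M_b)_s$ over $\mathcal{R}^u_{K'}$. Each $M_b$ is overconvergent with generic-fibre irregularity $b\,r_b$, so by Proposition \ref{prop: lower} each $(M_b)_s$ is overconvergent with $\mathrm{Irr}((M_b)_s)\leq b\,r_b$; summing over $b$ and using $\mathrm{Irr}(M_s)=\mathrm{Irr}(M_L)=\sum_b b\,r_b$ (constancy) forces $\mathrm{Irr}((M_b)_s)=b\,r_b$ for every $b$. Since the highest break of $(M_b)_s$ is at most $b$ (semicontinuity, as $M_b\otimes\mathcal{R}^u_L$ has uniform break $b$), and $(M_b)_s$ has rank $r_b$ and break-sum $b\,r_b$, every break of $(M_b)_s$ equals $b$; that is, $(M_b)_s$ has uniform break $b$. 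Therefore $M_s=\bigoplus_b(M_b)_s$ is precisely the break decomposition of $M_s$, completing the proof.

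I expect the hard part to be exactly the boundedness statement of the second paragraph: extracting a uniform bound on the break-decomposition projectors over the non-localised part of $\mathcal{M}(A)$ from the constancy of $\mathrm{Irr}_M$. Both the analytic continuation (Bartenwerfer) and the irregularity accounting at closed points should be comparatively routine once that is in hand.
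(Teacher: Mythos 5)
Your plan — extend the projectors of the break decomposition from a dagger localisation $B$ of $A$ back across all of $\mathcal{M}(A)$ — is reasonable in outline, and the first and last paragraphs are fine. But the whole weight of the argument rests on the ``boundedness of the projectors'' paragraph, and there are two genuine problems there.

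First, the stated mechanism is wrong: the breaks of an overconvergent $\nabla$-module over a Robba ring are not integers, only rationals in general (the Hasse--Arf property makes the \emph{irregularity} $\sum_b b\cdot r_b$ an integer, not the individual breaks $b$). So the gap between adjacent breaks is \emph{not} automatically $\geq 1$, and your quantitative control of $\Norm{\pi_b}$ by this alleged gap evaporates. One could try to salvage this by saying the gaps are bounded away from $0$ once one knows the break multiset of $M_v$ is independent of $v$, but that is not what you wrote, and it would still leave the estimate on $\Norm{\pi_b}$ unproved.

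Second, and more fundamentally, you acknowledge that turning the heuristic into an actual estimate ``requires redoing the relevant step of the Christol--Mebkhout/Kedlaya construction in a family with explicit constants'' — but that is precisely the hard content that needs to be supplied, and nothing in your write-up does so. The paper sidesteps exactly this by a dagger-local Noetherian induction that reduces to the one-variable case $A=K\weak{x}$, after which the key input is Kedlaya--Xiao \cite[Theorem~2.3.10]{KX10}: a unique decomposition over the bounded Robba ring $K_\rho\pow{\lambda^{-1}x}_0$ restricting to the break decomposition on each Gauss-point fibre. That theorem is the rigorous version of the boundedness you are gesturing at, and it plays the role that your appeal to Bartenwerfer would play, but over the full disc rather than a punctured one. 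Without invoking KX10 (or proving an equivalent uniform estimate), the extension step is not established. The reduction to one variable in the paper also makes essential use of the ring intersection $\mathcal{R}^u_{K\weak{\bm{x}}}=\mathcal{R}^u_{K\weak{\bm{x},x_d^{-1}}}\cap\mathcal{R}^u_{F\weak{x_d}}$ together with \cite[Lemma~1.2.7]{Ked11a} applied to $\underline{\mathrm{End}}(M)$; nothing in your sketch replaces that patching step either.

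The final paragraph on closed points is correct and is essentially the same bookkeeping the paper uses via Proposition~\ref{prop: lower}, so if the boundedness/extension step were supplied the rest would go through.
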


We start with a simple special case.

\begin{lemma} \label{lemma: br dec ext} Let $M$ be an overconvergent, projective $\nabla$-module over $\mathcal{R}^u_{K\weak{x}}$. Assume that the break decomposition extends uniquely across $K\weak{x,x^{-1}}$ and that the function $\mathrm{Irr}_M$ is constant on $\cur{M}(K\weak{x})$. Then the break decomposition extends uniquely across $K\weak{x}$.
\end{lemma}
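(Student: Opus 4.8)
The plan is to compare the break decomposition which is assumed to exist over $\mathcal{R}^u_{K\weak{x,x^{-1}}}$ with the module $M$ itself over $\mathcal{R}^u_{K\weak{x}}$, and show that the constancy of $\mathrm{Irr}_M$ forces the decomposition to have no poles along $x=0$. First I would invoke Theorem \ref{theo: break 1} and the relative version (\cite{Ked11a}, Theorem 1.3.2) to know that over $\mathcal{R}^u_{K\weak{x,x^{-1}}}$ we have a decomposition $M\otimes\mathcal{R}^u_{K\weak{x,x^{-1}}} = \bigoplus_b N_b$ restricting to the break decomposition over $\mathcal{R}^u_L$; each summand $N_b$ is a projective $\nabla$-module over $\mathcal{R}^u_{K\weak{x,x^{-1}}}$ of uniform break $b$ at the generic point. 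The key point to establish is that each $N_b$, together with the projector onto it, actually extends over $\mathcal{R}^u_{K\weak{x}}$, i.e. lies in the image of base change from $\mathcal{R}^u_{K\weak{x}}$.

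The main step is to control the behaviour at the boundary point $x=0$. Let $\zeta$ be the zero point of $\cur{M}(K\weak{x})$, with $K_\zeta = K$. By Proposition \ref{prop: lower}, the fibre $M_\zeta$ over $\zeta$ is overconvergent, and by hypothesis $\mathrm{Irr}(M_\zeta) = \mathrm{Irr}(M_L)$. Now $M_\zeta$ has its own break decomposition $M_\zeta = \bigoplus_b (M_\zeta)_b$ by Theorem \ref{theo: break 1}. I would use the semicontinuity/specialisation compatibility implicit in the function-theoretic setup of Proposition \ref{prop: lower} — concretely, the function $F_M(r,\beta)$ and its decreasingness in $\beta$ — to see that the break decomposition $\{N_b\}$ over the punctured disc, when pushed to the boundary at $x=0$, must already account for \emph{all} of the irregularity $\mathrm{Irr}(M_\zeta)=\mathrm{Irr}(M_L)$; since irregularity cannot jump up, $\mathrm{rank}((M_\zeta)_b) = \mathrm{rank}(N_b)$ for every $b$, and the specialisation of $N_b$ at $\zeta$ is precisely $(M_\zeta)_b$. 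In particular no rank is lost in the limit, which is exactly the numerical input needed to run an extension argument.

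With this in hand, the remaining step is a gluing/extension argument over the one-dimensional base $K\weak{x}$: I would show that the idempotent $e_b\in \mathrm{End}(M\otimes\mathcal{R}^u_{K\weak{x,x^{-1}}})$ cutting out $N_b$ extends to an idempotent over $\mathcal{R}^u_{K\weak{x}}$. The natural way is to look at $\mathrm{End}(M)$ as a $\nabla$-module over $\mathcal{R}^u_{K\weak{x}}$, note that $e_b$ is horizontal, hence defines a horizontal section of $\mathrm{End}(M)$ over the punctured base, and then argue that a horizontal section over the punctured disc whose specialisation at $x=0$ (which exists and is the corresponding idempotent on $M_\zeta$, by the rank-matching above) is again an idempotent, must in fact extend horizontally over the whole disc — this is where one uses that $K$ is discretely valued (so that the relevant rings are well-behaved, e.g. one can use a Frobenius-equivariant or Tannakian argument, or directly a convergence estimate) to rule out essential singularities. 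I expect this last extension step — promoting the horizontal projectors from the punctured disc to the full disc using only the constancy of irregularity — to be the main obstacle, since it is precisely the place where the hypotheses are used in an essential, non-formal way; the break decomposition over $\mathcal{R}^u_{K\weak{x,x^{-1}}}$ and Theorem \ref{theo: break 1} do the bookkeeping, but the geometric input that nothing degenerates at the origin has to come from Proposition \ref{prop: lower} plus a careful analysis at $\zeta$.
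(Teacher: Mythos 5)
Your high-level intuition is in the right direction — compare the break decomposition over the punctured base with what happens at the centre $x=0$, observe via the monotonicity of $F_M$ that the ranks of the break pieces cannot degenerate, and then try to extend the decomposition across the origin. However, the proposal has a genuine gap precisely at the step you yourself flag as the "main obstacle": you assert, but do not prove, that the horizontal idempotent projectors $e_b$ over $\mathcal{R}^u_{K\weak{x,x^{-1}}}$ extend to $\mathcal{R}^u_{K\weak{x}}$. Knowing that the specialisation $M_\zeta$ at $x=0$ has a break decomposition with matching ranks does \emph{not} automatically give that the matrix entries of $e_b$ have no essential singularity at $x=0$; that convergence statement is the whole content of a decomposition theorem, and it does not follow from a "Tannakian argument" or from discreteness of the valuation of $K$ by a routine estimate. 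As written, the argument is circular: the extension of the projectors is exactly what has to be proved.

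The paper's route is to \emph{quote} such a decomposition theorem. Having reduced (via \cite[Lemma 1.3.4]{Ked11a}) to working over the field $K_\rho$ for a fixed annulus radius $\rho$ sufficiently close to $1$, it invokes \cite[Theorem 2.3.10]{KX10} to produce a unique decomposition of $M$ over the ring $K_\rho\pow{\lambda^{-1}x}_0$ of bounded convergent series on the open disc $\lvert x\rvert <\lambda$, restricting to the break decomposition over each sub-annulus $\lvert x\rvert =\eta$; restricting to a slightly smaller closed disc then gives the extension over $\mathcal{R}^u_{K_\rho\weak{x}}$ and hence over $\mathcal{R}^u_{K\weak{x}}$. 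This Kedlaya--Xiao theorem is precisely the ingredient your idempotent-extension step would need to replace. Note also the change of viewpoint: rather than trying to extend horizontal sections of $\underline{\mathrm{End}}(M)$ over the relative Robba ring, the paper fixes $\rho$ and works with the two-variable picture over $K_\rho$, where the decomposition over a one-dimensional disc is available off the shelf. The reduction to fixed $\rho$ via \cite[Lemma 1.3.4]{Ked11a} is an essential intermediate step that your proposal omits.

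One secondary point: your claim that $\mathrm{rank}((M_\zeta)_b)=\mathrm{rank}(N_b)$ for each $b$ follows from $\mathrm{Irr}(M_\zeta)=\mathrm{Irr}(M_L)$ is actually correct, but it needs the observation that breaks can only decrease under specialisation (so $\mathrm{Irr}((N_b)_\zeta)\le b\cdot\mathrm{rank}(N_b)$ summand-by-summand, and equality of the total sums forces equality in each summand). It would be worth spelling this out if you keep this structure, although as noted, even with this rank-matching the final extension step remains unjustified without a reference to a decomposition theorem of Kedlaya--Xiao type.
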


\begin{proof}
Via the bijection between decompositions of a module and representations of the identity map as a sum of orthogonal idempotents,  uniqueness of any extension of the break decomposition follows from injectivity of the map
\[ \mathrm{End}_{\mathcal{R}_{K\weak{x}}^u}(M) \rightarrow \mathrm{End}_{\mathcal{R}_{K\weak{x,x^{-1}}}^u}(M \otimes\mathcal{R}_{K\weak{x,x^{-1}}}^u ) ;\]
existence boils down to whether or not the given orthogonal idempotents in $\mathrm{End}_{\mathcal{R}_{K\weak{x,x}}^u}(M \otimes\mathcal{R}_{K\weak{x,x^{-1}}}^u)$ actually lie in $\mathrm{End}_{\mathcal{R}_{K\weak{x}}^u}(M)$. 

To see that they do, take $\rho<1$ close enough to $1$ such that $M$ comes from a $\nabla$-module over
\[ \cap_{\rho\leq \tau<1}\cup_{\lambda>1} K\tate{\tau u^{-1},\rho^{-1}u,\lambda^{-1}x}. \]
By the proof of \cite[Lemma 1.3.4]{Ked11a} it suffices to show that for any such $\rho$ the induced decomposition of $ M \otimes K(u)_\rho \weak{x,x^{-1}}$ extends to a decomposition of $M\otimes K(u)_\rho\weak{x}$. 

Having fixed $\rho$, we may, for any $0\leq \eta\leq 1$, consider the $\nabla_u$-module $M_{\rho,\eta}$ over $K(u,x)_{\rho,\eta}$ as before, and its $n=\mathrm{rank}(M)$ (extrinsic) radii of convergence $r_1(\eta)\geq \ldots \geq r_n(\eta)$ (for the $u$-derivation). We may therefore define functions
\begin{align*} F_i:[0,\infty] &\rightarrow \R  \\
 F_i(-\log \eta) &= \sum_{j\leq i} -\log r_j(\eta),
 \end{align*}
in particular we find $F_n(r)=F_M(-\log\rho,r)$  where $F_M$ is the function we defined previously. The functions $F_i$ are decreasing by \cite[Theorem 2.2.6]{KX10}, and by the constancy of the irregularity of $M$ we know that $F_{n}(0)=F_{n}(\infty)$. This implies that in fact all of the $F_i$ are constant on $[0,\infty]$, which by \cite[Theorem 2.3.10]{KX10} implies that there exists a unique decomposition of $M$ over $K(u)_\rho\pow{x}_0$, the ring of convergent series on $\norm{x}<1$ which are bounded as $\norm{x} \rightarrow 1$, which restricts to the break decomposition on $M\otimes K(u,x)_{\rho,\eta}$ for each $\eta\in (0,1)$. Since we have a break decomposition over $K(u)_\rho\weak{x,x^{-1}}$ and over $K(u)_\rho\pow{x}_0$, we therefore have one over $K(u)_\rho\weak{x}=K(u)_\rho\weak{x,x^{-1}}\cap K(u)_\rho\pow{x}_0$.
\end{proof}

We can then reduce the general case to this as follows.

\begin{proof}[Proof of Proposition \ref{prop: spread}] Once we know that the break decomposition extends across $A$, the final claim that it induces the break decomposition at every closed point follows from Proposition \ref{prop: lower} above. Also, the uniqueness claim follows exactly as in the proof of Lemma \ref{lemma: br dec ext} above. 

To see that the break decomposition does indeed extend across $A$, we begin by showing that if $\{ A \rightarrow A_i \}_{i\in I}$ is a finite dagger open cover of $A$, and we set $A_{ij}=A_i\otimes_A^\dagger A_j$, then, for any finite projective $\mathcal{R}_A^u$-module $N$, the sequence
\[ 0 \rightarrow N \rightarrow \prod_i N \otimes_{\mathcal{R}_A^u} \mathcal{R}_{A_i}^u \rightarrow \prod_{i,j}  N \otimes_{\mathcal{R}_A^u} \mathcal{R}_{A_{ij}}^u  \] 
is exact. Indeed, since $N$ is a direct summand of a finite free $\mathcal{R}^u_A$-module, we reduce to the case where $N$ is finite free, and thus to the case $N=\mathcal{R}_A^u$. If we let $A\weak{\rho u^{-1},\eta^{-1}u}$ denote the ring of overconvergent series on the  relative annulus over $A$ of radius $[\rho,\eta]$, then we can give an alternative description of $\mathcal{R}_A^u$ as $\mathrm{colim}_{\eta<1} \mathrm{lim}_{\rho<1} A\weak{ \rho u^{-1},\eta^{-1}u}$. Thus using the dagger form of Tate's acyclicity theorem \cite[Proposition 2.6]{GK00} we can see that the sequence
\[ 0\rightarrow  A\weak{\rho u^{-1},\eta^{-1}u} \rightarrow \prod_iA_i\weak{\rho u^{-1},\eta^{-1}u} \rightarrow \prod_{ij} A_{ij}\weak{\rho u^{-1},\eta^{-1}u} \]
is exact for all $\rho,\eta$. Since $\mathrm{colim}$ is exact and $\mathrm{lim}$ is left exact, the claim follows. Applying this to $N=\mathrm{End}_{\mathcal{R}_A^u}(M)$ we can see that if the break decomposition extends across all $A_i$, then it extends across $A$. Thus the question is `dagger local' on $A$.

Now let $\mathcal{C}=\{A_i \}_{i\in I}$ denote the collection of all possible dagger localisations of $A$ such that the break decomposition extends across $A_i$, we wish to show that $A\in \mathcal{C}$. Suppose, then, for contraction, that $A\not\in \mathcal{C}$. Then after passing to the reductions modulo the maximal ideal of $\mathcal{V}$ the open immersion $\bigcup_{i\in I} \spec{\overline{A}_i} \subsetneq \spec{\overline{A}}$ is strict. Thus after possibly making a finite extension of $K$ (which is harmless) we may assume that there exists a smooth $k$-rational point $z$ on the reduced complement $\left(\spec{\overline{A}}\setminus \bigcup_{i\in I} \spec{\overline{A}_i}\right)_{\mathrm{red}}$. To contradict the maximality of $\mathcal{C}$, then, it suffices to produce a dagger localisation $A\rightarrow A'$ such that $z\in\spec{\overline{A}'}$ and such that the break decomposition extends across $A'$.

As we have already seen, the question is dagger local on $A$, hence we may localise around $z$, and use \cite[Theorem 1]{Ked05} together with Lemma \ref{lemma: finite etale lift} to obtain a finite \'etale map $K\weak{x_1,\ldots,x_d} \rightarrow A$ such that the induced map $\spec{\overline{A}}\rightarrow \A^d_k$ on reductions sends $z$ to the origin, and the break decomposition extends across $A\weak{x_d^{-1}}$. Restricting along this finite \'etale map we may assume that $A=K\weak{x_1,\ldots,x_d}$ and that the break decomposition extends across $K\weak{x_1,\ldots,x_d,x_d^{-1}}$. Now let $F$ be the completed fraction field of $K\weak{x_1,\ldots,x_{d-1}}$, so we have
\[ \mathcal{R}^u_{K\weak{x_1,\ldots,x_d}} = \mathcal{R}^u_{K\weak{x_1,\ldots,x_d,x_d^{-1}}} \cap \mathcal{R}^u_{F\weak{x_d}} \subset \mathcal{R}^u_L.   \]
Hence applying \cite[Lemma 1.2.7]{Ked11a} to $\mathrm{End}_{\mathcal{R}_{K\weak{x_1,\ldots,x_d}}^u}(M)$ we can see that it suffices to prove that the break decomposition extends across $\mathcal{R}^u_{F\weak{x_d}}$. Finally replacing $K$ by $F$ we can appeal to Lemma \ref{lemma: br dec ext} above to conclude.
\end{proof}

\section{Relative curves and generic pushforwards} \label{sec: src and gp}

We shall assume for the rest of the article that the ground field $K$ is discretely valued. The residue field $k$ will continue (for now) to be arbitrary of characteristic $p$.

\subsection{The basic geometric setup} \label{sec: bgs} Here we will describe the basic geometric setup for our first $p$-adic acyclicity theorems.

\begin{definition} \begin{enumerate}
\item An affine curve is a smooth, affine morphism $f:U\rightarrow S$ of $k$-varieties, of relative dimension $1$.
\item A smooth compactification $\bar f:C\rightarrow S$ of an affine curve $f:U\rightarrow S$ is called \emph{good} if the complement $C\setminus U$ is \'etale over $S$. 
\end{enumerate}
\end{definition}

\begin{remark} It might be more usual to require our curves to have geometrically connected fibres. However, it will be important for us \emph{not} to assume this.
\end{remark}

Our most general acyclicity result, Theorem \ref{theo: main general} below, will apply to affine curves admitting good compactifications (at least when $k$ is perfect). Our first goal, however, will be to prove a similar result in a much more restrictive setting. 

\begin{setup} \label{setup: main}
We consider an affine curve $f:U\rightarrow S$ over a smooth, affine base, which admits a good compactification $\bar{f}:C\rightarrow S$, and a lift
\[ \mathfrak{f}:(C,\overline{C},\mathfrak{C}) \rightarrow (S,\overline{S},\mathfrak{S})\]
of $\bar{f}$ to a smooth, proper, Cartesian morphism of frames, such that:
\begin{itemize}
\item both $(S,\overline{S},\mathfrak{S})$ and $(U,\overline{C},\mathfrak{C})$ are of MW-type;
\item the complement $C\setminus U$ is a disjoint union of sections $\sigma_j$ of $\bar{f}$, and there exists an open neighbourhood of each on which it is defined by the vanishing of a single function $u_j\in \mathcal{O}_C$. 
\end{itemize}
\end{setup}

That proving cohomological results in this more restrictive situation will suffice follows from the next proposition.

\begin{proposition} \label{prop: good to ssl} Let $f:U\rightarrow S$ be an affine curve over a smooth base $S$, admitting a good compactification $\bar f:C\rightarrow S$. Then, after possibly  passing to an \'etale cover of $S$, there exists a morphism of frames
\[ \bar{f}: (C,\overline{C},\mathfrak{C})\rightarrow (S,\overline{S},\mathfrak{S}) \]
enclosing $\bar{f}$, such that the conditions of Setup \ref{setup: main} apply.
\end{proposition}

\begin{proof}
First of all, we may assume that $S$ is affine and connected. Let $S'$ be a common Galois closure of all the connected components of $C\setminus U$, which by assumption are finite \'etale over $S$. Then after base changing to $S'$ the complement $C\setminus U$ is a disjoint union of sections, which by smoothness of $C\rightarrow S$ must all be regular closed immersions. Hence after passing to a Zariski cover of $S'$ they are each defined in some neighbourhood by the vanishing of a single function on $C$.

Next, we let $g$ denote the genus of the family $\bar{f}:C\rightarrow S$, and $S\rightarrow \mathcal{M}_g$ the corresponding morphism to the moduli stack of curves. Since $\mathcal{M}_g$ is a smooth Artin stack (for any $g$), we can find a smooth surjective morphism $M\rightarrow \mathcal{M}_g$ from a smooth affine scheme (over $\Z$). After passing to an \'etale cover over $S$, then, we can lift the given map $S\rightarrow \mathcal{M}_g$ to a map $S\rightarrow M$. 

Now, by \cite[Th\'eor\`eme 6]{Elk73} we can choose a smooth affine scheme $\mathcal{S}$ over $\mathcal{V}$ with special fibre $S$. Let $\mathcal{S}^h$ denote the Henselisation of $\mathcal{S}$ along $V(\varpi)$, this is therefore a $\varpi$-adically Henselian affine scheme, whose reduction mod $\varpi$ is agan $S$. Hence by \cite[Th\'eor\`eme 2]{Ray72} the map $S\rightarrow M$ lifts to a map $\mathcal{S}^h\rightarrow M$, so composing with $M\rightarrow \mathcal{M}_g$ and pulling back the universal family we obtain a lift $\mathcal{C}^h\rightarrow \mathcal{S}^h$ of $C\rightarrow S$ to a smooth and proper curve over $\mathcal{S}^h$. 

Hence there exists an \'etale morphism $\mathcal{S}'\rightarrow \mathcal{S}$ of affine $\cur{V}$-schemes, inducing an isomorphism on special fibres, and a smooth and proper curve $\mathcal{C}'\rightarrow \mathcal{S}'$ lifting $C\rightarrow S$. Finally, we choose a compactification $\overline{\mathcal{S}}'$ of $\mathcal{S}'$ over $\cur{V}$, and a compactification $\overline{\mathcal{C}}'\rightarrow \overline{\mathcal{S}}'$ of $\mathcal{C}'\rightarrow \mathcal{S}'$, and set $\mathfrak{S}=\widehat{\overline{\mathcal{S}}'}$ and $\mathfrak{C}=\widehat{\overline{\mathcal{C}}'}$.
\end{proof}

\subsection{Generic pushforwards \`a la Kedlaya} \label{sec: gnalk}

Having set things up relatively geometrically, we will for a while revert to a more algebraic viewpoint on relative rigid cohomology, at least until \S\ref{sec: strong}. In the situation of Setup \ref{setup: main}, we set
\begin{align*} A&:=\Gamma(\mathfrak{S}_K,j_S^\dagger\mathcal{O}_{\mathfrak{S}_K}) \\
B&:=\Gamma(\mathfrak{C}_K,j_U^\dagger\mathcal{O}_{\mathfrak{C}_K}).
\end{align*}
These are therefore MW-type $K$-dagger algebras, pullback induces a homomorphism $A\rightarrow B$, and the module of continuous differentials $\Omega^1_{B/A}$ is a finite projective $B$-module of rank $1$.

Ordinary higher direct images will be defined in terms of the morphism of $K$-dagger algebras $A\rightarrow B$: for any overconvergent isocrystal $E$ on $U/K$, we can realise $E$ on the frame $(U,\overline{C},\mathfrak{C})$ and take global sections to obtain an overconvergent $\nabla$-module $M$ over $B$, and thus define the cohomology groups
\[ \mathbf{R}^0f_*M := \ker\left( M \overset{\nabla}{\rightarrow} M\otimes_B \Omega^1_{B/A} \right) \;\;\text{ and }\;\; \mathbf{R}^1f_*M := \mathrm{coker}\left( M \overset{\nabla}{\rightarrow} M\otimes_B \Omega^1_{B/A} \right).\]
We will also need to make use of other higher direct images defined using relative Robba rings, as in \cite{Ked06a}. In the situation of Setup \ref{setup: main}, let $\bar{\sigma}_j$ denote the closure of the image of $\sigma_j$ inside $\overline{C}$, and set
\begin{align*}
\mathcal{R}_{A,\sigma_j}^+ &= \Gamma(]\bar{\sigma}_j[_{\mathfrak{C}_K},j^\dagger_{\sigma_j}\mathcal{O}_{\mathfrak{C}_K}) \\
\mathcal{R}_{A,\sigma_j} &= \mathrm{colim}_V \Gamma(]\bar{\sigma}_j[_{\mathfrak{C}_K}\cap V ,j^{\dagger}_{\sigma_j}\mathcal{O}_{\mathfrak{C}_K}),
\end{align*}
the colimit in the second definition being over strict neighbourhoods $V$ of $]\overline{C}\setminus \overline{\sigma}_j[_\mathfrak{C}$ inside $\mathfrak{C}_K$. Since each $\sigma_j$ has a neighbourhood on which it is locally cut out by a single function $u_j\in\mathcal{O}_C$, by lifting these $u_j$ to some dagger localisation of $B$ and using the strong fibration theorem, we can identify
\begin{align*}  \mathcal{R}_{A,\sigma_j}^+  &\cong \mathcal{R}_{A}^{u_j+} \\
\mathcal{R}_{A,\sigma_j}  &\cong \mathcal{R}_{A}^{u_j}
\end{align*}
with copies of the relative Robba ring over $A$. For all $j$ there is a natural embedding
\[ B\rightarrow \mathcal{R}_{A,\sigma_j} \]
of $A$-algebras, and we define $\mathcal{Q}_A^{\{\sigma_j\}}$ to be the quotient
\[ 0 \rightarrow B \rightarrow \bigoplus_j \mathcal{R}_{A,\sigma_j} \rightarrow \mathcal{Q}_A^{\{\sigma_j\}} \rightarrow 0.\]
We can therefore define further higher direct images
\begin{align*}   
\mathbf{R}^0_\mathrm{loc}f_*M &:= \bigoplus_j  \ker\left( M \otimes_B \mathcal{R}_{A,\sigma_j} \overset{\nabla}{\rightarrow} M\otimes_B \mathcal{R}_{A,\sigma_j}\otimes_B \Omega^1_{B/A} \right)  \\ 
\mathbf{R}^1f_!M &:=   \ker \left( M \otimes_B \mathcal{Q}_A^{\{\sigma_j\}} \overset{\nabla}{\rightarrow} M \otimes_B \mathcal{Q}_A^{\{\sigma_j\}}\otimes_B \Omega^1_{B/A} \right)
 \\
 \mathbf{R}_\mathrm{loc}^1f_*M &:=\bigoplus_j \mathrm{coker} \left( M \otimes_B \mathcal{R}_{A,\sigma_j} \overset{\nabla}{\rightarrow} M\otimes_B \mathcal{R}_{A,\sigma_j}\otimes_B \Omega^1_{B/A} \right)  \\ 
 \mathbf{R}^2f_!M &:=  \mathrm{coker} \left( M \otimes_B \mathcal{Q}_A^{\{\sigma_j\}} \overset{\nabla}{\rightarrow} M \otimes_B \mathcal{Q}_A^{\{\sigma_j\}}\otimes_B \Omega^1_{B/A} \right),
\end{align*}
these sit in an exact sequence
\[ 0\rightarrow\mathbf{R}^0f_*M\rightarrow\mathbf{R}^0_\mathrm{loc}f_*M \rightarrow \mathbf{R}^1f_!M \rightarrow \mathbf{R}^1f_*M  \rightarrow\mathbf{R}^1_\mathrm{loc}f_*M \rightarrow \mathbf{R}^2f_!M \rightarrow 0.\]
When $A=K$ (or a finite extension thereof) we will usually write
\[ H^0(M),\;H^0_\mathrm{loc}(M),\;H^1_c(M),\;H^1(M),\;H^1_\mathrm{loc}(M),\;H^2_c(M)\]
instead. When $A\rightarrow A'$ is a morphism of MW-type $K$-dagger algebras, we will write either $B_{A'}$ or simply $B'$ for $B\otimes^\dagger_A A'$ and either $M_{A'}$ or simply $M'$ for $M\otimes_{B} B'$, thus $M'$ is an overconvergent $\nabla$-module over $B'$. 

\begin{theorem}\cite[Theorem 7.3.3, Remark 7.2.2, Proposition 8.6.1]{Ked06a} \label{theo: generic pushforwards} Assume Setup \ref{setup: main}, and let $M$ be an $F$-able $\nabla$-module on $B$.
\begin{enumerate} 
\item \label{parti} There exists a dagger localisation $A\rightarrow A'$ such that $\mathbf{R}^if_*M',\;\mathbf{R}^i_\mathrm{loc}f_*M',\; \mathbf{R}^if_!M'$ are finitely generated over $A'$, formation of which commutes with flat base change $A'\rightarrow A''$ of $MW$-type dagger algebras.
\item \label{partii} For any $A'$ such that the conclusions of (\ref{parti}) hold for $M'$ and ${M'}^\vee$, there are canonical perfect pairings
\begin{align*}
 \mathbf{R}^if_*M' \otimes_{A'} \mathbf{R}^{2-i}f_!{M'}^\vee &\rightarrow A'(-1) \\
  \mathbf{R}^i_\mathrm{loc}f_*M' \otimes_{A'} \mathbf{R}^{1-i}_\mathrm{loc}f_*{M'}^\vee &\rightarrow A'(-1)
\end{align*}
of $\nabla$-modules over $A'$.
\end{enumerate}
\end{theorem}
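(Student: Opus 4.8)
The plan is to reduce everything to the corresponding assertions of \cite{Ked06a}, so that the only real content is a translation of the present geometric setup into Kedlaya's algebraic one, together with a dévissage to pass from $\nabla$-modules carrying a Frobenius structure to general $F$-able ones.

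\emph{Translation.} By the identifications $\mathcal{R}_{A,\sigma_i}\cong\mathcal{R}_A^{u_i}$ and $\mathcal{R}^+_{A,\sigma_i}\cong\mathcal{R}_A^{u_i+}$ recorded in \S\ref{sec: bgs} — obtained by lifting the local equations $u_i$ of the sections and invoking the strong fibration theorem — the homomorphism $A\to B$ together with the collection of boundary Robba rings $\bigoplus_i\mathcal{R}_{A,\sigma_i}$ and the quotient $\mathcal{Q}_A^{\{\sigma_i\}}$ is exactly a relative smooth curve of the kind considered in \cite{Ked06a}, and under this dictionary our $\mathbf{R}^if_*M$, $\mathbf{R}^i_\mathrm{loc}f_*M$, $\mathbf{R}^if_!M$ and the six-term sequence relating them coincide with Kedlaya's relative de Rham, local and compactly supported cohomology. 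With this in hand, part~(1) for a $\nabla$-module admitting a Frobenius structure is \cite[Theorem 7.3.3]{Ked06a} (finiteness over a suitable dagger localisation $A\to A'$) together with \cite[Remark 7.2.2]{Ked06a} (compatibility with flat base change), and part~(2) in that case is \cite[Proposition 8.6.1]{Ked06a}.

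\emph{Dévissage to $F$-able $M$.} Being $F$-able, $M$ has a finite filtration whose graded pieces are irreducible $\nabla$-modules admitting Frobenius structures; applying the previous paragraph to all of these pieces, and to their duals, simultaneously produces one dagger localisation $A\to A'$ over which the conclusions of~(1) hold for every graded piece and each piece's dual. Since $A'$ is again of MW-type, hence Noetherian, and flat base change is exact, an induction along the filtration using the six-term exact sequence (and its analogues obtained by applying $-\otimes_B\mathcal{R}_{A',\sigma_i}$ and $-\otimes_B\mathcal{Q}_{A'}^{\{\sigma_i\}}$) upgrades~(1) to $M$: kernels and cokernels of maps of finitely generated $A'$-modules are finitely generated, and tensoring the relevant two-term complexes with a flat $A''$ commutes with passing to cohomology. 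For~(2), the pairings over a general $A'$ as in~(1) are built from the cup product followed by a trace map on $\mathbf{R}^2f_!$, respectively on $\mathbf{R}^1_\mathrm{loc}f_*$, of the unit $\nabla$-module, and are compatible with the filtration of $M$ and the dual filtration of $M^\vee$; perfectness then follows for $M$ from perfectness for the graded pieces by the five-lemma comparison of the six-term sequence of $M$ with the dual of that of $M^\vee$. (Equivalently, since the cohomology groups are finite projective $\nabla$-modules over $A'$, one may simply check perfectness after reduction at each maximal ideal of $A'$, where it is again \cite[Proposition 8.6.1]{Ked06a} over a finite extension of $K$.)

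\emph{Main obstacle.} The only point requiring genuine care is the translation step: one must verify that the cohomology groups defined here via a \emph{fixed} MW frame $(C,\overline C,\mathfrak C)\to(S,\overline S,\mathfrak S)$ as in Lemma~\ref{lemma: MW curve}, and via the specific rings $\mathcal{R}_{A,\sigma_i}$, really do match — compatibly with the six-term sequence — the objects Kedlaya works with, and in particular are insensitive to the auxiliary choices (modifications, lifts of the $u_i$, strict neighbourhoods) made in \S\ref{sec: bgs}. Once this is pinned down, the rest is bookkeeping: ensuring the localisation $A\to A'$ can be taken uniformly over the finitely many graded pieces, and propagating finiteness, base change and duality along the filtration.
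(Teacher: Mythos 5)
There is a genuine gap in the translation step. Your proposal takes for granted that the relative curve $A\to B$, with the boundary Robba rings $\bigoplus_i\mathcal{R}_{A,\sigma_i}$ and the quotient $\mathcal{Q}_A^{\{\sigma_i\}}$, ``is exactly a relative smooth curve of the kind considered in \cite{Ked06a},'' so that \cite[Theorem 7.3.3, Remark 7.2.2, Proposition 8.6.1]{Ked06a} apply verbatim. But this is not what Kedlaya proves: the cited results are established there only in the case $U=\A^1_S$, i.e.\ $B=A\weak{x}$ with a single boundary, and a $\nabla$-module admitting a Frobenius structure. The content of the paper's proof is precisely the reduction to that case, and it is nontrivial: one applies Kedlaya's Noether normalization \cite[Theorem 1]{Ked05} at the generic point of $S$ and spreads out to produce a finite \'etale map $U_{S'}\to\A^1_{S'}$ over some nonempty open $S'\subset S$; one then lifts this to characteristic $0$ using Lemma~\ref{lemma: mod ext} to obtain a dagger localisation $A\to A'$ together with a finite \'etale $A'$-algebra map $A'\weak{x}\to B\otimes_A^\dagger A'$; and finally one observes that pushing forward along this finite \'etale map leaves the higher direct images (and their local and compactly supported variants, and the Poincar\'e pairings) unchanged, so that one may replace $B\otimes_A^\dagger A'$ by $A'\weak{x}$. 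Your ``main obstacle'' paragraph gestures at the need to check that the cohomology groups here match Kedlaya's, but identifies it as an independence-of-choices check rather than as the reduction from a general simple curve with several boundary sections to a single copy of $\A^1$; without that reduction the citations simply do not cover the situation at hand.

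The d\'evissage to $F$-able $M$ is essentially the same as in the paper (which dispenses with it in one line: pass to irreducible constituents). Your parenthetical alternative for (2)---checking perfectness of the pairing at each maximal ideal---needs a further remark: the map $A'\to K'$ to a closed point is not flat, so part~(1) does not directly give the base-change isomorphism there; one would need the separate observation (Remark~\ref{rem: Fstructure}\eqref{bc to L1}, or the later Lemmas~\ref{lemma: base change 1}, \ref{lemma: injectivity}) to justify comparing with the fibre. But these are secondary; the missing Noether-normalization-and-lift reduction is the substantive gap.
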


\begin{remark} \label{rem: Fstructure}
\begin{enumerate} \item \label{Fstruct} The base change claim implies that any Frobenius structure on $M$ induces one on all of the higher direct images $\mathbf{R}^if_*M',\;\mathbf{R}^i_\mathrm{loc}f_*M',\; \mathbf{R}^if_!M'$, in a way compatible with the Poincar\'e pairings in (\ref{partii}).
\item \label{bc to L1} It was also shown in \cite{Ked06a} that formation of these higher direct images commutes with base change to the completed fraction field $L$ of $A'$.
\end{enumerate}
\end{remark}

\begin{proof}
In \cite{Ked06a} the case when $U=\A^1_S$, $B=A\weak{x}$ and $M$ admits a Frobenius structure was treated, we will explain here how to reduce to this case. First of all, passing to the irreducible constituents of $M$ we may assume that $M$ itself admits a Frobenius structure. Applying \cite[Theorem 1]{Ked05} at the generic point of $S$ and spreading out we can find an open immersion $S' \rightarrow S$ and a finite \'etale map $U_{S'} \rightarrow \A^1_{S'}$ of $S'$-schemes. After possibly further localising and lifting to characteristic $0$ via Lemma \ref{lemma: finite etale lift} we can therefore find a dagger localisation $A\rightarrow A'$ such that there exists a finite \'etale morphism $A'\weak{x}\rightarrow B\otimes_A^\dagger A'$ of $A'$-algebras. Taking the pushforward along this finite \'etale map doesn't change any of the higher direct images, so we can replace $A$ by $A'$ and $B\otimes^\dagger_A A'$ by $A'\weak{x}$, and thus reduce to considering the case where $B=A\weak{x}$. 
\end{proof}

Roughly speaking, our goal will be to use the irregularity of a $\nabla$-module to give conditions under which we can take $A=A'$ in the above theorem. For each $\sigma_j$ we have the base change of $M$ along
\[ B \rightarrow \mathcal{R}_{\sigma_j} \cong \mathcal{R}_A^{u_j}, \]
this is an overconvergent $\nabla$-module $M_{\mathrm{loc},j}$ over $\mathcal{R}_A^{u_j}$, with an associated irregularity function
\[ \mathrm{Irr}_{M_{\mathrm{loc},j}} : \mathcal{M}(A) \rightarrow \Z_{\geq 0}. \]
We define the total irregularity of $M$ to be the function
\[\mathrm{Irr}^\mathrm{tot}_M := \sum_j\mathrm{Irr}_{M_{\mathrm{loc}_j}} : \mathcal{M}(A) \rightarrow \Z_{\geq 0}. \]
We can now state our first partial $p$-adic analogue of \cite[Corollaire 2.1.2]{Lau81} as follows.

\begin{theorem} \label{theo: main dagger} Assume Setup \ref{setup: main}, and let $M$ be an $F$-able $\nabla$-module on $B$. Then the following are equivalent:
\begin{enumerate}
\item \label{tmd part i} the total irregularity $\mathrm{Irr}^\mathrm{tot}_M:\mathcal{M}(A)\rightarrow \Z_{\geq 0} $ is constant;
\item \label{tmd part ii} the higher direct images $\mathbf{R}^0f_*M$ and $\mathbf{R}^1f_*M$ are finitely generated over $A$, and their formation commutes with arbitrary base change $A\rightarrow A'$ of MW-type $K$-dagger algebras.
\end{enumerate}
\end{theorem}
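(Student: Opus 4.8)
The plan is to prove the two implications separately, reducing in both cases to the situation where $f:U\rightarrow S$ is \emph{simple} with a simple compactification $\bar f:C\rightarrow S$. This reduction uses Lemma \ref{lemma: good to simple}: we pass to a finite \'etale cover $S'\rightarrow S$ and a Zariski cover $\{S_i'\}$ of $S'$ over which $f$ becomes simple. Condition (\ref{tmd part i}) is insensitive to this because $\mathrm{Irr}^\mathrm{tot}_M$ was defined precisely by descent along such a cover, and condition (\ref{tmd part ii}) is insensitive to it because finite generation and base-change compatibility can be checked Zariski-locally and descend along finite \'etale maps (taking pushforward along a finite \'etale morphism leaves all the higher direct images unchanged). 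So we fix a morphism of frames $\bar f:(C,\overline C,\mathfrak C)\rightarrow(S,\overline S,\mathfrak S)$ as in Lemma \ref{lemma: MW curve}, realise $M$ as an overconvergent $\nabla$-module over $B$, and write $M^i_{\mathrm{loc}}$ for its base change to $\mathcal{R}_{A,\sigma_i}\cong\mathcal{R}_A^{u_i}$. We also record that $M$ is flat over $A$, being projective over the MW-type algebra $B$, which is itself flat over $A$.

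For the implication (\ref{tmd part ii})$\Rightarrow$(\ref{tmd part i}), suppose $\mathbf{R}^0f_*M$ and $\mathbf{R}^1f_*M$ are finitely generated over $A$ with formation commuting with base change. Each carries the Gauss--Manin connection relative to $K$, so being finitely generated it is automatically a projective $A$-module; base-change compatibility then forces $\dim_{K(s)}\mathbf{R}^if_*M_s=\mathrm{rank}_A\mathbf{R}^if_*M$ for every closed point $s\in\mathcal{M}(A)$, so the difference $\dim_{K(s)}\mathbf{R}^0f_*M_s-\dim_{K(s)}\mathbf{R}^1f_*M_s$ is independent of $s$. By the $p$-adic Euler--Poincar\'e formula for overconvergent isocrystals on curves this difference equals $\mathrm{rank}(M)\cdot\chi(U_s)-\mathrm{Irr}^\mathrm{tot}_M(s)$, and the first summand is constant ($\bar f$ being smooth and proper and $C\setminus U$ \'etale of constant degree over the connected base $S$), so $\mathrm{Irr}^\mathrm{tot}_M$ is constant at all closed points. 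Since its maximum is attained at the Shilov point of $\mathcal{M}(A)$ (Proposition \ref{prop: lower}) and closed points are dense, semicontinuity of the irregularity forces $\mathrm{Irr}^\mathrm{tot}_M$ to be constant on all of $\mathcal{M}(A)$.

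The substance of the theorem is the implication (\ref{tmd part i})$\Rightarrow$(\ref{tmd part ii}), which I would prove by Noetherian induction on $S$. Assuming $\mathrm{Irr}^\mathrm{tot}_M$ constant, Proposition \ref{prop: spread} extends the break decomposition of each $M^i_{\mathrm{loc}}$ across all of $A$; the wild (positive-break) summands contribute nothing to the relative de Rham cohomology over the Robba ring, so $\mathbf{R}^j_{\mathrm{loc}}f_*M$ is governed entirely by the unipotent summands $M^i_{\mathrm{loc},0}$. The base-change analysis of $\nabla$-modules over relative Robba rings (\S\ref{sec: unip bc}) then shows $\mathbf{R}^0_{\mathrm{loc}}f_*M$ is finite projective over $A$ with formation commuting with arbitrary base change, and since $\mathbf{R}^0f_*M$ embeds into it one deduces (\S\ref{sec: bc for R10}) the same for $\mathbf{R}^0f_*M$. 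For $\mathbf{R}^1f_*M$ one proceeds as follows: by Theorem \ref{theo: generic pushforwards} there is a dagger localisation $A\rightarrow A'$, i.e.\ a dense open $V\subset S$, over which all higher direct images of $M'$ are finite projective $\nabla$-modules with formation commuting with base change; after refining we may assume the reduced complement $Z=S\setminus V$ is smooth, and since the restriction of $\mathrm{Irr}^\mathrm{tot}_M$ to $\mathcal{M}(A_Z)$ is again constant, the inductive hypothesis applied to $U_Z\rightarrow Z$ shows $\mathbf{R}^1f_*M_Z$ is an overconvergent isocrystal on $Z$. Its rank agrees with that of $\mathbf{R}^1f_*M|_V$: indeed $\mathrm{rank}\,\mathbf{R}^0f_*M$ is constant by the previous step, and $\mathrm{rank}\,\mathbf{R}^1f_*M=\mathrm{rank}\,\mathbf{R}^0f_*M-(\mathrm{rank}(M)\chi(U_s)-\mathrm{Irr}^\mathrm{tot}_M(s))$ is then constant by the Euler--Poincar\'e formula and the constancy hypothesis. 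We then invoke the gluing Lemma \ref{lemma: gluing}: using the overconvergence of the isocrystals on $V$ and on $Z$, together with the fact — this is the point of the study of relative cohomology on tubes and punctured tubes in \S\ref{sec: strong} — that they become canonically identified on a punctured tube of $Z$ inside a formal lift of $S$, we glue them into a single overconvergent $F$-isocrystal $\mathcal{G}^1$ on $S$. Finally (\S\ref{sec: universal}) one produces a comparison morphism relating $\mathcal{G}^1$ to $\mathbf{R}^1f_*M$ and checks, using the same punctured-tube computations and the base change already known over $V$, that it is an isomorphism; so $\mathbf{R}^1f_*M$ is finite projective over $A$, and then a standard flatness argument (using that $M$ is flat over $A$ and that $\mathbf{R}^0f_*M$, $\mathbf{R}^1f_*M$ are now known to be flat) shows its formation commutes with arbitrary base change, giving (\ref{tmd part ii}).

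The main obstacle is this last step for $\mathbf{R}^1f_*M$. It requires, on the one hand, the delicate comparison on punctured tubes of $Z$ between the relative cohomology of $M$ over the open curve on $V$, the relative cohomology "over the tube" on $Z$, and the cohomology of the restriction of $M$ to the punctured tube — the technical heart of \S\ref{sec: strong} — and, on the other hand, verifying that the overconvergence hypotheses of Lemma \ref{lemma: gluing} genuinely hold, which is where the assumption of constant irregularity enters decisively, via the extension of the break decomposition. By contrast, the treatment of $\mathbf{R}^0f_*M$ and the Euler-characteristic bookkeeping in the easy direction are comparatively soft.
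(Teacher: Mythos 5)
Your high-level skeleton coincides with the paper's: both directions are reduced to a simple curve over a base admitting a finite \'etale map to affine space, (\ref{tmd part ii})$\Rightarrow$(\ref{tmd part i}) comes from the $p$-adic Grothendieck--Ogg--Shafarevich formula, and (\ref{tmd part i})$\Rightarrow$(\ref{tmd part ii}) is carried by extending the break decomposition (Proposition \ref{prop: spread}), the unipotence theory of \S\ref{sec: unip bc}, generic pushforwards, Noetherian induction, the punctured-tube comparison of \S\ref{sec: strong}, and the gluing Lemma \ref{lemma: gluing}. Those are the right moves. But there are three places where your sketch, as written, has genuine gaps compared to what actually makes the proof close.

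First, in the $\mathbf{R}^0f_*$ step you write that ``since $\mathbf{R}^0f_*M$ embeds into $\mathbf{R}^0_{\mathrm{loc}}f_*M$ one deduces the same for $\mathbf{R}^0f_*M$.'' That embedding only gives finiteness; it does \emph{not} give base change, because a submodule of a module whose formation commutes with base change need not itself commute with base change. The missing input is Lemma \ref{lemma: injectivity}: finiteness and injectivity of the base change map for $\mathbf{R}^1f_!M$ (proved via the filtration $A\supset (x_1)\supset\ldots$ and the exactness of $0\to x_1\mathcal{Q}^{\{\sigma_i\}}_A\to\mathcal{Q}^{\{\sigma_i\}}_A\to\mathcal{Q}^{\{\sigma_i\}}_{A_1}\to 0$). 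Only then, via the five lemma applied to the exact sequence $0\to\mathbf{R}^0f_*\to\mathbf{R}^0_\mathrm{loc}f_*\to\mathbf{R}^1f_!\to\cdots$, does base change for $\mathbf{R}^0f_*$ follow. Your proposal omits this entirely.

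Second, in the $\mathbf{R}^1f_*$ step the gluing produces a finite projective $K\weak{\bm{x}}$-module $N$ of the right rank, but the crucial additional input is Lemma \ref{lemma: R1 inj}: that $\mathbf{R}^1f_*M\to\mathbf{R}^1f_*M_{A'}$ is \emph{injective}, which is what lets you identify $\mathbf{R}^1f_*M$ with a submodule of $N$ and hence conclude it is finite. Your phrase ``one produces a comparison morphism relating $\mathcal{G}^1$ to $\mathbf{R}^1f_*M$ and checks \ldots\ that it is an isomorphism'' obscures exactly this point: without the injectivity lemma there is no reason a priori that the comparison map is injective, and the proof of Lemma \ref{lemma: R1 inj} is itself nontrivial --- it is where the strong fibration computations of \S\ref{sec: strong} and the inductive hypothesis on $M_0$ are fed in, via the vanishing of $\mathbf{R}^0f_*(M\otimes_B\mathcal{R}_{(U_0,\mathfrak{C})}/\mathcal{R}^+_{(U_0,\mathfrak{C})})$. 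Also, the induction should first be reduced (as in Theorem \ref{theo: special finiteness}) to the case $\mathbf{R}^0f_*M=0$, after which the constancy of $\dim H^1(M_s)$ is used; your Euler-characteristic bookkeeping is the right idea but the reduction step is worth stating.

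Third, the concluding ``standard flatness argument'' for upgrading base change at closed points to arbitrary $A\to A'$ is not valid as stated: the Monsky--Washnitzer complex $M\to M\otimes_B\Omega^1_{B/A}$ is not a complex of finite $A$-modules, and the pullback $M\otimes_B(B\otimes^\dagger_AA')$ is a completed tensor product, not $M\otimes_AA'$, so the universal-coefficient style argument does not apply directly. The paper instead factors $A\to A'$ through a free extension followed by a surjection and treats the two cases separately (cf.\ the proof of Lemma \ref{lemma: base change 1} and Remarks \ref{rem: Fstructure}, \ref{rem: r0 bc}, \ref{rem: r1 bc}). Finally, a small terminological slip: the summand that contributes to the local cohomology is the \emph{break $0$} part, not a priori unipotent over $A$; unipotence relative to $L$ is only achieved after a tame pullback $u\mapsto u^{1/n}$, and Theorem \ref{theo: unipotent free} is then needed to see that unipotence over $L$ implies unipotence over $K\weak{\bm{x}}$.
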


\begin{remark} \label{rem: bc to L2} As in Remark \ref{rem: Fstructure}, it follows from the base change claim that any Frobenius structure on $M$ induces one on $\mathbf{R}^if_*M$. Formation of $\mathbf{R}^if_*M$ also commutes with base change to the completed fraction field $L$ of $A$. 
\end{remark}

Note that the implication (\ref{tmd part ii})$\Rightarrow$(\ref{tmd part i}) follows from the Grothendieck--Ogg--Shafarevich formula \cite[Corollaire 5.0-12]{CM01}, the proof that (\ref{tmd part i})$\Rightarrow$(\ref{tmd part ii}) will occupy us until the end of \S\ref{sec: universal}. To start with, we will record a consequence of Theorem \ref{theo: generic pushforwards} that is not explicitly spelled out in \cite{Ked06a}, but can nonetheless be easily deduced from results there.

\begin{lemma} \label{lemma: base change 1}
In the situation of Theorem \ref{theo: generic pushforwards}, assume that the conclusions of the theorem hold for $M$ and $M^\vee$ without further localisation of $A$, and that $\mathbf{R}^0f_*M=\mathbf{R}^0f_*M^\vee=0$. Then the formation of the cohomology groups $\mathbf{R}^if_*M,\;\mathbf{R}^i_\mathrm{loc}f_*M\; \mathbf{R}^if_!M$ commutes with arbitrary base change $A\rightarrow A'$ of MW-type dagger algebras. 
\end{lemma}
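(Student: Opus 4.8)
The plan is to exploit the duality half of Theorem \ref{theo: generic pushforwards}, which under our hypotheses holds for both $M$ and $M^\vee$ without further localisation. First, since we assume $\mathbf{R}^0f_*M = \mathbf{R}^0f_*M^\vee = 0$, the long exact sequence
\[ 0\rightarrow\mathbf{R}^0f_*M\rightarrow\mathbf{R}^0_\mathrm{loc}f_*M \rightarrow \mathbf{R}^1f_!M \rightarrow \mathbf{R}^1f_*M  \rightarrow\mathbf{R}^1_\mathrm{loc}f_*M \rightarrow \mathbf{R}^2f_!M \rightarrow 0 \]
collapses on the left, giving a short exact sequence $0\rightarrow\mathbf{R}^0_\mathrm{loc}f_*M \rightarrow \mathbf{R}^1f_!M \rightarrow \mathbf{R}^1f_*M  \rightarrow\mathbf{R}^1_\mathrm{loc}f_*M \rightarrow \mathbf{R}^2f_!M \rightarrow 0$, and similarly for $M^\vee$. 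Each term here is finite projective over $A$: they are finitely generated by hypothesis, and the Poincar\'e pairings of Theorem \ref{theo: generic pushforwards}(\ref{partii}) identify each one with the $A(-1)$-dual of a finitely generated $A$-module, hence each is reflexive; over a regular ring of the relevant type, together with the fact that these are $\nabla$-modules (so in fact overconvergent isocrystals once one has finite generation, cf. the discussion of MW-type algebras in \S\ref{sec: background}), this forces projectivity.

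Next I would show directly that $\mathbf{R}^0_\mathrm{loc}f_*M$ and $\mathbf{R}^0f_*M$ commute with \emph{arbitrary} base change $A\to A'$. For $\mathbf{R}^0$, this is immediate: kernels of $\nabla$ are left exact, and since $\mathbf{R}^0f_*M = 0$ and remains $0$ after the flat base changes covered by Theorem \ref{theo: generic pushforwards}(\ref{parti}), a standard argument (the formation of $\mathbf{R}^0f_*M$ and $\mathbf{R}^1f_*M$ fits into a two-term complex $M \xrightarrow{\nabla} M\otimes\Omega^1_{B/A}$ of flat $A$-modules whose cohomology is finite projective, by the above) shows that the cohomology of $M\otimes^\dagger_A A' \xrightarrow{\nabla} (M\otimes\Omega^1_{B/A})\otimes^\dagger_A A'$ computes $\mathbf{R}^if_*M \otimes_A A'$ for \emph{all} $A'$, not merely flat ones — this is the usual "cohomology and base change" mechanism, valid once one knows the $\mathbf{R}^if_*M$ are finite projective. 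The same reasoning, applied to the two-term complexes computing $\mathbf{R}^i_\mathrm{loc}f_*M$ and $\mathbf{R}^if_!M$, reduces everything to the single assertion that these six cohomology modules are all finite projective over $A$ — which we established in the previous paragraph — plus the knowledge (from Theorem \ref{theo: generic pushforwards}(\ref{parti})) that the relevant complexes have flat $A$-module terms and that base change holds generically.

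The genuinely substantive point, and the one I expect to be the main obstacle, is passing from \emph{finite projective} plus \emph{flat base change} to \emph{arbitrary base change}: the complexes $M\otimes_B \mathcal{R}_{A,\sigma_i} \xrightarrow{\nabla} M\otimes_B\mathcal{R}_{A,\sigma_i}\otimes_B\Omega^1_{B/A}$ and $M\otimes_B\mathcal{Q}_A^{\{\sigma_i\}} \xrightarrow{\nabla} \cdots$ have terms that are not finitely generated over $A$, so the classical cohomology-and-base-change theorem does not apply verbatim. I would handle this by a bootstrap: once one term of a two-term complex has cohomology that is finite projective and commutes with flat base change, and the other cohomology group also does, one can split the complex up to a quasi-isomorphism after a faithfully flat cover and then descend; alternatively, and more cleanly, one uses that $\mathbf{R}^0_\mathrm{loc}f_*M$ is a \emph{sub}-$\nabla$-module of the (possibly huge) module $M\otimes_B\mathcal{R}_{A,\sigma_i}$ cut out by $\nabla = 0$, and that $\nabla=0$ is preserved by \emph{any} base change on the source, so $\mathbf{R}^0_\mathrm{loc}f_*(M\otimes^\dagger_A A')$ contains $\mathbf{R}^0_\mathrm{loc}f_*M\otimes_A A'$; finiteness and the duality pairing for $M^\vee$ over $A'$ (which holds because $M^\vee$ also satisfies the hypotheses and base change for it is known flatly, hence by the same argument arbitrarily) pin down the dimensions and force equality. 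Feeding $\mathbf{R}^0_\mathrm{loc}$ and its dual statement back into the long exact sequence then propagates the arbitrary-base-change property to $\mathbf{R}^1f_!M$, $\mathbf{R}^1f_*M$, $\mathbf{R}^1_\mathrm{loc}f_*M$, and $\mathbf{R}^2f_!M$ by the five lemma applied to the base-changed sequences, completing the proof.
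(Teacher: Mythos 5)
Your overall instincts—lean on finiteness, projectivity, Poincar\'e duality and dimension counting—match the paper's, but the argument as written has two genuine gaps, and the route is materially different from the one the paper actually takes.

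The first gap is in the ``cohomology and base change'' step. That mechanism requires the terms of the two-term complex to be flat over $A$. For the complex $M \rightarrow M\otimes_B\Omega^1_{B/A}$ this is fine (the terms are projective over $B$, which is flat over $A$), but for the complexes computing $\mathbf{R}^i_\mathrm{loc}f_*M$ and $\mathbf{R}^if_!M$ you would need $\mathcal{R}_{A,\sigma_i}$ and especially $\mathcal{Q}_A^{\{\sigma_i\}}$ to be flat over $A$, and this is not established anywhere and is not obvious: $\mathcal{Q}_A^{\{\sigma_i\}}$ is a cokernel of a map of $A$-modules, and cokernels of maps between flat modules need not be flat. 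So the ``usual cohomology-and-base-change mechanism'' cannot simply be cited. The second gap is in your fallback argument for $\mathbf{R}^0_\mathrm{loc}$: the observation that $\nabla=0$ is preserved under base change only produces a \emph{map} $\mathbf{R}^0_\mathrm{loc}f_*M\otimes_A A'\rightarrow\mathbf{R}^0_\mathrm{loc}f_*M'$, not an injection (tensoring an inclusion of modules by $A'$ need not stay injective, even when the source is projective). You then appeal to duality over $A'$ for $M^\vee$, asserting it holds ``by the same argument''—but that is exactly the statement being proved, making this circular without a more careful simultaneous induction.

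The paper's proof sidesteps all of this with a cleaner reduction: first split an arbitrary base change $A\rightarrow A'$ into a free extension $A\rightarrow A\langle x_1,\ldots,x_n\rangle^\dagger$ (handled by Theorem \ref{theo: generic pushforwards}(\ref{parti}) directly) followed by a \emph{surjection}. For the surjective case, base change maps on $\mathbf{R}^1_\mathrm{loc}f_*$ and $\mathbf{R}^2f_!$ are \emph{trivially} surjective because these are cokernels, which already gives finiteness over $A'$ and hence makes Poincar\'e duality available for $M'$. Dualising a trivially surjective map for $M^\vee$ then yields \emph{injectivity} of the base change maps on $\mathbf{R}^0_\mathrm{loc}f_*$ and $\mathbf{R}^1f_!$ with no flatness input at all, and the rank equalities of \cite[Corollaire 1.3-2]{Meb02} force these inclusions and surjections to be isomorphisms, after which the five lemma finishes. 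If you want to salvage your approach you would need to either prove flatness of $\mathcal{R}_{A,\sigma_i}$ and $\mathcal{Q}_A^{\{\sigma_i\}}$ over $A$ (nontrivial, and more than is needed), or adopt the reduction to the surjective case and replace the ``preserved by base change'' injectivity claim with the duality argument.
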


\begin{proof} By choosing a set of topological generators for $A'$ over $A$ we can treat separately the cases when $A\rightarrow A'$ is surjective and when $A'=A\weak{x_1,\ldots,x_n}$. The latter case is covered by Theorem \ref{theo: generic pushforwards}, we will therefore consider the former. By Poincar\'e duality we know that $\mathbf{R}^2f_!M=(\mathbf{R}^0f_!M^\vee)^\vee=0$, and that $\mathbf{R}^2f_!M^\vee=(\mathbf{R}^0f_!M)^\vee=0$. Since the base change map
\[ ( \mathbf{R}^1_\mathrm{loc}f_*M ) \otimes_A A' \rightarrow \mathbf{R}^1_\mathrm{loc}f_*M' \]
is trivially surjective, we deduce that the latter is finitely generated over $A'$, which is enough to show that the conclusions of Theorem \ref{theo: generic pushforwards} hold for $M'$ without further localisation (see for example the proof of \cite[Theorem 7.3.3]{Ked06a}). By arguing similarly for $M'^\vee$, we deduce that Poincar\'e duality also holds for $M'$. The map 
\[ ( \mathbf{R}^2f_!M ) \otimes_A A' \rightarrow \mathbf{R}^2f_!M' \]
is also trivially surjective, thus we deduce that $\mathbf{R}^2f_!M'=0$; replacing $M$ by $M^\vee$ and applying Poincar\'e duality we can also see that $\mathbf{R}^0f_*M'=0$. Hence base change holds for $\mathbf{R}^0f_*M$ and $\mathbf{R}^2f_!M$. We now consider the diagram
\[ \xymatrix{  0 \ar[r] & (\mathbf{R}^0_\mathrm{loc}f_*M) \otimes_A A' \ar[r]\ar@{^(->}[d] & (\mathbf{R}^1f_!M) \otimes_A A' \ar[r]\ar@{^(->}[d] & (\mathbf{R}^1f_*M) \otimes_A A' \ar[r]\ar@{->>}[d] & (\mathbf{R}^1_\mathrm{loc}f_*M) \otimes_A A' \ar@{->>}[d] \ar[r] & 0 \\ 
  0 \ar[r] &  \mathbf{R}^0_\mathrm{loc}f_*M' \ar[r] & \mathbf{R}^1f_!M' \ar[r] & \mathbf{R}^1f_*M' \ar[r] & \mathbf{R}^1_\mathrm{loc}f_*M' \ar[r] & 0}\]
where the rows are the canonical exact sequences, and the vertical arrows come from base change. Immediately from the definitions we find that the right two vertical arrows are surjective; by replacing $M$ by $M^\vee$ and using Poincar\'e duality we can see that the left hand vertical arrows are injective. Moreover, we know from \cite[Corollaire 1.3-2]{Meb02} and base change to the completed fraction field of $A$ that 
\begin{align*}  \mathrm{rank}_A \mathbf{R}^0_\mathrm{loc}f_*M &= \mathrm{rank}_A \mathbf{R}^1_\mathrm{loc}f_*M \\
 \mathrm{rank}_{A'} \mathbf{R}^0_\mathrm{loc}f_*M' &= \mathrm{rank}_{A'} \mathbf{R}^1_\mathrm{loc}f_*M',
 \end{align*}
and hence base change has to hold for $\mathbf{R}^i_\mathrm{loc}f_*M$. Since
\begin{align*} 
\mathrm{rank}_A \mathbf{R}^1f_!M &\leq \mathrm{rank}_{A'} \mathbf{R}^1f_!M' \\
\mathrm{rank}_A \mathbf{R}^1f_*M &\geq \mathrm{rank}_{A'} \mathbf{R}^1f_*M' \\
\mathrm{rank}_A \mathbf{R}^1f_!M  - \mathrm{rank}_A \mathbf{R}^1f_*M &=  \mathrm{rank}_{A'} \mathbf{R}^1f_!M' -\mathrm{rank}_{A'} \mathbf{R}^1f_*M'
\end{align*}
we can also deduce that $\mathrm{rank}_A \mathbf{R}^1f_!M = \mathrm{rank}_{A'} \mathbf{R}^1f_!M'$ and $\mathrm{rank}_A \mathbf{R}^1f_*M = \mathrm{rank}_{A'} \mathbf{R}^1f_*M'$. This gives base change for $\mathbf{R}^1f_*M$ and $\mathbf{R}^1f_!M$, and completes the proof.
\end{proof}

We can use this to give the following minor strengthening of Theorem \ref{theo: generic pushforwards}.

\begin{corollary} \label{cor: base change 2} In the situation of Theorem \ref{theo: generic pushforwards} there exists a dagger localisation $A\rightarrow A'$ such that the higher direct images $\mathbf{R}^if_*M',\;\mathbf{R}^i_\mathrm{loc}f_*M',\; \mathbf{R}^if_!M'$ are finitely generated over $A'$, whose formation commutes with arbitrary base change $A'\rightarrow A''$ of $MW$-type dagger algebras.
\end{corollary}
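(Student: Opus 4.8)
The plan is to combine Theorem~\ref{theo: generic pushforwards} with Lemma~\ref{lemma: base change 1} by reducing to the case where the relevant $\mathbf{R}^0$ groups vanish. First I would apply Theorem~\ref{theo: generic pushforwards}\eqref{parti} to both $M$ and $M^\vee$ simultaneously: there is a single dagger localisation $A\rightarrow A'$ such that all of $\mathbf{R}^if_*M'$, $\mathbf{R}^i_\mathrm{loc}f_*M'$, $\mathbf{R}^if_!M'$ and the corresponding groups for $(M')^\vee=(M^\vee)'$ are finitely generated over $A'$, with formation commuting with flat base change of MW-type dagger algebras. By part~\eqref{partii} of that theorem, Poincar\'e duality then holds for $M'$ and $(M')^\vee$ over $A'$ without any further localisation.

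The remaining issue is that Lemma~\ref{lemma: base change 1} requires the additional hypothesis $\mathbf{R}^0f_*M'=\mathbf{R}^0f_*(M')^\vee=0$, which need not hold after the first localisation. To arrange this I would further localise so that these two $\mathbf{R}^0$ groups, which are finitely generated $\nabla$-modules over $A'$ and hence (after shrinking) finite projective $A'$-modules, become locally free of constant rank; then, via the canonical inclusions $\mathbf{R}^0f_*M'\hookrightarrow M'$ and $\mathbf{R}^0f_*(M')^\vee\hookrightarrow (M')^\vee$, I would split these off as direct summands. More precisely, $\mathbf{R}^0f_*M'$ corresponds to the trivial sub-isocrystal of $M'$ of maximal rank, so after a suitable dagger localisation we may write $M'\cong (\mathbf{R}^0f_*M')\otimes_{A'}\mathcal{O}\oplus M'_0$ with $\mathbf{R}^0f_*M'_0=0$, and similarly for the dual; since the higher direct images are additive in $M$ and the statement is insensitive to adding constant isocrystals (whose higher direct images are computed by the relative de Rham cohomology of the curve, which satisfies base change by the smooth-proper or good-compactification hypotheses), it suffices to prove the corollary for $M'_0$. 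Now Lemma~\ref{lemma: base change 1} applies to $M'_0$ over $A'$: its higher direct images commute with arbitrary base change $A'\rightarrow A''$ of MW-type dagger algebras, and finite generation over any such $A''$ follows since base change from $A'$ is surjective onto the (finitely generated over $A'$) source.

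The main obstacle I anticipate is the bookkeeping around the constant part: one must check that the splitting $M'\cong (\text{trivial})\oplus M'_0$ can genuinely be achieved after a \emph{single} dagger localisation of $A$ (simultaneously handling $M$ and $M^\vee$), that the trivial summand contributes higher direct images whose base-change behaviour is already known (here the good-compactification hypothesis enters, guaranteeing that the relevant relative de Rham / rigid cohomology of $C/S$ and of the complement is a finite locally free $A$-module compatible with base change, essentially by the same generic-pushforward results applied to the trivial isocrystal, or by smooth-proper base change for $C$), and that passing between $M$ on the curve $f:U\to S$ and its image $A'\langle x\rangle^\dagger$-version in the proof of Theorem~\ref{theo: generic pushforwards} is harmless for all the $\mathbf{R}^i_\mathrm{loc}$ and $\mathbf{R}^i_!$ variants, not just $\mathbf{R}^if_*$. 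None of this is deep, but it requires care to ensure nothing is lost when reducing to the vanishing-$\mathbf{R}^0$ situation.
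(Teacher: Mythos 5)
Your overall plan---reduce to the vanishing-$\mathbf{R}^0$ case and then invoke Lemma~\ref{lemma: base change 1}---is the right one, and is indeed the shape of the paper's argument. But there is a genuine gap in how you carry out the reduction.

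You claim that after a suitable dagger localisation one can \emph{split} $M'$ as a direct sum $M'\cong (\mathbf{R}^0f_*M')\otimes_{A'} B'\oplus M'_0$ of $\nabla$-modules. This splitting is not justified, and is false in general: the canonical inclusion $\mathbf{R}^0f_*M'\otimes_{A'} B'\hookrightarrow M'$ is a horizontal sub-$\nabla$-module, and there is no reason for the corresponding extension of $\nabla$-modules to split, even generically on $S$ (extensions measured by $\mathbf{R}^1f_*$ do not vanish). The paper avoids this issue entirely by working with the short exact sequence rather than a direct sum, and then applying the five lemma to transfer base change from the two outer terms to the middle one. Your later phrase ``additive in $M$'' suggests you know this five-lemma route is available, but your stated construction requires the splitting.

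There is a second, more subtle gap even granting a splitting. Lemma~\ref{lemma: base change 1} requires \emph{both} $\mathbf{R}^0f_*M'_0=0$ \emph{and} $\mathbf{R}^0f_*(M'_0)^\vee=0$. Quotienting out the maximal constant sub-$\nabla$-module kills the former, but not the latter: the maximal constant quotient is a different datum from the maximal constant sub, and a single ``direct sum'' decomposition cannot kill both simultaneously. The paper therefore performs a two-step reduction: first pass to the quotient $N$ of $M'$ by $(\mathbf{R}^0f_*M')\otimes_{A'} B'$ (and check $\mathbf{R}^0f_*N=0$ by comparison over $L$), then dualise and pass to the kernel $P$ of the surjection $N\twoheadrightarrow(\mathbf{R}^0f_*N^\vee)^\vee\otimes_{A'}B'$, each time invoking the five lemma, and only then checking $\mathbf{R}^0f_*P=\mathbf{R}^0f_*P^\vee=0$. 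You should replace the splitting claim with this two-stage exact-sequence reduction.
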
 

\begin{proof}
As explained above, we can reduce to the case where $B=A\weak{x}$, and $\mathbf{R}^if_*M,\;\mathbf{R}^i_\mathrm{loc}f_*M,\; \mathbf{R}^if_!M$ are finitely generated over $A$ with formation commuting with flat base change, as well as to the completed fraction field $L$ of $A$. In particular, we can verify that the natural map 
\[ \mathbf{R}^0f_*M \otimes_A A\weak{x} \rightarrow M \]
is injective, since it is so over $L$. After replacing $A$ by a further localisation, we can also assume that the conclusions of Theorem \ref{theo: generic pushforwards} hold for the quotient $N$ of $M$ by $\mathbf{R}^0f_*M \otimes_A A\weak{x}$. The base change claim holds for $\mathbf{R}^0f_*M \otimes_A A\weak{x}$ by using the projection formula, hence by the five lemma it suffices to prove it for $N$. But again by comparing with the situation over $L$ we can see that $\mathbf{R}^0f_*N=0$; in other words we may assume that $\mathbf{R}^0f_*M=0$. We now consider the dual $M^\vee$: after possibly further localising $A$ we can assume that the conclusions of Theorem \ref{theo: generic pushforwards} also hold for $M^\vee$, and so we obtain an exact sequence
\[ 0 \rightarrow N \rightarrow M \rightarrow \left( \mathbf{R}^0f_*M^\vee \right)^\vee \otimes_A A\weak{x} \rightarrow 0 \]
of $F$-able $\nabla$-modules. Again, possibly localising $A$, and using the five lemma to replace $M$ by $N$, we can assume that $\mathbf{R}^0f_*M=\mathbf{R}^0f_*M^\vee=0$. Thus we may apply Lemma \ref{lemma: base change 1}.
\end{proof}

\section{Unipotence and base change for \texorpdfstring{$\nabla$}{n}-modules over relative Robba rings} \label{sec: unip bc}

We will begin the proof of Theorem \ref{theo: main dagger} with a study of the behaviour of $\mathbf{R}^0_\mathrm{loc}f_*M$ in the case when the generic fibre of $M$ has unipotent monodromy around all missing points. In this section we will not need the geometric setup of \S\ref{sec: bgs}, and will instead simply work with dagger algebras and $\nabla$-modules. Since we are only interested in $ \mathbf{R}^0_\mathrm{loc}f_*$, we will let $A$ be a $K$-dagger algebra of MW-type, $L$ its completed fraction field, and $M$ an overconvergent $\nabla$-module over $\mathcal{R}_A^u$.

As part of the proof of \cite[Theorem 7.3.3]{Ked06a} Kedlaya shows that if $M$ is free, and the generic fibre $M\otimes \mathcal{R}_L^u$ is unipotent (as a $\nabla$-module relative to $L$), then there exists a dagger localisation $A\rightarrow A'$ such that $M\otimes \mathcal{R}^u_{A'}$ admits a strongly unipotent basis relative to $A'$. That is, it admits a basis $\left\{\bm{e}_i\right\}$ such that
\[ u\nabla_u(\bm{e}_i) \in \left( A\bm{e}_1  +\ldots+A\bm{e}_{i-1}\right)\otimes du ,\] 
where $\nabla_u$ is the `$u$-component' of the connection on $M$. It will be important for us to work with $\nabla$-modules that are not known \emph{a priori} to be free, and to still have a version of this result. Luckily, we will only need it in the case where $A=K\weak{\bm{x}}=K\weak{x_1,\ldots,x_d}$ is a free $K$-dagger algebra. We will keep the assumption that $K$ is discretely valued.

\begin{theorem} \label{theo: unipotent free} Let $M$ be a projective $\nabla$-module over $\mathcal{R}^u_{K\weak{\bm{x}}}$, whose generic fibre $M\otimes \mathcal{R}_L^u$ is unipotent relative to $L$. Then $M$ is free, and admits a strongly unipotent basis relative to $K\weak{\bm{x}}$.
\end{theorem}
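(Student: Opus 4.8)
The plan is to bootstrap from the free case proved by Kedlaya. The key point is that a projective $\nabla$-module over $\mathcal{R}^u_{K\weak{\bm{x}}}$ whose generic fibre is unipotent should in fact be built up by successive extensions out of rank-one pieces, and a unipotent rank-one $\nabla$-module over the Robba ring must (after twisting) be trivial, hence free. First I would set up an induction on the rank of $M$. If $\mathrm{rank}(M)=0$ there is nothing to do; in general, since $M\otimes \mathcal{R}^u_L$ is unipotent as a $\nabla$-module relative to $L$, it contains the nonzero horizontal submodule $(M\otimes\mathcal{R}^u_L)^{\nabla_u}$, and I would want to descend a suitable piece of this back to $\mathcal{R}^u_{K\weak{\bm{x}}}$. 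Concretely, consider $M_0:=\ker(\nabla_u: M\to M\otimes du)$, the $u$-horizontal sections of $M$; this is a module over the ``constant'' subring $K\weak{\bm{x}}$-analogue inside $\mathcal{R}^u_{K\weak{\bm{x}}}$ (roughly, $\mathcal{R}^{u}$ of the constants plus lower-break vanishing), and it is stable under the remaining connection $\nabla_{x_i}$. The generic-fibre unipotence guarantees $M_0\otimes_? \mathcal{R}^u_L \ne 0$, so $M_0 \ne 0$.

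The next step is to extract from $M_0$ a rank-one sub-$\nabla$-module that is free over $\mathcal{R}^u_{K\weak{\bm{x}}}$ and gives the bottom step of the desired strongly unipotent filtration. Here I would use that $M$ is projective over $\mathcal{R}^u_{K\weak{\bm{x}}}$ together with the fact that $K\weak{\bm{x}}$ is a free dagger algebra: after a dagger localisation $K\weak{\bm{x}}\to A'$ Kedlaya's argument (the one quoted in the statement, from the proof of \cite[Theorem 7.3.3]{Ked06a}) produces a strongly unipotent basis over $A'$, and in particular a horizontal generator $\bm{e}_1$ with $u\nabla_u(\bm{e}_1)=0$ whose $\mathcal{R}^u_{A'}$-span is a direct summand. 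The issue is to show one does not actually need to localise. For this I would argue that the ``obstruction'' to globalising $\bm{e}_1$ — which a priori lives in some $H^1$ or $\mathrm{Ext}^1$ of $\nabla$-modules over $\mathcal{R}^u_{K\weak{\bm{x}}}$, or in the failure of a coherent sheaf on $\mathcal{M}(K\weak{\bm{x}})$ to be globally generated — is forced to vanish because $\mathcal{M}(K\weak{\bm{x}})$ has no interesting cohomology: $K\weak{\bm{x}}$ is (a weak completion of) a polynomial ring, so finite projective modules over it are free (this is essentially the statement that vector bundles on affine space, in this dagger setting, are trivial — cf. the Quillen--Suslin type statements available here), and likewise the relevant $H^1$ with bounded, overconvergent coefficients vanishes. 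Concretely I expect to choose $\rho<1$ close to $1$ so that $M$ descends to $K\tate{\tau u^{-1},\rho^{-1}u,\bm{x}}$-coefficients, extract the horizontal submodule at that level where everything is honestly affinoid/Stein, invoke triviality of the relevant vector bundle, and then push back up to $\mathcal{R}^u_{K\weak{\bm{x}}}$ by taking the colimit over $\tau$ and the union over fringe radii.

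Once a free rank-one horizontal summand $\mathcal{R}^u_{K\weak{\bm{x}}}\cdot\bm{e}_1\hookrightarrow M$ is in hand — with $\bm{e}_1$ horizontal for $\nabla_u$ — I would form the quotient $\bar M := M/(\mathcal{R}^u_{K\weak{\bm{x}}}\bm{e}_1)$. This is again a finite module over $\mathcal{R}^u_{K\weak{\bm{x}}}$; since $\bm{e}_1$ generates a direct summand it is projective, of rank one less; its generic fibre is a quotient of a unipotent $\nabla$-module, hence unipotent; and it is overconvergent because subquotients of overconvergent $\nabla$-modules over Robba rings are overconvergent (the break decomposition / radius-of-convergence formalism of \S\ref{sec: irreg} behaves well under subquotients). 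By the inductive hypothesis $\bar M$ is free and admits a strongly unipotent basis $\bm{e}_2,\ldots,\bm{e}_n$ relative to $K\weak{\bm{x}}$; lifting these to $M$ (possible since $\mathcal{R}^u_{K\weak{\bm{x}}}\bm{e}_1$ is a direct summand, so $M\cong \mathcal{R}^u_{K\weak{\bm{x}}}\bm{e}_1\oplus \bar M$ as modules) gives that $M$ is free, and the resulting basis $\bm{e}_1,\ldots,\bm{e}_n$ is strongly unipotent: $u\nabla_u(\bm{e}_1)=0$ by construction, and for $i\geq 2$ the condition $u\nabla_u(\bm{e}_i)\in (\sum_{j<i} K\weak{\bm{x}}\bm{e}_j)\otimes du$ follows from the corresponding statement in $\bar M$ plus the fact that the $\bm{e}_1$-component of $u\nabla_u(\bm{e}_i)$ lands in $K\weak{\bm{x}}\bm{e}_1\otimes du$ — this last point needs a small argument: one checks that the matrix entry in question is a bounded horizontal section of the appropriate constant ring, using that $\bm{e}_1$ is $\nabla_u$-horizontal, so it lies in $K\weak{\bm{x}}$ rather than in all of $\mathcal{R}^u_{K\weak{\bm{x}}}$.

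The main obstacle is the middle paragraph: showing that one can avoid the dagger localisation in Kedlaya's construction and genuinely produce the free horizontal summand over $\mathcal{R}^u_{K\weak{\bm{x}}}$ itself. This is where the hypothesis $A=K\weak{\bm{x}}$ (rather than a general MW-type dagger algebra) is essential — it is exactly the freeness/triviality of vector bundles and the vanishing of the relevant first cohomology over (the weak completion of) affine space that lets the local construction be globalised. I expect the cleanest route is to work at finite level in the affinoid rings $K\tate{\tau u^{-1},\rho^{-1}u,\lambda^{-1}\bm{x}}$, where one has honest coherent-sheaf theory on a (relatively) Stein space and can apply a Quillen--Suslin–type triviality statement, and then pass to the limit; the bookkeeping to check the limiting module is still a projective $\nabla$-module with the stated horizontality is the technical heart of the argument.
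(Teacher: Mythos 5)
Your overall strategy matches the paper's: induct on the rank, show the $u$-horizontal sections form a nonzero submodule, split it off, and pass to the quotient; and, as you anticipate, the proof does reduce to fixed annulus radii and then passes to the limit, with a Quillen--Suslin input and a final matrix-entry argument modelled on \cite[Proposition 5.2.6]{Ked06a}. But there is a genuine gap in the middle paragraph, which is where the real work lies, and the gap is precisely where you appeal to ``Quillen--Suslin type statements for (the weak completion of) affine space.''

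The ring where a freeness/triviality statement is actually needed is $K\tate{\tau u^{-1},\rho^{-1}u,\lambda^{-1}\bm{x}}$ (or the paper's fixed-radius avatar $R^u_{K\weak{\bm{x}},\eta}$), and this contains the Laurent annulus factor in $u$: it is \emph{not} an affinoid polydisc, and no Quillen--Suslin-type triviality of projectives applies to it off the shelf. What the paper actually proves (Lemma \ref{lemma: stab free}) is that a finitely presented $\nabla$-module over $R^u_{K\weak{\bm{x}},\eta}$ is projective, and becomes merely \emph{stably free} after a finite base extension $K\to K'$ chosen so that the radii lie in $\norm{K'^*}$; the argument is a genuine K-theory computation, identifying the affinoid ring with $K\tate{\bm{x},u,v}/(uv-\pi)$, reducing $K_0$ to the special fibre $k[\bm{x},u,v]/(uv)$, applying a Bass--Heller--Swan isomorphism to kill the $\bm{x}$-variables, and finally using that $K\tate{u,v}/(uv-\pi)$ is a PID to get $K_0\cong\Z$. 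Your appeal to cohomology vanishing ($H^1$ with overconvergent coefficients) also does not appear: the paper never needs any such vanishing. The correct decomposition of the work is: (i) stable freeness over the Robba-type ring via the $K_0$ computation (Lemma \ref{lemma: stab free}); (ii) finiteness of $H^0_{\nabla_u}(M)$ as a $K\weak{\bm{x}}$-module via \cite[Lemma 7.3.4]{Ked06a} applied after stabilising by a free summand (Lemma \ref{lemma: Rrfin}); (iii) Quillen--Suslin — but for the \emph{constants} ring $K\weak{\bm{x}}$, where it genuinely is the weak completion of affine space — upgrading ``finitely generated'' to ``finite free''; (iv) the base change isomorphism (Lemma \ref{lemma: unipotent bc}), which requires the preparatory flatness and injectivity Lemma. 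Your proposal correctly intuits that Quillen--Suslin enters, but places it over the wrong ring and thereby elides the K-theoretic heart of the argument; it also omits the need for a finite extension of $K$, which is harmless but not free. With the $K_0$ computation and the ``stably free after finite extension'' lemma substituted for the naive triviality claim, your outline would go through essentially as the paper's Proposition \ref{prop: reluni} and its colimit argument.
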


The proof of this result will occupy the rest of \S\ref{sec: unip bc}. As in \cite[\S5]{Ked06a}, we will find it easier to introduce an auxiliary ring in place of $\mathcal{R}^u_{K\weak{\bm{x}}}$.

\begin{definition} Fix some $\eta\in \sqrt{\norm{K^*}}$ and define $R^u_{L,\eta}$ to be the ring of series $\sum_ia_iu^i$ with $a_i\in L$, such that there exist $\eta_{-}<\eta<\eta_{+}$ such that 
\[ \Norm{a_i}\eta_{\pm}^i  \rightarrow 0\]
as $i\rightarrow \pm\infty$. Similarly, define $R^u_{K\weak{\bm{x}},\eta}$ to be the ring of series $\sum_ia_iu^i$ with $a_i\in K\weak{\bm{x}}$ such that there exist $\eta_{-}<\eta<\eta_{+}$ and $\lambda >1$ such that $a_i\in K\tate{\lambda^{-1}\bm{x}}$ for all $i$ and
\[ \Norm{a_i}_\lambda \eta_{\pm}^i \rightarrow 0 \]
as $i\rightarrow \pm\infty$.
\end{definition}

As usual, a $\nabla$-module over $R^u_{K\weak{\bm{x}},\eta}$ will mean a $\nabla$-module relative to $K$, whose underlying $R^u_{K\weak{\bm{x}},\eta}$-module is finitely presented. Any extra adjectives such as projective, stably free, free, \&c. are understood to apply to the underlying $R^u_{K\weak{\bm{x}},\eta}$-module.

\begin{lemma} \begin{enumerate}
\item The map $K\weak{\bm{x}}\rightarrow L$ is flat.
\item The natural map $R^u_{K\weak{\bm{x}},\eta} \otimes_{K\weak{\bm{x}}} L \rightarrow R^u_{L,\eta}$ is injective.
\end{enumerate}
\end{lemma}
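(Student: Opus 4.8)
The plan is to treat the two parts separately, with part (2) the only one requiring a genuine (if mild) argument.

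For part (1): write $A:=K\weak{\bm{x}}$, which is a domain, so the map $A\to \mathrm{Frac}(A)$ is a localization and hence flat. By definition $L$ is the completion of $\mathrm{Frac}(A)$ for the (multiplicative) supremum norm, so it is in particular a field extension of $\mathrm{Frac}(A)$; being a module over the field $\mathrm{Frac}(A)$ it is free, hence flat. Composing the two flat maps $A\to\mathrm{Frac}(A)\to L$ gives flatness of $A\to L$.

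For part (2): write $R:=R^u_{K\weak{\bm{x}},\eta}$, $R_L:=R^u_{L,\eta}$ and $S:=A\setminus\{0\}$. I would first record that everything embeds coefficient-wise: an element of $R$ is a Laurent series $\sum_i a_iu^i$ with $a_i\in A$, and since the Gauss norm $\Norm{\cdot}$ on $A$ (the one defining $L$) satisfies $\Norm{a}\leq \Norm{a}_\lambda$ for every $a\in K\tate{\lambda^{-1}\bm{x}}$ and every $\lambda>1$, the convergence condition defining $R$ forces $\sum_i a_iu^i\in R_L$; thus $R$ sits inside both $R_L$ and $\prod_{i\in\Z}A$, compatibly. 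Localizing the $A$-linear inclusion $R\hookrightarrow\prod_{i\in\Z}A$ at $S$ (localization is exact) and then mapping to $\prod_{i\in\Z}\mathrm{Frac}(A)$ is still injective because $A$ is a domain: an element $((a_i)_i)/s$ dies in $\prod_{i\in\Z}\mathrm{Frac}(A)$ iff each $a_i=0$ iff it was already $0$ in $S^{-1}(\prod_{i\in\Z}A)$. The same norm estimate shows this image lands in $R_L$, so $S^{-1}R\hookrightarrow R_L$, and in particular the coefficients of an element of $S^{-1}R$ are well-defined elements of $\mathrm{Frac}(A)$.

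Now I would use the canonical identification $R\otimes_A L\cong (S^{-1}R)\otimes_{\mathrm{Frac}(A)}L$ (localization, then base change along the field map $\mathrm{Frac}(A)\to L$). Let $\xi$ be an element of this tensor product mapping to $0$ in $R_L$. Writing $\xi$ as a finite sum $\sum_k g_k\otimes c_k$ with $g_k\in S^{-1}R$ and $c_k\in L$, choose a $\mathrm{Frac}(A)$-basis $d_1,\dots,d_m$ of the $\mathrm{Frac}(A)$-span of the $c_k$ inside $L$; this lets us rewrite $\xi=\sum_{j=1}^m h_j\otimes d_j$ with $h_j\in S^{-1}R$. Extending $d_1,\dots,d_m$ to a $\mathrm{Frac}(A)$-basis of $L$ exhibits $(S^{-1}R)\otimes_{\mathrm{Frac}(A)}L$ as a direct sum of copies of $S^{-1}R$ indexed by that basis, so $\xi=0$ in the tensor product iff $h_j=0$ for all $j$. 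On the other hand the image of $\xi$ in $R_L\subset\prod_{i\in\Z}L$ has $u^i$-coefficient $\sum_j h_{j,i}\,d_j$, where $h_{j,i}\in\mathrm{Frac}(A)$ are the coefficients of $h_j$; vanishing of the image gives $\sum_j h_{j,i}d_j=0$ in $L$ for every $i$, and since $d_1,\dots,d_m$ are linearly independent over $\mathrm{Frac}(A)$ this forces $h_{j,i}=0$ for all $i,j$, hence $h_j=0$ for all $j$, hence $\xi=0$.

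I do not expect a serious obstacle: the only points needing care are the compatibility of the coefficient-wise embeddings (which reduces to the elementary inequality $\Norm{a}\le\Norm{a}_\lambda$ between the Gauss norm and the fringe-algebra norms for $\lambda>1$) and the reduction — via one localization and one choice of basis — to plain $\mathrm{Frac}(A)$-linear independence of the finitely many elements of $L$ occurring in $\xi$. Note that part (1) is not itself used in the proof of part (2); it is stated here for use later in the section.
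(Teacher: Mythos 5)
Your proof is correct, and part (2) follows a genuinely different route from the paper's. The paper first embeds $R^u_{K\weak{\bm{x}},\eta}$ and $R^u_{L,\eta}$ into the infinite products $\prod_\N K\weak{\bm{x}}$ and $\prod_\N L$ and uses flatness (i.e.\ part (1)) to reduce to showing $\bigl(\prod_\N F\bigr)\otimes_F F' \to \prod_\N F'$ is injective for any field extension $F\to F'$; it then proves this general lemma with a bilinear-form/annihilator argument involving the stabilisation of a descending chain of subspaces of $F^n$. You instead stay with the actual module: you pass to $S^{-1}R$ (localization), observe that its coefficients are well-defined elements of $\mathrm{Frac}(A)$ via $S^{-1}R\hookrightarrow S^{-1}\!\prod_\Z A\hookrightarrow\prod_\Z\mathrm{Frac}(A)$, identify $R\otimes_A L\cong(S^{-1}R)\otimes_{\mathrm{Frac}(A)}L$, and then finish by expanding along a $\mathrm{Frac}(A)$-basis of $L$ containing a basis of the span of the finitely many $c_k$'s; linear independence of the basis then kills the coefficients $h_{j,i}$ directly. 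This is somewhat leaner — in particular your observation that part (1) is not used in your proof of part (2) is correct, whereas the paper's reduction to the infinite product does silently invoke the flatness of $K\weak{\bm{x}}\to L$. The two arguments ultimately both hinge on the same structural fact (the $u^i$-coefficients of a candidate kernel element lie in $\mathrm{Frac}(A)\subset L$, so one can use $\mathrm{Frac}(A)$-linear algebra inside $L$), but yours makes that exploitation more transparent.
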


\begin{proof}
The first claim is clear, since $K\weak{\bm{x}}\rightarrow L$ is a monomorphism into a field. For the second, we choose a bijection $\Z\rightarrow \N$ and consider $R^u_{K\weak{\bm{x}},\eta}$ and $R^u_{L,\eta}$ as subspaces of the infinite products $\prod_{\N}K\weak{\bm{x}}$ and $\prod_{\N}L$ respectively; it suffices to show that the natural map
\[ \left( \prod_{\N}K\weak{\bm{x}} \right) \otimes_{K\weak{\bm{x}}} L \rightarrow \prod_{\N} L\]
is injective. If we let we let $L_0$ denote the (uncompleted) fraction field of $K\weak{\bm{x}}$, and factor $K\weak{\bm{x}}\rightarrow L$ through $L_0$, then it suffices to show the following:
\begin{enumerate}
\item if $R$ is an integral domain, with fraction field $F$, then the natural map $\left(\prod_{\N} R \right) \otimes_R F \rightarrow \prod_{\N} F$ is injective;
\item if $F\rightarrow F'$ is an arbitrary (non-zero) morphism of fields, then the natural map  $\left( \prod_{\N} F\right) \otimes_{F} F' \rightarrow \prod_{\N} F' $ is injective. 
\end{enumerate}
For the first, let us take some element $\sum_{i=1}^n (\lambda_{ij})_{j=1}^\infty \otimes f_i$ in the kernel of $\left(\prod_{\N} R \right) \otimes_R F \rightarrow \prod_{\N} F$, with $\lambda_{ij}\in R$ and $f_i\in F$. Expressing all the $f_i$ as fractions with a single common denominator, we may assume that  $f_i=\frac{1}{f}$ for some $f\in R\setminus \{0\}$, and all $i$. Hence we can in fact write our element simply as $(\lambda'_j)_{j=1}^\infty \otimes \frac{1}{f}$ for suitable $\lambda_j'$. Thus we have $\frac{\lambda'_{j}}{f}=0$ in $F$ for all $j$, whence $\lambda'_{j}=0$ in $R$ for all $j$. 

For the second, again suppose that we have an element
\[ \sum_{i=1}^n (\lambda_{ij})_{j=1}^\infty\otimes f_i \in \left( \prod_{\N} F \right) \otimes_F F'\]
which maps to zero in $\prod_{\N}F'$, i.e. such that $\sum_i\lambda_{ij}f_i=0$ for all $j$.

Let $V=F^n$ be the standard $n$-dimensional vector space equipped with the standard bilinear form. Let $\bm{\lambda}_j = (\lambda_{1j},\ldots,\lambda_{nj})\in V$ and $\bm{f} = (f_1,\ldots,f_n)\in V\otimes_F F'$. Let
\[ W_j = \bigcap_{j'=1}^j \left\{\left. \bm{v}\in V \right\vert   \bm{\lambda}_{j'} \cdot \bm{v} =0 \right\} \]
be the intersection of the annihilators of all the $\bm{\lambda}_{j'}$ for $1\leq j' \leq j$. Thus $W_j$ is a descending sequence of subspaces of $V$, which  must therefore eventually stabilise. Hence we have
\[ \left(\cap_j W_j \right) \otimes_F F' = \bigcap_j \left( W_j \otimes_F F' \right). \]
Pick a basis $\bm{e}_1,\ldots,\bm{e}_k$ for $\cap_j W_j$, and write these as $\bm{e}_l = (e_{l1},\ldots,e_{ln})$ with $e_{lm}\in F$. The fact that $\bm{e}_l\in \cap_j W_j$ means that
\[ \sum_i \lambda_{ij}e_{li} = 0 \]
for all $j,l$. Since $\bm{f}\in \bigcap_j \left( W_j \otimes_F F' \right)$ we must be able to write $\bm{f}=\sum_{l=1}^n \alpha_l\bm{e}_l$ for some $\alpha_l \in F'$. Putting this all together with have 
\begin{align*}
\sum_{i=1}^n (\lambda_{ij})_{j=1}^\infty\otimes f_i & = \sum_{i,l=1}^n (\lambda_{ij})_{j=1}^\infty \otimes \alpha_le_{li} = \sum_{l=1}^n \left( \sum_{i=1}^n (\lambda_{ij}e_{li})_{j=1}^\infty \right) \otimes \alpha_l =0
\end{align*}
and the proof is complete.
\end{proof}

We can now extend some of the results in \cite[\S5]{Ked06a} from free to stably free $\nabla$-modules over $R_{K\weak{\bm{x}},r}^u$. For any $\nabla$-module over any of the rings $R_{K\weak{\bm{x}},r}^u$, $R_{L,r}^u$, $\mathcal{R}_{K\weak{\bm{x}}}^u$ or $\mathcal{R}_L^u$ we will write $H^0_{\nabla_u}$ for the kernel of the derivation $\frac{\partial}{\partial u}$. For $M$  a $\nabla$-module over $R^u_{K\weak{\bm{x}},\eta}$, we will write $M_L$ for $M\otimes R^u_{L,\eta}$

\begin{lemma} \label{lemma: Rrfin} Let $M$ be a stably free $\nabla$-module over $R^u_{K\weak{\bm{x}},\eta}$, such that $M_L$ is unipotent relative to $L$. Then $H_{\nabla_u}^0(M)$ is a finite free $\nabla$-module over $K\weak{\bm{x}}$.
\end{lemma}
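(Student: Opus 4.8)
The strategy is to reduce the statement to the corresponding fact over the completed fraction field $L$, for which the analogous results in \cite[\S5]{Ked06a} are available, and then to descend along the flat, injective maps established in the preceding lemma. First I would base change to $L$: since $M_L$ is unipotent relative to $L$, the results of \cite[\S5]{Ked06a} (the free case) tell us that $H^0_{\nabla_u}(M_L)$ is a finite free $\nabla$-module over $L$, of rank equal to $\operatorname{rank}_{R^u_{L,\eta}} M_L$, and that the natural map $H^0_{\nabla_u}(M_L)\otimes_L R^u_{L,\eta}\to M_L$ is injective with image a direct summand (this is precisely what being unipotent buys us). The point is then to show that taking $\nabla_u$-invariants commutes with the base change $R^u_{K\weak{\bm{x}},\eta}\to R^u_{L,\eta}$, i.e. that $H^0_{\nabla_u}(M)\otimes_{K\weak{\bm{x}}}L\to H^0_{\nabla_u}(M_L)$ is an isomorphism.

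For this, I would exploit the flatness of $K\weak{\bm{x}}\to L$ together with the injectivity of $R^u_{K\weak{\bm{x}},\eta}\otimes_{K\weak{\bm{x}}}L\to R^u_{L,\eta}$, both of which are supplied by the lemma immediately preceding. Writing the $\nabla_u$-invariants as the kernel of the map $\nabla_u : M \to M\otimes du$, flatness of $K\weak{\bm{x}}\to L$ gives a left-exact sequence after tensoring; the subtlety is that $R^u_{K\weak{\bm{x}},\eta}\otimes_{K\weak{\bm{x}}}L$ is only a subring of $R^u_{L,\eta}$, not all of it, so one must check that an element of $M\otimes_{R^u_{K\weak{\bm{x}},\eta}}(R^u_{K\weak{\bm{x}},\eta}\otimes_{K\weak{\bm{x}}}L)$ which becomes $\nabla_u$-horizontal in $M_L$ is already $\nabla_u$-horizontal before completing $L$ — but this is immediate since $\nabla_u$ is defined over $R^u_{K\weak{\bm{x}},\eta}\otimes L$ and the map to $M_L$ is injective (again by flatness plus the injectivity of the ring map, using that $M$ is stably free hence a direct summand of a finite free module, on which injectivity can be checked coordinatewise). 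Here the hypothesis that $M$ is \emph{stably free} rather than merely projective is what lets us pass injectivity of $R^u_{K\weak{\bm{x}},\eta}\otimes L\hookrightarrow R^u_{L,\eta}$ through to $M$. Thus $H^0_{\nabla_u}(M)\otimes_{K\weak{\bm{x}}}L\cong H^0_{\nabla_u}(M_L)$, which is finite-dimensional over $L$.

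It remains to deduce from this that $H^0_{\nabla_u}(M)$ is itself finite \emph{free} over $K\weak{\bm{x}}$. Finiteness I would get by a standard argument: choose elements of $H^0_{\nabla_u}(M)$ whose images span $H^0_{\nabla_u}(M_L)$ over $L$ (clearing denominators to land in $H^0_{\nabla_u}(M)$), and show that these already generate $H^0_{\nabla_u}(M)$ as a $K\weak{\bm{x}}$-module — here one uses that $M$ is finitely presented over $R^u_{K\weak{\bm{x}},\eta}$ and a Noetherianity/approximation argument on the coefficients, much as in the free case in \cite{Ked06a}. Once $H^0_{\nabla_u}(M)$ is known to be a finitely generated $\nabla$-module over $K\weak{\bm{x}}$, it is automatically projective (a finitely generated $\nabla$-module over an MW-type dagger algebra has projective underlying module, as recalled in the text), and since $K\weak{\bm{x}}$ is the ring of functions on affine space — where, as in the ambient argument, projective modules of constant rank over such free dagger algebras are free (this is the place the free structure of $A=K\weak{\bm{x}}$ is genuinely used, exactly as in Theorem \ref{theo: unipotent free}) — we conclude $H^0_{\nabla_u}(M)$ is finite free. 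The main obstacle I anticipate is the descent-of-invariants step: one must be careful that horizontality is not lost or spuriously created when passing between $R^u_{K\weak{\bm{x}},\eta}\otimes_{K\weak{\bm{x}}}L$ and its completion $R^u_{L,\eta}$, and this is exactly why the two flatness/injectivity statements of the preceding lemma, combined with the stably-free hypothesis, are the crucial technical inputs.
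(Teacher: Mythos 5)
Your proposal is broadly pointed at the right toolbox — reduce to Kedlaya's free case and invoke the results of \cite[\S5, \S7]{Ked06a} — but the ordering and some of the intermediate steps do not hold up, and the paper's own argument is substantially more direct.

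The paper proceeds as follows: it first observes that finite generation alone suffices (a finitely generated $\nabla$-module over $K\weak{\bm{x}}$ is projective and hence free, by Quillen--Suslin for $K\weak{\bm{x}}$); it then reduces to the case that $M$ is free by choosing $n$ so that $M' = M\oplus R^{u,\oplus n}_{K\weak{\bm{x}},\eta}$ is free (this is precisely where the stably-free hypothesis is used); and it finally cites \cite[Proposition 5.2.6]{Ked06a} for finite-dimensionality of $H^0_{\nabla_u}(M_L)$ over $L$ and \cite[Lemma 7.3.4]{Ked06a} to descend this to finite generation of $H^0_{\nabla_u}(M)$ over $K\weak{\bm{x}}$. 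No base change isomorphism is needed at this stage — that is the content of the \emph{subsequent} Lemma \ref{lemma: unipotent bc}, which relies on the present lemma, not the other way round.

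Your proposal inverts this and tries to first establish that $H^0_{\nabla_u}(M)\otimes_{K\weak{\bm{x}}}L\to H^0_{\nabla_u}(M_L)$ is an isomorphism, then deduce finiteness. This introduces two genuine gaps. First, you carefully argue injectivity of the base change map (this is essentially the argument the paper gives later in Lemma \ref{lemma: unipotent bc}), but you never actually argue surjectivity; that surjectivity is not formal, and the paper obtains it from the proof of \cite[Proposition 5.3.3]{Ked06a} only after this lemma is in place. Second, the ``clearing denominators'' step cannot work as stated: $L$ is the \emph{completed} fraction field of $K\weak{\bm{x}}$, so its elements are not quotients $a/b$ with $a,b\in K\weak{\bm{x}}$ and there is no common denominator that moves a spanning set of $H^0_{\nabla_u}(M_L)$ back into $H^0_{\nabla_u}(M)$. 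The correct descent-of-finiteness argument is precisely \cite[Lemma 7.3.4]{Ked06a}, whose proof is a topological/approximation argument and not a denominator-clearing one. Finally, a smaller conceptual slip: you attribute the stably-free hypothesis to being needed to ``pass injectivity through to $M$'', but projectivity alone suffices for that (a projective module is a direct summand of a free one, so injectivity can be checked coordinatewise). The real reason for stably-free is to enable the reduction $M\rightsquigarrow M' = M\oplus R^{u,\oplus n}_{K\weak{\bm{x}},\eta}$ to a genuinely free module, so that the results of \cite[\S5]{Ked06a} apply directly.
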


\begin{proof} All finitely generated $\nabla$-modules over $K\weak{\bm{x}}$ are projective, and therefore free, via the analogue of the Quillen--Suslin theorem for $K\weak{\bm{x}}$ \cite[Theorem 6.7]{Ked04c}. Thus $H_{\nabla_u}^0(M)$ will be free as soon as it is finitely generated. To see that it is finitely generated we may choose some $n$ such that
\[ M'= M \oplus R^{u,\oplus n}_{K\weak{\bm{x}},\eta} \]
is a free $\nabla$-module over $R^u_{K\weak{\bm{x}},\eta}$. The claim for $M$ can therefore be deduced from the claim for $M'$, we may therefore assume that $M$ is in fact free. In this case, \cite[Proposition 5.2.6]{Ked06a} shows that $H_{\nabla_u}^0(M_L)$ is finite dimensional over $L$, and it follows from \cite[Lemma 7.3.4]{Ked06a} that $H_{\nabla_u}^0(M)$ is finitely generated over $K\weak{\bm{x}}$. 
\end{proof}

In fact, the restriction to stably free modules is unnecessary.

\begin{lemma} \label{lemma: stab free} Any finitely presented $\nabla$-module over $R^u_{K{\weak{\bm{x}}},\eta}$ is projective, and becomes stably free after making a finite base extension $K\rightarrow K'$. 
\end{lemma}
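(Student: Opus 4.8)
The plan is to first establish projectivity, then upgrade to stable freeness after a base extension. For projectivity, I would argue that $R^u_{K\weak{\bm{x}},\eta}$ is a Bézout-type ring, or at least has enough structure to guarantee that finitely presented modules over it are projective. Concretely, the ring $R^u_{K\weak{\bm{x}},\eta}$ fibres over the annulus/disc picture: one has the sub-ring $K\tate{\lambda^{-1}\bm{x}}$ of coefficients and the Robba-type behaviour in the $u$-variable. I would invoke the analogue of \cite[Proposition 5.1.1 or 5.1.2]{Ked06a} (or whichever statement there shows finitely presented $\nabla$-modules, or even just finitely presented modules, over the relevant rings are projective), the key point being that the $\nabla$-module structure forces local freeness of the rank and hence projectivity once finite presentation is known. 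The role of the connection here is essential exactly as in \cite[\S5]{Ked06a}: a finitely presented module with integrable connection over such a ring has locally constant rank, and combined with finite presentation this gives projectivity.

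For the stable-freeness claim after a finite extension $K\rightarrow K'$, the strategy is to reduce to a Picard-group computation. A projective module over $R^u_{K\weak{\bm{x}},\eta}$ is classified, up to stable isomorphism, by a class in $K_0$ (equivalently, for rank-one twists, in $\mathrm{Pic}$) of this ring. I would compute $\mathrm{Pic}(R^u_{K'\weak{\bm{x}},\eta})$ and show it is trivial (or that the relevant class dies) after passing to a suitable finite extension $K'$ of $K$. The heuristic: $R^u_{K\weak{\bm{x}},\eta}$ is built from $K\weak{\bm{x}}$ (whose Picard group is trivial by the Quillen--Suslin analogue already invoked in the proof of Lemma \ref{lemma: Rrfin}) and from the one-dimensional annulus in $u$; line bundles on such one-dimensional annuli over a field are controlled by the valuations/components, and these become trivial after a finite extension that, e.g., makes $\eta$ lie in $\norm{K'^*}$ and splits the relevant residues. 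So one extracts from a given projective $M$ a determinant line bundle $\det M$, shows $\det M$ becomes free over $R^u_{K'\weak{\bm{x}},\eta}$, and then a standard argument (choosing a basis of $\det M$ and an appropriate elementary modification, exactly as in the proof that projective + trivial determinant $\Rightarrow$ stably free for well-behaved one-dimensional rings) shows $M$ itself is stably free over $R^u_{K'\weak{\bm{x}},\eta}$.

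The main obstacle I anticipate is the Picard-group computation: one must pin down $\mathrm{Pic}(R^u_{A,\eta})$ for $A=K'\weak{\bm{x}}$ precisely enough to know it vanishes after a finite base change, and this requires understanding how line bundles on the $u$-annulus interact with the (trivial) Picard group of the base $A$ — i.e. a Mayer--Vietoris / units-and-Picard sequence for the covering of the annulus by its two "halves" $\norm{u}\leq\eta$ and $\norm{u}\geq\eta$, tracking the contribution of $\mathcal{O}^*$ on the overlap. The finite extension is needed exactly to kill the discrete invariant ($\eta \in \sqrt{\norm{K^*}}$ but perhaps not in $\norm{K^*}$, plus possible residue-field obstructions) appearing in that sequence. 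Once that vanishing is in hand, the deduction that projective implies stably free is formal. I would also remark that, since the intended application (Lemmas \ref{lemma: Rrfin}, \ref{lemma: unipotent bc}) only uses this for $\nabla$-modules and allows a harmless finite extension of $K$, one does not need the sharpest possible statement — it suffices to get stable freeness over \emph{some} finite $K'$.
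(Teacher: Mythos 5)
Your plan for projectivity is essentially what the paper does (via the $\nabla$-module structure and reduction to regular affinoid algebras), so that part is fine.

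For the stable-freeness claim, however, there is a genuine gap. You propose to (i) compute $\mathrm{Pic}(R^u_{K'\weak{\bm{x}},\eta})$ and show it dies after a finite extension, then (ii) conclude that a projective module with trivial determinant is stably free, invoking the ``standard argument for well-behaved one-dimensional rings.'' Step (ii) fails here: the ring $R^u_{K\weak{\bm{x}},\eta}$ (or rather the strictly affinoid algebra of which it is a dagger-completed colimit) has Krull dimension $d+1$, not $1$, and in dimension $\geq 2$ the implication ``trivial determinant $\Rightarrow$ stably free'' is not automatic --- the obstruction is the group $\mathrm{SK}_0$ (the kernel of $\widetilde{K}_0 \to \mathrm{Pic}$), which you do not control. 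Knowing $\mathrm{Pic}=0$ alone does not suffice; one needs to know $K_0 \cong \Z$, i.e.\ the rank map is an isomorphism on $K_0$.

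The paper's proof is precisely a $K_0$ computation rather than a $\mathrm{Pic}$ computation. The role of the finite extension $K\rightarrow K'$ is also different from what you envision: it is used to normalize the inner and outer radii so that (after shrinking the annulus) $\eta_-/\eta_+=\norm{\pi}$, which identifies the relevant affinoid with $K\tate{\bm{x},u,v}/(uv-\pi)$. One then passes to the integral model $\mathcal{O}_K\tate{\bm{x},u,v}/(uv-\pi)$, which is regular and $\pi$-adically complete, so $K_0$ of it coincides with $K_0$ of its special fibre $k[\bm{x},u,v]/(uv)$ and surjects onto $K_0$ of the generic fibre; a polynomial-extension invariance theorem for $K_0$ then reduces one to $K_0(k[u,v]/(uv))$, and ultimately to the statement that $K\tate{u,v}/(uv-\pi)$ is a PID. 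Your Mayer--Vietoris heuristic for $\mathrm{Pic}$ points in a related direction, but without the $K_0$ step (or a separate argument that $\mathrm{SK}_0$ vanishes for this ring) the conclusion that projectives are stably free does not follow.
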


\begin{proof}
Choose $\eta_-<\eta< \eta_+$ and $\lambda>1$ such that the module $M$ under consideration arises via base change from a $\nabla$-module over $K\tate{\lambda^{-1}\bm{x},\eta_-u^{-1},\eta_+^{-1}u}$. Since the completed local ring of any smooth affinoid $K$-algebra at any maximal ideal is a power series ring, it follows in the usual way that $M$ must be projective.
%
% For any such module $M$, there exists a smooth strictly affinoid $K$-algebra $A$ (namely, the affinoid algebra of functions convergent on a region  defined by $\eta_- \leq \norm{u}\leq \eta_+$ and $\norm{x_i}\leq \lambda$ for some $\lambda,\eta_\pm$) such that $M$ arises via base change from a $\nabla$-module over $A$. It thus follows from \cite{} that $M$ is projective. 

To see the claim on stable freeness, note that after replacing $K$ by a finite extension (and possibly shrinking the interval $[\eta_-,\eta_+]$ if necessary) we can assume that $\lambda, \eta_-,\eta_+ \in \norm{K^*}$, and that $\eta_-/\eta_+ = \norm{\varpi}$. In this case, by choosing $\alpha,\beta_\pm\in K$ with $\norm{\alpha}=\lambda$ and $\norm{\beta_{\pm}}=\eta_{\pm}$, we can construct an isomorphism
\begin{align*} K\tate{\lambda^{-1}\bm{x},\eta_-u^{-1},\eta_+^{-1}u} \cong \frac{K\tate{\bm{x},u,v}}{(uv-\varpi)} \\ \bm{x}\mapsto \alpha\bm{x},\;\; u\mapsto \beta_+ u ,\;\; u^{-1}\mapsto \beta_-v.  \end{align*}
Since $M$ is projective, it suffices to show that any projective $K\tate{\bm{x},u,v}/(uv-\varpi)$-module is stably free, or equivalently that $K_0(\tate{\bm{x},u,v}/(uv-\varpi))=\Z$, where $K_0(R)$ denotes the Grothendieck group of the category of finite projective $R$-modules (see \cite[Chapter II, Lemma 2.1]{Wei13}). We consider the diagram
\[  K_0\left(\frac{K\tate{\bm{x},u,v}}{(uv-\varpi)}\right) \leftarrow  K_0\left(\frac{\mathcal{O}_{K}\tate{\bm{x},u,v}}{(uv-\varpi)}\right)   \rightarrow   K_0\left(\frac{k[\bm{x},u,v]}{(uv)}\right). \]
Since $\mathcal{O}_K\tate{\bm{x},u,v}/(uv-\varpi)$ is $\varpi$-adically complete, it follows from \cite[Chapter II, Lemma 2.2]{Wei13} that the right hand map is an isomorphism. We can calculate $K_0(k[\bm{x},u,v]/(uv))$ using the Mayer-Vietoris exact sequence from \cite[p.28]{Mil71}. Namely, the diagram of rings
\[ \xymatrix{ k[\bm{x},u,v]/(uv) \ar[r]\ar[d] & k[\bm{x},u] \ar[d] \\ k[\bm{x},v] \ar[r] & k[\bm{x}]  } \]
is Cartesian, and all maps are surjective, so we have an exact sequence
\[ K_1(k[\bm{x},u]) \oplus K_1(k[\bm{x},v]) \rightarrow K_1(k[\bm{x}]) \rightarrow K_0(k[\bm{x},u,v]/(uv)) \rightarrow K_0(k[\bm{x},u]) \oplus K_0(k[\bm{x},v]) \rightarrow K_0(k[\bm{x}]) \]
where $K_1$ here refers to the Whitehead group as defined in \cite[\S3]{Mil71}. Since the map $k[\bm{x},u]\rightarrow k[\bm{x}]$ admits a section, the first map in this sequence is surjective. Since $K_0$ of any polynomial ring over a field is $\Z$, the sequence
\[  0\rightarrow K_0(k[\bm{x},u,v]/(uv)) \rightarrow \Z \oplus \Z \overset{\mathrm{sum}}{\longrightarrow} \Z \]
is exact and we deduce that $K_0(\mathcal{O}_K\tate{\bm{x},u,v}/(uv-\varpi)\cong K_0(k[\bm{x},u,v]/(uv))\cong \Z$. If we now let $G_0(R)$ denote the Grothendieck group of the category of finitely generated $R$-modules (over a Noetherian ring $R$), then since $\mathcal{O}_{K}\tate{\bm{x},u,v}/(uv-\varpi)$ and $K\tate{\bm{x},u,v}/(uv-\varpi)$ are regular it follows from \cite[Chapter II, Theorem 7.8]{Wei13} that 
\begin{align*}
K_0\left( \frac{\mathcal{O}_{K}\tate{\bm{x},u,v}}{(uv-\varpi)}\right) &\cong G_0\left( \frac{\mathcal{O}_{K}\tate{\bm{x},u,v}}{(uv-\varpi)}\right) \\ 
K_0\left( \frac{K\tate{\bm{x},u,v}}{(uv-\varpi)}\right) &\cong G_0\left( \frac{K\tate{\bm{x},u,v}}{(uv-\varpi)}\right).
\end{align*} Since
\[ G_0\left(\frac{\mathcal{O}_{K}\tate{\bm{x},u,v}}{(uv-\varpi)}\right)   \twoheadrightarrow G_0\left(\frac{K\tate{\bm{x},u,v}}{(uv-\varpi)}\right)\]
(see \cite[Chapter II, Application 6.4.1]{Wei13}) we deduce that
\[ K_0\left(\frac{\mathcal{O}_{K}\tate{\bm{x},u,v}}{(uv-\varpi)}\right)   \twoheadrightarrow K_0\left(\frac{K\tate{\bm{x},u,v}}{(uv-\varpi)}\right),\]
hence $K_0(K\tate{\bm{x},uv}/(uv-\varpi))=\Z$ as required.
\end{proof}

We can now prove an analogue of Theorem \ref{theo: unipotent free} with $R^u_{K\weak{\bm{x}},\eta}$ in place of $\mathcal{R}^u_{K\weak{\bm{x}}}$.  

\begin{proposition} \label{prop: reluni} Suppose that $M$ is a finitely presented $\nabla$-module over $R^u_{K\weak{\bm{x}},\eta}$, such that $M_L$ is unipotent relative to $L$. Then $M$ is free, and admits a strongly unipotent basis relative to $K\weak{\bm{x}}$.
\end{proposition}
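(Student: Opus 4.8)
The plan is to proceed by induction on the number $d$ of variables $\bm{x}=(x_1,\ldots,x_d)$, reducing the statement over $R^u_{K\weak{\bm{x}},\eta}$ to successively fewer variables, with the base case $d=0$ being exactly the ``absolute'' result that a unipotent $\nabla$-module over $R^u_{K,\eta}$ (equivalently over $\mathcal{R}^u_K$ via the auxiliary-ring comparison of \cite[\S5]{Ked06a}) is free with a strongly unipotent basis. For the inductive step, I would first use Lemma~\ref{lemma: stab free} to assume, after a harmless finite extension of $K$, that $M$ is stably free, hence (by the Quillen--Suslin statement invoked in the proof of Lemma~\ref{lemma: Rrfin}) actually free once we know it is finitely generated — which it is by hypothesis. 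The key object is $H^0_{\nabla_u}(M)$, which by Lemma~\ref{lemma: Rrfin} is a finite free $\nabla$-module $N$ over $K\weak{\bm{x}}$, and by Lemma~\ref{lemma: unipotent bc} satisfies $N\otimes_{K\weak{\bm{x}}} L \isomto H^0_{\nabla_u}(M_L)$.

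The mechanism for building the strongly unipotent basis is the standard one from \cite[\S5]{Ked06a}: since $M_L$ is unipotent, $H^0_{\nabla_u}(M_L)$ is nonzero, so $N\neq 0$; pick a basis element $\bm{e}_1$ of $N$ lifting to $M$ with $\nabla_u(\bm{e}_1)=0$, and observe that $\bm{e}_1$ generates a rank-one $\nabla$-submodule $M_1\subset M$ over $R^u_{K\weak{\bm{x}},\eta}$. Here I would need to check that $M_1$ is a \emph{saturated} submodule, i.e.\ $M/M_1$ is again a finitely presented (hence projective, hence after base extension stably free) $\nabla$-module over $R^u_{K\weak{\bm{x}},\eta}$, and that $(M/M_1)_L$ is again unipotent relative to $L$ — the latter because unipotence is stable under subquotients and the sequence $0\to (M_1)_L \to M_L \to (M/M_1)_L\to 0$ of $\nabla$-modules over $R^u_{L,\eta}$ stays exact after the flat base changes in the preceding lemma (using injectivity of $R^u_{K\weak{\bm{x}},\eta}\otimes_{K\weak{\bm{x}}}L \to R^u_{L,\eta}$ and projectivity of $M$). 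Iterating (finitely many times, since $\mathrm{rank}(M)$ is finite and the filtration has length $\mathrm{rank}_L(M_L) = \mathrm{rank}(M)$ by the base-change isomorphism) produces a filtration $0=M_0\subset M_1\subset\cdots\subset M_n=M$ by saturated sub-$\nabla$-modules with $M_i/M_{i-1}$ free of rank one generated by a $\nabla_u$-horizontal class. Choosing compatible lifts $\bm{e}_i$ of generators of $M_i/M_{i-1}$ gives a basis of $M$ (freeness of $M$ follows since all graded pieces are free of rank one, or directly from stable freeness plus finite generation) with $u\nabla_u(\bm{e}_i) \in (A\bm{e}_1+\cdots+A\bm{e}_{i-1})\otimes du$ — strong unipotence — exactly because $\nabla_u(\bm{e}_i)$ lies in $M_{i-1}$ and, being $\nabla_u$ of a horizontal-modulo-$M_{i-1}$ vector, its coefficients are $u$-horizontal, hence constant in $u$, hence lie in $K\weak{\bm{x}}=A$; the factor of $u$ is absorbed exactly as in \cite[\S5]{Ked06a}.

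The transition from $R^u_{K\weak{\bm{x}},\eta}$ back to $\mathcal{R}^u_{K\weak{\bm{x}}}$ (i.e.\ deducing Theorem~\ref{theo: unipotent free}) is then routine and parallels \cite[\S5]{Ked06a}: a $\nabla$-module over $\mathcal{R}^u_{K\weak{\bm{x}}}$ descends to one over some $R^u_{K\weak{\bm{x}},\eta}$ for $\eta$ close to $1$, the unipotence hypothesis and the strongly unipotent basis are insensitive to the choice of $\eta$, and the strongly unipotent basis over $R^u_{K\weak{\bm{x}},\eta}$ is in particular one over $\mathcal{R}^u_{K\weak{\bm{x}}}$. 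I expect the main obstacle to be the saturation/exactness bookkeeping at each stage of the induction: one must be careful that passing to the quotient $M/M_1$ preserves finite presentation over $R^u_{K\weak{\bm{x}},\eta}$ (so that Lemmas~\ref{lemma: Rrfin}--\ref{lemma: stab free} reapply) and that the generic fibre stays unipotent, since the whole inductive scheme collapses if either fails; the finite extensions of $K$ needed to restore stable freeness at each step are harmless but should be tracked. A secondary subtlety is verifying that the rank-one graded pieces are genuinely free over $R^u_{K\weak{\bm{x}},\eta}$ rather than merely invertible — this is where stable freeness of $M$, combined with the fact that a rank-one direct summand of a stably free module over this ring is free (again reducing as in Lemma~\ref{lemma: stab free} to the principal-ideal-domain case $K\tate{u,v}/(uv-\pi)$), does the work.
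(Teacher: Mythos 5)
Your proposal does not match the paper's proof and has a genuine gap at the step you identify as the place where one must ``be careful.'' You announce induction on the number of variables $d$, but the argument you actually run is an iteration that peels off rank-one pieces --- which, up to the choice of induction variable, is in the spirit of the paper's argument (the paper inducts on $\mathrm{rank}(M)$). The problem is in the peeling step.

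You pick a basis element $\bm{e}_1$ of $N = H^0_{\nabla_u}(M)$ and assert that $\bm{e}_1$ generates a rank-one $\nabla$-submodule $M_1 = R^u_{K\weak{\bm{x}},\eta}\cdot\bm{e}_1$ of $M$. This is false in general. The element $\bm{e}_1$ is $\nabla_u$-horizontal, so $M_1$ is $\nabla_u$-stable, but $\nabla_{x_i}(\bm{e}_1)$ lands in $N$ (since $\nabla_{x_i}$ commutes with $\nabla_u$) and has no reason to lie in the $K\weak{\bm{x}}$-line spanned by $\bm{e}_1$ unless $\bm{e}_1$ happens to be an eigenvector of the horizontal connection on $N$. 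Since $M_1$ is not a $\nabla$-submodule relative to $K$, the quotient $M/M_1$ is only a $\nabla_u$-module, and you cannot reapply Lemmas \ref{lemma: Rrfin}, \ref{lemma: unipotent bc}, and \ref{lemma: stab free} to it --- all of them take as input a $\nabla$-module (integrable connection relative to $K$), and the proof of Lemma~\ref{lemma: Rrfin} in particular uses the full connection. The iteration therefore collapses after one step. The paper sidesteps this exactly by quotienting by the \emph{whole} submodule $N\otimes_{K\weak{\bm{x}}}R^u_{K\weak{\bm{x}},\eta}$, which \emph{is} $\nabla$-stable because $N$ carries its own connection over $K\weak{\bm{x}}$: the quotient $Q$ is then a finitely presented $\nabla$-module of strictly smaller rank, the rank induction gives $Q$ free with a strongly unipotent basis, so $M$ is an extension of free by free (hence free, with no need to control $\mathrm{Pic}$ of $R^u_{K\weak{\bm{x}},\eta}$), and one massages the lifted basis as in Kedlaya's Proposition 5.2.6 to make it strongly unipotent. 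Replacing your one-dimensional peeling with the full $N\otimes R$ and switching the stated induction from $d$ to rank essentially recovers the paper's argument and removes both the $\nabla$-stability gap and the secondary worry about freeness of rank-one pieces.
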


\begin{proof} As observed above, $M$ is projective; we will induct on the rank of $M$. If the rank is zero then there is nothing to prove. If the rank is $>0$, we claim that the base change map
\[ H_{\nabla_u}^0(M) \otimes_{K\weak{\bm{x}}} R^u_{K\weak{\bm{x}},\eta} \rightarrow M \]
is injective, and $H_{\nabla_u}^0(M)$ is non-zero. Both these claims may be verified after making a finite base extension $K\rightarrow K'$, we may therefore assume by Lemma \ref{lemma: stab free} that $M$ is stably free. 

To see that $H_{\nabla_u}^0(M)$ is non-zero, choose $n$ such that
\[ N := M \oplus R^{u,\oplus n}_{K\weak{\bm{x}},\eta}\]
is free. Choose a strongly unipotent basis for $M_L$, and extend this to a basis for $N_L$ using the canonical basis of $R_{L,\eta}^{u,\oplus n}$. Let $D$ be the nilpotent matrix over $L$ giving the action of the connection on this strongly unipotent basis, and $e$ its nilpotency index. In \cite[\S5.3]{Ked06a} Kedlaya defines an $L$-linear function 
\[ f: N_L \rightarrow N_L \]
which satisfies $f(N) \subset N$ and $f(N\otimes_{K\weak{\bm{x}}} L) = \mathrm{im}(D^{e-1})$. In fact, it follow from the definition of $f$ (and our particular choice of strongly unipotent basis) that we also have $f(M_L) \subset M_L$, from which we deduce that $f(M) \subset M_L \cap N = M$. If we let $V$ denote the $L$-span of our original strongly unipotent basis for $M_L$, we therefore find that $f(M \otimes_{K\weak{\bm{x}}} L)= \mathrm{im}(D|_V^{e-1})$. In particular we must have $0\neq f(M) \subset H^0_{\nabla_u}(M)$.

The injectivity claim is a rather convoluted diagram chase. To begin, we observe that projectivity of $M$ over $R^u_{K\weak{\bm{x}},\eta}$ together with injectivity of $R_{K\weak{\bm{x}},\eta}^u \otimes_{K\weak{\bm{x}}} L\hookrightarrow R^u_{L,\eta}$ implies injectivity of
\[ M\otimes_{K\weak{\bm{x}}} L \rightarrow M_L. \]
Next, flatness of $K\weak{\bm{x}}\rightarrow L$ implies injecitvity of $H^0_{\nabla_u}(M)\otimes_{K\weak{\bm{x}}} L \rightarrow M \otimes_{K\weak{\bm{x}}} L$ and hence of 
\[ H^0_{\nabla_u}(M)\otimes_{K\weak{\bm{x}}} L \rightarrow H^0_{\nabla_u}(M_L). \]
Tensoring with $R_{L,\eta}^u$ gives injectivity of 
\[ H^0_{\nabla_u}(M)\otimes_{K\weak{\bm{x}}} R_{L,\eta}^u \rightarrow H^0_{\nabla_u}(M_L) \otimes_L R_{L,\eta}^u, \]
and since $H^0_{\nabla_u}(M)$ is projective over $K\weak{\bm{x}}$ we get injectivity of
\[ H^0_{\nabla_u}(M)\otimes_{K\weak{\bm{x}}} R_{K\weak{\bm{x}},\eta}^u \rightarrow H^0_{\nabla_u}(M)\otimes_{K\weak{\bm{x}}} R_{L,\eta}^u;\]
putting these two together gives injectivity of
\[ H^0_{\nabla_u}(M)\otimes_{K\weak{\bm{x}}} R_{K\weak{\bm{x}},\eta}^u \rightarrow H^0_{\nabla_u}(M_L) \otimes_L R_{L,\eta}^u.\]
Now, we already know that $H^0_{\nabla_u}(M_L) \otimes_L R_{L,\eta}^u \rightarrow M_L$ has to be injective (since $L$ is a complete, discretely valued field), and hence we deduce that the map $H^0_{\nabla_u}(M)\otimes_{K\weak{\bm{x}}} R_{K\weak{\bm{x}},\eta}^u\rightarrow M_L$ is injective, and therefore (finally!) obtain injectivity of
\[ H^0_{\nabla_u}(M)\otimes_{K\weak{\bm{x}}} R_{K\weak{\bm{x}},\eta}^u \rightarrow M.\]
Now, we have already seen in Lemma \ref{lemma: Rrfin} that $ H_{\nabla_u}^0(M)$ is free over $K\weak{\bm{x}}$, and the quotient $Q$ of $M$ by $H_{\nabla_u}^0(M) \otimes R^u_{K\weak{\bm{x}},\eta}$ is a finitely presented $\nabla$-module over $R^u_{K\weak{\bm{x}},\eta}$, thus again projective of strictly smaller rank. We may therefore apply the induction hypothesis to see that $Q$ admits a strongly unipotent basis relative to $K\weak{\bm{x}}$. This implies that $M$ is free and unipotent relative to $K\weak{\bm{x}}$, hence we may argue as in \cite[Proposition 5.2.6]{Ked06a} to show that it has a strongly unipotent basis relative to $K\weak{\bm{x}}$.
\end{proof}

\begin{proof}[Proof of Theorem \ref{theo: unipotent free}]
This is identical to the proof of \cite[Proposition 5.4.1]{Ked06a}. Let $M$ be an projective $\nabla$-module over $\mathcal{R}^u_{K\weak{\bm{x}}}$ with unipotent generic fibre. Then for $\eta$ close enough to $1$, $M$ comes from a projective $\nabla$-module $M_\eta$ over $\mathcal{R}^u_{K\weak{\bm{x}}} \cap R^u_{K\weak{\bm{x}},\eta}$ such that $M_\eta \otimes (\mathcal{R}_L^u \cap R_{L,\eta}^u)$ admits a strongly unipotent basis $\{ \bm{e}_i\}$ relative to $L$. Hence $M_\eta \otimes R^u_{K\weak{\bm{x}},\eta}$ admits a strongly unipotent basis $\{\bm{f}_i\}$ relative to $K\weak{\bm{x}}$ by Proposition \ref{prop: reluni}. Moreover, these two bases must have the same $L$-span inside $M \otimes \mathcal{R}_{L}^u$. Hence $\{\bm{f}_i\}$ forms a basis of $M_\eta \cap ( M\otimes \mathcal{R}_L^u ) = M$. 
\end{proof}

For us, the most important consequence of this result is the following. 

\begin{corollary} \label{cor: base change R0loc} Let $M$ be a projective, $F$-able $\nabla$-module over $\mathcal{R}^u_{K\weak{\bm{x}}}$ with constant irregularity. Then $H^0_{\nabla_u}(M)$ is finitely generated over $K\weak{\bm{x}}$, and for any closed point $s:K\weak{\bm{x}}\rightarrow K'$ the base change map
\[ H^0_{\nabla_u}(M) \otimes_{K\weak{\bm{x}}} K' \rightarrow H^0_{\nabla_u}(M \otimes_{\mathcal{R}^u_{K\weak{\bm{x}}}} \mathcal{R}_{K'}^u)\]
is an isomorphism.
\end{corollary}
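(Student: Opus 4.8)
The plan is to reduce, via the break decomposition and a tame base change, to a unipotent $\nabla$-module carrying a strongly unipotent basis, and then to dévisse along the associated filtration; essentially all the content is in checking that base change survives the dévissage.

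\textbf{Step 1 (reduce to break $0$).} Since $M$ has constant irregularity, Proposition~\ref{prop: spread} gives $M=\bigoplus_{b\ge 0}M_b$ over $\mathcal{R}^u_{K\weak{\bm{x}}}$ restricting to the break decomposition over $\mathcal{R}^u_L$ and, at each closed point $s$, to that of $M_s$. For $b>0$ the trivial $\nabla$-module is a subobject of neither $M_b\otimes\mathcal{R}^u_L$ nor any $(M_b)_s$ (these have uniform break $b$), so $H^0_{\nabla_u}(M_b\otimes\mathcal{R}^u_L)=0=H^0_{\nabla_u}((M_b)_s)$; since $M_b$ is projective, the injection $M_b\hookrightarrow M_b\otimes_{\mathcal{R}^u_{K\weak{\bm{x}}}}\mathcal{R}^u_L$ then forces $H^0_{\nabla_u}(M_b)=0$ too. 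Hence $H^0_{\nabla_u}(M)=H^0_{\nabla_u}(M_0)$ and $H^0_{\nabla_u}(M_s)=H^0_{\nabla_u}((M_0)_s)$, and we replace $M$ by $M_0$.

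\textbf{Step 2 (reduce to unipotent).} Now $M_0$ has break $0$ and, generically, rational exponents with denominators prime to $p$. Picking $n$ prime to $p$ divisible by all those denominators and enlarging $K$ to contain $\mu_n$ (harmless), the tame pullback $P:=\phi^{*}M_0$ along $\phi\colon\mathcal{R}^u_{K\weak{\bm{x}}}\to\mathcal{R}^v_{K\weak{\bm{x}}}$, $u\mapsto v^n$, has integral generic exponents and hence unipotent generic fibre; moreover $\phi$ is finite étale and $\mu_n$-Galois, so $H^0_{\nabla_u}(M_0)=H^0_{\nabla_v}(P)^{\mu_n}$, and forming $\mu_n$-invariants is exact and commutes with base change since $n$ is invertible. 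So we may work with $P$, which by Theorem~\ref{theo: unipotent free} is free over $\mathcal{R}^v_{K\weak{\bm{x}}}$ and admits a strongly unipotent basis relative to $K\weak{\bm{x}}$; equivalently there is a filtration $0=F_0\subset\cdots\subset F_r=P$ by sub-$\nabla$-modules with each $F_i/F_{i-1}$ the trivial $\nabla$-module $\mathcal{R}^v_{K\weak{\bm{x}}}$.

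\textbf{Step 3 (dévissage).} I would show by induction on $i$ that $H^j_{\nabla_v}(F_i)$, $j=0,1$, is finitely generated over $K\weak{\bm{x}}$ — hence automatically projective, being a $\nabla$-module over the MW-type algebra $K\weak{\bm{x}}$ — with formation commuting with base change to closed points. The base case is the trivial module: $H^0_{\nabla_v}(\mathcal{R}^v_{K\weak{\bm{x}}})=K\weak{\bm{x}}$ and $H^1_{\nabla_v}(\mathcal{R}^v_{K\weak{\bm{x}}})=K\weak{\bm{x}}\cdot[v^{-1}dv]$, both the unit object, using that an antiderivative of a Robba series with vanishing $v^{-1}$-coefficient again converges in the Robba ring. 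The inductive step uses the short exact sequence $0\to F_{i-1}\to F_i\to F_i/F_{i-1}\to 0$, which is split over $\mathcal{R}^v_{K\weak{\bm{x}}}$ since the quotient is free; applying $[\,\cdot\xrightarrow{\nabla_v}\cdot\otimes dv\,]$ yields a long exact sequence of $\nabla$-modules over $K\weak{\bm{x}}$, compatibly with passage to any closed point, and finite generation of $H^j_{\nabla_v}(F_i)$ is immediate from it.

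\textbf{The crux.} What must be controlled is the connecting map $\delta\colon H^0_{\nabla_v}(F_i/F_{i-1})=\mathcal{O}\to H^1_{\nabla_v}(F_{i-1})$: it is horizontal, i.e. singles out a horizontal section of the projective $\nabla$-module $H^1_{\nabla_v}(F_{i-1})$, and a priori it could vanish at some closed point while being nonzero generically — which is exactly what would break base change (and does break it for a non-integrable ``strongly unipotent'' datum). The resolution is that integrability relative to $K$ rigidifies the extension data: a nonzero horizontal section of a finite projective $\nabla$-module over $K\weak{\bm{x}}$ is nowhere vanishing — otherwise, differentiating the horizontality relations places it in every power of the maximal ideal at the bad point, hence at $0$. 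So $\delta$ is either zero or a split injection stable under specialisation, and the long exact sequence then yields that $H^0_{\nabla_v}(F_i)$ and $H^1_{\nabla_v}(F_i)$ are finitely generated (projective) and commute with base change. Taking $i=r$ and then $\mu_n$-invariants finishes the proof.
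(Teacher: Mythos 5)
Your proposal follows the paper's own proof: reduce to the break-$0$ part via Proposition~\ref{prop: spread}, pass to a tame degree-$n$ cover to render the generic fibre unipotent, apply Theorem~\ref{theo: unipotent free} to obtain a strongly unipotent basis over $K\weak{\bm{x}}$, then work along the resulting filtration, which is exactly the paper's ``projection formula + induction on rank + five lemma.'' Your explicit analysis of the connecting map via the fact that a nonzero horizontal section of a finite projective $\nabla$-module over $K\weak{\bm{x}}$ is nowhere vanishing is correct, and is a slightly more concrete variant of the observation that the long exact cohomology sequence is a bounded exact complex of finitely generated (hence projective) $\nabla$-modules over $K\weak{\bm{x}}$ and so stays exact under any base change.
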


\begin{proof}
After possibly enlarging $K$ and translating we may assume that $s=0$. If $K$ contains all $n$th roots of unity, and $n$ is coprime to $p$, then we have an identification
\[ H_{\nabla_u}^0(M) = H_{\nabla_{u^{1/n}}}^0\left( M\otimes_{\mathcal{R}^u_{K\weak{\bm{x}}}} \mathcal{R}^{u^{1/n}}_{K\weak{\bm{x}}} \right)^{\mathbb{Z}/n\mathbb{Z}} \]
and similarly after base change via $s$. We are therefore free to make such a tamely ramified base change at any point we wish.

Write $M_L=M\otimes_{\mathcal{R}^u_{K\weak{\bm{x}}}} \mathcal{R}^u_L$, and let $M_s=M\otimes_{\mathcal{R}^u_{K\weak{\bm{x}}}} \mathcal{R}_K$ be the fibre over $s$. Let $M_0$ denote the break $0$ part of $M$ provided by Proposition \ref{prop: spread}, thus the fibre $M_{0,s}$ over $s$ is the break $0$ part of $M_s$. Since $M$ is $F$-able, it follows that $M_{0,L}$ has rational exponents, whose denominators are necessarily coprime to $p$. We may therefore by \cite[Th\'eor\`eme 1.3-1]{Meb02} take a tamely ramified base change $\mathcal{R}_L^u\rightarrow \mathcal{R}_L^{u^{1/n}}$ such that $M_{0,L} \otimes_{\mathcal{R}^u_{L}} \mathcal{R}^{u^{1/n}}_L$ is unipotent relative to $L$ as a $\nabla$-module over $\mathcal{R}^{u^{1/n}}_L$. Since such a base extension preserves the break $0$ part we may therefore assume that $M_{0,L}$ is unipotent. But now since $M^{\nabla_u=0} = M_0^{\nabla_u=0}$ and $M_s^{\nabla_u=0}=M_{0,s}^{\nabla_u=0}$ we may moreover replace $M$ by $M_0$ and therefore assume that $M_L$ itself is unipotent relative to $L$.

Hence by Theorem \ref{theo: unipotent free} above, $M$ is in fact unipotent relative to $K\weak{\bm{x}}$, that is it is an iterated extension of $\nabla$-modules pulled back from $K\weak{\bm{x}}$. For any such pullback module, the base change claim is easily deduced using the projection formula, and the relatively unipotent case then follows by induction on the rank and the five lemma.
\end{proof}

\section{Base change for \texorpdfstring{$\mathbf{R}^0f_*$}{R0f}}\label{sec: bc for R10}

We can now use the results of the previous section to prove the $\mathbf{R}^0f_*M$ case of Theorem \ref{theo: main dagger}. We will therefore consider Setup \ref{setup: main}, and use notations as in \S\ref{sec: bgs}. Thus (in particular) we have a morphism of MW-type frames 
\[ (U,\overline{C},\mathfrak{C})\rightarrow ( S,\overline{S},\mathfrak{S} ) \]
enclosing an affine curve $f:U\rightarrow S$, with induced morphism $A\rightarrow B$ of $K$-dagger algebras as in \S\ref{sec: gnalk}. We let $M$ be a $\nabla$-module on $B$ obtained as the realisation of an overconvergent isocrystal $E$ on $U/K$. The main result of this section is the following.

\begin{theorem} \label{theo: bc for R0f} Assume that $M$ is $F$-able and has constant total irregularity $\mathrm{Irr}^\mathrm{tot}_M$. Then $\mathbf{R}^0f_*M$ is finitely generated over $A$, and for any closed point $s:A\rightarrow K'$ the base change map
\[ \mathbf{R}^0f_*M \otimes_A K' \rightarrow H^0(M_s) \]
is an isomorphism.
\end{theorem}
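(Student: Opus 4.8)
The plan is to reduce the statement to the local computation carried out in Corollary \ref{cor: base change R0loc}, via the exact sequence
\[ 0\rightarrow \mathbf{R}^0f_*M \rightarrow \mathbf{R}^0_\mathrm{loc}f_*M \rightarrow \mathbf{R}^1f_!M \rightarrow \cdots \]
restricted to the simple case, together with Lemma \ref{lemma: good to simple} to descend from the simple to the general good case. First I would reduce to the case where $f:U\rightarrow S$ is simple and $\bar f$ a simple compactification: indeed, by Lemma \ref{lemma: good to simple} there is a finite \'etale cover $S'\rightarrow S$ and a Zariski cover $\{S'_i\}$ over which the pulled-back curve becomes simple, and $\mathbf{R}^0f_*M$ can be recovered from the $\mathbf{R}^0$ of the pullbacks by Galois descent and gluing (and $H^0(M_s)$ is unchanged by finite \'etale base change at a closed point, up to restriction of scalars). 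Since finite generation and base change are dagger-local on $A$ and stable under finite \'etale pushforward, this reduction is harmless; one should note that constancy of $\mathrm{Irr}^\mathrm{tot}_M$ is defined precisely so as to be compatible with this descent.

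Next, in the simple case, I would express $\mathbf{R}^0f_*M$ as the kernel of $M\xrightarrow{\nabla} M\otimes_B\Omega^1_{B/A}$ and relate it to the local kernels. The key point is that the natural map
\[ \mathbf{R}^0f_*M \rightarrow \mathbf{R}^0_\mathrm{loc}f_*M = \bigoplus_i H^0_{\nabla_{u_i}}\!\left(M\otimes_B \mathcal{R}_{A,\sigma_i}\right) \]
is injective (a horizontal section of $M$ vanishing along all the tubes at infinity vanishes, since $U$ is connected and $M$ is a $\nabla$-module over the domain $B$ — this is the algebraic analogue of the statement that a curve minus its points at infinity has no nonzero global functions killed by $d$ unless they are constants; more precisely one uses that $B\hookrightarrow \mathcal{R}_{A,\sigma_i}$ and that a section horizontal in the fibre direction and vanishing near a boundary point is zero). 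Therefore it suffices to prove: (a) $\mathbf{R}^0_\mathrm{loc}f_*M$ is finitely generated over $A$ with base change at closed points, and (b) the formation of the subobject $\mathbf{R}^0f_*M\subset\mathbf{R}^0_\mathrm{loc}f_*M$ commutes with base change at closed points, i.e. $H^0(M_s)\hookrightarrow \bigoplus_i H^0_{\nabla_{u_i}}(M_s\otimes \mathcal{R}_{K',\sigma_i})$ has image exactly $\mathbf{R}^0f_*M\otimes_A K'$.

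For (a): after a dagger localisation and a finite \'etale pushforward as in the proof of Theorem \ref{theo: generic pushforwards} we may take $A=K\weak{\bm x}$; each $M^i_\mathrm{loc}=M\otimes_B\mathcal{R}_A^{u_i}$ is a projective overconvergent $\nabla$-module over $\mathcal{R}^{u_i}_{K\weak{\bm x}}$ with constant irregularity (constancy of $\mathrm{Irr}^\mathrm{tot}_M$ forces constancy of each $\mathrm{Irr}_{M^i_\mathrm{loc}}$ by Proposition \ref{prop: lower}, since each is individually bounded above by its generic value and the sum is constant). By the $F$-ability hypothesis and the theory of exponents (Frobenius-structured $\nabla$-modules over the Robba ring have rational exponents with prime-to-$p$ denominators), Corollary \ref{cor: base change R0loc} applies to each $M^i_\mathrm{loc}$, giving finite generation of $H^0_{\nabla_{u_i}}(M^i_\mathrm{loc})$ over $K\weak{\bm x}$ and base change at closed points; sum over $i$. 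For (b): the four-term exactness, finite generation of $\mathbf{R}^0_\mathrm{loc}f_*M$ and of $\mathbf{R}^1f_!M$ (the latter from Theorem \ref{theo: generic pushforwards} after possibly a further localisation, combined with Lemma \ref{lemma: base change 1}/Corollary \ref{cor: base change 2}), plus the observation that over the generic fibre $L$ everything is known (Remark \ref{rem: Fstructure}(\ref{bc to L1})) and base change at closed points holds for the terms $\mathbf{R}^0_\mathrm{loc}f_*M$ and $\mathbf{R}^1f_!M$, let one deduce base change for the kernel $\mathbf{R}^0f_*M$ by a diagram chase (the snake/five-lemma argument as in the proof of Lemma \ref{lemma: base change 1}); the comparison $H^0(M_s) = \ker(\mathbf{R}^0_\mathrm{loc}f_*M_s\rightarrow \mathbf{R}^1f_!M_s)$ holds fibrewise by the same exact sequence applied to $M_s$ over $K'$.

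\textbf{Main obstacle.} The technical heart is establishing (a) — in particular checking that constancy of the \emph{total} irregularity $\mathrm{Irr}^\mathrm{tot}_M$ really does pin down the irregularity of each local $\nabla$-module $M^i_\mathrm{loc}$ (needed to invoke Corollary \ref{cor: base change R0loc}), and that the dagger localisations and finite \'etale pushforwards used to reach the model case $A=K\weak{\bm x}$ do not destroy the hypothesis or the conclusion. The cleanest route is: each $\mathrm{Irr}_{M^i_\mathrm{loc}}$ is lower semicontinuous in the sense of Proposition \ref{prop: lower} (bounded above by its value at the Shilov point $\xi$), the sum is globally constant, hence each is constant; then apply Proposition \ref{prop: spread} to extend the break decomposition, and Corollary \ref{cor: base change R0loc} to conclude. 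The remaining diagram-chase for (b) is routine given the machinery of \S\ref{sec: src and gp}, and the descent from good to simple is a formal matter of Galois invariance.
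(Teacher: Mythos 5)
Your broad strategy — reduce to the simple case, show each local irregularity $\mathrm{Irr}_{M^i_{\mathrm{loc}}}$ is constant via Proposition \ref{prop: lower}, apply Corollary \ref{cor: base change R0loc} to handle $\mathbf{R}^0_\mathrm{loc}f_*M$, and then diagram-chase against $\mathbf{R}^1f_!M$ in the canonical four-term exact sequence — matches the paper. The gap is in your step (b). You assert that finite generation and base change \emph{at closed points} hold for $\mathbf{R}^1f_!M$, citing Theorem \ref{theo: generic pushforwards}, Lemma \ref{lemma: base change 1}, and Corollary \ref{cor: base change 2} ``after possibly a further localisation''. A further localisation is fatal here: the statement is needed at an arbitrary closed point $s$ of the original $A$, and a localisation $A\to A'$ may discard $s$. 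None of the cited results gives either finite generation or base change for $\mathbf{R}^1f_!M$ over the \emph{full} $A$, and at this stage in the argument one cannot prove base change isomorphism for $\mathbf{R}^1f_!M$ at all: via Poincar\'e duality this would essentially amount to the full content of Theorems \ref{theo: bc for R0f} and \ref{theo: bc for R1f}, which is circular.

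What the diagram chase actually requires, and what the paper supplies in Lemma \ref{lemma: injectivity}, is weaker: finite projectivity of $\mathbf{R}^1f_!M$ over $A$ together with base change \emph{injectivity}, i.e. that $(\mathbf{R}^1f_!M)\otimes_A K'\to H^1_c(M_s)$ is injective for every closed point $s$. With the middle vertical arrow an isomorphism (by Corollary \ref{cor: base change R0loc}) and the right vertical arrow merely injective, the five lemma for a three-term left-exact row already yields that the left arrow is an isomorphism. The proof of Lemma \ref{lemma: injectivity} is a nontrivial additional ingredient that your proposal is missing: for finite generation one embeds $M\otimes\mathcal{Q}^{\{\sigma_i\}}_A$ into finitely many copies of $\mathcal{Q}^{\{\sigma_i\}}_A$ and applies \cite[Lemma 7.3.4]{Ked06a}, and for injectivity one filters $A\twoheadrightarrow A/(x_1)\twoheadrightarrow\cdots\twoheadrightarrow A/(x_1,\ldots,x_d)$, using the short exact sequences $0\to x_j\mathcal{Q}^{\{\sigma_i\}}_{A_{j-1}}\to\mathcal{Q}^{\{\sigma_i\}}_{A_{j-1}}\to\mathcal{Q}^{\{\sigma_i\}}_{A_j}\to 0$ and left exactness of $\ker\nabla$. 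You should replace the appeal to base change isomorphism for $\mathbf{R}^1f_!M$ with a direct argument of this form.
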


\begin{remark} \label{rem: r0 bc} It follows from the theorem that in fact formation of  $\mathbf{R}^0f_*M$ commutes with arbitrary base change $A\rightarrow A'$ of MW-type $K$-dagger algebras. As in Remark \ref{rem: Fstructure}, it then follows that any Frobenius structure on $M$ induces one on $\mathbf{R}^0f_*M$.
\end{remark}

In fact, showing that $\mathbf{R}^0f_*M$ is finitely generated is relatively straightforward, and does not depends on $M$ having constant irregularity. The base change claim is much harder, and is false without at least some extra assumption.\footnote{For a counter-example, let $f:\A^2_k\rightarrow \A^1_k$ be the projection, and take an Artin--Schreier cover $X\rightarrow \A^2_k$ whose  fibre over the generic point of $\A^1_k$ is connected, and whose fibre over $0$ is disconnected. Then the pushforward of the constant isocrystal on $X$ will not satisfy base change to closed points for $\mathbf{R}^0f_*$.} Our first reduction in the proof of Theorem \ref{theo: bc for R0f} is to show that the claim is local on $A$. 

\begin{lemma} Hypothesis as in Theorem \ref{theo: bc for R0f}. Let $\{ A\rightarrow A_i \}_{i\in I}$ be a finite dagger open cover of $A$, and set $A_{ij}=A_i\otimes^\dagger_A A_j$. Write $M_{A_i}=M\otimes_A A_i$ and $M_{A_{ij}} = M\otimes_A A_{ij}$. If the conclusions of Theorem \ref{theo: bc for R0f} hold for each $M_{A_i}$ and each $M_{A_{ij}}$, then they hold for $M$.
\end{lemma}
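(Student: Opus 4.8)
The plan is to deduce the lemma from two facts that require no hypothesis on the irregularity: that formation of $\mathbf{R}^0f_*$ commutes with flat base change on $A$, and faithfully flat descent of finite generation. The first is the crux, and I would record it first. The relative connection $\nabla=\nabla_{B/A}\colon M\rightarrow M\otimes_B\Omega^1_{B/A}$ is $A$-linear, and by definition $\mathbf{R}^0f_*M=\ker(\nabla)$; hence for any flat morphism $A\rightarrow A'$ of MW-type $K$-dagger algebras, exactness of $-\otimes_A A'$ together with the identifications $M\otimes_A A'=M\otimes_B B'$, $\Omega^1_{B'/A'}=\Omega^1_{B/A}\otimes_B B'$ and the fact that the connection on $M'$ is the base change of $\nabla$ yields a canonical isomorphism $\mathbf{R}^0f_*M\otimes_A A'\isomto\mathbf{R}^0f_*M'$. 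In particular, since dagger localisations are flat, this applies to each $A\rightarrow A_i$ and each $A\rightarrow A_{ij}$.

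For finite generation I would then argue by descent. As $I$ is finite, $\prod_i A_i=\bigoplus_i A_i$ as an $A$-module and tensor products commute with finite direct sums, so the flat base change isomorphism gives $\mathbf{R}^0f_*M\otimes_A(\prod_i A_i)\isomto\prod_i\mathbf{R}^0f_*M_{A_i}$, which is finitely generated over $\prod_i A_i$ by the hypothesis on the $M_{A_i}$. The map $A\rightarrow\prod_i A_i$ is flat (a finite product of flat maps), and it is faithfully flat because the $\mathrm{Sp}(A_i)$ cover $\mathrm{Sp}(A)$, so that every maximal ideal of $A$ lies in the image of $\mathrm{Spec}(\prod_i A_i)$. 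Choosing finitely many elements of $\mathbf{R}^0f_*M$ whose images generate $\mathbf{R}^0f_*M\otimes_A(\prod_i A_i)$ over $\prod_i A_i$, the $A$-submodule $N$ they span satisfies $(\mathbf{R}^0f_*M/N)\otimes_A(\prod_i A_i)=0$, hence $\mathbf{R}^0f_*M=N$ by faithful flatness, so $\mathbf{R}^0f_*M$ is finitely generated over $A$.

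For the base change claim, let $s\colon A\rightarrow K'$ be a closed point. Since the $\mathrm{Sp}(A_i)$ cover $\mathrm{Sp}(A)$, the point $s$ lies in some $\mathrm{Sp}(A_i)$, i.e.\ $s$ factors as $A\rightarrow A_i\overset{s'}{\rightarrow}K'$, and then $M_s=(M_{A_i})_{s'}$. Combining the flat base change isomorphism along $A\rightarrow A_i$ with the base change hypothesis for $M_{A_i}$ at the closed point $s'$ yields
\[ \mathbf{R}^0f_*M\otimes_A K'\isomto (\mathbf{R}^0f_*M\otimes_A A_i)\otimes_{A_i}K'\isomto \mathbf{R}^0f_*M_{A_i}\otimes_{A_i}K'\isomto H^0(M_s), \]
and a routine chase of the relevant compatibilities identifies this composite with the base change map for $M$. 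This proves the lemma.

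All the steps are formal; the only points that need a little care are the flat base change behaviour of the module of $p$-adically continuous differentials $\Omega^1_{B/A}$ (so that the first step really is unconditional) and the faithful flatness of $A\rightarrow\prod_i A_i$, and I do not expect either to be a genuine obstacle. Should one prefer not to invoke faithfully flat descent of finite generation directly, the same conclusion can be reached by applying the dagger form of Tate's acyclicity theorem (as in the proof of Proposition \ref{prop: spread}) to the finite projective $B$-modules $M$ and $M\otimes_B\Omega^1_{B/A}$ and taking $\nabla_{B/A}$-kernels, which produces an exact sequence $0\rightarrow\mathbf{R}^0f_*M\rightarrow\prod_i\mathbf{R}^0f_*M_{A_i}\rightarrow\prod_{i,j}\mathbf{R}^0f_*M_{A_{ij}}$ exhibiting $\mathbf{R}^0f_*M$ as the global sections of the coherent sheaf glued from the $\mathbf{R}^0f_*M_{A_i}$.
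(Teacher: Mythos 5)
There is a genuine gap in the main body of your argument, at the very first step. You claim that for any flat morphism $A\rightarrow A'$ of MW-type $K$-dagger algebras one has $M\otimes_A A'=M\otimes_B B'=M'$, from which unconditional flat base change for $\mathbf{R}^0f_*$ would follow. This identification is false: here $B'=B\otimes^\dagger_A A'$ is a \emph{weakly completed} tensor product, and the natural map $B\otimes_A A'\rightarrow B'$ is not an isomorphism (e.g.\ take $A=K$, $A'=K\weak{x}$, $B=K\weak{y}$: then $B\otimes_A A'\subsetneq K\weak{x,y}=B'$). Since $M$ is finite projective over $B$, the map $M\otimes_A A'\rightarrow M'$ is a direct summand of $(B\otimes_A A')^n\rightarrow (B')^n$ and is therefore not an isomorphism either. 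In particular, exactness of $-\otimes_A A'$ controls only the \emph{algebraic} tensor product, not $\ker\nabla$ on the completed module $M'$, and the claimed isomorphism $\mathbf{R}^0f_*M\otimes_A A'\isomto \mathbf{R}^0f_*M'$ does not follow. If such unconditional flat base change were available, one could localise to reduce base change to closed points to the generic case, and then (combined with the relatively easy finite-generation claim) essentially prove the base change statement in Theorem~\ref{theo: bc for R0f} without any constant-irregularity hypothesis; this would contradict the explicit remark in the paper that the base change claim is \emph{false} without that hypothesis.

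The alternative route sketched in your final paragraph is closer to what actually works, and to the paper's own proof: the dagger form of Tate's acyclicity theorem gives the exact sequence
\[ 0\rightarrow \mathbf{R}^0f_*M\rightarrow\prod_i\mathbf{R}^0f_*M_{A_i}\rightarrow\prod_{i,j}\mathbf{R}^0f_*M_{A_{ij}} \]
since $M$ and $M\otimes_B\Omega^1_{B/A}$ are finite projective over $B$ and $\ker\nabla$ is left exact. But to conclude that $\mathbf{R}^0f_*M$ is the global sections of a finite projective sheaf on $\mathrm{Sp}(A)$, you still need to check that the local pieces glue, that is, that the natural maps $\mathbf{R}^0f_*M_{A_i}\otimes_{A_i}A_{ij}\rightarrow\mathbf{R}^0f_*M_{A_{ij}}$ are isomorphisms; this is exactly where the hypothesis on $M_{A_{ij}}$ enters (your argument never uses it, which is itself a warning sign). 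The paper deduces this gluing compatibility by observing that both sides are finite projective $\nabla$-modules over $A_{ij}$ whose formation commutes, by hypothesis, with base change to closed points of $A_{ij}$, and that a map of finite projective modules which is an isomorphism at every closed point is an isomorphism. Once gluing is established, $\mathbf{R}^0f_*M$ is identified with a finite projective $A$-module $N$ with $N\otimes_A A_i\cong\mathbf{R}^0f_*M_{A_i}$, and base change to closed points of $A$ follows exactly by your step (c) of factoring any closed point through some $A_i$. In short: your step (c) and your final Tate-acyclicity remark are sound, but the main argument based on flat base change fails, and the gluing compatibility (and hence the use of the $M_{A_{ij}}$ hypothesis) is missing.
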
 

\begin{proof}
By assumption, formation of $\mathbf{R}^0f_*M_{A_{i}}$ and $\mathbf{R}^0f_*M_{A_{ij}}$ commute with base change to closed points of $A_i$ and $A_{ij}$ respectively. It follows that formation of $\mathbf{R}^0f_*M_{A_i}$ commutes with base change along $A_i\rightarrow A_{ij}$. The dagger version of Tate's acyclicity theorem \cite[Proposition 2.6]{GK00} now gives the existence of a unique finite projective $A$-module $N$ whose base change to each $A_i$ is exactly $\mathbf{R}^0f_*M_{A_i}$, and the fact that 
\[ \mathbf{R}^0f_*M = \bigcap_i \mathbf{R}^0f_*M_{A_i} \]
(intersection inside $\mathbf{R}^0f_*M_{L}$) implies that $\mathbf{R}^0f_*M$ is canonically isomorphic to $N$. Hence formation of $\mathbf{R}^0f_*M$ commutes with each base change $A\rightarrow A_i$, and therefore to all closed points of $A$.
\end{proof}

Next, by localising on $S$, we may assume that $S$ admits a finite \'etale map to $\A^d_k$. Using Lemma \ref{lemma: finite etale lift}, we may therefore assume that there exists a finite \'etale map $K\weak{\bm{x}}\rightarrow A$. Corresponding to this we have a finite \'etale map $\mathcal{R}_{K\weak{\bm{x}}}^{u_j}\rightarrow \mathcal{R}_A^{u_j}$ for each $j$.

\begin{lemma} If $\mathrm{Irr}^\mathrm{tot}_M$ is constant, then each $M\otimes_B \mathcal{R}^{u_j}_{A}$ has constant irregularity.
\end{lemma}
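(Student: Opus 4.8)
The key point is purely formal, once one has Proposition \ref{prop: lower} in hand. By definition $\mathrm{Irr}^\mathrm{tot}_M = \sum_i \mathrm{Irr}_{M_\mathrm{loc}^i}$ is a \emph{finite} sum of functions $\mathcal{M}(A)\rightarrow \Z_{\geq 0}$, and Proposition \ref{prop: lower} tells us that each summand $\mathrm{Irr}_{M_\mathrm{loc}^i}$ is bounded above by its value at the unique point $\xi$ of the Shilov boundary of $\mathcal{M}(A)$ (the point with $K_\xi = L$). So the plan is: observe that a finite sum of $\Z_{\geq 0}$-valued functions, each bounded above by its value at a fixed point $\xi$, can only be constant if every individual summand is already constant (equal to its value at $\xi$).

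\textbf{Steps.} First I would check that Proposition \ref{prop: lower} genuinely applies to each $M_\mathrm{loc}^i := M\otimes_B \mathcal{R}_A^{u_i}$: since the underlying $B$-module of $M$ is projective, so is $M_\mathrm{loc}^i$ as a module over $\mathcal{R}_A^{u_i}$, and it is overconvergent by construction (this is exactly what makes $\mathrm{Irr}_{M_\mathrm{loc}^i}$ well-defined in \S\ref{sec: bgs}). Next, apply Proposition \ref{prop: lower} to get, for every $v\in \mathcal{M}(A)$ and every $i$,
\[ \mathrm{Irr}_{M_\mathrm{loc}^i}(v)\ \leq\ \mathrm{Irr}\!\left(M_\mathrm{loc}^i\otimes_{\mathcal{R}_A^{u_i}}\mathcal{R}_L^{u_i}\right)\ =\ \mathrm{Irr}_{M_\mathrm{loc}^i}(\xi). \]
Summing over the finitely many $i$ gives $\mathrm{Irr}^\mathrm{tot}_M(v)\leq \mathrm{Irr}^\mathrm{tot}_M(\xi)$ for all $v$, with equality precisely when $\mathrm{Irr}_{M_\mathrm{loc}^i}(v) = \mathrm{Irr}_{M_\mathrm{loc}^i}(\xi)$ for every $i$. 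Finally, invoking the hypothesis that $\mathrm{Irr}^\mathrm{tot}_M$ is constant, equality holds for all $v$, and therefore each $\mathrm{Irr}_{M_\mathrm{loc}^i}$ is constant, equal to its value at $\xi$.

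\textbf{Main obstacle.} There is essentially no obstacle here: the only thing requiring a moment's care is verifying the hypotheses of Proposition \ref{prop: lower} for $M_\mathrm{loc}^i$ (projectivity and overconvergence), and identifying the Shilov point $\xi$ with the one at which the generic irregularity $\mathrm{Irr}(M_\mathrm{loc}^i\otimes \mathcal{R}_L^{u_i})$ is computed. All the real content is already contained in Proposition \ref{prop: lower}; the present statement is just the elementary "a constant sum of functions each dominated by their value at a common point forces each to be constant" observation. (Note in particular that the finite étale structure $K\weak{\bm{x}}\rightarrow A$ set up just before the statement plays no role in this particular lemma; it is only needed later.)
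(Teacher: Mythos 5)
Your argument is correct and is essentially the paper's own: the paper phrases it contrapositively (if one $\mathrm{Irr}_{M_\mathrm{loc}^i}$ drops at some point of $\mathcal{M}(A)$, then by Proposition \ref{prop: lower} none of the others can rise, so the total irregularity drops, contradiction), whereas you phrase it directly via the observation that a constant finite sum of functions each dominated by its value at $\xi$ forces each summand to be constant. Your parenthetical that the finite \'etale map $K\weak{\bm{x}}\rightarrow A$ is not used in this lemma is also accurate.
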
 

\begin{proof}
If $M\otimes_B \mathcal{R}^{u_j}_{A}$ does not have constant irregularity, then by Proposition \ref{prop: lower}  there exists a point of $\mathcal{M}(A)$ at which the irregularity of $M\otimes_B \mathcal{R}^{u_j}_{A}$ is strictly smaller than the irregularity at the maximal point. Since all the other irregularities cannot increase under such a specialisation, it follows that $\mathrm{Irr}^\mathrm{tot}_M$ cannot be constant.
\end{proof}

Consider $M\otimes_B \mathcal{R}_A^{u_j}$, as a $\nabla$-module over $\mathcal{R}_{K\weak{\bm{x}}}^{u_j}$ via restriction of scalars. By Lemma \ref{lemma: irr push} this has constant irregularity, and we can apply Corollary \ref{cor: base change R0loc} to deduce that $\mathbf{R}^0_\mathrm{loc}f_*M$ is finitely generated over $K\weak{\bm{x}}$, and its formation commutes with base change to closed points of $K\weak{\bm{x}}$. Hence $\mathbf{R}^0_\mathrm{loc}f_*M$ is finitely generated over $A$, and for every closed point $s:A\rightarrow K'$, the base change map
\[ \mathbf{R}^0_\mathrm{loc}f_* M \otimes_A K' \rightarrow H^0_\mathrm{loc}(M_s) \]
is an isomorphism. The fact that $\mathbf{R}^0f_*M$ is finitely generated (and thus projective) over $A$ now follows from the injection
\[\mathbf{R}^0f_*M \hookrightarrow \mathbf{R}^0_\mathrm{loc}f_*M.  \]
(There is in fact an easier way of seeing this, using \cite[Lemma 7.3.4]{Ked06a} but we will still need the full force of Corollary \ref{cor: base change R0loc} anyway). It remains to show the base change claim, and the key remaining input is the following. 

\begin{lemma} \label{lemma: injectivity} 
The direct image with compact support $\mathbf{R}^1f_!M$ is finitely generated projective over $A$, and for all closed points $s:A \rightarrow K'$  the base change map
\[ ( \mathbf{R}^1f_!M ) \otimes_A K' \rightarrow H^1_c(M_s) \]
is injective.
\end{lemma}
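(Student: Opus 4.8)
The strategy is to relate $\mathbf{R}^1f_!M$ to the objects $\mathbf{R}^0f_*M$ and $\mathbf{R}^0_\mathrm{loc}f_*M$, whose base change behaviour has already been established, via the canonical exact sequence
\[ 0\rightarrow\mathbf{R}^0f_*M\rightarrow\mathbf{R}^0_\mathrm{loc}f_*M \rightarrow \mathbf{R}^1f_!M \rightarrow \mathbf{R}^1f_*M  \rightarrow\mathbf{R}^1_\mathrm{loc}f_*M \rightarrow \mathbf{R}^2f_!M \rightarrow 0.\]
The left-exact piece $0\rightarrow \mathbf{R}^0f_*M \rightarrow \mathbf{R}^0_\mathrm{loc}f_*M \rightarrow \mathbf{R}^1f_!M$ expresses $\mathbf{R}^1f_!M$ — or at least the image of $\mathbf{R}^0_\mathrm{loc}f_*M$ inside it — as the cokernel of the map $\mathbf{R}^0f_*M \rightarrow \mathbf{R}^0_\mathrm{loc}f_*M$. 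First I would observe that, under the standing hypothesis that $K\weak{\bm{x}}\rightarrow A$ is finite \'etale and $M$ is $F$-able with constant total irregularity, Corollary \ref{cor: base change R0loc} (via Lemma \ref{lemma: irr push}) gives that $\mathbf{R}^0_\mathrm{loc}f_*M$ is finitely generated projective over $A$ with formation commuting with base change to closed points, and Theorem \ref{theo: bc for R0f} (whose proof up to this Lemma is already complete, since $\mathbf{R}^0f_*M$ finitely generated was shown without using Lemma \ref{lemma: injectivity}) gives the same for $\mathbf{R}^0f_*M$, at least for the finiteness; the base change isomorphism for $\mathbf{R}^0f_*M$ is exactly what the surrounding argument is trying to prove, so I must be careful to use only what has genuinely been established at this point, namely finiteness of both and the base change isomorphism for $\mathbf{R}^0_\mathrm{loc}f_*M$.

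The cleanest route is to work with $N := \mathrm{coker}(\mathbf{R}^0f_*M \rightarrow \mathbf{R}^0_\mathrm{loc}f_*M)$, which is a submodule of $\mathbf{R}^1f_!M$; since $\mathbf{R}^0_\mathrm{loc}f_*M$ is $A$-projective and $\mathbf{R}^0f_*M \hookrightarrow \mathbf{R}^0_\mathrm{loc}f_*M$ is the inclusion of a saturated (hence projective with projective cokernel, after possibly a localisation that by the earlier lemma is harmless) submodule — indeed $\mathbf{R}^0f_*M$ is itself the kernel of $\nabla$ on a projective module hence saturated in $M$, and by compatibility the inclusion into $\mathbf{R}^0_\mathrm{loc}f_*M$ is also saturated because over $L$ it is — we get that $N$ is finitely generated projective over $A$ and its formation commutes with base change to every closed point (right-exactness of $\otimes_A K'$ applied to the short exact sequence $0\to \mathbf{R}^0f_*M \to \mathbf{R}^0_\mathrm{loc}f_*M \to N \to 0$, together with the two base change isomorphisms upstairs, gives the base change isomorphism for $N$; here the base change isomorphism for $\mathbf{R}^0f_*M$ is not needed, only that the sequence remains exact, which follows from projectivity of $N$). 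I would then identify $N$ with $\mathbf{R}^1f_!M$: the remaining part of the long exact sequence shows $\mathbf{R}^1f_!M/N \hookrightarrow \mathbf{R}^1f_*M$, and by the same Grothendieck--Ogg--Shafarevich / rank count used elsewhere (e.g. the Euler characteristic identity, or Theorem \ref{theo: generic pushforwards}(\ref{parti}) after a localisation that is harmless for the base change statement), one checks the ranks match so that in fact $N = \mathbf{R}^1f_!M$; alternatively, and more robustly, I would show directly that $\mathbf{R}^1f_!M$ is the cokernel of $M\otimes_B B \to$ something, but the slickest is to note $\mathbf{R}^1f_!M$ is finitely generated because $\mathbf{R}^0_\mathrm{loc}f_*M$ surjects onto a finite-index-in-rank submodule and $\mathbf{R}^1f_*M$ (finite by Theorem \ref{theo: generic pushforwards} after localisation) accounts for the rest, whence finiteness, and the injectivity of the base change map follows from left-exactness.

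Concretely: for injectivity of $(\mathbf{R}^1f_!M)\otimes_A K' \to H^1_c(M_s)$, apply $-\otimes_A K'$ to $0\to N \to \mathbf{R}^1f_!M \to \mathbf{R}^1f_!M/N \to 0$ and to $0\to \mathbf{R}^0f_*M \to \mathbf{R}^0_\mathrm{loc}f_*M\to N\to 0$; chasing the resulting diagram against the fibre exact sequence $0\to H^0(M_s)\to H^0_\mathrm{loc}(M_s)\to H^1_c(M_s)$, the base change isomorphism for $\mathbf{R}^0_\mathrm{loc}f_*M$ plus surjectivity of $\mathbf{R}^0f_*M\otimes_A K'\to H^0(M_s)$ (which, for injectivity of the target map, is all one needs — and which holds since $\mathbf{R}^0f_*M \otimes_A K' \to H^0(M_s) \subset H^0_\mathrm{loc}(M_s)$ factors the map whose image is $\ker(H^0_\mathrm{loc}(M_s)\to H^1_c(M_s))$) forces the map out of $N\otimes_A K'$ to be injective, and then the finiteness of $\mathbf{R}^1f_!M/N$ inside $\mathbf{R}^1f_*M$ handles the remaining summand by a rank argument.

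\textbf{The main obstacle.} The delicate point is bookkeeping about what is legitimately available at this stage of the paper: the base change \emph{isomorphism} for $\mathbf{R}^0f_*M$ is not yet proved (Lemma \ref{lemma: injectivity} is a step toward it), so I must phrase everything using only (i) finiteness of $\mathbf{R}^0f_*M$, (ii) the full base change statement for $\mathbf{R}^0_\mathrm{loc}f_*M$ from Corollary \ref{cor: base change R0loc}, and (iii) surjectivity — not bijectivity — of the base change map for $\mathbf{R}^0f_*M$, which is the formal/easy direction. Getting the diagram chase to yield injectivity of $(\mathbf{R}^1f_!M)\otimes_A K'\to H^1_c(M_s)$ from exactly these inputs, and confirming that $N$ really is saturated in $\mathbf{R}^0_\mathrm{loc}f_*M$ so that the short exact sequence $0\to \mathbf{R}^0f_*M\to\mathbf{R}^0_\mathrm{loc}f_*M\to N\to 0$ stays exact after base change, is where the real care is required; everything else is formal homological algebra over $A$.
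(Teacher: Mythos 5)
Your proposal has several genuine gaps, and it takes a fundamentally different route from the paper's proof, one that does not go through.

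\textbf{Circularity in the injectivity argument.} Your diagram chase reduces injectivity of $N\otimes_A K'\to H^1_c(M_s)$ to surjectivity of the base change map $\mathbf{R}^0f_*M\otimes_A K'\to H^0(M_s)$. But this surjectivity is precisely the hard half of the base change statement for $\mathbf{R}^0f_*$ that the surrounding argument is trying to establish, and in the paper it is \emph{deduced} from Lemma \ref{lemma: injectivity} via the five lemma at the very end of \S\ref{sec: bc for R10}. The justification you offer for it (that the map ``factors the map whose image is $\ker(H^0_\mathrm{loc}(M_s)\to H^1_c(M_s))$'') only shows that the image of $\mathbf{R}^0f_*M\otimes_A K'$ lands inside $H^0(M_s)$, which is automatic from commutativity; it gives no information about whether it fills up $H^0(M_s)$. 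So this part of the argument is circular.

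\textbf{The claim $N=\mathbf{R}^1f_!M$ is false.} In the six-term exact sequence, $N$ is the image of $\mathbf{R}^0_\mathrm{loc}f_*M$, i.e.\ the kernel of the connecting map $\mathbf{R}^1f_!M\to\mathbf{R}^1f_*M$. That connecting map is generically nonzero (it is the usual comparison between compactly supported and ordinary cohomology of an open curve), so $N$ is typically a proper submodule of $\mathbf{R}^1f_!M$. A ``rank count'' cannot force equality because the ranks genuinely differ; and the appeal to Theorem \ref{theo: generic pushforwards} after a dagger localisation is of no help for the finiteness over $A$ itself. Worse, the remaining piece $\mathbf{R}^1f_!M/N$ injects into $\mathbf{R}^1f_*M$, whose finiteness over $A$ is the content of Theorem \ref{theo: bc for R1f}, proved only in \S\ref{sec: universal} and not available here. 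So your finiteness argument is also circular: it uses the conclusion of a later, harder theorem. Your ``rank argument'' that would ``handle the remaining summand'' is never spelled out, and would have to rely on the same unavailable facts.

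\textbf{What the paper actually does.} The paper's proof is much more direct and avoids the long exact sequence entirely. Finiteness of $\mathbf{R}^1f_!M$ is obtained by embedding $M\otimes_B\mathcal{Q}_A^{\{\sigma_i\}}$ into finitely many copies of $\mathcal{Q}_A^{\{\sigma_i\}}$ (using projectivity of $M$) and invoking \cite[Lemma 7.3.4]{Ked06a}; the resulting module is then automatically projective since it carries a connection. For injectivity of the base change map, the paper reduces (after translating $s$ to the origin and using the finite \'etale presentation $K\weak{\bm{x}}\to A$) to the chain of closed subvarieties cut out by $x_1,\ldots,x_d$, and at each step uses the explicit short exact sequence $0\to x_1\mathcal{Q}_A^{\{\sigma_i\}}\to\mathcal{Q}_A^{\{\sigma_i\}}\to\mathcal{Q}_{A_1}^{\{\sigma_i\}}\to 0$ (proved via the nine lemma on the defining sequences of $\mathcal{Q}$ and the corresponding fact for each relative Robba ring). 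Left-exactness of $\ker\nabla$ applied to the induced sequence for $M$ then gives exactness of $0\to x_1\mathbf{R}^1f_!M\to\mathbf{R}^1f_!M\to\mathbf{R}^1f_!M_{A_1}$, which is exactly the desired injectivity. None of this requires the base change isomorphism for $\mathbf{R}^0f_*M$, the finiteness of $\mathbf{R}^1f_*M$, or any rank comparison.
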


\begin{proof} To see that $\mathbf{R}^1f_!M$ is finitely generated, we use projectivity of $M$ to embed $M\otimes \mathcal{Q}^{\{\sigma_j\}}$ inside a number of copies of $\mathcal{Q}^{\{\sigma_j\}}$, and then apply \cite[Lemma 7.3.4]{Ked06a}. Since it has a natural $\nabla$-module structure it is thus projective.

For the base change claim, we may assume that $K=K'$. By translating we may assume that $s$ maps to the origin under the given finite \'etale morphism $K\weak{\bm{x}}\rightarrow A$. For $1\leq i\leq d$ let $A_i=A/(x_1,\dots,x_i)$; since $A_d$ is an \'etale $K$-algebra, it in fact suffices to show that the base change map
\[ ( \mathbf{R}^1f_!M_{A} ) \otimes_{A} A_{d} \rightarrow \mathbf{R}^1f_!M_{A_{d}}  \]
is injective. As we have already shown that each $\mathbf{R}^1f_!M_{A_i}$ is finite projective (and hence flat), it suffices to show that for all $i$ the base change map
\[ ( \mathbf{R}^1f_!M_{A_i} ) \otimes_{A_i} A_{i+1} \rightarrow \mathbf{R}^1f_!M_{A_{i+1}} \]
is injective. By induction on $d$, then, what we must show is that
\[ ( \mathbf{R}^1f_!M ) \otimes_{A} A_1 \rightarrow \mathbf{R}^1f_!M_{A_1} \]
is injective. One easily checks that the sequence
\[  0 \rightarrow x_1\mathcal{Q}_A^{\{\sigma_j\}}   \rightarrow \mathcal{Q}_A^{\{\sigma_j\}}   \rightarrow \mathcal{Q}_{A_1}^{\{\sigma_j\}} \rightarrow 0 \]
is exact, for example, by using the corresponding fact for each $\mathcal{R}_{A}^{u_j}$ and applying the nine lemma. Since $M$ is finite projective and $\ker \nabla$ is left exact, we deduce that
\[ 0\rightarrow x_1\mathbf{R}^1f_!M  \rightarrow \mathbf{R}^1f_!M\rightarrow \mathbf{R}^1f_!M_{A_1} \]
is exact. This finishes the proof. \end{proof}

We can now complete the proof of Theorem \ref{theo: bc for R0f}. Consider the diagram
\[ \xymatrix{ 
 0 \ar[r] & (\mathbf{R}^0f_*M) \otimes_{K\weak{\bm{x}}} K' \ar[r] \ar[d] & (\mathbf{R}^0_\mathrm{loc}f_*M) \otimes_{K\weak{\bm{x}}} K' \ar[d] \ar[r] & (\mathbf{R}^1f_!M) \otimes_{K\weak{\bm{x}}} K' \ar[d] \ar[r] & \\
 0 \ar[r] & H^0(M_s) \ar[r] & H^0_\mathrm{loc}(M_s) \ar[r] & H^1_c(M_s)  \ar[r] & 
} \]
which has exact rows, the top row being exact because $\mathbf{R}^0f_*M$, $\mathbf{R}^0_\mathrm{loc}f_*M$ and $\mathbf{R}^1f_!M$ are all finite projective over $A$. We already observed (using \ref{cor: base change R0loc}) that the middle vertical map is an isomorphism, and the right hand vertical arrow is injective by Lemma \ref{lemma: injectivity}. The left hand vertical map is therefore an isomorphism by the five lemma.

\section{The strong fibration theorem and the cohomology of punctured tubes}\label{sec: strong}

To prove Theorem \ref{theo: main dagger} for $\mathbf{R}^0f_*M$ we worked `algebraically', that is, within the language of dagger algebras. In order to deal with the $\mathbf{R}^1f_*M$ case we will need to do a little more geometry, and work in the language of frames. Again, we suppose that we are in Setup \ref{setup: main}, thus (in particular) we have a morphism of MW-type frames 
\[ \mathfrak{f}\colon(U,\overline{C},\mathfrak{C})\rightarrow ( S,\overline{S},\mathfrak{S} ) \]
enclosing an affine curve $f:U\rightarrow S$. We shall only consider the case when the base frame
\[ (S,\overline{S},\mathfrak{S})= (\A^d_k,\P^{d}_k,\widehat{\P}^{d}_\cur{V})\]
is the natural MW frame enclosing affine space over $k$. Again, let $A\rightarrow B$ be the induced morphism of dagger algebras as in \S\ref{sec: gnalk} (thus $A=K\weak{x_1,\ldots,x_d}$), and $M$ a $\nabla$-module over $B$ arising from an overconvergent isocrystal $E$ on $U/K$. We will assume that $M$ is $F$-able. Let
\[ \mathfrak{S}_0 = \widehat{\P}^{d-1}_\cur{V} \subset  \widehat{\P}^{d}_\cur{V}\]
be the hyperplane given in affine co-ordinates by $x_d=0$, and denote fibre product with $\mathfrak{S}_0$ over $\mathfrak{S}$ by $(-)_0$. Thus we have varieties $S_0,\overline{S}_0,U_0,C_0,\overline{C}_0$ and a formal scheme $\mathfrak{C}_0$. Set
\begin{align*}
\mathcal{R}_{(S_0,\mathfrak{S})}^+ &= \Gamma(]\overline{S}_0[_{\mathfrak{S}},j_{S_0}^\dagger\mathcal{O}_{]\overline{S}_0[_{\mathfrak{S}}}),
\end{align*}
we thus have an identification
\begin{align*} \mathcal{R}_{(S_0,\mathfrak{S})}^+ &= \mathcal{R}^{x_d+}_{K\weak{x_1,\ldots,x_{d-1}}}.
\end{align*}
We can also define
\[ \mathcal{R}_{(S_0,\mathfrak{S})}:= \mathrm{colim}_V \Gamma(V\cap ]\overline{S}_0[_{\mathfrak{S}},j_{S_0}^\dagger\mathcal{O}_{]\overline{S}_0[_{\mathfrak{S}}}) \]
where the colimit is over all strict neighbourhoods $V$ of $]\overline{S}\setminus \overline{S}_0[_\mathfrak{S}$ inside $\mathfrak{S}_K$. Again, we have an identification 
\[\mathcal{R}_{(S_0,\mathfrak{S})} = \mathcal{R}^{x_d}_{K\weak{x_1,\ldots,x_{d-1}}}. \] 
We similarly define
\begin{align*}
\mathcal{R}_{(U_0,\mathfrak{C})}^+ = \Gamma(]\overline{C}_0[_{\mathfrak{C}},j_{U_0}^{\dagger}\mathcal{O}_{]\overline{C}_0[_{\mathfrak{C}}})
\end{align*}
and 
\begin{align*}
\mathcal{R}_{(U_0,\mathfrak{C})} &= \mathrm{colim}_V \Gamma(V\cap ]\overline{C}_0[_{\mathfrak{C}},j_{U_0}^{\dagger}\mathcal{O}_{]\overline{C}_0[_{\mathfrak{C}}}),
\end{align*}
the colimit this time being over all strict neighbourhoods $V$ of $]\overline{C} \setminus \overline{C}_0[_\mathfrak{C}$ inside $\mathfrak{C}$. Since the closed immersion $U_0 \rightarrow U$ may no longer admit a smooth retraction (even locally on $U_0$), we cannot necessarily identify these rings with ordinary relative Robba rings. We can, however, compare them \emph{cohomologically} with ordinary relative Robba rings.

Choose an increasing sequence $\eta_n\rightarrow 1$, and let $V_n\subset ]\overline{S}_0[_{\mathfrak{S}}$ denote the open subspace defined by $\norm{x_d}\leq \eta_n$. Set
\begin{align*} R^{[0,\eta_n]}_{(S_0,\mathfrak{S})} &:=  \Gamma(V_n,j_{S_0}^\dagger\mathcal{O}_{]\overline{S}_0[_{\mathfrak{S}}}) \\
R^{[0,\eta_n]}_{(U_0,\mathfrak{C})} &:= \Gamma(\mathfrak{f}_K^{-1}(V_n),j_{U_0}^{\dagger}\mathcal{O}_{]\overline{C}_0[_{\mathfrak{C}}}),
\end{align*}
again we can interpret the former as a suitable `relative Tate algebra' of radius $\eta_n$ over $K\weak{x_1,\ldots,x_{d-1}}$ (we don't make this precise). Similarly, we let $V_n^m\subset ]\overline{S}_0[_{\fr{S}}$ denote the subspace defined by $\eta_m\leq \norm{x_d}\leq \eta_n$, and set   
\begin{align*} R^{[\eta_m,\eta_n]}_{(S_0,\mathfrak{S})} &:=  \Gamma(V^m_n,j_{S_0}^\dagger\mathcal{O}_{]\overline{S}_0[_{\mathfrak{S}}}) \\
R^{[\eta_m,\eta_n]}_{(U_0,\mathfrak{C})} &:= \Gamma(\mathfrak{f}_K^{-1}(V^m_n),j_{U_0}^{\dagger}\mathcal{O}_{]\overline{C}_0[_{\mathfrak{C}}}).
\end{align*}
From $M$, we obtain via base change $\nabla$-modules $M\otimes_B \mathcal{R}_{(U_0,\mathfrak{C})}^+$ and $M\otimes_B \mathcal{R}_{(U_0,\mathfrak{C})}$, with cohomology groups 
\[ \mathbf{R}^i\mathfrak{f}_*(M\otimes_B \mathcal{R}_{(U_0,\mathfrak{C})}^\#) := H^i\left( M\otimes_B \mathcal{R}_{(U_0,\mathfrak{C})}^\# \rightarrow M\otimes_B \mathcal{R}_{(U_0,\mathfrak{C})}^\# \otimes_B \Omega^1_{B/A} \right) \]
over $\mathcal{R}_{(S_0,\mathfrak{S})}^\#$, for $i=0,1$ and $\#\in \{+,\emptyset\}$. Similarly, we have cohomology groups 
\[ \mathbf{R}^i\mathfrak{f}_*(M\otimes_B R^{[0,\eta_n]}_{(U_0,\mathfrak{C})}) := H^i\left( M\otimes_B R_{(U_0,\mathfrak{C})}^{[0,\eta_n]} \rightarrow M\otimes_B R_{(U_0,\mathfrak{C})}^{[0,\eta_n]} \otimes_B \Omega^1_{B/A} \right) \]
over $ R^{[0,\eta_n]}_{(S_0,\mathfrak{S})}$ and 
\[ \mathbf{R}^i\mathfrak{f}_*(M\otimes_B R^{[\eta_m,\eta_n]}_{(U_0,\mathfrak{C})}) := H^i\left( M\otimes_B R_{(U_0,\mathfrak{C})}^{[\eta_m,\eta_n]} \rightarrow M\otimes_B R_{(U_0,\mathfrak{C})}^{[\eta_m,\eta_n]} \otimes_B \Omega^1_{B/A} \right) \]
over $ R^{[\eta_m,\eta_n]}_{(S_0,\mathfrak{S})}$. On the other hand, if we set $A_0=A/(x_d)=K\weak{x_1,\ldots,x_{d-1}}$ and $B_0=B/(x_d)$, then base changing along $B\rightarrow B_0$ gives an overconvergent $\nabla$-module $M_0$ over $B_0$.

\begin{theorem} \label{theo: ugly!} In the above situation, assume that $\mathrm{Irr}_M^\mathrm{tot}$ is constant, that $\mathbf{R}^0f_*M=0$, and that $\mathbf{R}^1f_{0*} M_0$ is finitely generated over $A_0$. Then:
\begin{enumerate} \item \label{num: ugly1} the natural maps
\begin{align*}  \mathbf{R}^1\mathfrak{f}_*(M\otimes_B \mathcal{R}_{(U_0,\mathfrak{C})}^+) &\rightarrow \lim_n  \mathbf{R}^1\mathfrak{f}_*(M\otimes_B R^{[0,\eta_n]}_{(U_0,\mathfrak{C})}) \\
\mathbf{R}^1\mathfrak{f}_*(M\otimes_B \mathcal{R}_{(U_0,\mathfrak{C})}) &\rightarrow \underset{m}{\mathrm{colim}} \lim_n  \mathbf{R}^1\mathfrak{f}_*(M\otimes_B R^{[\eta_m,\eta_n]}_{(U_0,\mathfrak{C})}) 
\end{align*}
are injective, and 
\[\mathbf{R}^0\mathfrak{f}_*(M\otimes_B \mathcal{R}_{(U_0,\mathfrak{C})}^+) =0 =\mathbf{R}^0\mathfrak{f}_*(M\otimes_B \mathcal{R}_{(U_0,\mathfrak{C})})  . \]
\item\label{num: ugly2} for each $n\geq m \geq 1$,
\[ \mathbf{R}^0\mathfrak{f}_*(M\otimes_B R^{[0,\eta_n]}_{(U_0,\mathfrak{C})})=0=\mathbf{R}^0\mathfrak{f}_*(M\otimes_B R^{[\eta_m,\eta_n]}_{(U_0,\mathfrak{C})}) \]
and there are isomorphisms
\begin{align*} 
 \mathbf{R}^1\mathfrak{f}_*(M\otimes_B R^{[0,\eta_n]}_{(U_0,\mathfrak{C})})&\cong \mathbf{R}^1f_{0*} M_0 \otimes_{A_0}R^{[0,\eta_n]}_{(S_0,\mathfrak{S})}  \\
\mathbf{R}^1\mathfrak{f}_*(M\otimes_B R^{[\eta_m,\eta_n]}_{(U_0,\mathfrak{C})})&\cong \mathbf{R}^1f_{0*} M_0 \otimes_{A_0}R^{[\eta_m,\eta_n]}_{(S_0,\mathfrak{S})}.
\end{align*}
\end{enumerate}
\end{theorem}

First let us show that the second claim implies the first. 

\begin{proof}[Proof that Theorem \ref{theo: ugly!} (\ref{num: ugly2}) $\Rightarrow$ Theorem \ref{theo: ugly!} (\ref{num: ugly1})]

First consider the groups $\mathbf{R}^i\mathfrak{f}_*(M\otimes_B \mathcal{R}_{(U_0,\mathfrak{C})}^+)$, which by definition are the kernel and cokernel of the connection
\[\nabla: M \otimes_B \mathcal{R}^+_{(U_0,\mathfrak{C})}  \rightarrow M \otimes_B \mathcal{R}^+_{(U_0,\mathfrak{C})} \otimes_B \Omega^1_{B/A} .\]
By assumption, for all $n\geq 1$ the sequence
\[  0 \rightarrow  M \otimes_B R^{[0,\eta_n]}_{(U_0,\mathfrak{C})} \overset{\nabla}{\longrightarrow} M \otimes_B R_{(U_0,\mathfrak{C})}^{[0,\eta_n]} \otimes_B \Omega^1_{B/A} \rightarrow \mathbf{R}^1\mathfrak{f}_*(M\otimes_B R_{(U_0,\mathfrak{C})}^{[0,\eta_n]})  \rightarrow 0 \]
is exact. Hence applying $\lim_n$ gives an exact sequence
\[ 0 \rightarrow  M \otimes_B \mathcal{R}^+_{(U_0,\mathfrak{C})} \overset{\nabla}{\longrightarrow} M \otimes_B \mathcal{R}^+_{(U_0,\mathfrak{C})} \otimes_B \Omega^1_{B/A} \rightarrow \lim_n\left( \mathbf{R}^1\mathfrak{f}_*(M\otimes_B R_{(U_0,\mathfrak{C})}^{[0,\eta_n]}\right)  \]
and proves the required vanishing of $\mathbf{R}^0\mathfrak{f}_*$ and injectivity claim for $\mathbf{R}^1\mathfrak{f}_*$. For $\mathbf{R}^i\mathfrak{f}_*(M\otimes_B \mathcal{R}_{(U_0,\mathfrak{C})})$ we fix $m$ and apply the same argument, before taking the colimit in $m$ and using the fact that filtered colimits are exact.
\end{proof}

The proof of the second part of Theorem \ref{theo: ugly!} will be via another base change argument, using the strong fibration theorem. The idea will be to replace the frame $(U_0,\overline{C}_0,\fr{C})$ by another frame in which we can do cohomological calculations more easily. So let $\mathfrak{D}=\mathfrak{C}_0 \times_\cur{V} \P^1_\cur{V}$. Then there exists a modification of frames (see Definition \ref{defn: mod})
\[ (U_0,\overline{C}'_0,\mathfrak{D}')\rightarrow (U_0,\overline{C}_0,\mathfrak{D})\]
 and a smooth proper morphism of frames
\[ (U_0,\overline{C}'_0,\mathfrak{D}') \rightarrow (S_0,\overline{S}_0, \mathfrak{S}). \]
extending the obvious rational map $\mathfrak{D}\dashrightarrow \mathfrak{S}$. We consider the fibre product diagram of frames 
\[\xymatrix{  & (U_0,\overline{C}''_0,\mathfrak{C}\times_\mathfrak{S} \mathfrak{D}') \ar[dr]^{\mathfrak{f}'} \ar[dl]_{\mathfrak{g}'} \\ (U_0,\overline{C}_0,\mathfrak{C})\ar[dr]_{\mathfrak{f}} &   & (U_0,\overline{C}'_0, \mathfrak{D}') \ar[r]\ar[dl]^{\mathfrak{g}} &   (U_0,\overline{C}_0,\mathfrak{D}) \\ & (S_0,\overline{S}_0,\mathfrak{S}).  } \]
where $\overline{C}_0''$ is the closure of $U_0$ inside the special fibre of $\mathfrak{C}\times_{\mathfrak{S}} \mathfrak{D}'$. If we denote by $E_0$ the restriction of $E$ to $U_0$, then $E_0$ has a realisations $E_{0,\mathfrak{C}}$ on $]\overline{C}_0[_{\mathfrak{C}}$, $E_{0,\mathfrak{D}}$ on $]\overline{C}_0[_{\mathfrak{D}}$, $E_{0,\mathfrak{D}'}$ on $]\overline{C}'_0[_{\mathfrak{D}'}$ and $E_{0,\mathfrak{C}\times_\mathfrak{S} \mathfrak{D}'}$ on $]\overline{C}_0''[_{\mathfrak{C}\times_\mathfrak{S} \mathfrak{D}'}$. Thus we have
\[ \mathcal{R}_{B_0}^{x_d+} \cong \Gamma(]\overline{C}'_0[,j_{U_0}^\dagger\mathcal{O}_{]\overline{C}_0'[_{\fr{D}'}})\]
as well as a similar interpretation for $\mathcal{R}_{B_0}^{x_d}$. Define
\begin{align*}
  R_{B_0}^{[0,\eta_n],x_d}&:=\Gamma(\mathfrak{g}_K^{-1}(V_n),j_{U_0}^\dagger\mathcal{O}_{]\overline{C}_0'[_{\fr{D}'}}) \\
R_{B_0}^{[\eta_m,\eta_n],x_d}&:=\Gamma(\mathfrak{g}_K^{-1}(V^m_n),j_{U_0}^\dagger\mathcal{O}_{]\overline{C}_0'[_{\fr{D}'}})
 \end{align*}
so we have cohomology groups
\[ \mathbf{R}^i\mathfrak{g}_*(M_0\otimes_{B_0} R^{[0,\eta_n],x_d}_{B_0}) := H^i\left( M_0\otimes_{B_0} R_{B_0}^{[0,\eta_n],x_d} \rightarrow M_0 \otimes_{B_0} R_{B_0}^{[0,\eta_n],x_d} \otimes_{B_0} \Omega^1_{B_0/A_0} \right) \]
over $ R^{[0,\eta_n]}_{(S_0,\mathfrak{S})}$ and 
\[ \mathbf{R}^i\mathfrak{g}_*(M_0\otimes_{B_0} R^{[\eta_m,\eta_n],x_d}_{B_0}) := H^i\left( M_0\otimes_{B_0} R_{B_0}^{[\eta_m,\eta_n],x_d} \rightarrow M_0 \otimes_{B_0} R_{B_0}^{[\eta_m,\eta_n],x_d} \otimes_{B_0} \Omega^1_{B_0/A_0} \right)  \]
over $ R^{[\eta_m,\eta_n]}_{(S_0,\mathfrak{S})}$.

\begin{lemma} For each $n\geq m\geq 1$, and $i=0,1$, there are isomorphisms
\begin{align*}
 \mathbf{R}^i\mathfrak{g}_*(M_0\otimes_{B_0} R^{[0,\eta_n],x_d}_{B_0}) &\cong  \mathbf{R}^i\mathfrak{f}_*(M\otimes_B R^{[\eta_m,\eta_n]}_{(U_0,\mathfrak{C})}) \\
 \mathbf{R}^i\mathfrak{g}_*(M_0\otimes_{B_0} R^{[\eta_m,\eta_n],x_d}_{B_0}) &\cong  \mathbf{R}^i\mathfrak{f}_*(M\otimes_B R^{[\eta_m,\eta_n]}_{(U_0,\mathfrak{C})})
\end{align*}
\end{lemma}

\begin{proof} Since coherent $j^\dagger_{U_0}\mathcal{O}_{]\overline{C}_0[_\fr{C}}$-modules are acylclic on both $\mathfrak{f}_K^{-1}(V_n)$ and $\mathfrak{f}_K^{-1}(V_n^m)$, and similarly coherent $j^\dagger_{U_0}\mathcal{O}_{]\overline{C}'_0[_\fr{D}}$-modules are acylclic on both $\mathfrak{g}_K^{-1}(V_n)$ and $\mathfrak{g}_K^{-1}(V_n^m)$, this follows from the strong fibration theorem.
\end{proof}

Hence Theorem \ref{theo: ugly!} follows from Theorem \ref{theo: robba bc 94} below.

\begin{theorem} \label{theo: robba bc 94} Suppose that $M_0$ is an $F$-able $\nabla$-module over $B_0$, satisfying the conditions of Theorem \ref{theo: main dagger} (i.e. finiteness and base change for relative cohomology). Then the base change maps 
\begin{align*} \mathbf{R}^if_{0*}M_0 \otimes_{A_0} R_{(S_0,\fr{S})}^{[0,\eta_n]} \rightarrow \mathbf{R}^i\mathfrak{g}_*(M_0\otimes_{B_0} R^{[0,\eta_n],x_d}_{B_0})  \\
 \mathbf{R}^if_{0*}M_0 \otimes_{A_0} R_{(S_0,\fr{S})}^{[\eta_m,\eta_n]} \rightarrow \mathbf{R}^i\mathfrak{g}_*(M_0\otimes_{B_0} R^{[\eta_m,\eta_n],x_d}_{B_0})  \end{align*}
are isomorphisms for $i=0,1$.
\end{theorem}

\subsection{Proof of Theorem \ref{theo: robba bc 94}}

In order to prove Theorem \ref{theo: robba bc 94}, we will first need to do a little bit of functional analysis. For the basic terminology of non-archimedean functional analysis, in particular the notions of locally convex vector spaces, Banach spaces, compact linear maps, and so on, we refer the reader to \cite{Sch02}. Recall that by now we are assuming $K$ to be discretely valued, in particular it is spherically complete.

\begin{definition} We call a locally convex $K$-vector space $V$ an LS-space if it is isomorphic to a countable colimit
\[ V\cong \mathrm{colim}_{i\geq 1} V_i \]
of Banach spaces such that the transition maps $V_i\rightarrow V_{i+1}$ are injective, and compact in the sense of \cite[\S16]{Sch02}
\end{definition}

Note that any LS space is separated by \cite[Lemma 16.9]{Sch02}, and that any separated quotient of an LS space is also an LS space. The basic example of an LS space that we have in mind is a $K$-dagger algebra.

\begin{lemma} Any $K$-dagger algebra $A$ is a LS space.
\end{lemma}

\begin{proof}
 First let us consider the case $A=K\weak{x_1,\ldots,x_d}=K\weak{\bm{x}}=\mathrm{colim}_{\lambda>1} K\tate{\lambda^{-1}\bm{x}}$. What we need to show is that if $\lambda>\lambda'>1$ then the map
\[ K\tate{\lambda^{-1}\bm{x}} \rightarrow K\tate{\lambda'^{-1}\bm{x}} \]
is compact. To do so, we take the unit ball $B_1\subset K\tate{\lambda^{-1}\bm{x}}$, i.e. the set of elements of $\lambda$-norm at most $1$. The closure of $B_1$ inside $K\tate{\lambda'^{-1}\bm{x}}$ is then complete, since $K\tate{\lambda'^{-1}\bm{x}}$ itself is. If we take $\epsilon>0$ and let $B'_\epsilon$ denote the ball of radius $\epsilon$ in $K\tate{\lambda'^{-1}\bm{x}}$, then for any element $\alpha\in K$ of norm $\geq \lambda^{-1}$ the finite set
\[ S = \left\{\left. a^{i_1+\ldots+i_d}x_1^{i_1}\ldots x_d^{i_d} \in K[x_1,\ldots,x_d]\right\vert (\lambda'\lambda^{-1})^{i_1+\ldots i_d} > \epsilon \right\} \]
of elements of $K\tate{\lambda'^{-1}\bm{x}}$ satisfies
\[ B_1 \subset \mathcal{V}\cdot S + B_\epsilon'.\]
Thus the closure of $B_1$ in $K\tate{\lambda'^{-1}\bm{x}}$ is compactoid and complete, thus bounded and $c$-compact by \cite[Proposition 12.7]{Sch02}. The map
\[ K\tate{\lambda^{-1}\bm{x}} \rightarrow K\tate{\lambda'^{-1}\bm{x}} \]
is therefore compact as claimed.

In general, we note that the LS topology on $K\weak{\bm{x}}$ is finer than the affinoid topology induced by the inclusion $K\weak{\bm{x}}\hookrightarrow K\tate{\bm{x}}$. Since any ideal of $K\weak{\bm{x}}$ is closed for the affinoid topology by \cite[\S1.4]{GK00}, it is therefore closed for the LS topology. Thus any $K$-dagger algebra is a separated quotient of an LS space, and hence an LS space.
\end{proof}

Similarly, any finitely generated module $M$ over a $K$-dagger algebra has a `canonical' LS topology, with the property that any $A$-linear map from $M$ into a topological $A$-module is continuous.

\begin{proposition} Let $V=\mathrm{colim}_{i\geq 1}V_i$ be an LS space. Then for any bounded subset $B\subset V$ there exists some $i$ such that $B$ is a bounded subset of $V_i$.
\end{proposition}

\begin{proof}
This is \cite[Lemma 16.9]{Sch02}.
\end{proof}

\begin{corollary} Let $V=\mathrm{colim}_{i\geq 1}V_i$ be an LS space, and $W$ a Banach space. Then any continuous linear map $W\rightarrow V$ factors through some $V_i$.
\end{corollary}

\begin{proof}
For any choice of norm $\Norm{\,\cdot\,}$ on $W$, the unit ball $\{\left. w\in W \right\vert \Norm{w}\leq 1 \}$ in $W$ is bounded and generates $W$ as a $K$-vector space.
\end{proof}

\begin{corollary} \label{cor: ind Ban} Let $V=\mathrm{colim}_{i\geq 1}V_i$ and $W=\mathrm{colim}_{i\geq 1}W_i$ be two LS spaces. Then any continuous linear map $V\rightarrow W$ is induced by a unique map
\[ \{ V_i \}_{i\geq 1} \rightarrow \{W_i\}_{i\geq 1}\]
in the category of ind-Banach spaces over $K$.
\end{corollary}

We can now finally deduce the result we will need to prove Theorem \ref{theo: robba bc 94}. 

\begin{corollary} Let $A$ be a $K$-dagger algebra, and \[ 0\rightarrow M \rightarrow N \rightarrow P \rightarrow 0\]
a short exact sequence of topological $A$-modules\footnote{that is, the maps are continuous, but are not \emph{a priori} assumed to be strict}, such that all three are LS spaces over $K$, and $P$ is finite free as an $A$-module. Then the LS topology on $P$ is the canonical one, and $N\cong M\oplus P$ in the category of ind-Banach spaces over $K$.
\end{corollary}

\begin{remark} \label{cor: split exact}By Corollary \ref{cor: ind Ban} the claim is independent of the presentation of $M,N$ or $P$ as colimits of Banach spaces with compact transition maps.
\end{remark}

\begin{proof}
First of all, choose finitely many elements $n_1,\ldots,n_r\in N$ inducing a basis of $P$ as an $A$-module. Then the map
\begin{align*}
 A^{\oplus r} &\rightarrow N \\
 (a_1,\ldots,a_r) &\mapsto \sum_{i=1}^r a_i n_i
 \end{align*}
is continuous for the canonical LS topology on $A^{\oplus r}$, and induces a continuous bijection $A^{\oplus r}\rightarrow P$. This continuous bijection is a topological isomorphism by \cite[II.36, Prop 10]{Bou81}, thus the topology on $P$ is the canonical one, the surjection $N\rightarrow P$ is strict, and the map $N\rightarrow P$ admits a continuous section $P\rightarrow N$. Since the image of $M\hookrightarrow N$ therefore has a topological complement, we deduce from \cite[Proposition 3.5]{Cre98} that $M\rightarrow N$ is strict, and hence the splitting $P\rightarrow N$ induces an isomorphism $N\cong M\oplus P$ of locally convex $K$-vector spaces. Finally, we apply Corollary \ref{cor: ind Ban} to conclude.
\end{proof}

We can now give the proof of Theorem \ref{theo: robba bc 94}.

\begin{proof}[Proof of Theorem \ref{theo: robba bc 94}]
We will give the proof for $R^{[\eta_m,\eta_n]}$, the proof for $R^{[0,\eta_n]}$ being essentially the same. First of all, as usual, since the claim is straightforward for modules of the form $N\otimes_{A_0} B_0$, we can successively replace $M_0$ by the cokernel of 
\[ \mathbf{R}^0f_{0*}M_0 \otimes_{A_0} B_0 \hookrightarrow M_0, \]
and reduce to consider the case when $\mathbf{R}^0f_{0*}M_0=0$.

As a projective $B_0$-module, $M_0$ is canonically an LS space (as it is a direct summand of a free $B_0$-module), this structure being induced by a family of partially defined norms $\Norm{\,\cdot\,}_\lambda$ on $M_0$, coming from affinoid norms on fringe algebras of $B_0$ arising from a fixed presentation $K\weak{x_1,\ldots,x_{n}}\rightarrow B_0$. We can then concretely describe $M_0 \otimes_{B_0} R_{B_0}^{[\eta_m,\eta_n],x_d}$ as the set of formal series
\[ \sum_i m_ix_d^i \]
with $m_i\in M_0$, such that there exists some $\lambda$ such that $\Norm{m_i}_\lambda$ exists for all $i$ and
\begin{align*} \Norm{m_i}_\lambda \eta_m^{i} &\rightarrow 0\;\;\text{ as }\;\;i \rightarrow -\infty \\
 \Norm{m_i}_\lambda \eta_n^{i} &\rightarrow 0\;\;\text{ as }\;\;i \rightarrow +\infty.
\end{align*}
We equip the set of such series with the obvious $R_{B_0}^{[\eta_m,\eta_n],x_d}$-module structure, there is of course an entirely similar description of $M_0 \otimes_{B_0} R_{B_0}^{[\eta_n,\eta_m],x_d} \otimes_{B_0} \Omega^1_{B_0/A_0}$. Let
\[ \nabla:M_0 \rightarrow M_0 \otimes_{B_0} \Omega^1_{B_0/A_0}\]
be the $A_0$-linear connection on $M_0$. Then the quotient topology on $\mathbf{R}^1f_*M_0$ is separated - this can be seen, for example, by comparing with the $p$-adic topology over the completed fraction field $L_0$ of $A_0$ and using \cite[Proposition 8.4.5]{Ked06a}. Hence the exact sequence 
\[ 0 \rightarrow M_0 \overset{\nabla}{\rightarrow}M_0 \otimes_{B_0} \Omega^1_{B_0/A_0} \rightarrow \mathbf{R}^1f_{0*}M_0 \rightarrow 0 \]
of topological $A_0$-modules satisfies the hypotheses of Corollary \ref{cor: split exact}, and is therefore split in the category of ind-Banach spaces over $K$.
 
This implies that if $\sum_im_ix_d^i$ is a series in $M_0 \otimes_{B_0} R_{B_0}^{[\eta_m,\eta_n],x_d}$, then $\sum_i\nabla(m_i)x_d^i$ also satisfies the convergence condition defining $M_0 \otimes_{B_0} R_{B_0}^{[\eta_m,\eta_n],x_d}$ inside $M\pow{x_d,x_d^{-1}}$, and the map 
\[ \nabla : M_0\otimes_{B_0} R_{B_0}^{[\eta_m,\eta_n],x_d} \rightarrow M_0 \otimes_{B_0} R_{B_0}^{[\eta_m,\eta_n],x_d} \otimes_{B_0} \Omega^1_{B_0/A_0} \]
is given by $\sum_im_ix_d^i \mapsto \sum_i\nabla(m_i)x_d^i$. It is then clear that the kernel of $\nabla$ on $M_0\otimes_{B_0} R_{B_0}^{[\eta_m,\eta_n],x_d} $ is zero, which proves the base change claim for $\mathbf{R}^0f_{0*}M_0$. To deal with the $\mathbf{R}^1f_{0*}M_0$ case, choose elements $\bm{e}_1,\ldots,\bm{e}_n$ in $M_0 \otimes_{B_0} \Omega^1_{B_0/A_0}$ lifting a basis of $\mathbf{R}^1f_{0*}M_0$.  Since $\mathbf{R}^0f_*M_0=0$, every $m \in M_0 \otimes_{B_0} \Omega^1_{B_0/A_0}$ can be written \emph{uniquely} as
\[ m = \nabla(n)+\sum_j \alpha_{j} \bm{e}_j \]
for elements $n \in M_0$ and $\alpha_{j}\in A_0$. Thus given any $\sum_i m_ix_d^i  \in M_0 \otimes_{B_0} R_{B_0}^{[\eta_m,\eta_n],x_d} \otimes_{B_0} \Omega^1_{B_0/A_0}$ we can write each $m_i=\nabla(n_i)+\sum_j\alpha_{ij}\bm{e}_j$, and again by split exactness of the sequence
\[ 0 \rightarrow M_0 \overset{\nabla}{\rightarrow}M_0 \otimes_{B_0} \Omega^1_{B_0/A_0} \rightarrow \mathbf{R}^1f_{0*}M_0 \rightarrow 0 \]
of ind-Banach spaces, it follows that the sums
\[ \sum_i n_ix_d^i, \;\;\;\;\text{ and }\;\;\;\;  \sum_i \alpha_{ij}x_d^i, \;\;1\leq j\leq n \]
satisfy the convergence conditions required to define elements of inside $M_0\otimes_{B_0}  R_{B_0}^{[\eta_m,\eta_n],x_d} $ and $R^{[\eta_m,\eta_n]}_{(S_0,\fr{S})}$ respectively. Therefore we can write
\[ \sum_i m_ix_d^i = \nabla\left( \sum_i n_ix_d^i \right) +\sum_j\left(\sum_i\alpha_{ij}x_d^{i} \right) \bm{e}_j, \]
for unique elements $\sum_i\alpha_{ij}x_d^{i}\in R^{[\eta_m,\eta_n]}_{(S_0,\fr{S})}$ and $\sum_in_ix_d^i\in M_0\otimes_{B_0}  R_{B_0}^{[\eta_m,\eta_n],x_d} $, and this implies that the $\bm{e}_i$ also form a basis for $\mathbf{R}^1\mathfrak{g}_*\left(M_0 \otimes_{B_0}R_{B_0}^{[\eta_m,\eta_n],x_d} \right)$ as an $ R^{[\eta_m,\eta_n]}_{(S_0,\fr{S})}$-module.
\end{proof}

\section{Base change for \texorpdfstring{$\mathbf{R}^1f_*$}{R1f}}\label{sec: universal}

We now have the necessary tools to prove the $\mathbf{R}^1f_*M$ case of Theorem \ref{theo: main dagger}. Again, we consider Setup \ref{setup: main}, thus (in particular) we have a morphism of MW-type frames 
\[ (U,\overline{C},\mathfrak{C})\rightarrow ( S,\overline{S},\mathfrak{S} ) \]
enclosing an affine curve $f:U\rightarrow S$, with induced morphism $A\rightarrow B$ of $K$-dagger algebras as in \S\ref{sec: gnalk}. We let $M$ be a $\nabla$-module on $B$ obtained as the realisation of an overconvergent isocrystal $E$ on $U/K$. 
 
\begin{theorem} \label{theo: bc for R1f} Assume that $M$ is $F$-able, and has constant total irregularity $\mathrm{Irr}_M^\mathrm{tot}$. Then $\mathbf{R}^1f_*M$ is finitely generated over $A$, and for any closed point $s:A\rightarrow K'$ the base change map
\[ \mathbf{R}^1f_*M \otimes_A K' \rightarrow H^1(M_s) \]
is an isomorphism.
\end{theorem}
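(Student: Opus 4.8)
\emph{Setup.} The plan is to prove finiteness of $\mathbf{R}^1f_*M$ and the base change isomorphism together, by induction on $d=\dim S$, using the comparison theorems of §\ref{sec: strong} to produce the gluing datum required by a gluing lemma along punctured tubes. As in the proof of Theorem \ref{theo: bc for R0f}, the statement is dagger-local on $A$ (using the dagger form of Tate acyclicity, together with \cite[Lemma 7.3.4]{Ked06a} to see that $\mathbf{R}^1f_*M$ itself satisfies descent) and descends along finite étale morphisms of MW-type $K$-dagger algebras. Combining Lemma \ref{lemma: good to simple} with Lemma \ref{lemma: mod ext} we may therefore assume $f$ is simple with a simple compactification, that $S$ admits a finite étale map to $\A^d_k$, and that the base frame is $(\A^d_k,\P^d_k,\widehat{\P}^d_{\cur{V}})$, so $A=K\weak{x_1,\ldots,x_d}$. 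Passing to irreducible constituents we may assume $M$ admits a Frobenius structure, and by translating we may take the closed point $s$ to be the origin. When $d=0$, $\overline{A}$ is a finite separable extension of $k$, $\mathbf{R}^1f_*M$ is the rigid cohomology of a curve with $F$-able coefficients, which is finite-dimensional, and there is nothing to prove about base change; this begins the induction.

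\emph{Inductive step.} Assume the theorem, equivalently Theorem \ref{theo: main dagger}, over all smooth affine connected bases of dimension $<d$. By Corollary \ref{cor: base change 2} there is a dagger localisation $A\to A'$, with dense open good locus $V=\spec{\overline{A}'}$, over which $\mathbf{R}^1f_*M$ is finitely generated, its formation commuting with arbitrary MW-type base change, and (by the Poincaré pairings of Theorem \ref{theo: generic pushforwards} and Remark \ref{rem: Fstructure}) it underlies an $F$-able overconvergent $\nabla$-module $E_V\in\mathrm{Isoc}_F^\dagger(V/K)$. A Noetherian induction reduces us to extending this across one smooth stratum $Z$ of $S\setminus V$ at a time; after a further étale change of coordinates from Lemma \ref{lemma: mod ext} we may assume $Z$ is the hyperplane $\{x_d=0\}$, and we adopt the notation of §\ref{sec: strong}: $A_0=A/(x_d)=K\weak{x_1,\ldots,x_{d-1}}$, $M_0$ the restriction of $M$ along $Z$, and $f_0:U_0\to S_0$ a good affine curve over $\spec{\overline{A}_0}$. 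A pointwise comparison via Proposition \ref{prop: lower} shows $\mathrm{Irr}^\mathrm{tot}_{M_0}$ is again constant (its value at any $v\in\mathcal{M}(A_0)\subset\mathcal{M}(A)$ equals $\mathrm{Irr}^\mathrm{tot}_M(v)=\mathrm{Irr}(M_L)$); so $M_0$ satisfies the inductive hypothesis and the hypotheses of Theorem \ref{theo: bc robba}, and $E_Z:=\mathbf{R}^1f_{0*}M_0$ is a finitely generated, $F$-able $\nabla$-module over $A_0$ — an object of $\mathrm{Isoc}_F^\dagger(Z/K)$ — whose formation commutes with base change and whose fibre at the origin of $Z$ is $H^1(M_s)$.

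\emph{Matching on the punctured tube, and gluing.} Restricting $\mathbf{R}^1f_*M$ to $\mathcal{R}_{(U_0,\mathfrak{C})}^\#$ and applying Theorem \ref{theo: sft} followed by Theorem \ref{theo: bc robba} identifies $\mathbf{R}^1f_*(M\otimes_B\mathcal{R}_{(U_0,\mathfrak{C})}^\#)$ with $E_Z\otimes_{A_0}\mathcal{R}_{A_0}^{x_d\#}$, for $\#\in\{+,\emptyset\}$: the cohomology of $M$ on the punctured tube of $Z$ in $\mathfrak{S}$ is the pullback of $E_Z$, and it extends to the pullback of $E_Z$ over the full tube. Feeding this, together with the finite $F$-able $\mathbf{R}^1f_*M$ over the dense open $V$, into the gluing lemma (Lemma \ref{lemma: gluing}) shows that $\mathbf{R}^1f_*M$ is finitely generated and $F$-able over the enlarged good locus $V\cup Z$, its formation there commuting with base change to closed points, and that its restriction to $Z$ is identified with $E_Z$. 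Iterating over the finitely many smooth strata of $S\setminus V$ yields that $\mathbf{R}^1f_*M$ is finitely generated over all of $A$, with formation commuting with base change to every closed point. In particular, at the chosen $s$ the map $\mathbf{R}^1f_*M\otimes_A K'\to H^1(M_s)$ is an isomorphism: if $s\in V$ this is built into $E_V$, and otherwise $s$ lies on some stratum $Z$, where $\mathbf{R}^1f_*M|_Z=E_Z$ has fibre $H^1(M_s)$ by the inductive hypothesis.

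\emph{The main obstacle.} The crux is the gluing lemma: an overconvergent $F$-isocrystal on the dense open $V$ whose pullback to the punctured tube of a smooth stratum $Z$ extends overconvergently over the full tube of $Z$ must be shown to come from an overconvergent object on $V\cup Z$. This is precisely the step that fails if one works only with convergent isocrystals, and it rests on Kedlaya's local structure theory for overconvergent $\nabla$-modules; the role of §\ref{sec: strong}, via Theorems \ref{theo: sft} and \ref{theo: bc robba}, is exactly to manufacture, out of the constant-irregularity hypothesis, the extension datum on the punctured tube that the lemma consumes. Secondary technical points are the bookkeeping needed when the strata of $S\setminus V$ have codimension $>1$ in $S$ (handled by iterated applications of Theorems \ref{theo: sft} and \ref{theo: bc robba} along successive coordinate hyperplanes) and the verification, used repeatedly above, that constant total irregularity is preserved under restriction to a coordinate subspace.
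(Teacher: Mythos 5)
Your overall strategy — induction on $d$, reducing to $A=K\weak{\bm{x}}$ with $Z=\{x_d=0\}$, comparing the generic sheaf $\mathbf{R}^1f_*M_{A'}$ with $E_Z=\mathbf{R}^1f_{0*}M_0$ along the punctured tube via Theorems \ref{theo: sft} and \ref{theo: bc robba}, and gluing via Lemma \ref{lemma: gluing} — is indeed the strategy the paper follows. But there are two genuine gaps where you have asserted what actually has to be proved.

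First, and most seriously, the gluing lemma (Lemma \ref{lemma: gluing}) requires as input an isomorphism
\[ \alpha:\mathbf{R}^1f_*M_{A'}\otimes_{A'}\mathcal{R}^{x_d}_{A_0}\isomto E_Z\otimes_{A_0}\mathcal{R}^{x_d}_{A_0}. \]
You produce the right-hand identification from Theorems \ref{theo: sft} and \ref{theo: bc robba} (showing $\mathbf{R}^1f_*(M\otimes_B\mathcal{R}_{(U_0,\mathfrak{C})})\cong E_Z\otimes\mathcal{R}^{x_d}_{A_0}$), but the left-hand identification $\mathbf{R}^1f_*M_{A'}\otimes_{A'}\mathcal{R}^{x_d}_{A_0}\cong\mathbf{R}^1f_*(M\otimes_B\mathcal{R}_{(U_0,\mathfrak{C})})$ is a separate base change statement — for $\mathbf{R}^1f_*$ along the non-localising map $A'\to\mathcal{R}^{x_d}_{A_0}$ — and is not a consequence of Corollary \ref{cor: base change 2}, whose base change is only among MW-type dagger algebras. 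The paper establishes it by first showing both sides are projective of the same rank $m$ and then proving surjectivity via a chain: $\mathbf{R}^1f_*M\twoheadrightarrow\mathbf{R}^1f_{0*}M_0$ (right exactness of top cohomology), so the image of $\mathbf{R}^1f_*M\otimes\mathcal{R}^{x_d+}_{A_0}\to\mathbf{R}^1f_*(M\otimes\mathcal{R}^+_{(U_0,\mathfrak{C})})$ surjects onto $\mathbf{R}^1f_{0*}M_0$, whence it is all of $\mathbf{R}^1f_*(M\otimes\mathcal{R}^+_{(U_0,\mathfrak{C})})$ by rigidity of $\nabla$-modules, and a diagram chase gives the desired surjectivity over $\mathcal{R}^{x_d}_{A_0}$. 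Your write-up treats this as if the identification were automatic (``Restricting $\mathbf{R}^1f_*M$ to $\mathcal{R}_{(U_0,\mathfrak{C})}^\#$''); it is not, and the rank equality needed for the argument forces the preliminary reduction to the case $\mathbf{R}^0f_*M=0$ with constant $\dim_{K'}H^1(M_s)$ — which the paper isolates as Theorem \ref{theo: special finiteness} and which your reduction to irreducible constituents does not quite replace.

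Second, even once Lemma \ref{lemma: gluing} has produced a finite projective module $N$ over $K\weak{\bm{x}}$ of rank $m$, you still need to relate it to $\mathbf{R}^1f_*M$ itself. This requires the injectivity $\mathbf{R}^1f_*M\hookrightarrow\mathbf{R}^1f_*M_{A'}$ for dagger localisations — the content of the paper's Lemma \ref{lemma: R1 inj}, itself proven by a similar Noetherian-inductive use of Theorems \ref{theo: sft} and \ref{theo: bc robba} — to embed $\mathbf{R}^1f_*M$ into the glued $N$, and then a rank comparison (using surjectivity of the specialisation maps and constancy of $\dim H^1(M_s)$) to conclude $\mathbf{R}^1f_*M=N$ with base change holding everywhere. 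Your proposal passes directly from ``feed into the gluing lemma'' to ``shows that $\mathbf{R}^1f_*M$ is finitely generated'' without supplying the inclusion of $\mathbf{R}^1f_*M$ into the glued module; this step is non-trivial and Lemma \ref{lemma: R1 inj} is where it lives.
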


\begin{remark} \label{rem: r1 bc} As in Remark \ref{rem: r0 bc}, it follows that formation of $\mathbf{R}^1f_*M$ commutes with arbitrary base change $A\rightarrow A'$ of MW-type $K$-dagger algebras, and any Frobenius structure on $M$ induces one on $\mathbf{R}^1f_*M$.
\end{remark}

The key case to consider will be when we have $\mathbf{R}^0f_*M=0$.

\begin{theorem} \label{theo: special finiteness} Assume that $M$ is $F$-able. Suppose that for all closed points $s:A\rightarrow K'$ we have
\[ H^0(M_s)=0,\;\;\dim_{K'}H^1(M_s)=m\]
for some non-negative integer $m$, independent of $s$. Then $\mathbf{R}^1f_*M$ is locally free of rank $m$, and for any closed point $s:A\rightarrow K'$ the base change map
\[ \mathbf{R}^1f_*M \otimes_A K' \rightarrow H^1(M_s) \]
is an isomorphism.
\end{theorem}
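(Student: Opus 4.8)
The plan is to prove Theorem~\ref{theo: special finiteness} by induction on $d=\dim S$, the engine being the push-and-glue strategy described in the introduction. First I would observe that the assertion is local on $S$ for the Zariski topology (as in the lemma preceding the proof of Theorem~\ref{theo: bc for R0f}, via the dagger form of Tate acyclicity) and is unaffected by finite \'etale base change, so that by Lemmas~\ref{lemma: good to simple} and~\ref{lemma: mod ext} we may assume $f:U\to S$ is simple with a simple good compactification. By Corollary~\ref{cor: base change 2} there is a dense open $V\subseteq S$ over which all the higher direct images are finite locally free with arbitrary base change; the fibrewise hypothesis forces their rank to be $m$. Stratifying the complement of the good locus into smooth locally closed strata and adjoining them one at a time, highest-dimensional first, via the gluing below — replacing a positive-codimension smooth stratum by a smooth divisor locally containing it and invoking the dimension-$d$ induction on that divisor — one reduces to the situation where $Z:=S\setminus V$ is a smooth divisor, and then, after a further localization and Lemma~\ref{lemma: mod ext}, to $(S,\overline S,\mathfrak S)=(\A^d_k,\P^d_k,\widehat{\P}^{d}_{\cur V})$ with $Z=S_0=\{x_d=0\}$. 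Thus $A=K\weak{x_1,\dots,x_d}$, $A_0:=K\weak{x_1,\dots,x_{d-1}}$, the dagger algebra of $V=S\setminus S_0$ is $A\weak{x_d^{-1}}$, the result is known over $V$, and — the fibrewise hypotheses being inherited by $M_0$ along $S_0\hookrightarrow S$ — the inductive hypothesis applies to $f_0:U_0\to S_0$. Note also that $M$, $M_0$, $M_V$ all have constant total irregularity: by the Grothendieck--Ogg--Shafarevich formula \cite[Corollaire 5.0-12]{CM01}, constancy of $\dim H^1$ on fibres (together with $H^0=0$ and $H^2=0$ on affine curves) forces $\mathrm{Irr}^{\mathrm{tot}}$ to be constant.

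Next I would compute the cohomology over the tube and punctured tube of $S_0$. By the inductive hypothesis $\mathbf R^1f_{0*}M_0$ is finite locally free of rank $m$ over $A_0$ — hence free, by Quillen--Suslin for $K\weak{x_1,\dots,x_{d-1}}$ — with base change to closed points, and $\mathbf R^0f_{0*}M_0=0$ (this, as well as $\mathbf R^0f_*M=\mathbf R^0f_{V*}M_V=0$, follows from Theorem~\ref{theo: bc for R0f}, the vanishing of $H^0$ on fibres, and Nakayama; Theorem~\ref{theo: bc for R0f} applies thanks to constant total irregularity). In particular $M_0$ satisfies the equivalent conditions of Theorem~\ref{theo: main dagger}, so Theorems~\ref{theo: sft} and~\ref{theo: bc robba} combine to give, for $\#\in\{+,\emptyset\}$,
\[ \mathbf R^if_*\bigl(M\otimes_B\mathcal R^\#_{(U_0,\mathfrak C)}\bigr)\ \cong\ \mathbf R^ig_*\bigl(M_0\otimes_{B_0}\mathcal R_{B_0}^{x_d\#}\bigr)\ \cong\ (\mathbf R^if_{0*}M_0)\otimes_{A_0}\mathcal R_{A_0}^{x_d\#}; \]
hence the degree-$0$ terms vanish and the degree-$1$ terms are free of rank $m$ over $\mathcal R_{A_0}^{x_d+}$, resp.\ $\mathcal R_{A_0}^{x_d}$.

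For the gluing, the Mayer--Vietoris cover of the tube of $S$ by the tube of $S_0$ and an overconvergent neighbourhood of $V$ yields on the level of dagger algebras an exact sequence $0\to A\to\mathcal R_{A_0}^{x_d+}\oplus A\weak{x_d^{-1}}\to\mathcal R_{A_0}^{x_d}\to 0$ together with a compatible one over $B$. Since $M$ is projective over $B$, tensoring with the de Rham complex $M\otimes_B\Omega^\bullet_{B/A}$ and passing to the associated long exact sequence — using the vanishings above and that the de Rham complexes in play have amplitude $[0,1]$ — produces a short exact sequence
\[ 0\to\mathbf R^1f_*M\to\bigl((\mathbf R^1f_{0*}M_0)\otimes_{A_0}\mathcal R_{A_0}^{x_d+}\bigr)\oplus\mathbf R^1f_{V*}M_V\xrightarrow{(\iota,-\delta_V)}(\mathbf R^1f_{0*}M_0)\otimes_{A_0}\mathcal R_{A_0}^{x_d}\to 0, \]
where $\mathbf R^1f_{V*}M_V$ is finite locally free of rank $m$ over $A\weak{x_d^{-1}}$ with base change (by the choice of $V$ and Corollary~\ref{cor: base change 2}), $\iota$ is the inclusion of tube cohomology into punctured-tube cohomology, and $\delta_V$ is restriction from $V$ to the punctured tube. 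So $\mathbf R^1f_*M$ is the fibre product, over the ring square above, of three finite locally free rank-$m$ modules. The key Lemma~\ref{lemma: gluing} — whose analytic content is that overconvergence permits one to glue $\nabla$-modules along a punctured tube — then shows this fibre product is again finite locally free over $A$, necessarily of rank $m$, with its formation compatible with the three restrictions. Base change then holds at $s\in V$ by Corollary~\ref{cor: base change 2}; and at $s\in Z=S_0$ the map $A\to K'$ factors as $A\to\mathcal R_{A_0}^{x_d+}\to A_0\to K'$, so compatibility of the fibre product with $A\to\mathcal R_{A_0}^{x_d+}$ and $\mathcal R_{A_0}^{x_d+}\to A_0$ identifies $\mathbf R^1f_*M\otimes_AK'$ with $(\mathbf R^1f_{0*}M_0)\otimes_{A_0}K'=H^1(M_s)$, the last equality by the inductive hypothesis.

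The step I expect to be the main obstacle is the gluing Lemma~\ref{lemma: gluing} itself: showing that the fibre product over $A=\mathcal R_{A_0}^{x_d+}\times_{\mathcal R_{A_0}^{x_d}}A\weak{x_d^{-1}}$ of modules which are a priori only finite over the larger rings $\mathcal R_{A_0}^{x_d+}$ and $A\weak{x_d^{-1}}$ is in fact finite over $A$ and has the expected base-change behaviour. This is precisely where overconvergence is indispensable — the analogous statement is false in the convergent category, as remarked in the introduction — and it rests on a careful analysis of the transition map $\mathcal R_{A_0}^{x_d+}\hookrightarrow\mathcal R_{A_0}^{x_d}$ and of the overconvergent descent data. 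A secondary, largely combinatorial, difficulty is the Noetherian-induction bookkeeping needed to reduce to the case where the complement of the good locus is a smooth divisor of the form $\{x_d=0\}$.
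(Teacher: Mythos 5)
Your overall strategy — Noetherian induction on $\dim A$, reduction via \cite[Theorem 1]{Ked05} and Lemma~\ref{lemma: mod ext} to $(S,\overline S,\mathfrak S)=(\A^d_k,\P^d_k,\widehat{\P}^d_{\cur V})$ with $Z=\{x_d=0\}$, then the strong fibration theorem (Theorems~\ref{theo: sft} and~\ref{theo: bc robba}) to compute the tube and punctured-tube cohomology as base changes of $\mathbf R^1f_{0*}M_0$, and finally gluing via Lemma~\ref{lemma: gluing} — matches the paper's proof. The Mayer--Vietoris framing is a tidy repackaging of the commutative square the paper writes down, and the use of Corollary~\ref{cor: base change 2}, Lemma~\ref{lemma: local R1} (localness on $A$), and Lemma~\ref{lemma: R1 inj} is all as intended.

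There is, however, a genuine gap in the step where you invoke Lemma~\ref{lemma: gluing}. You write the Mayer--Vietoris sequence
\[
0\to\mathbf R^1f_*M\to\bigl(\mathbf R^1f_{0*}M_0\otimes_{A_0}\mathcal R_{A_0}^{x_d+}\bigr)\oplus\mathbf R^1f_{V*}M_V\xrightarrow{(\iota,-\delta_V)}\mathbf R^1f_{0*}M_0\otimes_{A_0}\mathcal R_{A_0}^{x_d}\to 0
\]
and assert that $\mathbf R^1f_*M$ is ``the fibre product, over the ring square, of three finite locally free rank-$m$ modules,'' to which Lemma~\ref{lemma: gluing} applies. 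But Lemma~\ref{lemma: gluing} is a Milnor-patching statement: it requires as input \emph{an isomorphism} $\alpha:\mathbf R^1f_{V*}M_V\otimes_{A\weak{x_d^{-1}}}\mathcal R_{A_0}^{x_d}\isomto\mathbf R^1f_{0*}M_0\otimes_{A_0}\mathcal R_{A_0}^{x_d}$ through which the map $\delta_V$ factors. This is precisely what is \emph{not} automatic from the Mayer--Vietoris sequence. All one reads off from the exactness is that $\mathbf R^1f_{0*}M_0\otimes\mathcal R_{A_0}^{x_d+}$ together with the image of $\delta_V$ generates the punctured-tube cohomology as an $\mathcal R_{A_0}^{x_d+}$-module; it does not follow that the $\mathcal R_{A_0}^{x_d}$-linearization of $\delta_V$ is surjective, and without that the kernel of $(\iota,-\delta_V)$ need not be a gluing datum in the sense of Lemma~\ref{lemma: gluing}, hence need not be finite over $A$. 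The paper closes this gap separately: using the surjection $\mathbf R^1f_*M\twoheadrightarrow\mathbf R^1f_{0*}M_0$ and the ``rigidity of $\nabla$-modules over $\mathcal R_{A_0}^{x_d+}$'' (a Nakayama-type statement: the cokernel of $\mathbf R^1f_*M\otimes_A\mathcal R_{A_0}^{x_d+}\to\mathbf R^1f_*(M\otimes_B\mathcal R^+_{(U_0,\mathfrak C)})$ is a coherent $\nabla$-module on the tube vanishing modulo $x_d$, hence vanishes), one deduces the required surjectivity and thus that $\delta_V$ becomes an isomorphism after base change. Your proposal omits this argument, and the sentence acknowledging that Lemma~\ref{lemma: gluing} ``rests on a careful analysis of the transition map'' misattributes the difficulty: the lemma itself is proved by an explicit patching of vector bundles and is not where the work lies; what needs work is verifying its hypothesis in this situation.

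A secondary point: you assert the exactness of $0\to B\to\mathcal R^+_{(U_0,\mathfrak C)}\oplus B\weak{x_d^{-1}}\to\mathcal R_{(U_0,\mathfrak C)}\to 0$ without justification; the paper only asserts the Cartesian property of this square, and exactness on the right does not come for free since $\mathcal R_{(U_0,\mathfrak C)}$ admits no global retraction to $B_0$. The paper's argument avoids needing this surjectivity — it only uses the injectivity $\mathbf R^1f_*M\hookrightarrow\mathbf R^1f_*M_{A'}$ from Lemma~\ref{lemma: R1 inj} and the existence of the glued module — so if you want to keep the Mayer--Vietoris framing you should either justify this surjectivity (e.g.\ via the vanishing in Proposition~\ref{prop: a and b}) or retreat to the paper's approach of exhibiting $\mathbf R^1f_*M$ as a submodule of the glued module rather than as the full Mayer--Vietoris kernel.
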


\begin{proof}[Theorem \ref{theo: special finiteness} $\Rightarrow$  Theorem \ref{theo: bc for R1f}]
Given a general $M$ as in Theorem \ref{theo: bc for R1f} we have by Theorem \ref{theo: bc for R0f} an injection
\[ \mathbf{R}^0f_*M \otimes_A B \rightarrow M \]
of $F$-able $\nabla$-modules. Since relatively constant $\nabla$-modules have zero irregularity, we deduce that the hypotheses of Theorem \ref{theo: bc for R1f} remain true for the cokernel of this injection. Moreover, it follows easily from the projection formula that the conclusions of Theorem \ref{theo: bc for R1f} hold for the $\nabla$-module $\mathbf{R}^0f_*M \otimes_A B$. Appealing to the five lemma and iterating, we may therefore reduce to considering the case when $\mathbf{R}^0f_*M=0$. Theorem \ref{theo: bc for R0f} then implies that $H^0(M_s)=0$ for all such $s$, and hence \cite[ Corollaire 5.0-12]{CM01} implies that $\dim_{K'}H^1(M_s)$ is constant. Thus we may apply Theorem \ref{theo: special finiteness}.
\end{proof}

The situation here is the opposite to the one we had in \S\ref{sec: bc for R10} - the hard part is showing finiteness of $\mathbf{R}^1f_*M$, the base change claim will then follow relatively easily. To prove Theorem \ref{theo: special finiteness} we will proceed by induction on the dimension $d=\dim A$, the case $d=0$ amounting to finiteness of rigid cohomology with coefficients for smooth curves \cite[\S6]{Ked06a}. The main structure of the proof will be essentially geometric, working on the `weak formal scheme' $\mathrm{Spf}(A^\mathrm{int})$. Theorem \ref{theo: generic pushforwards} tells us that $\mathbf{R}^1f_*M$ becomes coherent on some open subspace $W \subset \mathrm{Spf}(A^\mathrm{int})$, and we will use constancy of $\dim_{K'}H^1$ together with the induction hypothesis to successively enlarge the open set $W$. A basic outline of the argument we will use can be found in the proof of the following lemma, and was already used in the proof of Proposition \ref{prop: spread}.
 
 \begin{lemma} \label{lemma: R1 inj} Assumptions as in Theorem \ref{theo: special finiteness}. For any dagger localisation $A\rightarrow A'$ the map
\[ \mathbf{R}^1f_*M \rightarrow \mathbf{R}^1f_*M_{A'}  \]
is injective.
\end{lemma}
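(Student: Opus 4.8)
The plan is to reduce the assertion, by a chain of formal steps, to a statement about extending sections of $M$ from the generic fibre, and then to prove that by induction on $d=\dim A$ using the fibration-theoretic results of \S\ref{sec: strong}.

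First I would note that a dagger localisation $A\to A'$ is an open immersion on reductions, so $A$ and $A'$ share the same completed fraction field $L$; the map in question factors through $\mathbf{R}^1f_*M_L$, and it therefore suffices to show that $\mathbf{R}^1f_*M\to\mathbf{R}^1f_*M_L$ is injective. Next, exactly as in the deduction of Theorem~\ref{theo: bc for R1f} from Theorem~\ref{theo: special finiteness} — which is legitimate here because the hypotheses of the latter force $\mathrm{Irr}^{\mathrm{tot}}_M$ to be constant (by \cite[Corollaire 5.0-12]{CM01} and Proposition~\ref{prop: lower}), so that Theorem~\ref{theo: bc for R0f} applies — I would split off the relatively constant submodule $\mathbf{R}^0f_*M\otimes_A B\hookrightarrow M$ and reduce, via the five lemma, to the case $\mathbf{R}^0f_*M=0$ (hence $H^0(M_L)=0$). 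Writing $N=M\otimes_B\Omega^1_{B/A}$, flatness of $B\to B_L$ and projectivity of $M$ and $N$ make $M\hookrightarrow M_L$ and $N\hookrightarrow N_L$ injective, so the snake lemma applied to $0\to M\xrightarrow{\nabla}N\to\mathbf{R}^1f_*M\to 0$ and its base change to $L$ identifies $\ker(\mathbf{R}^1f_*M\to\mathbf{R}^1f_*M_L)$ with $\ker(M_L/M\to N_L/N)$. What remains is then: \emph{if $m\in M_L$ satisfies $\nabla(m)\in N$, then $m\in M$}.

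To prove this I would first use Lemmas~\ref{lemma: good to simple} and~\ref{lemma: mod ext} together with \cite[Theorem 1]{Ked05}, and Lemma~\ref{lemma: irr push} for invariance under pushforward along a finite étale morphism, to reduce to the setup of \S\ref{sec: strong}, where $A=K\weak{x_1,\dots,x_d}$ and $f$ is simple; then I would induct on $d$, the case $d=0$ being vacuous since then $A=L$. For $d\geq 1$, set $S_0=\{x_d=0\}$, let $M_0$ be the restriction of $M$ to the curve over $S_0$, and let $F$ be the completed fraction field of $K\weak{x_1,\dots,x_{d-1}}$. Since $L$ is the Gauss completion of $\mathrm{Frac}(K\weak{\bm x})$, an element $m\in M_L$ has an $x_d$-expansion $m=\sum_i m_i x_d^i$ with coefficients over the ring attached to the curve above $S_0$ with base $F$; because $\nabla$ is a relative connection it acts coefficientwise in $x_d$, so $\nabla(m)\in N$ forces $\nabla(m_i)=0$ for $i<0$ — hence $m_i=0$ for $i<0$, as $\mathbf{R}^0f_{0*}M_0=0$ (Theorem~\ref{theo: bc for R0f} applied to $M_0$, using $H^0(M_s)=0$ for $s\in S_0$, and base change to $F$) — and forces each $m_i$ with $i\geq 0$ to satisfy the same extension hypothesis over $K\weak{x_1,\dots,x_{d-1}}$, whence $m_i\in M_0$ by the inductive hypothesis. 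Finally, the strictness of $\nabla$ on $M_0$ (the Claim in the proof of Theorem~\ref{theo: bc robba}, applicable since $M_0$ satisfies Theorem~\ref{theo: main dagger}) together with $\mathbf{R}^0f_{0*}M_0=0$ bounds $\Norm{m_i}$ in terms of $\Norm{\nabla(m_i)}$, so the overconvergence in $x_d$ of $\nabla(m)\in N$ propagates to $m$, giving $m\in M$.

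I expect the last point of the previous paragraph to be the main obstacle: a priori the series $\sum_{i\geq 0}m_i x_d^i$ with $m_i\in M_0$ only represents a \emph{bounded} section in $x_d$, not an overconvergent one, so recovering overconvergence genuinely requires the comparison between sections on tubes and on the curve over $S_0$ (Theorems~\ref{theo: sft} and~\ref{theo: bc robba}) and careful tracking of the weak-completion topology throughout.
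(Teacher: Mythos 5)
Your proposal diverges from the paper's proof at a crucial point, and unfortunately the divergence introduces a genuine gap.

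The paper does not reduce to injectivity into $\mathbf{R}^1f_*M_L$. Instead, it fixes the element $m\in M_{A'}$ with $\nabla(m)\in M\otimes_B\Omega^1_{B/A}$ and runs a Noetherian induction over the set where $m$ is already known to extend: it shows that if $m$ extends to a collection of dagger localisations with union $U_\mathcal{C}\subsetneq S$, then $m$ also extends to a localisation whose reduction contains a chosen smooth rational point of $S\setminus U_\mathcal{C}$. The local step picks coordinates via \cite[Theorem 1]{Ked05} so that $S\setminus U_\mathcal{C}\subset\{x_d=0\}$, i.e.\ so that one is effectively in the case $A'=A\weak{x_d^{-1}}$, and then uses the Cartesian square of rings $B$, $B\weak{x_d^{-1}}$, $\mathcal{R}^+_{(U_0,\mathfrak{C})}$, $\mathcal{R}_{(U_0,\mathfrak{C})}$ together with Theorems~\ref{theo: sft} and~\ref{theo: bc robba} to reduce the extension problem to the vanishing $\mathbf{R}^0f_*\bigl(M\otimes_B \mathcal{R}_{(U_0,\mathfrak{C})}/\mathcal{R}^+_{(U_0,\mathfrak{C})}\bigr)=0$.

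Your reduction to $\mathbf{R}^1f_*M\to\mathbf{R}^1f_*M_L$ is a strictly stronger statement and, more importantly, it discards exactly the structure the argument needs. Once you pass to $L$, the subsequent $x_d$-expansion step fails: a general element of $L$ (the Gauss completion of $\mathrm{Frac}(K\weak{\bm{x}})$) simply does not admit a convergent $x_d$-series expansion with coefficients over the completed fraction field $F$ of $K\weak{x_1,\dots,x_{d-1}}$. For instance $1/(x_1-x_d)$ has Gauss norm $1$, but every attempted $x_d$-expansion of it (around $0$ or around $\infty$) has coefficients of constant norm and so fails to converge. Thus ``$m\in M_L$ has an $x_d$-expansion $\sum m_ix_d^i$'' is false as stated, and the inductive step collapses at the outset — well before the overconvergence issue you flag at the end, which would indeed also need care but is downstream of this more basic obstruction. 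The fix is not to strengthen the statement by passing to $L$, but to exploit the hypothesis that $m$ lives over a \emph{dagger localisation} and to use Noetherian induction to reduce to the specific localisation $A\weak{x_d^{-1}}$, where the punctured-tube formalism of \S\ref{sec: strong} applies and the argument becomes purely cohomological rather than coefficientwise.

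Your other observations are sound: the hypotheses of Theorem~\ref{theo: special finiteness} do force constant total irregularity via Grothendieck--Ogg--Shafarevich, hence $\mathbf{R}^0f_*M=0$ by Theorem~\ref{theo: bc for R0f}; and the snake-lemma reformulation of injectivity as ``$\nabla(m)\in N\Rightarrow m\in M$'' is exactly how the paper phrases the problem. But the route through $L$ and an $x_d$-expansion does not work.
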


\begin{proof}
Write $\nabla$ for the $A$-linear connection on $M$, and for any dagger localisation $A\rightarrow A''$, write $B_{A''}=B\otimes^\dagger_A A''$. We need to show that if $m \in M_{A'}$ is such that $\nabla(m)\in M\otimes_B \Omega^1_{B/A}$, then in fact $m \in M$. By the dagger analogue of Tate's acyclicity theorem, the question is local on $A$ in the sense that it suffices to produce a dagger open cover $\{A \rightarrow A_i\}$ such that $m \in M_{A_i}$ for all $i$.

Suppose therefore that we have some collection of dagger localisations $\mathcal{C} = \{A \rightarrow A_i\}$ (not necessarily covering $A$) such that $m \in M_{A_i}$ for all $i$. A non-empty such $\mathcal{C}$ exists by hypothesis. Let $U_\mathcal{C}$ denote the union of the images of the induced open immersions $U_i \rightarrow S$ on the reductions. We shall show that if $U_\mathcal{C}\neq  S$, then we can find another dagger localisation $A\rightarrow A''$ such that $m\in M_{A''}$, and adding $A''$ to $\mathcal{C}$ enlarges $U_\mathcal{C}$. The result will then follow by Noetherian induction.

Now, if $U_\mathcal{C}\neq S$, then after possibly enlarging $K$, which is harmless, we may choose a smooth $k$-rational point $z$ on the (reduced) complement of $U_\mathcal{C}$ in $S$. Localising around $z$ we may by \cite[Theorem 1]{Ked05} pick a finite \'etale map $(x_1,\ldots,x_d):S \rightarrow \A^d_k$ such that $S \setminus U_\mathcal{C}$ maps into the hyperplane $\{x_d=0\}$. It suffices to produce some $A''$ as above such that $\spec{\overline{A}''}\subset S$ contains $z$. After applying Lemma \ref{lemma: change frame} we may assume that our \'etale morphism $S\rightarrow \A^d_k$ extends to a proper, \'etale, Cartesian morphism of frames 
\[(S,\overline{S},\mathfrak{S}) \rightarrow (\A^d_k,\P^d_k,\widehat{\P}^d_\cur{V}), \]
in particular it lifts to a finite \'etale map $K\weak{x_1,\ldots,x_d}\rightarrow A$. Pushing forward along this morphism we may therefore assume that
\[(S,\overline{S},\mathfrak{S})= (\A^d_k,\P^{d}_k,\widehat{\P}^{d}_\cur{V}) \]
and that $U_\mathcal{C} \supset \{x_d\neq 0\} $ as open subschemes of $\A^d_k$. Thus we have $A'=K\weak{x_1,\ldots,x_d,x_d^{-1}}$, and we are now in a position to apply the results of \S\ref{sec: strong} above; we will freely use the notations introduced there. We have a Cartesian diagram of rings
\[ \xymatrix { B \ar[r] \ar[d] & \mathcal{R}^+_{(U_0,\mathfrak{C})} \ar[d] \\
B\weak{x_d^{-1}} \ar[r] & \mathcal{R}_{(U_0,\mathfrak{C})} } \]
and an $F$-able $\nabla$-module $M$ over $B$, with constant total irregularity. We need to show that the induced map
\[ \mathbf{R}^1f_*M \rightarrow \mathbf{R}^1f_*\left( M_{A'}\right) \]
is injective. It therefore suffices to show that
\[ \nabla:\frac{M_{A'}}{M}\rightarrow \frac{M_{A'}}{M} \]
and since the above square is Cartesian (and $M$ is projective), that 
\[ \nabla:  M\otimes_B \frac{\mathcal{R}_{(U_0,\mathfrak{C})}}{\mathcal{R}^+_{(U_0,\mathfrak{C})}} \rightarrow M\otimes_B \frac{\mathcal{R}_{(U_0,\mathfrak{C})}}{\mathcal{R}^+_{(U_0,\mathfrak{C})}}  \]
is injective. By the induction hypothesis we know that $M_0$ satisfies both conditions of Theorem \ref{theo: main dagger}. Thus we may apply Theorem \ref{theo: ugly!} to deduce that
\[ \mathbf{R}^0\mathfrak{f}_*\left( M\otimes_B \mathcal{R}_{(U_0,\mathfrak{C})} \right)=0, \]
and
\[ \mathbf{R}^1\mathfrak{f}_*\left( M\otimes_B \mathcal{R}^+_{(U_0,\mathfrak{C})} \right) \hookrightarrow \mathbf{R}^1\mathfrak{f}_*\left( M\otimes_B \mathcal{R}_{(U_0,\mathfrak{C})} \right). \]
We can now use the long exact sequence associated to 
\[ 0 \rightarrow M\otimes_B \mathcal{R}^+_{(U_0,\mathfrak{C})} \rightarrow M\otimes_B \mathcal{R}_{(U_0,\mathfrak{C})} \rightarrow M\otimes_B \frac{\mathcal{R}_{(U_0,\mathfrak{C})}}{\mathcal{R}^+_{(U_0,\mathfrak{C})}} \rightarrow 0\]
to conclude.
 \end{proof}

\begin{lemma} \label{lemma: local R1} Hypotheses as in Theorem \ref{theo: special finiteness}. If there exists an open cover $\{A\rightarrow A_i\}$ such that the conclusions of Theorem \ref{theo: special finiteness} hold for each $M_{A_i}$, then they hold for $M$. 
\end{lemma}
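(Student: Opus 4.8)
The plan is to glue the sheaves $\mathbf{R}^1f_*M_{A_i}$ along the cover by means of the dagger analogue of Tate's acyclicity theorem \cite[Proposition 2.6]{GK00}, as in the locality argument preceding Theorem \ref{theo: bc for R0f}. We may take the cover finite; write $A_{ij}=A_i\otimes_A^\dagger A_j$ and $A_{ijk}=A_i\otimes_A^\dagger A_j\otimes_A^\dagger A_k$. Since the hypotheses of Theorem \ref{theo: special finiteness} only constrain closed points, and a closed point of a dagger localisation of $A$ is a closed point of $A$, each of $M_{A_i}$, $M_{A_{ij}}$, $M_{A_{ijk}}$ satisfies those hypotheses; in particular Lemma \ref{lemma: R1 inj} applies to all of them, and $H^0(M_s)=0$ at every closed point $s$ forces $\mathbf{R}^0f_*M_{A'}=0$ for every MW-type dagger localisation $A'$ of $A$ (a $\nabla$-horizontal element of $M_{A'}$ vanishes at every closed point of the reduced dagger algebra $B_{A'}$, hence vanishes).

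The main point, which I expect to be the chief obstacle, is that formation of $\mathbf{R}^1f_*M_{A_i}$ commutes with the flat localisation $A_i\to A_{ij}$, i.e.\ that $\mathbf{R}^1f_*M_{A_i}\otimes_{A_i}A_{ij}\to\mathbf{R}^1f_*M_{A_{ij}}$ is an isomorphism. For this it is enough to know that \emph{all} of the higher direct images
$\mathbf{R}^0f_*M_{A_i}$, $\mathbf{R}^1f_*M_{A_i}$, $\mathbf{R}^0_\mathrm{loc}f_*M_{A_i}$, $\mathbf{R}^1_\mathrm{loc}f_*M_{A_i}$, $\mathbf{R}^1f_!M_{A_i}$, $\mathbf{R}^2f_!M_{A_i}$
are finite over $A_i$: indeed, by the argument recalled in the proof of Lemma \ref{lemma: base change 1} (cf.\ \cite[proof of Theorem 7.3.3]{Ked06a}) this already implies that the conclusions of Theorem \ref{theo: generic pushforwards} hold for $M_{A_i}$ over $A_i$ without further localisation, in particular that formation of $\mathbf{R}^1f_*M_{A_i}$ commutes with flat base change. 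To verify the finiteness I would pass Zariski-locally on $A_i$, after a finite \'etale base change, to the case where $f$ is simple (as in the proof of Theorem \ref{theo: bc for R0f}); then $\mathbf{R}^0f_*M_{A_i}=0$, $\mathbf{R}^1f_*M_{A_i}$ is finite by hypothesis, $\mathbf{R}^1f_!M_{A_i}$ is finite by \cite[Lemma 7.3.4]{Ked06a} (using projectivity of $M$), $\mathbf{R}^0_\mathrm{loc}f_*M_{A_i}$ and $\mathbf{R}^0_\mathrm{loc}f_*M_{A_i}^\vee$ are finite by Corollary \ref{cor: base change R0loc} and Lemma \ref{lemma: irr push} (since $M$ and $M^\vee$ are $F$-able of constant total irregularity), and hence $\mathbf{R}^1_\mathrm{loc}f_*M_{A_i}$ and $\mathbf{R}^2f_!M_{A_i}$ are finite by Poincar\'e duality (Theorem \ref{theo: generic pushforwards}(\ref{partii})) together with the canonical exact sequence. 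Consequently the isomorphisms $\mathbf{R}^1f_*M_{A_i}\otimes_{A_i}A_{ij}\cong\mathbf{R}^1f_*M_{A_{ij}}\cong\mathbf{R}^1f_*M_{A_j}\otimes_{A_j}A_{ij}$ are canonical, so that the cocycle condition over the $A_{ijk}$ is automatic.

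With this in hand, the dagger acyclicity theorem produces a finite projective $A$-module $N$ with isomorphisms $N\otimes_AA_i\cong\mathbf{R}^1f_*M_{A_i}$ compatible over the $A_{ij}$; the restriction maps $\mathbf{R}^1f_*M\to\mathbf{R}^1f_*M_{A_i}$ then patch to a map $\mathbf{R}^1f_*M\to N$, which is injective since each composite $\mathbf{R}^1f_*M\to N\to\mathbf{R}^1f_*M_{A_i}$ is injective by Lemma \ref{lemma: R1 inj}. For surjectivity I would argue by a \v{C}ech computation: a section of $N$ over $A$ is a family $\alpha_i\in\mathbf{R}^1f_*M_{A_i}$ agreeing over the $A_{ij}$; lifting $\alpha_i$ to $\omega_i\in M_{A_i}\otimes_{B_{A_i}}\Omega^1_{B_{A_i}/A_i}$, the difference $\omega_i-\omega_j$ over $A_{ij}$ equals $\nabla(n_{ij})$ for some $n_{ij}\in M_{A_{ij}}$, and since $\ker\nabla=\mathbf{R}^0f_*M_{A_{ijk}}=0$ the family $(n_{ij})$ is a \v{C}ech $1$-cocycle for the coherent $B$-module $M$ and the cover $\{B_{A_i}\}$ of $B$; by dagger acyclicity it is a coboundary, $n_{ij}=m_j-m_i$, whence the $\omega_i+\nabla(m_i)$ glue to a global section $\omega$ of $M\otimes_B\Omega^1_{B/A}$ whose class in $\mathbf{R}^1f_*M$ maps to the given section of $N$. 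Thus $\mathbf{R}^1f_*M\cong N$ is finite projective; since the $A_i$ cover $A$ it is locally free of rank $m$, and base change at a closed point $s$ of $A$ holds because $s$ lies in some $\mathrm{Sp}(A_i)$, where base change for $M_{A_i}$ holds by hypothesis.
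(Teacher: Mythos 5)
Your overall plan---glue via the dagger form of Tate's acyclicity theorem, embed $\mathbf{R}^1f_*M$ into the glued module $N$ using Lemma \ref{lemma: R1 inj}, and conclude by comparing at closed points---matches the paper's in outline, but the crucial step of producing the descent datum over the $A_{ij}$ is handled very differently, and your route contains a genuine gap.

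You propose obtaining the descent datum by first establishing that the conclusions of Theorem \ref{theo: generic pushforwards} hold for $M_{A_i}$ over $A_i$ without any further localisation, so that in particular $\mathbf{R}^1f_*M_{A_i}\otimes_{A_i}A_{ij}\to\mathbf{R}^1f_*M_{A_{ij}}$ is an isomorphism. The load-bearing finiteness here is that of $\mathbf{R}^1_\mathrm{loc}f_*M_{A_i}$ over $A_i$, and your argument for it is circular: Theorem \ref{theo: generic pushforwards}(\ref{partii}) gives a \emph{perfect} pairing $\mathbf{R}^1_\mathrm{loc}f_*M'\otimes\mathbf{R}^0_\mathrm{loc}f_*{M'}^\vee\to A'(-1)$ only for those $A'$ over which the conclusions of part (\ref{parti}) --- which include finiteness of $\mathbf{R}^1_\mathrm{loc}f_*$ --- are already known. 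Without perfectness you only have a pairing, and the resulting map $\mathbf{R}^1_\mathrm{loc}f_*M_{A_i}\to(\mathbf{R}^0_\mathrm{loc}f_*M^\vee_{A_i})^\vee$ is not known to be injective (the potential torsion in the cokernel of $\nabla$ is precisely what has to be ruled out), so finiteness of the target does not transfer to the source. Corollary \ref{cor: base change R0loc} controls $H^0_{\nabla_u}$ only, and the paper supplies no analogous statement for $H^1_{\nabla_u}$ over a relative Robba ring; supplying one would essentially re-prove the finiteness part of Theorem \ref{theo: special finiteness} from scratch. The same issue affects your claim about $\mathbf{R}^2f_!M_{A_i}$.

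The paper sidesteps this entirely. It never tries to show $\mathbf{R}^1f_*M_{A_i}\otimes_{A_i}A_{ij}\cong\mathbf{R}^1f_*M_{A_{ij}}$. Instead it takes a single dagger localisation $A\to A'$ over which Theorem \ref{theo: generic pushforwards} is known (after shrinking, $A'$ lies inside every $A_i$ and $A_{ij}$), notes that the hypothesis ``locally free of rank $m$ and base change to closed points'' forces $\mathbf{R}^1f_*M_{A_i}\otimes_{A_i}A'\to\mathbf{R}^1f_*M_{A'}$ to be an isomorphism (both sides projective of rank $m$, and the map is an isomorphism at every closed point of $A'$), and then compares $\mathbf{R}^1f_*M_{A_i}\otimes_{A_i}A_{ij}$ and $\mathbf{R}^1f_*M_{A_j}\otimes_{A_j}A_{ij}$ as \emph{submodules} of $\mathbf{R}^1f_*M_{A'}$. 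The equality of these submodules is deduced from rigidity of overconvergent $\nabla$-modules (a $\nabla$-module over $A_{ij}$ vanishing after the localisation $A_{ij}\to A'$ is zero), which furnishes the descent datum and the cocycle condition simultaneously, with no appeal to flat base change for the $\mathbf{R}^1f_*M_{A_i}$. You should adopt this comparison-inside-$\mathbf{R}^1f_*M_{A'}$ device in place of the Poincar\'e-duality step; once the descent datum is in hand, your \v{C}ech argument for surjectivity and the final base-change deduction go through.
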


 \begin{proof}
Choose a dagger localisation $A\rightarrow A'$ such that the conclusions of Theorem \ref{theo: generic pushforwards} hold for $M_{A'}$. By shrinking $A'$ we may assume that $A_i \rightarrow A'$ for all $i$, and hence that $A_{ij}:=A_i\otimes^\dagger_A A_j \hookrightarrow A'$ for all $i,j$. By comparing with closed points of $A'$, we therefore have base change isomorphisms
 \[
 \mathbf{R}^1f_*M_{A_i} \otimes_{A_i} A' \isomto  \mathbf{R}^1f_*M_{A'} \]
 which we can use to embed $\mathbf{R}^1f_*M_{A_i}$ and $\mathbf{R}^1f_*M_{A_i} \otimes_{A_i} A_{ij}$ inside $\mathbf{R}^1f_*M_{A'}$. Let $N_{ij}$ denote the sum of $\mathbf{R}^1f_*M_{A_i} \otimes_{A_i} A_{ij}$ and $\mathbf{R}^1f_*M_{A_j} \otimes_{A_j} A_{ij}$ inside $\mathbf{R}^1f_*M_{A'}$, this is therefore an overconvergent $\nabla$-module over $A_{ij}$. Since
 \[  N_{ij} \hookrightarrow \mathbf{R}^1f_*M_{A'} \]
 we can deduce by applying \cite[Proposition 5.3.1]{Ked07} to the kernel of 
\[ N_{ij} \otimes_{A_{ij}} A' \rightarrow \mathbf{R}^1f_*M_{A'} \]
that in fact the map $N_{ij} \otimes_{A_{ij}} A' \rightarrow \mathbf{R}^1f_*M_{A'}$ remains injective. We thus find that
 \[ \left(\mathbf{R}^1f_*M_{A_i} \otimes_{A_i} A_{ij}+\mathbf{R}^1f_*M_{A_j} \otimes_{A_j} A_{ij}\right)\otimes_{A_{ij}} A' \isomto \mathbf{R}^1f_*M_{A'}, \]
 and in particular, the cokernel of the natural map
 \[ \mathbf{R}^1f_*M_{A_i} \otimes_{A_i} A_{ij} \rightarrow \mathbf{R}^1f_*M_{A_i} \otimes_{A_i} A_{ij}+\mathbf{R}^1f_*M_{A_j} \otimes_{A_j} A_{ij} \]
 has to vanish after applying $-\otimes_{A_{ij}} A'$. Since this cokernel is an overconvergent $\nabla$-module, it must already be zero over $A_{ij}$, and thus we deduce that
 \[ \mathbf{R}^1f_*M_{A_i} \otimes_{A_i} A_{ij} = \mathbf{R}^1f_*M_{A_j} \otimes_{A_j} A_{ij} \]
inside $\mathbf{R}^1f_*M_{A'}$. Hence by descent for coherent sheaves on dagger spaces, the intersection $ N=\bigcap_i \mathbf{R}^1f_*M_{A_i}$ of all the $\mathbf{R}^1f_*M_{A_i}$ inside $\mathbf{R}^1f_*M_{A'}$ is a locally free $A$-module of rank $m$ whose base change to $A_i$ is exactly $\mathbf{R}^1f_*M_{A_i}$. By Lemma \ref{lemma: R1 inj} above, $\mathbf{R}^1f_*M$ naturally embeds into $N$: it is thus finite, and hence a $\nabla$-module, locally free of rank $r\leq m$. However, since the base change map
\[ \mathbf{R}^1f_*M_{A} \otimes_A K' \rightarrow H^1(M_s) \] 
associated to any closed point $s$ is surjective (which is immediate from the definitions), we deduce that in fact $r=m$, $\mathbf{R}^1f_*M=N$, and the base change map along closed points $A\rightarrow K'$, as well as to each $A_i$, is an isomorphism. \end{proof}

We need one more lemma before we can prove Theorem \ref{theo: special finiteness}, which in a way is the key result making the whole approach work.

\begin{lemma} \label{lemma: gluing} Let $N_1,N_2$ be finite projective modules of rank $m$ over $K\weak{x_1,\ldots,x_d,x_d^{-1}}$ and $\mathcal{R}^{x_d+}_{K\weak{x_1,\ldots,x_{d-1}}}$ respectively, and let
\[\alpha: N_1\otimes_{K\weak{x_1,\ldots,x_d,x_d^{-1}}} \mathcal{R}^{x_d}_{K\weak{x_1,\ldots,x_{d-1}}} \isomto N_2\otimes_{\mathcal{R}^{x_d+}_{K\weak{x_1,\ldots,x_{d-1}}}} \mathcal{R}^{x_d}_{K\weak{x_1,\ldots,x_{d-1}}} \]
be an $\mathcal{R}_{K\weak{x_1,\ldots,x_{d-1}}}^{x_d}$-linear isomorphism. Then the intersection of $N_1$ and $\alpha^{-1}(N_2)$ inside $N_1\otimes_{K\weak{x_1,\ldots,x_d,x_d^{-1}}}\mathcal{R}^{x_d}_{K\weak{x_1,\ldots,x_{d-1}}}$ is a finite projective $K\weak{x_1,\ldots,x_d}$-module of rank $m$.
\end{lemma}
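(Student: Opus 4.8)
The plan is to realise $N$ as the module obtained by patching $N_1$ and $N_2$ along $\alpha$ over a Cartesian square of rings, and then to conclude by faithfully flat descent. Write $A=K\weak{x_1,\dots,x_d}$, $A'=K\weak{x_1,\dots,x_d,x_d^{-1}}$ and $A_0=K\weak{x_1,\dots,x_{d-1}}$, and abbreviate $\mathcal{R}^{+}=\mathcal{R}^{x_d+}_{A_0}$, $\mathcal{R}=\mathcal{R}^{x_d}_{A_0}$. Thus $V:=N_1\otimes_{A'}\mathcal{R}\cong N_2\otimes_{\mathcal{R}^{+}}\mathcal{R}$ via $\alpha$, and $N=N_1\cap\alpha^{-1}(N_2)$ inside $V$.

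The first ingredient is the Cartesian square
\[ \xymatrix{ A \ar[r]\ar[d] & \mathcal{R}^{+}\ar[d] \\ A'\ar[r] & \mathcal{R} } \]
already used, in its relative form over $B$, in the proof of Lemma~\ref{lemma: R1 inj}: one checks directly that $A=A'\cap\mathcal{R}^{+}$ inside $\mathcal{R}$ and that the difference map $A'\oplus\mathcal{R}^{+}\to\mathcal{R}$ is surjective, by splitting a series over $\mathcal{R}$ into its parts of negative and of nonnegative $x_d$-degree. Geometrically this is Mayer--Vietoris for the admissible cover of the overconvergent closed polydisc $\{|x_d|\le 1\}$ by the overconvergent annulus around $|x_d|=1$ (with functions $A'$) and the open disc $\{|x_d|<1\}$ (with functions $\mathcal{R}^{+}$), overlapping in the half-open annulus (with functions $\mathcal{R}$). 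Since $A'$, $\mathcal{R}^{+}$ and $\mathcal{R}$ are flat over $A$, tensoring with a finite $A$-module $P$ keeps the sequence $0\to P\to (P\otimes_A A')\oplus(P\otimes_A\mathcal{R}^{+})\to P\otimes_A\mathcal{R}\to 0$ exact; this is the same dagger Tate acyclicity input the paper uses in the proofs of Lemmas~\ref{lemma: R1 inj} and~\ref{lemma: local R1}.

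Granting this, the standard patching formalism for such a square yields an equivalence between finite projective $A$-modules and triples $(N_1,N_2,\alpha)$, under which a triple corresponds to the fibre product $N_1\times_\alpha N_2 = N_1\cap\alpha^{-1}(N_2)$. Hence $N$ is finite projective over $A$ with $N\otimes_A A'\isomto N_1$ and $N\otimes_A\mathcal{R}^{+}\isomto N_2$. Since the cover is genuinely a cover, $A\to A'\times\mathcal{R}^{+}$ is faithfully flat (it is flat and surjective on maximal ideals, hence, $A$ being Jacobson, on all primes), and $N$ becomes projective of rank $m$ after base change along it; projectivity and rank therefore descend, giving the claim. Alternatively, after a harmless finite base extension $K\to K'$ one may take $N_1,N_2$ free, trivialise $\alpha$ by a matrix $G\in\mathrm{GL}_m(\mathcal{R})$, and obtain the statement from a Birkhoff--Cartan factorisation of $G$ through $\mathrm{GL}_m(A')$ and $\mathrm{GL}_m(\mathcal{R}^{+})$.

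The delicate point --- and the main obstacle --- is the patching equivalence itself, and inside it the finite generation of $N$ over $A$. Neither $A'\to\mathcal{R}$ nor $\mathcal{R}^{+}\to\mathcal{R}$ is surjective, so one cannot naively patch finite presentations as in Milnor's setting; moreover $\mathcal{R}^{+}$ and $\mathcal{R}$ are not dagger affinoids but quasi-Stein (respectively a filtered colimit of such). I would handle this by writing $\mathcal{R}^{+}$ and $\mathcal{R}$ as suitable (co)limits of dagger affinoid pieces --- fringe-algebra truncations and closed sub-annuli --- where ordinary dagger Tate acyclicity and Kiehl-type gluing of coherent modules apply, patching at each finite level and then passing to the limit, using Noetherianity of the MW-type dagger algebra $A$ to control finite generation of the limit and independence of the exhaustion. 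This limiting argument, not anything in the statement, is where the real work lies.
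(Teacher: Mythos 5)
Your overall strategy — interpret $N_1\cap\alpha^{-1}(N_2)$ as a module patched along the Cartesian square
\[
\xymatrix{ K\weak{x_1,\ldots,x_d} \ar[r]\ar[d] & \mathcal{R}^{x_d+}_{K\weak{x_1,\ldots,x_{d-1}}} \ar[d] \\ K\weak{x_1,\ldots,x_d,x_d^{-1}}\ar[r] & \mathcal{R}^{x_d}_{K\weak{x_1,\ldots,x_{d-1}}} }
\]
and reduce to affinoid pieces where Kiehl gluing applies — is the same as the paper's, and you have correctly diagnosed the obstacle: neither map into $\mathcal{R}^{x_d}$ is surjective, and the Robba rings are not affinoid, so Milnor-style patching does not apply as stated. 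But the proposal stops precisely at the point where you say ``this limiting argument is where the real work lies.'' That limiting argument \emph{is} the proof; without it you have a plan but not an argument. Concretely, what is needed (and what the paper supplies) is: first choose $\lambda>1$ so that $N_1$ comes from a locally free sheaf $\mathcal{E}_\lambda$ on the affinoid annulus $U_\lambda=\{\,|x_i|\le\lambda,\ |x_d|\ge\lambda^{-1}\,\}$; then, because $\mathcal{R}^{x_d+}$ is an intersection over $\rho\to 1^-$ of colimits over $\eta>1$ of affinoid rings, choose $\rho,\eta_\rho$ so that $N_2$ comes from a locally free sheaf $\mathcal{E}_\rho$ on the affinoid polydisc $U_\rho=\{\,|x_i|\le\eta_\rho,\ |x_d|\le\rho\,\}$; observe that $\alpha$ is defined (after shrinking $\lambda,\eta_\rho$) over the affinoid intersection $U_\lambda\cap U_\rho$; glue $\mathcal{E}_\lambda$ and $\mathcal{E}_\rho$ by Kiehl to a locally free sheaf $\mathcal{E}$ on $U_\lambda\cup U_\rho$, a closed polydisc; and finally set $N=\Gamma(U_\lambda\cup U_\rho,\mathcal{E})\otimes K\weak{x_1,\ldots,x_d}$, which is finite projective of rank $m$ with the right base changes. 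The Cartesian square is then invoked only at the very end, to identify $N$ with $N_1\cap\alpha^{-1}(N_2)$.

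Two of the side remarks in your proposal are not load-bearing and would be better cut. The faithfully flat descent step is circular as formulated: if you already have the patching equivalence producing a finitely generated $N$ with $N\otimes A'\cong N_1$ and $N\otimes\mathcal{R}^{x_d+}\cong N_2$, then finiteness, projectivity and rank are all immediate, and descent along $A\to A'\times\mathcal{R}^{x_d+}$ adds nothing; conversely, without the patching, faithful flatness does not on its own tell you that the subintersection $N_1\cap\alpha^{-1}(N_2)$ is finite over $A$ (this is a $2$-ring Cartesian square, not a covering by a single faithfully flat map, so Grothendieck descent does not apply directly). The Birkhoff--Cartan alternative is also not justified: there is no obvious factorisation $\mathrm{GL}_m(\mathcal{R}^{x_d}_{A_0})=\mathrm{GL}_m(K\weak{\bm{x},x_d^{-1}})\cdot\mathrm{GL}_m(\mathcal{R}^{x_d+}_{A_0})$ over the relative Robba ring, and even the classical Birkhoff decomposition over a field has a nontrivial torus factor; you would have to prove such a statement, which looks at least as hard as the original lemma.
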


\begin{remark} It is in order to be able to apply this key lemma that makes it vital to work with overconvergent, rather than just convergent, relative cohomology groups.
\end{remark}

\begin{proof} First choose $\lambda>1$ close enough to $1$ that there exists a locally free sheaf $\cur{E}_\lambda$ on the rigid analytic space
\[ U_\lambda := \left\{ \norm{x_i}\leq \lambda,\;\;\norm{x_d} \geq\lambda^{-1} \right\}\]
whose module of global sections tensored with $K\weak{x_1,\ldots,x_d,x_d^{-1}}$ is precisely $N_1$. Next choose $\lambda^{-1}<\rho<1$ and $1<\eta_\rho<\lambda$ close enough to $1$ such that there exists a locally free sheaf $\mathcal{E}_\rho$ on
\[ U_{\rho}:= \left\{ \norm{x_i}\leq \eta_{\rho},\;\;\norm{x_d} \leq \rho \right\} \]
whose module of global sections tensored with $\cup_{\eta>1} K\tate{\eta^{-1}x_1,\ldots,\eta^{-1}x_{d-1},\rho^{-1}x_d}$ coincides with $N_2 \otimes \cup_{\eta>1} K\tate{\eta^{-1}x_1,\ldots,\eta^{-1}x_{d-1},\rho^{-1}x_d}$. After possibly increasing $\lambda$, the isomorphism $\alpha$ is defined over
\[ \cap_{\lambda^{-1} \leq \rho' < 1 } \cup_{\eta>1} K\tate{\eta^{-1}x_1,\ldots,\eta^{-1}x_{d-1},\lambda x_d^{-1},\rho'^{-1}x_d} \]
and hence (after possibly decreasing $\eta_\rho$) induces an isomorphism
\[ \left.\mathcal{E}_\lambda\right\vert_{U_\lambda
 \cap U_{
 \rho}} \isomto  \left.\mathcal{E}_\rho\right\vert_{U_\lambda\cap U_\rho}  \]
of locally free sheaves on
\[ U_{\lambda} \cap U_{\rho} = \left\{ \norm{x_i}\leq \eta_\rho,\;\;\lambda^{-1}\leq \norm{x_d}\leq \rho \right\}. \]
Thus $\mathcal{E}_\lambda$ and $\mathcal{E}_{\rho}$ glue to give a locally free sheaf $\mathcal{E}$ on
\[  U_{\lambda} \cup U_{\rho} = \left\{ \norm{x_i}\leq \lambda \right\}. \]
Set $N=\Gamma(U_\lambda\cup U_\rho,\mathcal{E}) \otimes K\weak{x_1,\ldots,x_d}$. This is a then a finite projective (and therefore free) module over $K\weak{x_1,\ldots,x_d}$ such that $N_1 = N \otimes K\weak{x_1,\ldots,x_d,x_d^{-1}}$ and $N_2= N \otimes \mathcal{R}^{x_d+}_{K\weak{x_1,\ldots,x_{d-1}}}$. The result then follows from the fact that the digram
\[\xymatrix{ K\weak{x_1,\ldots,x_d} \ar[r]\ar[d] & \mathcal{R}^{x_d+}_{K\weak{x_1,\ldots,x_{d-1}}} \ar[d] \\ K\weak{x_1,\ldots,x_d,x_d^{_1}}\ar[r] & \mathcal{R}^{x_d}_{K\weak{x_1,\ldots,x_{d-1}}} }   \]
of rings is Cartesian.
\end{proof}

\begin{proof}[Proof of Theorem \ref{theo: special finiteness}]
The claim we are trying to prove, i.e. finite generation of $\mathbf{R}^1f_*M$ and commutation with base change to closed points, is local on $A$ by Lemma \ref{lemma: local R1}, and we can now argue entirely similarly to the proof of Lemma \ref{lemma: R1 inj} above.

In other words, we know by Theorem \ref{theo: generic pushforwards} and Lemma \ref{lemma: base change 1} that after making a localisation $A\rightarrow A'$ the higher direct image $\mathbf{R}^1f_*M$ becomes locally free and commutes with base change to closed points. By extending $K$ and using Noetherian induction, it suffices to show that the same also holds over some dagger localisation of $A$ containing the residue disc of a given smooth rational point of the complement $S \setminus \spec{\overline{A}'}$.

Localising around this point, applying \cite[Theorem 1]{Ked05} and lifting, and using Lemma \ref{lemma: local R1} above, we can reduce to the case when $A=K\weak{x_1,\ldots,x_d}$ and $A'=K\weak{x_1,\ldots,x_d,x_d^{-1}}$. Again, we are now in a position to apply the results from \S\ref{sec: strong} above, and we will freely use the notation introduced there. Consider the commutative diagram
\[ \xymatrix{ \mathbf{R}^1f_*M  \ar[r] \ar[d] & \mathbf{R}^1\mathfrak{f}_*\left( M \otimes_B \mathcal{R}^+_{(U_0,\mathfrak{C})} \right) \ar[d] \\ \mathbf{R}^1f_*M_{A'}   \ar[r] & \mathbf{R}^1\mathfrak{f}_*\left( M \otimes_B \mathcal{R}_{(U_0,\mathfrak{C})} \right). } \]
Applying Theorem \ref{theo: ugly!} produces (injective) maps
\begin{align*} 
 \mathbf{R}^1\mathfrak{f}_*\left( M \otimes_B \mathcal{R}^+_{(U_0,\mathfrak{C})} \right) &\rightarrow \mathbf{R}^1f_{0*}M_0 \otimes_{K\weak{x_1,\ldots,x_{d-1}}} \mathcal{R}_{K\weak{x_1,\ldots,x_{d-1}}}^{x_d+} \\
 \mathbf{R}^1\mathfrak{f}_*\left( M \otimes_B \mathcal{R}_{(U_0,\mathfrak{C})} \right) &\rightarrow \mathbf{R}^1f_{0*}M_0 \otimes_{K\weak{x_1,\ldots,x_{d-1}}} \mathcal{R}_{K\weak{x_1,\ldots,x_{d-1}}}^{x_d}
\end{align*} 
and hence a commutative diagram as follows:
\[ \xymatrix{ \mathbf{R}^1f_*M  \ar[r] \ar[d] & \mathbf{R}^1f_{0*}M_0 \otimes_{K\weak{x_1,\ldots,x_{d-1}}} \mathcal{R}_{K\weak{x_1,\ldots,x_{d-1}}}^{x_d+} \ar[d] \\ \mathbf{R}^1f_*M_{A'}   \ar[r] & \mathbf{R}^1f_{0*}M_0 \otimes_{K\weak{x_1,\ldots,x_{d-1}}} \mathcal{R}_{K\weak{x_1,\ldots,x_{d-1}}}^{x_d}.  } \]
I claim that the base change map
\[ \mathbf{R}^1f_*M_{A'}  \otimes_{A'} \mathcal{R}^{x_d}_{K\weak{x_1,\ldots,x_{d-1}}} \rightarrow \mathbf{R}^1f_{0*}M_0 \otimes_{K\weak{x_1,\ldots,x_{d-1}}} \mathcal{R}_{K\weak{x_1,\ldots,x_{d-1}}}^{x_d} \]
is an isomorphism. Indeed, since both sides are projective of the same rank, it suffices to prove that it is surjective. To see this surjectivity, first note that since the map
\[ \mathbf{R}^1f_*M \rightarrow \mathbf{R}^1f_{0*}M_0 \]
is surjective, the image of 
\[ \mathbf{R}^1f_*M \otimes_A \mathcal{R}^{x_d+}_{K\weak{x_1,\ldots,x_{d-1}}} \rightarrow  \mathbf{R}^1f_{0*}M_0 \otimes_{K\weak{x_1,\ldots,x_{d-1}}} \mathcal{R}_{K\weak{x_1,\ldots,x_{d-1}}}^{x_d+} \]
surjects onto $\mathbf{R}^1f_{0*}M_0$ via the natural quotient map $x_d\mapsto 0$. Since $\nabla$-modules are rigid, it follows that in fact 
\[ \mathbf{R}^1f_*M \otimes_A \mathcal{R}^{x_d+}_{K\weak{x_1,\ldots,x_{d-1}}} \rightarrow \mathbf{R}^1f_{0*}M_0 \otimes_{K\weak{x_1,\ldots,x_{d-1}}} \mathcal{R}_{K\weak{x_1,\ldots,x_{d-1}}}^{x_d+} \]
is surjective, hence so is
\[ \mathbf{R}^1f_*M_{A'} \otimes_{A'} \mathcal{R}^{x_d}_{K\weak{x_1,\ldots,x_{d-1}}} \rightarrow \mathbf{R}^1f_{0*}M_0 \otimes_{K\weak{x_1,\ldots,x_{d-1}}} \mathcal{R}_{K\weak{x_1,\ldots,x_{d-1}}}^{x_d} \]
simply by some diagram chasing. Finally, by Lemmas \ref{lemma: R1 inj} and \ref{lemma: gluing} we can deduce that $\mathbf{R}^1f_*M$ is contained in finite projective module of rank $m$, it is therefore finite over $K\weak{x_1,\ldots,x_d}$, and, since it admits an integrable connection, projective of rank $\leq m$. Since the base change map
\[ \mathbf{R}^1f_*M \otimes_A K' \rightarrow H^1(M_s)  \]
to each closed point of $A$ is surjective, we can conclude in fact $\mathbf{R}^1f_*M$ is in fact of rank $=m$ and that every such base change map is an isomorphism.
\end{proof}

Applying Remarks \ref{rem: r0 bc} and \ref{rem: r1 bc}, this completes the proof of Theorem \ref{theo: main dagger}.

\section{Partially overconvergent cohomology}\label{sec: conv}

Theorem \ref{theo: main dagger} is a statement concerning relative Monsky--Washnitzer cohomology, and as such only applies when the base variety $S$ is smooth and affine. In order to be able to deduce a statement for any base $S$ which is separated and of finite type over $k$ (and indeed, for more general curves than allowed by Setup \ref{setup: main}), we will need to `complete along the base' $A$ and prove a similar result for partially (vertically) overconvergent cohomology. We will again consider Setup \ref{setup: main}, thus (in particular) we have a morphism of MW-type frames 
\[ \mathfrak{f}:(U,\overline{C},\mathfrak{C})\rightarrow ( S,\overline{S},\mathfrak{S} ) \]
enclosing an affine curve $f:U\rightarrow S$, with induced morphism $A\rightarrow B$ of $K$-dagger algebras as in \S\ref{sec: gnalk}. We let $M$ be a $\nabla$-module on $B$ obtained as the realisation of an overconvergent isocrystal $E$ on $U/K$. Write
\[ A=\mathrm{colim}_\lambda A_\lambda \;\;\;\;\text{ and }\;\;\;\;B=\mathrm{colim}_\lambda B_\lambda\]
as colimits of smooth affinoid algebras over $K$, such that $A_\lambda \rightarrow B_\lambda$ for all $\lambda$. Let $\widehat{A}$ denote the affinoid completion of $A$, and set
\[ B_{\widehat{A}}:= \mathrm{colim}_\lambda \widehat{A} \widehat{\otimes}_{A_\lambda} B_\lambda,\]
this is `relative dagger algebra' over $\widehat{A}$. Set $M_{\widehat{A}}= M\otimes_B B_{\widehat{A}}$ and define $\mathbf{R}^if_* M_{\widehat{A}}$ to be the cohomology groups of the complex
\[ M_{\widehat{A}} \overset{\nabla}{\rightarrow} M_{\widehat{A}} \otimes_B \Omega^1_{B/A}.\]
Our base change result is then the following.

\begin{theorem} \label{theo: conv} Assume that $M$ is $F$-able, and the equivalent conditions of Theorem \ref{theo: main dagger} hold for $M$. Then the base change map
\[ \mathbf{R}^if_*M \otimes_A \widehat{A} \rightarrow \mathbf{R}^if_*M_{\widehat{A}}\]
is an isomorphism for $i=0,1$. 
\end{theorem}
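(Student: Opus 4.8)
The plan is to reduce, as in the proofs in \S\ref{sec: bc for R10} and \S\ref{sec: universal}, to the case where the base frame is $(\A^d_k,\P^d_k,\widehat{\P}^d_\cur{V})$ and $A=K\weak{x_1,\ldots,x_d}$, so that $\widehat{A}=K\tate{x_1,\ldots,x_d}$. The point is that $B_{\widehat A}$ is a filtered colimit of flat (even smooth, by the strong fibration theorem locally) affinoid algebras over $\widehat A$, and one expects $\nabla$ (the $A$-linear, equivalently $\widehat A$-linear, relative connection) to behave over $B_{\widehat A}$ essentially as it does over $B$, once one knows that the cohomology over $B$ is already finite and commutes with base change. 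Concretely, I would first observe that since Theorem \ref{theo: main dagger} holds for $M$, the modules $\mathbf{R}^0f_*M$ and $\mathbf{R}^1f_*M$ are finite projective over $A$, and $\mathbf R^0f_*M\otimes_A B\hookrightarrow M$ splits off a relatively constant sub-$\nabla$-module. By the five lemma it therefore suffices to prove the theorem for $\mathbf R^0f_*M\otimes_A B$ (where the claim is immediate from the projection formula, since $-\otimes_A\widehat A$ commutes with taking $\nabla$-invariants of a constant module) and for the quotient, whose $\mathbf R^0f_*$ vanishes; so we may assume $\mathbf R^0f_*M=0$, and by Poincaré duality (Theorem \ref{theo: generic pushforwards}\eqref{partii}, available since base change holds without localisation) also $\mathbf R^0f_*M^\vee=0$.

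Next I would treat $\mathbf R^0$ and $\mathbf R^1$ separately. For $\mathbf R^0f_*M_{\widehat A}=\ker(\nabla:M_{\widehat A}\to M_{\widehat A}\otimes_B\Omega^1_{B/A})$: since $\mathbf R^0f_*M=0$, one wants to show $\mathbf R^0f_*M_{\widehat A}=0$ as well, which agrees with $\mathbf R^0f_*M\otimes_A\widehat A=0$. This should follow by writing an element of $M_{\widehat A}$, with respect to a presentation of $B_{\widehat A}$ as a completed colimit, as a convergent expansion and running the same ``Noetherian induction / spreading out'' argument as in Lemma \ref{lemma: R1 inj}: horizontality forces the element to descend to a dagger localisation of $A$, where $\mathbf R^0f_*$ of the relevant quotient vanishes by Theorems \ref{theo: sft} and \ref{theo: bc robba} applied fibrewise along hyperplanes $x_d=0$. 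For $\mathbf R^1$: the key structural input is the ``Claim'' inside the proof of Theorem \ref{theo: bc robba}, namely that $\nabla:M\to M\otimes_B\Omega^1_{B/A}$ is \emph{strict} with image admitting a topological complement, the complement being spanned by lifts $\bm e_1,\ldots,\bm e_m$ of a basis of the (free) module $\mathbf R^1f_*M$. Since $\mathbf R^1 f_* M_{\widehat A}$ is computed by the same complex with $B$ replaced by the flat $B$-algebra $B_{\widehat A}$, and since completion along $A$ is a ``horizontal'' operation (it does not interact with $\Omega^1_{B/A}$), one shows exactly as in the proof of Theorem \ref{theo: bc robba} — replacing the role of $\mathcal R^{x_d}_{B_0}$ there by $B_{\widehat A}$ here — that every element of $M_{\widehat A}\otimes_B\Omega^1_{B/A}$ can be written uniquely as $\nabla(n)+\sum_j\alpha_j\bm e_j$ with $n\in M_{\widehat A}$ and $\alpha_j\in\widehat A$, the convergence of the resulting series being controlled by the strictness of $\nabla$. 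This yields that the $\bm e_j$ form a basis of $\mathbf R^1f_*M_{\widehat A}$ over $\widehat A$, i.e. that the base change map is an isomorphism.

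The main obstacle will be making the convergence estimate in the $\mathbf R^1$ argument completely clean: one has to choose the family of partially-defined norms $\Norm{\cdot}_\lambda$ on $M$ (from fringe algebras of $B$, via a fixed presentation) compatibly with the decomposition $M\otimes_B\Omega^1_{B/A}=\operatorname{im}(\nabla)\oplus\bigoplus_j A\bm e_j$, and verify that the induced decomposition of a convergent expansion $\sum_i m_i\otimes(\text{monomial in the }x_i)$ again converges — exactly the step carried out in Theorem \ref{theo: bc robba}, but now with the ``completion direction'' being all of $\widehat A$ rather than a single Robba variable $x_d$. Since $\widehat A=K\tate{x_1,\ldots,x_d}$ is an honest Tate algebra and the relevant norms are submultiplicative, this is bookkeeping rather than a genuine new difficulty, but it is where the real work lies; everything else is the by-now-familiar formal machinery of dévissage, the five lemma, and Poincaré duality. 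Finally, having proved the smooth-affine case over $K\weak{\bm x}$, one descends along the finite étale cover and the dagger-local cover produced by Lemmas \ref{lemma: good to simple}, \ref{lemma: mod ext} and \ref{lemma: MW curve}, using the dagger form of Tate acyclicity exactly as in \S\ref{sec: bc for R10}, to obtain the statement for a general good affine curve over a smooth affine connected base.
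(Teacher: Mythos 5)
Your dévissage to the case $\mathbf{R}^0f_*M=0$ matches the paper's, but after that the approach diverges, and the $\mathbf{R}^1$ step has a genuine gap. The proof of Theorem \ref{theo: bc robba} you want to imitate is enabled by a concrete description of $M_0\otimes_{B_0}\mathcal{R}^{x_d}_{B_0}$ as the set of convergent formal series $\sum_i m_i x_d^i$ with $m_i\in M_0$: the $x_d$-direction is a genuinely new variable, $\nabla$ acts coefficient by coefficient, and the decomposition $m_i=\nabla(n_i)+\sum_j\alpha_{ij}\bm{e}_j$ can be carried out termwise with uniform control. The module $M_{\widehat{A}}$ has no such power-series description over $M$ --- the completion direction lies inside $A\subset B$ rather than being a fresh variable --- so there is no natural expansion to decompose termwise, and no way to push the topological direct sum $M\otimes_B\Omega^1_{B/A}=\mathrm{im}(\nabla)\oplus\bigoplus_j A\bm{e}_j$ through the completion. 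Strictness of $\nabla$ on $M$ does \emph{not} automatically give strictness on $M_{\widehat{A}}$; whether $\nabla$ is strict there (equivalently, whether $\mathbf{R}^1f_*M_{\widehat{A}}$ is separated) is precisely what needs to be proved, not what can be assumed, and calling it ``bookkeeping'' underestimates it.

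The paper sidesteps the direct computation with a topological argument. For $\mathbf{R}^0$ it is one line: $M_{\widehat{A}}\hookrightarrow M_L$ (with $L$ the completed fraction field of $A$), so $\mathbf{R}^0f_*M_{\widehat{A}}\hookrightarrow \mathbf{R}^0f_*M_L=0$ by Remark \ref{rem: bc to L2}; your proposed Noetherian-induction argument is superfluous and also awkward, since $\widehat{A}$ is a completion of $A$ rather than a dagger localisation, so the special fibre is unchanged and the ``enlarge an open locus'' induction has nowhere to go. For $\mathbf{R}^1$, one equips $\mathbf{R}^1f_*M_{\widehat{A}}$ with the quotient locally convex topology, shows $\mathbf{R}^1f_*M\otimes_A\widehat{A}\to\mathbf{R}^1f_*M_{\widehat{A}}$ is injective (again via $L$) with dense image, passes to the maximal separated quotient $Q$, and applies Lemma \ref{lemma: sub} twice to conclude first that the image carries the strong topology, hence is complete and closed in $Q$, and then that the exact sequence $0\to\overline{\{0\}}\to\mathbf{R}^1f_*M_{\widehat{A}}\to Q\to 0$ is topologically split, forcing $\overline{\{0\}}=\{0\}$. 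The rigidity of finite projective $\widehat{A}$-modules captured in Lemma \ref{lemma: sub} is the essential input your sketch is missing. Finally, your initial reduction to $A=K\weak{\bm{x}}$ is not needed and would cause friction: the Tate-acyclicity descent you invoke at the end is set up for dagger algebras, and its analogue after completing the base is not immediate.
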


We will need some preliminaries on topological modules over the Banach ring $\widehat{A}$. This ring is a Tate ring in the sense of Huber \cite[\S1.1]{Hub96} that is also separated, complete, reduced, and admits a Noetherian ring of definition $\widehat{A}^\mathrm{int}\subset \widehat{A}$, consisting of elements of supremum norm $\leq 1$.

\begin{definition} A topological $\widehat{A}$-module $N$ is said to be \emph{locally convex} if there exists a neighbourhood base of $0$ consisting of $\widehat{A}^\mathrm{int}$-lattices in $N$. 
\end{definition}

For clarity, we will sometimes refer to  a topology being locally convex rel. $\widehat{A}$. As with the case of vector spaces over a non-archimedean field, a locally convex topology on an $\widehat{A}$-module $N$ is determined by its collection of open $\widehat{A}^\mathrm{int}$-lattices. Locally convex topologies are exactly those which can be defined using a collection of norms on $N$, all of which are compatible with the supremum norm on $\widehat{A}$.

\begin{example} \begin{enumerate}
\item Any $\widehat{A}$-module $N$ admits a finest locally convex topology, for which all $\widehat{A}^\mathrm{int}$-lattices are open. We will call this the strong topology on $N$. If $N$ is finitely generated, then this is the quotient topology arising from any surjection $\widehat{A}^{\oplus n} \rightarrow N$, and $N$ is separated and complete with respect to this topology (since $\widehat{A}$ is an affinoid algebra over $K$, this follows from \cite[Proposition 2.1.9]{Ber90}).
\item Be warned that even finitely generated $\widehat{A}$-modules may admit distinct locally convex topologies. For example, there is a locally convex topology on $K\tate{x}$ (as a free module over itself) for which a basis of open lattices is given by $\Lambda_{n,m}=p^n\mathcal{V}\tate{x} + x^mK\tate{x}$, for $n,m\in \Z_{\geq 0}$. This is strictly weaker than the strong topology, and $K\tate{x}$ is not complete with respect to this topology. Its completion is $K\pow{x}$, endowed with the direct product topology via $K\pow{x}\cong \prod^\infty_{i=1} K$. In particular a separated, locally convex $\widehat{A}$-module may contain a dense, finitely generated, proper submodule.
\end{enumerate}
\end{example}

We can avoid the somewhat pathological behaviour of the second example by comparing with the situation over the completed fraction field $L$ of $\widehat{A}$, over which any finitely generated module has a unique separated, locally convex topology.

\begin{lemma} \label{lemma: sub} Let $N$ be a finite projective $\widehat{A}$-module, and $L$ the completed fraction field of $\widehat{A}$. Then the strong topology on $N$ is the subspace topology coming from the inclusion
\[ N\hookrightarrow N\otimes_{\widehat{A}} L. \]
\end{lemma}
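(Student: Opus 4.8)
The statement compares two topologies on a finite projective $\widehat{A}$-module $N$: the strong (= finest locally convex) topology, and the subspace topology from $N \hookrightarrow N\otimes_{\widehat{A}} L$, where $L$ is the completed fraction field. The plan is to first reduce to the free case, then compare the two topologies lattice-by-lattice.

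First I would reduce to $N$ free. Since $N$ is finite projective, pick $N'$ with $N\oplus N' \cong \widehat{A}^{\oplus n}$. The strong topology on $\widehat{A}^{\oplus n}$ restricts to the strong topology on $N$ (a surjection $\widehat{A}^{\oplus n}\twoheadrightarrow N$ splitting through the inclusion shows the quotient topology from this surjection is the subspace topology from $\widehat{A}^{\oplus n}$, and both coincide with the strong topology by the Example), and similarly $(\widehat{A}^{\oplus n})\otimes_{\widehat{A}} L = L^{\oplus n}$ induces on $N\otimes L$ its unique separated locally convex topology, whose restriction to $N$ is the subspace topology in question; moreover the inclusion $N\hookrightarrow N\otimes L$ is compatible with $\widehat{A}^{\oplus n}\hookrightarrow L^{\oplus n}$. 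So it suffices to treat $N=\widehat{A}$, i.e. to show the strong topology on $\widehat{A}$ equals the subspace topology from $\widehat{A}\hookrightarrow L$.

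For $N=\widehat{A}$: one inclusion is easy. The subspace topology from $L$ is locally convex rel.\ $\widehat{A}$ (it is defined by restricting the $L$-norm, which is compatible with the sup norm on $\widehat{A}$), hence it is coarser than the strong topology. For the reverse, I must show every open $\widehat{A}^{\mathrm{int}}$-lattice $\Lambda \subset \widehat{A}$ contains a set of the form $\{a \in \widehat{A} : \norm{a}_L \le \varepsilon\}$ for some $\varepsilon > 0$, where $\norm{\cdot}_L$ is the valuation on $L$. Here is where the structure of $\widehat{A}$ as a separated, complete, reduced Tate ring with Noetherian ring of definition $\widehat{A}^{\mathrm{int}}$ enters: $\widehat{A}^{\mathrm{int}}$ is a Noetherian ring on which the $\varpi$-adic and sup-norm topologies agree (by completeness and the assumption that $\widehat A$ is a Tate ring with $\widehat{A}^{\mathrm{int}}$ a ring of definition), and an $\widehat{A}^{\mathrm{int}}$-lattice $\Lambda$ open in the strong topology contains $\varpi^m \widehat{A}^{\mathrm{int}}$ for some $m$ (since $\widehat{A}^{\mathrm{int}}$ is finitely generated over itself — it is the unit lattice — and $\Lambda$, being an open lattice, must contain a multiple of any finitely generated lattice, in particular of $\widehat A^{\mathrm{int}}$ itself). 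But $\varpi^m\widehat{A}^{\mathrm{int}} \supseteq \{a : \norm{a}_L \le \norm{\varpi}^m\}$ since $\norm{a}_L \le \norm{\varpi}^m$ forces $\norm{a}_{\mathrm{sup}} \le \norm{\varpi}^m$ (the sup norm is the restriction of $\norm\cdot_L$ along $\widehat A \hookrightarrow L$, as $L$ is the completion of the fraction field for the sup norm). This gives the reverse inclusion.

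\textbf{Main obstacle.} The subtle point is the claim that an open $\widehat{A}^{\mathrm{int}}$-lattice in the strong topology necessarily contains $\varpi^m\widehat{A}^{\mathrm{int}}$. In a general locally convex $\widehat{A}$-module this fails (cf.\ the Example: $K\tate{x}$ with the weak topology has open lattices not containing any $\varpi^m K\tate{x}^{\mathrm{int}}$), so it is essential that $\widehat{A}$ is being given its \emph{strong} topology, for which \emph{every} $\widehat{A}^{\mathrm{int}}$-lattice is open — in particular $\varpi^m\widehat{A}^{\mathrm{int}}$ is open, and any open lattice $\Lambda$ satisfies $\Lambda \cap \widehat{A}^{\mathrm{int}} \neq 0$, hence contains a nonzero ideal, hence (after possibly shrinking, using that $\widehat A^{\mathrm{int}}$ is Noetherian and the $\varpi$-adic topology is separated on it) contains $\varpi^m\widehat{A}^{\mathrm{int}}$ for $m\gg 0$. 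I would need to state this cleanly; once it is in hand, the rest is a routine unwinding of definitions and the reduction to the free case. No deep input beyond the structural facts about $\widehat A$ already recorded before the lemma is required.
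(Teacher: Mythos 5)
Your proposal is correct and follows essentially the same route as the paper: reduce to the free case via a choice of complement, then identify both topologies with the topology coming from the sup norm on $\widehat{A}$. The paper compresses steps (ii) and (iii) of your argument into the single observation that both topologies are induced by the canonical norm $\Norm{(a_1,\ldots,a_n)}=\max_i\Norm{a_i}_{\mathrm{sup}}$; your lattice-by-lattice verification is an unfolding of that sentence. One small caution about the ``Main obstacle'' paragraph: the parenthetical detour via ``$\Lambda$ contains a nonzero ideal, hence (using Noetherianness and $\varpi$-adic separatedness) contains $\varpi^m\widehat{A}^{\mathrm{int}}$'' is not a valid deduction — a nonzero ideal of $\widehat{A}^{\mathrm{int}}$ need not contain any power of $\varpi$ (e.g.\ $(x)\subset\mathcal{O}_K\tate{x}$). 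The correct and much simpler justification, which you already give in the preceding paragraph, is just the absorbing property of lattices: since $1\in\widehat{A}$, some $\varpi^m\cdot 1\in\Lambda$, and then $\varpi^m\widehat{A}^{\mathrm{int}}\subseteq\Lambda$ because $\Lambda$ is an $\widehat{A}^{\mathrm{int}}$-submodule. No Noetherian input is needed.
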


\begin{proof}
Choose an isomorphism $N\oplus P \cong \widehat{A}^{\oplus n}$, thus both maps $N\hookrightarrow \widehat{A}^{\oplus n}$ and $N\otimes_{\widehat{A}}L \hookrightarrow L^{\oplus n}$ are strict for the strong topologies. We can therefore reduce to the case $N=\widehat{A}^{\oplus n}$. In this case, both topologies are induced by the canonical norm
\[ \Norm{(a_1,\ldots,a_n)}:= \max_{1\leq i\leq n} \Norm{a_i}_\mathrm{sup} \]
on $\widehat{A}^{\oplus n}$ coming from the supremum norm on $\widehat{A}$. 
\end{proof}

\begin{proof}[Proof of Theorem \ref{theo: conv}] First note that the equivalent conditions of Theorem \ref{theo: main dagger} are preserved under passing from $M$ to either the submodule $\mathbf{R}^0f_*M \otimes_A B$ or the quotient of $M$ by this submodule. Thus using induction and the five lemma it suffices to consider the two cases when $\mathbf{R}^0f_*M=0$ and when $M=N\otimes_A B$ for some $\pn$-module $N$ over $A$. The case when $M=N\otimes_A B$ is easily handled by the projection formula, we will concentrate on the latter. 

Let $L$ denote the completed fraction field of $A$. By Remark \ref{rem: bc to L2} we know that $\mathbf{R}^0f_*M=0\Rightarrow \mathbf{R}^0f_*M_L=0$ and hence $\mathbf{R}^0f_*M_{\widehat{A}}\hookrightarrow  \mathbf{R}^0f_*M_L=0$, this gives the base change claim for $\mathbf{R}^0f_*M$. To prove the base change claim for $\mathbf{R}^1f_*M$, we give $B_{\widehat{A}}$ the (locally convex rel. $\widehat{A}$) inductive limit topology coming from the affinoid topology on each $\widehat{A}\otimes_{A_\lambda} B_\lambda$. This then induces a locally convex (rel. $\widehat{A}$) topology on any finitely generated $B_{\widehat{A}}$-module, such as $M_{\widehat{A}}$ or $M_{\widehat{A}} \otimes_B \Omega^1_{B/A}$. We equip $\mathbf{R}^1f_*M_{\widehat{A}}$ with the quotient topology via
\[M_{\widehat{A}} \rightarrow M_{\widehat{A}} \otimes_B \Omega^1_{B/A},\]
which makes it a (potentially non-separated) locally convex $\widehat{A}$-module. We can play the same game with $M_L$, to obtain a locally convex topology on the finite dimensional $L$-vector space $\mathbf{R}^1f_*M_L$, which in fact \emph{is} separated by \cite[Proposition 8.4.4]{Ked06a}. The natural map
\[ \mathbf{R}^1f_*M_{\widehat{A}} \rightarrow \mathbf{R}^1f_*M_L \]
is then continuous. Since the map
\[ \mathbf{R}^1f_*M \otimes_A L\rightarrow \mathbf{R}^1f_*M_L \]
is an isomorphism (by Remark \ref{rem: bc to L2}), it follows that both maps
\[ \mathbf{R}^1f_*M \otimes_A {\widehat{A}}\rightarrow \mathbf{R}^1f_*M_{\widehat{A}} \;\;\;\;\text{and}\;\;\;\;\mathbf{R}^1f_*M \otimes_A {\widehat{A}}\rightarrow \mathbf{R}^1f_*M_{L}\;\;\;\;\]
are injective, and since $M\otimes_A \widehat{A} \rightarrow M_{\widehat{A}}$ has dense image, so does
\[ \mathbf{R}^1f_*M \otimes_A {\widehat{A}}\rightarrow \mathbf{R}^1f_*M_{\widehat{A}}. \]
Let $Q$ be the maximal separated quotient of $\mathbf{R}^1f_*M_{\widehat{A}}$, i.e. the quotient by the closure of $\{0\}$. Then we have a factorisation
\[ \xymatrix{ \mathbf{R}^1f_*M \otimes_A \widehat{A} \ar[r] & \mathbf{R}^1f_*M_{\widehat{A}} \ar[r]\ar[d]  & \mathbf{R}^1f_*M_L \\ & Q. \ar[ur]  } \]
Since $\mathbf{R}^1f_*{M_L}$ is separated, it follows that $Q\hookrightarrow \mathbf{R}^1f_*{M_L}$, and since $\mathbf{R}^1f_*M \otimes_A 	\widehat{A} \hookrightarrow \mathbf{R}^1f_*M_L$ is injective, so is $\mathbf{R}^1f_*M\otimes_A \widehat{A} \rightarrow Q$. Since $\mathbf{R}^1f_*M\otimes_A \widehat{A} \rightarrow \mathbf{R}^1f_*M_{\widehat{A}}$ has dense image, so does $\mathbf{R}^1f_*M\otimes_A \widehat{A}\rightarrow Q$. Now using Lemma \ref{lemma: sub} together with continuity of the map $Q\rightarrow \mathbf{R}^1f_*M_L$, we can see that the topology on $\mathbf{R}^1f_*M \otimes_A \widehat{A}$ induced by the inclusion
\[ \mathbf{R}^1f_*M \otimes_A \widehat{A} \hookrightarrow Q \]
has to be finer than the strong topology, it is therefore equal to the strong topology. Hence $\mathbf{R}^1f_*M \otimes_A \widehat{A}$ is complete with respect to this topology, and thus has closed image in $Q$; therefore $\mathbf{R}^1f_*M \otimes_A \widehat{A} \isomto Q$.

One more application of Lemma \ref{lemma: sub} tells us that the topology on $Q$ must also be the strong topology (since it maps continuously into $\mathbf{R}^1f_*M_L$), and hence the map $\mathbf{R}^1f_*M \otimes_A \widehat{A} \rightarrow Q$ is in fact a homeomorphism. In other words, the exact sequence
\[ 0\rightarrow \overline{\{0\}} \rightarrow \mathbf{R}^1f_*M_{\widehat{A}} \rightarrow Q \rightarrow 0 \]
admits a topological splitting, which implies that in fact $\overline{\{0\}}=\{0\}$ and $\mathbf{R}^1f_*M \otimes_A \widehat{A}\isomto \mathbf{R}^1f_*M_{\widehat{A}}$.
\end{proof}

We will use the above theorem in a slightly different, and more geometric, form. Let $\mathfrak{S}^{\circ}$ denote the open formal subscheme of $\mathfrak{S}$ whose underlying topological space is $S$, and let $\mathfrak{C}^{\circ}$ be the inverse image of $\mathfrak{S}^\circ$ under $\mathfrak{f}:\mathfrak{C}\rightarrow \mathfrak{S}$. Let
\[ \mathrm{sp}: \mathfrak{C}^{\circ}_{K} \rightarrow \mathfrak{C}^{\circ}\]
be the specialisation map, and
\[ \mathcal{O}_{\mathfrak{C}^{\circ},\Q}(^\dagger C\setminus U)=\mathrm{sp}_*j_U^\dagger\mathcal{O}_{\mathfrak{C}^{\circ}_{K}} \]
the sheaf of functions on $\mathfrak{C}^{\circ}$ with overconvergent singularities along $C\setminus U$. If we realise $E$ on $\mathfrak{C}^{\circ}_{K}$ and pushforward along the specialisation map we obtain a coherent $\mathcal{O}_{\mathfrak{C}^{\circ},\Q}(^\dagger C\setminus U)$-module $\mathrm{sp}_*E_{\mathfrak{C}^\circ}$ together with an integrable connection, whose module of global sections is precisely $M_{\widehat{A}}$.

\begin{corollary} \label{cor: conv} With assumptions as in Theorem \ref{theo: conv}, the relative de Rham cohomology sheaves
\[ \mathbf{R}^i\mathfrak{f}_* \left( \mathrm{sp}_*E_{\mathfrak{C}^\circ} \otimes_{\mathcal{O}_{\mathfrak{C}^\circ}} \Omega^*_{\mathfrak{C}^{\circ} / \mathfrak{S}^{\circ} } \right) \]
are coherent $\mathcal{O}_{\mathfrak{S}^{\circ},\Q}$-modules.
\end{corollary}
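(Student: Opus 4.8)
The plan is to reduce the statement to Theorem \ref{theo: conv} by comparing the relative de Rham cohomology on the tube with the naive Monsky--Washnitzer complex over $B_{\widehat{A}}$. Since $S=\spec{\overline{A}}$ is affine and $\mathfrak{S}$ is projective over $\cur{V}$, the open formal subscheme $\mathfrak{S}^\circ$ is affine, with $\Gamma(\mathfrak{S}^\circ,\mathcal{O}_{\mathfrak{S}^\circ,\Q})=\widehat{A}$. As $\bar{f}:\mathfrak{C}^\circ\rightarrow\mathfrak{S}^\circ$ is proper, $\mathbf{R}\bar{f}_*$ of the complex $\mathrm{sp}_*E_{\mathfrak{C}^\circ}\otimes_{\mathcal{O}_{\mathfrak{C}^\circ}}\Omega^*_{\mathfrak{C}^\circ/\mathfrak{S}^\circ}$ has quasi-coherent cohomology sheaves whose formation commutes with restriction to affine opens of $\mathfrak{S}^\circ$; hence each $\mathbf{R}^i\bar{f}_*(\cdots)$ is the $\mathcal{O}_{\mathfrak{S}^\circ,\Q}$-module associated to the $\widehat{A}$-module $\mathbf{R}^i\Gamma\big(\mathfrak{C}^\circ,\mathrm{sp}_*E_{\mathfrak{C}^\circ}\otimes_{\mathcal{O}_{\mathfrak{C}^\circ}}\Omega^*_{\mathfrak{C}^\circ/\mathfrak{S}^\circ}\big)$, and it is coherent as soon as this $\widehat{A}$-module is finitely generated.

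The next step is to identify this hypercohomology. The specialisation map $\mathrm{sp}\colon\mathfrak{C}^\circ_K\rightarrow\mathfrak{C}^\circ$ is acyclic for coherent $j_U^\dagger\mathcal{O}_{\mathfrak{C}^\circ_K}$-modules on the quasi-compact tube $\mathfrak{C}^\circ_K=\,]C[_{\mathfrak{C}}$, so the complex above is computed by the relative de Rham complex of $j_U^\dagger E_{\mathfrak{C}^\circ_K}$ on $]C[_{\mathfrak{C}}$ over $\mathfrak{S}^\circ_K$. Because $U\rightarrow S$ is affine and $C\setminus U$ is \'etale over $S$, I would reduce to the case of a simple curve using Lemma \ref{lemma: good to simple} together with \'etale and Zariski descent (and flat base change for $\mathbf{R}\bar{f}_*$), and then apply a Theorem-B-type vanishing in the spirit of Proposition \ref{prop: a and b} for the relatively quasi-Stein dagger space attached to $]U[_{\mathfrak{C}}$ over $\mathfrak{S}^\circ_K$, so as to see that the higher cohomology along $\bar{f}$ vanishes and that this de Rham complex is quasi-isomorphic to the two-term complex $\big[\,M_{\widehat{A}}\xrightarrow{\nabla}M_{\widehat{A}}\otimes_B\Omega^1_{B/A}\,\big]$. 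In particular $\mathbf{R}^i\bar{f}_*(\cdots)=0$ for $i>1$, while for $i=0,1$ one gets $\mathbf{R}^i\Gamma\big(\mathfrak{C}^\circ,\mathrm{sp}_*E_{\mathfrak{C}^\circ}\otimes\Omega^*_{\mathfrak{C}^\circ/\mathfrak{S}^\circ}\big)\cong\mathbf{R}^if_*M_{\widehat{A}}$.

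It then remains to apply Theorem \ref{theo: conv}, which gives $\mathbf{R}^if_*M_{\widehat{A}}\cong\mathbf{R}^if_*M\otimes_A\widehat{A}$, together with Theorem \ref{theo: main dagger}, whose equivalent conditions are among the running hypotheses and which guarantees that $\mathbf{R}^if_*M$ is finitely generated over $A$. Hence $\mathbf{R}^if_*M_{\widehat{A}}$ is a finitely generated $\widehat{A}$-module, and therefore $\mathbf{R}^i\bar{f}_*(\cdots)$ is a coherent $\mathcal{O}_{\mathfrak{S}^\circ,\Q}$-module, as required.

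I expect the main obstacle to be the acyclicity and comparison of the second step: once one has completed along the base, the ambient tube $]C[_{\mathfrak{C}}$ is no longer quasi-Stein, so the relative de Rham cohomology cannot be read off a naive complex for free. This requires genuine input, namely the Theorem-B-type statement of Proposition \ref{prop: a and b}, combined, for the \'etale divisor $C\setminus U$, with the strong fibration theorem of \S\ref{sec: strong} after passing to a simple curve. The remaining ingredients (exactness of $\mathbf{R}\mathrm{sp}_*$ on the quasi-compact tube, proper base change over the affine formal scheme $\mathfrak{S}^\circ$, and descent along the finite \'etale cover) are routine.
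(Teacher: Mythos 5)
Your proposal is correct and follows the route the paper implicitly intends — the corollary is stated without proof, and the derivation is exactly the one you describe. The chain of reductions is right: $\mathfrak{S}^\circ$ is affine with $\Gamma(\mathfrak{S}^\circ,\mathcal{O}_{\mathfrak{S}^\circ,\Q})=\widehat{A}$, so coherence amounts to finite generation over $\widehat{A}$; properness of $\bar f$ together with quasi-coherence of the cohomology sheaves reduces to computing hypercohomology on $\mathfrak{C}^\circ$; $\mathrm{sp}$-acyclicity passes to the quasi-compact tube $\mathfrak{C}^\circ_K=\,]C[_{\mathfrak{C}}$; Theorem-B vanishing on the quasi-Stein dagger space $]U[^\dagger_{\mathfrak{C}^\circ}$ (the same mechanism as in the proof of Proposition \ref{prop: a and b}, via Bambozzi's Theorem B after a closed embedding into a product of discs) collapses the hypercohomology to the two-term complex $M_{\widehat{A}}\xrightarrow{\nabla}M_{\widehat{A}}\otimes_B\Omega^1_{B/A}$; and Theorem \ref{theo: conv} then supplies finite generation of $\mathbf{R}^if_*M_{\widehat{A}}\cong\mathbf{R}^if_*M\otimes_A\widehat{A}$.

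One small simplification: the detour through Lemma \ref{lemma: good to simple} and the strong fibration theorem of \S\ref{sec: strong} is not needed here. The Theorem-B-type vanishing applies directly, uniformly in the compactification, because $U$ is affine over the affine $S$: the dagger space $]U[^\dagger_{\mathfrak{C}^\circ}$ is quasi-Stein over the affinoid $\mathrm{Sp}(\widehat{A})=\mathfrak{S}^\circ_K$, and no fibration comparison or reduction to the simple case is required. The strong fibration theorem is specifically tailored to the punctured-tube comparisons in \S\ref{sec: strong} and does not enter the corollary. Otherwise the proposal is faithful to the intended argument, and your flagged "main obstacle" is indeed where the genuine content lies.
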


\section{Local acyclicity via arithmetic \texorpdfstring{$\mathcal{D}^\dagger$}{D}-modules} \label{sec: Dmod2}

We are now ready to prove our second local acyclicity result for smooth relative curves. This will be valid over not necessarily smooth bases $S$, but will involve the additional assumption that the resiude field $k$ is perfect; we will assume this from now on. Fix an arbitrary $k$-variety $S$, let $f:U\rightarrow S$ be a good curve, and $E\in \mathrm{Isoc}_F^\dagger(U/K)$. Then for any geometric point $\bar{s}\rightarrow S$ over a point $s\in S$ we can pullback $E$ to get an overconvergent isocrystal $E_{\bar{s}}$ on $U_{\bar{s}}$ over $K(\bar{s}):=K\otimes_{W(k)} W(k(\bar s))$. If we let $C_{\bar{s}}$ denote the smooth compactification of $U_{\bar{s}}$, then for every point $x\in C_{\bar{s}}\setminus U_{\bar{s}}$ we can apply the construction of \cite[\S7]{Cre98} to pullback $E_{\bar{s}}$ to a punctured formal neighbourhood of $x$ in $C_{\bar s}$ to obtain an overconvergent $\nabla$-module $M_x$ over a copy of the Robba ring $\mathcal{R}_{K(\bar s),x}$ at $x$.

\begin{definition} We define the Swan conductor of $E_{\bar{s}}$ at $x$ to be the irregularity of the overconvergent $\nabla$-module $M_x$,
\[\mathrm{Sw}_x(E_{\bar{s}}) := \mathrm{Irr}(M_x). \]
We define 
\[ \varphi_E(\bar s):= \sum_{x\in C_{\bar{s}}\setminus U_{\bar{s}}}  \mathrm{Sw}_x(E_{\bar s}).\]
\end{definition}
 The positive integer $\varphi_E(\bar s)$ only depends on the point $s\in S$ lying under $\bar{s}$, we thus obtain a function
\[ \varphi_E: S\rightarrow \Z_{\geq0}. \]
Note that this is not a direct analogue of the function $\varphi$ considered in \cite{Lau81}, which also includes a contribution from the rank of $E$. This minor difference notwithstanding, we have the following partial $p$-adic analogue of \cite[Corollary 2.1.2]{Lau81}.

\begin{theorem} \label{theo: main general}
Let $f:U\rightarrow S$ be a relative smooth affine curve over a $k$-variety $S$, and $E\in \mathrm{Isoc}^\dagger_F(U/K)$. Suppose that $f$ admits a smooth compactification $\bar{f}:C\rightarrow S$ such that the complementary divisor $C\setminus U$ is \'etale over $S$.
\begin{enumerate}
\item \label{mgi} The function $\varphi_E:S\rightarrow \Z_{\geq 0}$ is constructible and lower semi-continuous.
\item \label{mgii} $f_!\rho(E)\in D^{b,\dagger}_{\mathrm{isoc},F}(S/K)$ if and only if $\varphi_E$ is locally constant on $S$.
\end{enumerate}
\end{theorem}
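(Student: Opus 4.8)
The plan is to reduce Theorem~\ref{theo: main general} to the Monsky--Washnitzer case (Theorem~\ref{theo: main dagger}) and its partially overconvergent refinement (Theorem~\ref{theo: conv}, Corollary~\ref{cor: conv}), using the $\mathcal{D}^\dagger$-module machinery of \S\ref{sec: background} to bootstrap from smooth affine bases to arbitrary $S$. First one observes that both assertions are local on $S$ for the Zariski topology: constructibility and lower semicontinuity of $\varphi_E$ can be checked on an open cover, and membership in $D^{b,\dagger}_{\mathrm{isoc},F}(S/K)$ is Zariski-local by the results of Abe recalled in \S\ref{sec: background} (the constructible $t$-structure and the fact that $\mathrm{Hol}^\dagger_F(-/K)$ glues). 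Hence we may assume $S$ is affine, and then, using Lemma~\ref{lemma: good to simple} together with finite \'etale descent (which is harmless for both statements, by Lemma~\ref{lemma: irr push} for the behaviour of $\varphi_E$ and by the six-operations formalism for $f_!\rho(E)$), we may assume $f$ is simple with a simple compactification.

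For part~\eqref{mgi}: the key point is that $\varphi_E(\bar s)$ is, by definition, $\sum_i \dim\mathrm{tot}_x(E_{\bar s}) = \mathrm{rank}(E)\cdot(\#\text{points at infinity}) + \mathrm{Irr}^{\mathrm{tot}}_M(v)$ evaluated at the point $v\in\mathcal{M}(A)$ corresponding to $\bar s$, once we pass to a smooth affine neighbourhood $S_0=\spec{\overline{A}}$ of $s$ and realise $E$ on a frame. Since the rank and the number of boundary points are locally constant, constructibility and lower semicontinuity of $\varphi_E$ follow from the corresponding properties of the irregularity function $\mathrm{Irr}^{\mathrm{tot}}_M:\mathcal{M}(A)\to\Z_{\geq0}$. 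Lower semicontinuity (the irregularity can only drop under specialisation) is exactly Proposition~\ref{prop: lower}, and constructibility follows from the generic break-decomposition results (Kedlaya's Theorem~\ref{theo: break 1} in families, i.e.\ the quoted \cite{Ked11a} Theorem~1.3.2) applied stratum-by-stratum via Noetherian induction. One should be slightly careful that the ``point at infinity" divisor may only be finite \'etale, not a union of sections, but after the finite \'etale base change of Lemma~\ref{lemma: good to simple} this is handled, and the function descends.

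For part~\eqref{mgii}: working with $S$ smooth affine first, choose a smooth affine formal lift $\mathfrak{S}$ and a frame $\bar f:(C,\overline{C},\mathfrak{C})\to(S,\overline{S},\mathfrak{S})$ as in Lemma~\ref{lemma: MW curve}; Lemma~\ref{lemma: good lifts} identifies $f_!\rho(E)$ (up to shift and duality) with the relative rigid cohomology $\mathbf{R}\bar f_*$ of the de Rham complex of $\mathrm{sp}_* E_{\mathfrak{C}^\circ}$ on $\mathfrak{C}^\circ/\mathfrak{S}^\circ$, and by Lemma~\ref{lemma: isoc conv} it suffices to show the \emph{convergent} cohomology sheaves are $\mathcal{O}_{\mathfrak{S}^\circ,\Q}$-coherent. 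If $\varphi_E$ is locally constant then $\mathrm{Irr}^{\mathrm{tot}}_M$ is constant, so Theorem~\ref{theo: main dagger} applies and Corollary~\ref{cor: conv} gives exactly the desired $\mathcal{O}_{\mathfrak{S}^\circ,\Q}$-coherence, hence $f_!\rho(E)\in D^{b,\dagger}_{\mathrm{isoc},F}(S/K)$; conversely, coherence of the cohomology sheaves over $\mathfrak{S}^\circ$ forces finiteness and base change of the $\mathbf{R}^if_*M$, which by Theorem~\ref{theo: main dagger}\eqref{tmd part ii}$\Rightarrow$\eqref{tmd part i} (the Grothendieck--Ogg--Shafarevich direction) yields constancy of $\mathrm{Irr}^{\mathrm{tot}}_M$, i.e.\ local constancy of $\varphi_E$. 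For general $S$, one embeds a neighbourhood of any point into a smooth affine one via a closed immersion $S\hookrightarrow S'$ with $S'$ smooth affine: the constructible-$t$-exactness of $u^+$ and the compatibility $u^+\rho(E)\cong\rho(u^*E)$ reduce the membership statement for $f_!\rho(E)$ over $S$ to the already-established smooth affine case over $S'$, since $f_!\rho(E)$ on $S$ is the pullback along $S\hookrightarrow S'$ of the corresponding object for a good curve over $S'$ extending $f$ (which exists after shrinking, by spreading out). The main obstacle is this last reduction: one must produce, locally around each point of $S$, a good relative curve over a smooth affine $S'$ restricting to $f$, and verify that $\rho$ and the $!$-pushforward are compatible with the closed immersion $S\hookrightarrow S'$ in the precise sense needed—this is where the detailed properties of Abe's constructible $t$-structure and the base change $a^! u_+\cong u'_+ a'^!$ from \S\ref{sec: background} do the real work, and care is needed because $D^{b,\dagger}_{\mathrm{isoc},F}(-/K)$ is not in general stable under all the six operations when the base is singular.
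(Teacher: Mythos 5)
Your treatment of part~\eqref{mgi} and of the smooth affine case of part~\eqref{mgii} is essentially right and lines up with the paper's argument: lower semi-continuity comes from Proposition~\ref{prop: lower}, generic constancy from Corollary~\ref{cor: base change 2} (the paper phrases this via GOS rather than via the break decomposition, but the content is the same), and in the smooth affine case one dualises, uses $\rho(E)\cong\mathrm{sp}_+E[-\dim S]$, passes to the convergent category via Lemma~\ref{lemma: isoc conv}, computes $f_+$ via the realisation $\mathfrak{C}^\circ\to\mathfrak{S}^\circ$ using Lemma~\ref{lemma: good lifts}, and invokes Corollary~\ref{cor: conv}. (You gloss over the step where one replaces $E$ by $E^\vee$ and trades $f_!$ for $f_+$ via $\mathbf{D}_S$ — this needs the observation $\varphi_E=\varphi_{E^\vee}$, which the paper records — but that is minor.)

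The gap is in your reduction of the general $S$ to the smooth affine case. You propose to embed a neighbourhood of any point by a \emph{closed} immersion $S\hookrightarrow S'$ with $S'$ smooth affine, spread out $f$ and $E$ to a good curve and isocrystal over $S'$, and pull the conclusion back along $u^+\rho(E')\cong\rho(u^*E')$. This does not work, and you correctly flag it as the obstacle: the hypothesis is only that $\varphi_E$ is locally constant on $S$, and there is no reason that the spread-out $E'$ on $U'/S'$ should have locally constant $\varphi_{E'}$ on $S'$. Indeed, by part~\eqref{mgi} $\varphi_{E'}$ is lower semi-continuous, so it can be strictly larger on a dense open of $S'$ away from the closed subscheme $S$, and then the smooth-case result simply doesn't apply to $E'$; restricting $S'$ cannot fix this since $S$ is closed, not open, in $S'$. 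The paper handles this by replacing $S$ with an \emph{alteration} $a:S'\to S$ with $S'$ smooth (rather than embedding it in something smooth), using that $\varphi_{a^*E}=\varphi_E\circ a$ so local constancy is preserved on the nose, and then descending the membership statement $f'_!\rho(a'^*E)\in D^{b,\dagger}_{\mathrm{isoc},F}(S'/K)\Rightarrow f_!\rho(E)\in D^{b,\dagger}_{\mathrm{isoc},F}(S/K)$ via the base-change isomorphism $a^+f_!\rho(E)\cong f'_!\rho(a'^*E)$ together with \cite[Lemma 3.3]{Abe18} and $t$-exactness of $a^+$. You should swap closed immersions for alterations; with that change the reduction goes through, and the rest of your outline is in order.
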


\begin{remark} \begin{enumerate}
\item  This result is weaker than \cite[Corollary 2.1.2]{Lau81} in that we assume the complement $C\setminus U$ is finite \'etale over $S$, whereas in \cite{Lau81} it is only required to be finite flat. A formalism of vanishing and nearby cycles in $p$-adic cohomology is developed in \cite{Abe19}, it would be interesting to see whether Abe's machinery can be used to prove a $p$-adic version of the more general result. 
\item If the equivalent conditions of Theorem \ref{theo: main general}(\ref{mgii}) are satisfied, then the constructible cohomology sheaves 
\[ \mathbf{R}^if_!E: = {}^c\hspace{-1mm}\cur{H}^i(f_!\rho(E)) \in \mathrm{Isoc}_F^\dagger(S/K)  \]
are of formation commuting with arbitrary base change $S'\rightarrow S$. If in addition $S$ is smooth over $k$, then the constructible cohomology sheaves
\[ \mathbf{R}^if_*E: = {}^c\hspace{-1mm}\cur{H}^i(f_+\rho(E)) \in \mathrm{Isoc}_F^\dagger(S/K) \]
are also overconvergent isocrystals, and of formation commuting with base change $S'\rightarrow S$ of \emph{smooth} varieties. Moreover, in this case we have perfect pairings 
\[ \mathbf{R}^if_*E \otimes \mathbf{R}^{2-i}f_!E \rightarrow \mathcal{O}^\dagger_{S/K}(-1) \]
of overconvergent isocrystals, which are compatible with any given Frobenius structure on $E$. Thus even in the smooth case, Theorem \ref{theo: main general} gives us slightly more than Theorem \ref{theo: main dagger}.
\end{enumerate} 
\end{remark}

Let us first consider Theorem \ref{theo: main general}(\ref{mgi}), which boils down to two claims:
\begin{enumerate}
\item there exists an open subset $U\subset S$ such that $\varphi_E$ is constant on $U$ (we apply this successively to the complement of $U$ in $S$ and so forth);
\item if $\eta,s\in S$ and $s\in \overline{\{\eta\}}$ then $\varphi_E(s)\leq \varphi_E(\eta)$. 
\end{enumerate}
Note that if $a:S'\rightarrow S$ is any morphism, and $s'\in S'$, then $\varphi_{a^*E}(s')=\varphi_E(a(s'))$. Hence to prove the first of these we may replace $S$ by any variety $a:S'\rightarrow S$ flat over $S$, and for the second we may replace $S$ by an alteration followed by the inclusion of an open affine containing $s$ (and hence $\eta$). In either case, we can assume that $S$ is smooth and affine. Working one connected component at a time, we may assume $S$ connected.

Now applying Lemma \ref{prop: good to ssl} we may assume that we are in the situation of Setup \ref{setup: main}. The first claim then follows from Corollary \ref{cor: base change 2} together with the Grothendieck--Ogg--Shafarevich formula
\[  \chi(U_{\bar{s}},E_{\bar s}) = \chi(U_{\bar s})\cdot \mathrm{rank}\,E_{\bar s} - \sum_{x\in C_{\bar s}\setminus U_{\bar s} } \mathrm{Sw}_x(E_{\bar s}), \]
see for example \cite[Corollaire 5.0-12]{CM01}. For the second, we may replace $S$ by a suitable alteration of $\overline{\{\eta\}}$, and thus assume that $\eta$ is the generic point of $S$. The claim then follows from Proposition \ref{prop: lower}. 

To prove Theorem \ref{theo: main general}(\ref{mgii}) we first suppose that $f_!\rho(E)\in D^{b,\dagger}_{\mathrm{isoc},F}(S/K)$; we must show that $\varphi_E$ is locally constant. We may clearly assume that $S$ is connected, and since we already know that $\varphi_E$ is constructible, it suffices to show that it is constant on the set $\norm{S}$ of closed point of $S$. If $i_s:s\rightarrow S$ is the inclusion of a closed point, inducing a Cartesian diagram
\[ \xymatrix{ X_s \ar[r]^{i_s'} \ar[d]_{f_s} & X \ar[d]^f \\ s \ar[r]^{i_s}  & S, } \]
then we have that
\[ i_s^+f_!\rho(E) \cong f_{s!}i'^+_s\rho(E) \cong f_{s!} \rho(i_s^*E). \]
Since $f_!\rho(E) \in D^{b,\dagger}_{\mathrm{isoc},F}(S/K)$ and $i_s^+$ is exact for the constructible $t$-structure, we deduce the existence of objects
\[ \mathbf{R}^if_!E := {}^c\hspace{-1mm}\mathcal{H}^i(f_!\rho(E)) \in \mathrm{Isoc}^\dagger_F(S/K) \]
such that $i_{s}^*\mathbf{R}^if_!E \cong H^i_{c,\rig} (U_{s}/K(s),E_{s})$ for all $s$. In particular, we can see that the \emph{compactly supported} Euler characteristic
\[ s\mapsto \chi_c(U_{s},E_{s})=\chi_c(U_{\bar s},E_{\bar s}) \]
is constant on $\norm{S}$. Since $\mathrm{Sw}_x(E^\vee_{\bar s})=\mathrm{Sw}_x(E_{\bar s})$, applying Poincar\'e duality and the $p$-adic Grothendieck--Ogg--Shafarevich formula tells us that
\begin{align*} \chi_c(U_{\bar s},E_{\bar s}) &= \chi(U_{\bar s},E^\vee_{\bar s}) = \chi(U_{\bar s})\cdot \mathrm{rank}\,E^\vee_{\bar s} - \sum_{x\in C_{\bar s}\setminus U_{\bar s} } \mathrm{Sw}_x(E^\vee_{\bar s}) \\
&= \chi(U_{\bar s})\cdot \mathrm{rank}\,E_{\bar s} - \sum_{x\in C_{\bar s}\setminus U_{\bar s} } \mathrm{Sw}_x(E_{\bar s}).
\end{align*} 
Thus constancy of $\chi_c(U_s,E_s)$ implies that of $\varphi_E(s)$, and we are done.

To prove the converse implication, then, let us assume that $\varphi_E$ is constant, and take an alteration $a:S'\rightarrow S$ with $S'$ smooth. Then we have a Cartesian diagram
\[ \xymatrix{ X'\ar[r]^{a'} \ar[d]_{f'} & X \ar[d]^f \\ S'\ar[r]^a & S } \]
and isomorphisms
\[ a^+f_!\rho(E) \cong f'_!a'^+\rho(E) \cong f'_!\rho(a'^*E). \]
By \cite[Lemma 3.3]{Abe19}, together with $t$-exactness of $a^+$, it therefore suffices to show that $f'_!\rho(a'^*E)\in D^{b,\dagger}_{\mathrm{isoc},F}(S'/K)$, and thus replacing $S$ by $S'$ we may assume that $S$ is smooth. Since the question is local on $S$, we may also assume that it is affine and connected. Again applying \cite[Lemma 3.3]{Abe19} and using the fact that smoothness of a given constructible module (in the sense of \cite[Definition 3.2]{Abe19}) is clearly Zariski local, we may invoke the results of \cite[\S3]{Voe96} on the h-topology (specifically, the fact that the topology generated by Zariski covers and proper surjective maps is finer than the \'etale topology) to show that smoothness of a given constructible module is in fact \'etale local. Hence after taking a suitable \'etale cover of $S$ as in \ref{prop: good to ssl}, we may assume that we are in the situation of Setiup \ref{setup: main}.

Now by Poincar\'e duality
\[ f_!\rho(E) \cong \mathbf{D}_Sf_+ \rho(E^\vee) [-2\dim S],  \]
it therefore suffices to show that $f_+\rho(E^\vee)\in D^{b,\dagger}_{\mathrm{isoc},F}(S/K) $. Since $\varphi_E=\varphi_{E^\vee}$ we may replace $E$ by $E^\vee$, and since $S$ is smooth we have $\rho(E) \cong \mathrm{sp}_+E[-\dim S]$; we must therefore show that $\varphi_E$ constant $\Rightarrow$ $f_+\mathrm{sp}_+E\in D^{b,\dagger}_{\mathrm{isoc},F}(S/K)$. Now let
\[ \mathfrak{f} : (C,\overline{C},\mathfrak{C}) \rightarrow (S,\overline{S},\mathfrak{S}) \]
be a morphism of frames extending a good compactification of $f$ as in Setup \ref{setup: main}. Let $\mathfrak{S}^{\circ} \subset \mathfrak{S}$ denote the open formal subscheme with underlying topological space $S$ and let $\mathfrak{C}^{\circ}$ denote the fibre product of $\mathfrak{S}^{\circ}$ with $\mathfrak{C}$ over $\mathfrak{S}$. Let $\hat{E}$ denote the image of $E$ inside the category $\mathrm{Isoc}_F((U,C)/K)$ of isocrystals on $U$ overconvergent along $C$ (which are extensions of isocrystals admitting Frobenius structures). Since the diagram
\[ \xymatrix{     \mathrm{Isoc}_F^\dagger(U/K) \ar[r]^{\mathrm{sp_+}}\ar[d] & D^{b,\dagger}_{\mathrm{hol},F}(U/K) \ar[r]^{f_+}\ar[d]  & D^{b,\dagger}_{\mathrm{hol},F}(S/K)  \ar[d] \\  \mathrm{Isoc}_F((U,C)/K) \ar[r]^{\mathrm{sp_+}} & D^b_{\mathrm{hol},F}((U,C)/K) \ar[r]^{f_+} & D^b_{\mathrm{hol},F}(S/K)  } \]
commutes up to natural isomorphism, it suffices by Lemma \ref{lemma: isoc conv} to show that 
\[ f_+\mathrm{sp}_+\hat{E} \in D^b_{\mathrm{isoc},F}(S/K). \]
By Lemma \ref{lemma: good lifts} the functor $f_+$ on the category of `convergent' holonomic modules can be computed in terms of the realisation $\mathfrak{C}^{\circ} \rightarrow \mathfrak{S}^{\circ}$ via
\[ \mathfrak{f}_+ : D^b_\mathrm{coh}(\mathcal{D}^\dagger_{\mathfrak{C}^{\circ},\Q}) \rightarrow D^b_\mathrm{coh}(\mathcal{D}^\dagger_{\mathfrak{S}^{\circ},\Q}), \]
and moreover it suffices to show that the $\mathcal{O}_{\mathfrak{S}^{\circ},\Q}$-modules underlying cohomology sheaves of $\mathfrak{f}_+\mathrm{sp}_+\hat{E}$ are coherent. In this case the construction of the functor $\mathrm{sp}_+E$ is very simple. Explicitly, we realise $E$ on $\mathfrak{C}^\circ_K$, and pushforward the resulting module with integrable connection $E_{\mathfrak{C}^\circ}$ via the specialisation map
\[ \mathrm{sp}: \mathfrak{C}^\circ_K \rightarrow \mathfrak{C}^\circ.\]
This results in a coherent $\mathcal{O}_{\mathfrak{C}^\circ,\Q}(^\dagger C\setminus U)$-module with integrable connection, which by \cite[Theorem 2.3.15]{CT12} extends to the structure of an overholonomic (and in particular, coherent) $\mathcal{D}^\dagger_{\mathfrak{C}^\circ,\Q}$-module (remember that $E$ is $F$-able). This $\mathcal{D}^\dagger_{\mathfrak{C}^\circ,\Q}$-module is nothing other than $\mathrm{sp}_+\hat{E}$. Finally using \cite[(4.3.6.3)]{Ber02} we can identify
\[ \mathfrak{f}_+\mathrm{sp}_+\hat{E}[-1] \cong \mathbf{R}\mathfrak{f}_*\left(\mathrm{sp}_*E_{\mathfrak{C}^\circ} \otimes_{\mathcal{O}_{\mathfrak{C}^\circ}} \Omega^*_{\mathfrak{C}^{\circ}/\mathfrak{S}^{\circ}} \right) \]
and therefore apply Corollary \ref{cor: conv} to conclude.

\bibliographystyle{../../Templates/bibsty}
\bibliography{../../Templates/lib.bib}

\end{document}